\numberwithin{equation}{section} \theoremstyle{plain}
\newtheorem{theorem}[subsection]{Theorem}
\newtheorem{proposition}[subsection]{Proposition}
\newtheorem{lemma}[subsection]{Lemma}
\newtheorem{corollary}[subsection]{Corollary}
\newtheorem{definition}[subsection]{Definition}
\newtheorem*{sylvester-gallai-rpt}{Theorem \ref{syl-gal}}
\newtheorem*{main-struct-repeat}{Theorem \ref{main-struct}}
\newtheorem*{main-structure-theorem-rpt}{Theorem \ref{main-structure-theorem}}
\newtheorem*{almost-group-rpt}{Proposition \ref{almost-group}}
\newtheorem*{precise-dm-rpt}{Theorem \ref{precise-dm}}
\renewcommand{\leq}{\leqslant}
\renewcommand{\geq}{\geqslant}
\newsavebox{\proofbox}
\savebox{\proofbox}{\begin{picture}(7,7)  \put(0,0){\framebox(7,7){}}\end{picture}}
\newcommand{\md}[1]{\ensuremath{(\operatorname{mod}\, #1)}}
\newcommand\Z{\mathbb{Z}}
\newcommand\R{\mathbb{R}}
\newcommand\C{\mathbb{C}}
\newcommand\N{\mathbb{N}}
\renewcommand\P{\mathbb{P}}
\newcommand\eps{\varepsilon}
\newcommand\Sym{\operatorname{Sym}}
\newcommand\length{\operatorname{length}}
\def\endproof{\hfill{\usebox{\proofbox}}\vspace{11pt}}
\begin{document}

\title{On sets defining few ordinary lines}

\author{Ben Green}
\address{Centre for Mathematical Sciences\\
Wilberforce Road\\
Cambridge CB3 0WA\\
England }
\email{b.j.green@dpmms.cam.ac.uk}

\author{Terence Tao}
\address{Department of Mathematics, UCLA\\
405 Hilgard Ave\\
Los Angeles CA 90095\\
USA}
\email{tao@math.ucla.edu}

\subjclass{20G40, 20N99}

\begin{abstract}  
Let $P$ be a set of $n$ points in the plane, not all on a line. We show that if $n$ is large then there are at least $n/2$ \emph{ordinary} lines, that is to say lines passing through exactly two points of $P$. This confirms, for large $n$,  a conjecture of Dirac and Motzkin. In fact we describe the exact extremisers for this problem, as well as all sets having fewer than $n - C$ ordinary lines for some absolute constant $C$. We also solve, for large $n$, the ``orchard-planting problem'', which asks for the maximum number of lines through exactly 3 points of $P$. Underlying these results is a structure theorem which states that if $P$ has at most $Kn$ ordinary lines then all but $O(K)$ points of $P$ lie on a cubic curve, if $n$ is sufficiently large depending on $K$.
\end{abstract}

\maketitle
\tableofcontents

\section{Introduction}

The \emph{Sylvester--Gallai theorem} is a well-known theorem in combinatorial geometry. It was proven by Gallai \cite{gallai} in response to a question of Sylvester \cite{sylvester-q} from forty years earlier.

\begin{figure}
\includegraphics[scale=1]{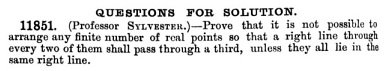} 
\caption{Sylvester's question \cite{sylvester-q}.}\label{sylvester-qq}
\end{figure}

\begin{theorem}[Sylvester-Gallai theorem]\label{syl-gal}
Suppose that $P$ is a finite set of points in the plane, not all on one line. Then there is an \emph{ordinary line} spanned by $P$, that is to say a line in $P$ containing exactly two points.
\end{theorem}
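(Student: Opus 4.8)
The plan is to use the classical metric argument due to Kelly. Suppose, for contradiction, that every line spanned by $P$ passes through at least three points of $P$. Consider the collection of all pairs $(p,\ell)$ where $p\in P$, $\ell$ is a line spanned by $P$, and $p\notin\ell$. This collection is finite and, crucially, nonempty: if it were empty then every point of $P$ would lie on every spanned line, forcing $P$ to be collinear, contrary to hypothesis. Among all such pairs choose one for which the Euclidean distance from $p$ to $\ell$ is minimal, and write $d$ for this distance.

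The key step is to produce from $(p,\ell)$ another such pair with strictly smaller distance. Let $f$ be the foot of the perpendicular from $p$ to $\ell$. Since $\ell$ contains at least three points of $P$, partitioning $\ell\setminus\{f\}$ into the two open rays from $f$ and applying the pigeonhole principle yields two points $q,r\in P\cap\ell$ lying on the same closed ray, with $q$ no further from $f$ than $r$; thus $q$ lies on the segment $fr$ (possibly $q=f$) and $q\neq r$. Now set $\ell':=pr$, a line spanned by $P$, and note $q\notin\ell'$, for otherwise $q$, $r$ and $p$ would be collinear, forcing $p\in\ell$. A short computation with similar right triangles — comparing the right triangle $pfr$ (right angle at $f$) with the configuration given by $q$, $r$, and the foot of the perpendicular from $q$ to $\ell'$ — shows that the distance from $q$ to $\ell'$ is at most the distance from $f$ to $\ell'$, and the latter is strictly less than $d$ because $q$ lies strictly between $f$ and $r$ (or $q=f$, in which case $d(f,\ell')<d(p,\ell)$ is immediate from $\ell'$ passing through $f=q$ but making a nonzero angle with $\ell$ since $p\notin\ell$). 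Either way $d(q,\ell')<d$, contradicting minimality of $d$.

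The only subtle point — and hence the main obstacle — is organising the case analysis around the position of $f$ among the points of $P\cap\ell$ and checking that the comparison of distances is \emph{strict} in every case, in particular when $q=f$. An alternative, non-metric route is Melchior's argument: pass to the projective dual, where $P$ becomes an arrangement of lines; apply Euler's formula $V-E+F=2$ together with the observation that every face of the arrangement has at least three edges to force the existence of a vertex of degree $2$, i.e. an ordinary point, which dualises back to an ordinary line. I would present Kelly's proof as the main one, being entirely elementary and self-contained, while remarking that Melchior's approach in fact upgrades the conclusion to the quantitative statement that $P$ spans at least three ordinary lines.
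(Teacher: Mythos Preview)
Your argument is Kelly's classical metric proof, and it is essentially correct, though the justification of the strict inequality is muddled. The clean way to finish is: in the right triangle $pfr$ (right angle at $f$) the altitude from $f$ to the hypotenuse $pr=\ell'$ is strictly shorter than the leg $|pf|=d$, so $d(f,\ell')<d$; and since $q$ lies on the closed segment $fr$, the distance from $q$ to $\ell'$ is at most $d(f,\ell')$. This covers $q=f$ and $q\neq f$ uniformly. Your parenthetical about the case $q=f$ is actually wrong as written: $\ell'=pr$ does \emph{not} pass through $f$ (if it did, $p$, $f$, $r$ would be collinear, forcing $p\in\ell$), so the sentence ``$\ell'$ passing through $f=q$'' should be deleted.

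The paper, however, takes a genuinely different route: it gives Melchior's proof via projective duality and Euler's formula. One dualises $P$ to an arrangement of lines, forms the planar graph $\Gamma_P$ in $\R\P^2$, and from $V-E+F=1$ together with counts of vertices by line-multiplicity and faces by number of sides derives the exact identity
\[
N_2 \;=\; 3 \;+\; \sum_{k\geq 4}(k-3)N_k \;+\; \sum_{s\geq 4}(s-3)M_s,
\]
which immediately gives $N_2\geq 3$. Your Kelly argument is more elementary and needs no duality, but Melchior's approach buys much more for the purposes of this paper: the surplus term $\sum_{s\geq 4}(s-3)M_s$ controls how far $\Gamma_P$ is from being a triangulation, and this is precisely the input that drives the entire structure theory (Proposition~\ref{bad-edges-prop} and everything built on it). So while your proof is fine for the bare Sylvester--Gallai statement, the paper's choice is not incidental --- Melchior's identity is the engine of the whole argument.
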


Several different proofs of this now appear in the literature. We will be particularly interested in a proof due to Melchior \cite{melchior} based on projective duality and Euler's formula, which we will recall in Section \ref{melchior-sec}. It is natural to wonder how many ordinary lines there are in a set of $P$ points, not all on a line, when the cardinality $|P|$ of $P$ is equal to $n$. Melchior's argument in fact shows that there are at least three ordinary lines, but considerably more is known. Motzkin \cite{motzkin} was the first to obtain a lower bound (of order $n^{1/2}$) tending to infinity with $n$. Kelly and Moser \cite{kelly-moser} proved that there are at least $3n/7$ ordinary lines, and Csima and Sawyer \cite{csima-sawyer} improved this to $6n/13$ when $n > 7$. Their work used ideas from the thesis of Hansen \cite{hansen}, which purported to prove the $n/2$ lower bound but was apparently flawed. An illuminating discussion of this point may be found in the MathSciNet review of \cite{csima-sawyer}. There are several nice surveys on this and related problems; see \cite{borwein-moser}, \cite[Chapter 17]{erdos-handbook}, \cite{nilakantan} or \cite{pach-sharir}. 

One of our main objectives in this paper is to clarify this issue for large $n$. The following theorem resolves, for large $n$, a long-standing conjecture which has been known as the \emph{Dirac-Motzkin conjecture}. Apparently neither author formally conjectures this in print, though Dirac \cite{dirac} twice states that its truth is ``likely''. Motzkin \cite{motzkin} does not seem to mention it at all.

\begin{theorem}[Dirac-Motzkin conjecture]\label{mainthm}
Suppose that $P$ is a finite set of $n$ points in the plane, not all on one line. Suppose that $n \geq n_0$ for a sufficiently large absolute constant $n_0$. Then $P$ spans at least $n/2$ ordinary lines.  
\end{theorem}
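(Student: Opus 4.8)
The plan is to derive Theorem~\ref{mainthm} from the structure theorem advertised in the abstract: if $P$ has at most $Kn$ ordinary lines then all but $O(K)$ points of $P$ lie on a cubic curve, provided $n$ is large in terms of $K$. Taking $K = 1/2$ (say), we may assume $P$ spans fewer than $n/2$ ordinary lines, so all but a bounded number $O(1)$ of the points of $P$ lie on a cubic curve $\gamma$. The first main step is therefore to understand the possible cubic curves and, on each, to count ordinary lines precisely. A plane cubic is either irreducible, or a union of a conic and a line, or a union of three lines (possibly with multiplicity or complex conjugate components); since $P$ is a large real point set not all collinear, the relevant cases are essentially: $\gamma$ irreducible (smooth, or with a node or cusp), or $\gamma$ the union of an irreducible conic and a line, or $\gamma$ three concurrent or triangular lines. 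In each case I would use the classical group law on the smooth points of the cubic: three points of $P$ on $\gamma$ are collinear if and only if they sum to the identity (a flex, suitably chosen), so ``ordinary lines within $\gamma$'' correspond to pairs $p, q$ with $p + q + r = 0$ having $r \notin P$ or $r \in \{p,q\}$ (tangency). This converts the geometric count into an additive-combinatorics question about the subset $P \cap \gamma$ of the group (which is $\R/\Z$, or $\R/\Z \times \Z/3\Z$, or $\R^\times$, or $\R$, etc.), namely: a subset $A$ of an abelian group with few ``ordinary'' representations must be very structured — essentially a coset of a subgroup, an arithmetic-progression-like set, or a union of a few such.

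The second main step is the additive-combinatorial heart: showing that if $A \subseteq \R/\Z$ (the model case) has the property that all but $O(1)$ of the pairs $\{a,b\}$ have $-(a+b) \in A$, then $A$ is, up to $O(1)$ exceptions, a coset of a finite subgroup, i.e.\ an arithmetic progression $\{x_0 + j/m : 0 \le j < m\}$; and similarly in the other group models. One then reads off exactly how many ordinary lines such a configuration has, getting clean formulas like $n/2$ (for even $n$, a specific AP-on-conic-plus-points construction), $3\lfloor n/4 \rfloor$ or the like, and checks all of them are $\ge n/2$ for $n$ large. Along the way one has to handle the $O(1)$ stray points not on $\gamma$ and the interaction lines between them and $\gamma$: each stray point lies on a bounded number of lines, but a stray point together with a near-perfect AP on a cubic typically creates $\gg n$ ordinary lines unless the stray point is in a very special position, which pins down the extremal configurations and in particular forces the stray-point count to be small. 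The degenerate reducible cubics (conic-plus-line, three lines) need separate but similar case analysis; the conic-plus-line case is the source of the Böröczky-type near-extremal examples, so it must be treated with care to get the constant exactly $n/2$ rather than something weaker.

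The step I expect to be the main obstacle is the additive-combinatorial classification and its quantitative sharpening: going from ``$A$ has $\le Kn$ ordinary pairs'' to ``$A$ is $O(K)$-close to a genuine coset/AP'' with good enough control to then optimize the ordinary-line count down to the exact threshold $n/2$. A soft Freiman-type argument gives approximate structure, but pinning down the exact extremizers (and showing every non-extremal structured set has strictly more than $n/2$ ordinary lines for large $n$) requires a delicate stability analysis — one must rule out, for instance, sets that are an AP missing a few elements or with a few extra elements, each of which perturbs the ordinary-line count by a controlled amount that has to be shown to stay $\ge n/2$. Handling the different group models ($\R/\Z$, $\R$, $\R^\times$, and the index-$3$ extension coming from the $3$-torsion of an elliptic curve) uniformly, and correctly accounting for tangent lines (which are ``ordinary'' in a degenerate sense and must be counted), adds bookkeeping that is routine in spirit but easy to get wrong. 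A secondary obstacle is simply invoking the structure theorem with a clean enough dependence: we need the ``$O(K)$ exceptional points'' to be an honest bounded set when $K = 1/2$, and we need $n_0$ to be a genuine absolute constant, which it is since $K$ is fixed.
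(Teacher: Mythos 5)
Your high-level route is the paper's route: invoke a structure theorem for sets with at most $Kn$ ordinary lines, classify the possible cubics via the (quasi-)group law on their smooth points, use an ``almost-group implies close to a coset'' statement from additive combinatorics (this is Proposition \ref{almost-group}, and it is the routine part, not the main obstacle you fear), and then count ordinary lines for configurations that are $O(1)$-close to a line, to $X_{2m}$, or to a coset on an elliptic/acnodal cubic. The nearly-collinear and cubic-coset cases are indeed easy once there: the coset already carries $n-O(1)$ tangent ordinary lines and any point of the plane lies on at most $6$ tangents to the cubic (resp.\ $2$ tangents to the circle), so stray points destroy only $O(1)$ of them (this is Lemma \ref{lem8.1}); no claim that a stray point \emph{creates} $\gg n$ ordinary lines is needed in that case, which is fortunate because that claim (Lemma \ref{elliptic-lemma}) is genuinely nontrivial.

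The genuine gap is in the conic-plus-line case, exactly where the bound has no slack. If $P$ differs from $X_{2m}$ by $O(1)$ added/deleted points, the surviving tangent lines give only $m-O(1)=n/2-O(1)$ ordinary lines, so each added stray point $p$ (not the origin, not on the circle or the line at infinity) must be shown to contribute new ordinary lines through $p$ to make up the deficit. Whether the lines from $p$ to $X_{2m}$ are ordinary is controlled by how many chords joining two $m$-th roots of unity pass through $p$ (every chord of the $m$-gon automatically picks up a third point of $X_{2m}$ at infinity), and nothing elementary that you have stated rules out a point lying on a constant fraction of such chords; if it did, your count could stall below $n/2$. The paper supplies precisely this missing ingredient: Proposition \ref{poonen-rubinstein-prop} (after Poonen--Rubinstein), or the weaker elementary Proposition \ref{chords-prop}, feeding into Corollary \ref{prc}, which shows a generic stray point generates $2m-O(1)$ (or at least $2m-O(n^{5/6})$) ordinary lines, after which the special positions (origin, circle, line at infinity) are checked by hand as in Proposition \ref{boroczky-sharp}. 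Your proposal never identifies this concurrent-chords estimate, and ``a stray point typically creates $\gg n$ ordinary lines'' is exactly the assertion that needs proof; without it you only recover $n/2-O(1)$. (Two smaller points: the real group of an elliptic curve is $\R/\Z$ or $\R/\Z\times\Z/2\Z$, not $\R/\Z\times\Z/3\Z$; and since Theorem \ref{weak-struct} allows $\gamma$ to be three lines, your plan must also rule that case out, which your group-law reduction can do since $\R$ and $\R^\times$ have no large finite subgroups, but this step should be made explicit rather than folded into the irreducible-cubic discussion.)
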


We will in fact establish a more precise result obtaining the exact minimum for all $n \geq n_0$ as well as a classification of the extremal examples. One rather curious feature of this more precise result is that if $n$ is odd there must be at least $3\lfloor n/4\rfloor$ ordinary lines. See Theorems \ref{dirac-motzkin-1} and \ref{dirac-motzkin-2} below for more details.  When $n$ is even, one can attain $n/2$ ordinary lines by combining $n/2$ equally spaced points on a circle with $n/2$ points at infinity; see Proposition \ref{boroczky-examples} below.

For small values of $n$, there are exceptional configurations with fewer than $n/2$ ordinary lines. Kelly and Moser \cite{kelly-moser} observe that a triangle together with the midpoints of its sides and its centroid has $n = 7$ and just $3$ ordinary lines. Crowe and McKee \cite{crowe-mckee} provide a more complicated configuration with $n = 13$ and $6$ ordinary lines. It is possible that Theorem \ref{mainthm} remains true for \emph{all} $n$ with the exception of these two examples (or equivalently, one could take $n_0$ as low as $14$). Unfortunately our method does not give a good bound for $n_0$; we could in principle compute such a bound, but it would be of double exponential type and, we think, not worth the effort.

Our methods also apply (in fact with considerably less effort) to resolve sufficiently large instances of a slightly less well-known (but considerably older) problem referred to in the literature as the \emph{orchard problem}. This was first formally posed by Sylvester \cite{sylvester-orchard} in 1868 (Figure \ref{syl-q}), though the 1821 book of Jackson \cite{jackson} has a whole section containing puzzles of a similar flavour, posed more eloquently than any result in this paper (Figure \ref{jackson-qq}).

\begin{figure}
\includegraphics[scale=0.5]{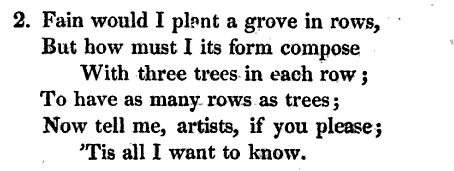} 
\caption{Jackson's question \cite{jackson}.}\label{jackson-qq}
\end{figure}

\begin{figure}
\includegraphics[scale=1]{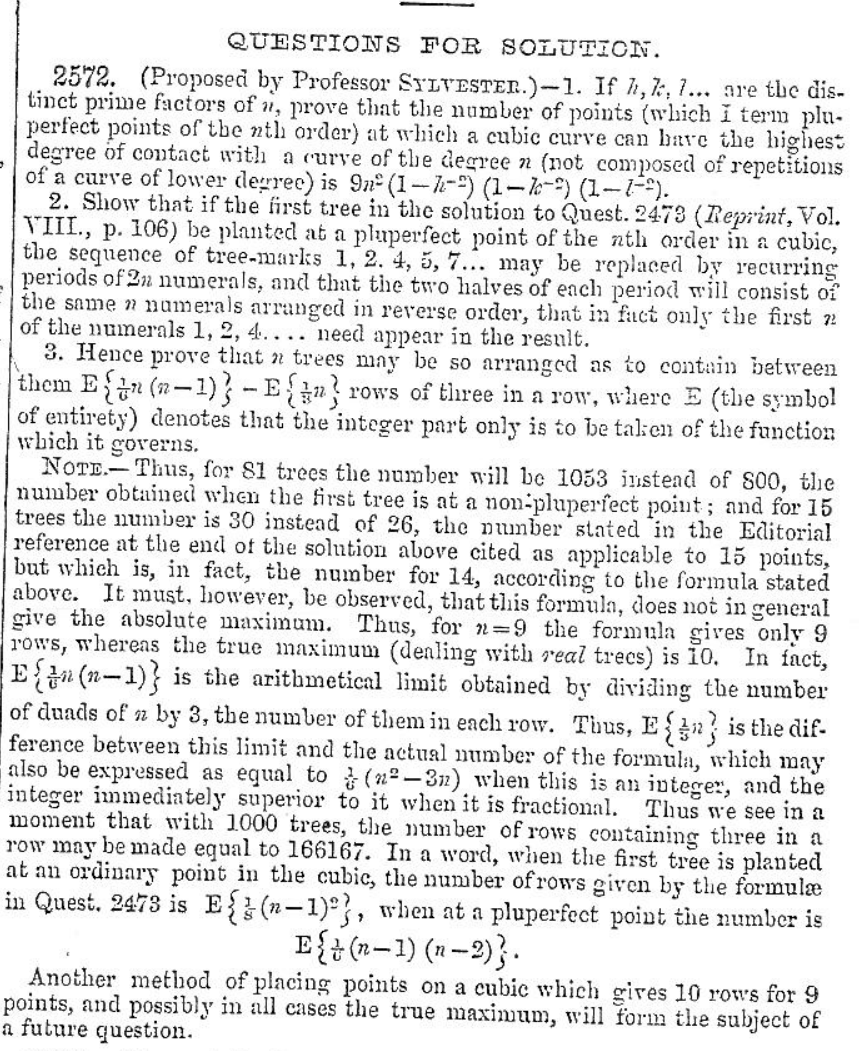} 
\caption{Sylvester's question \cite{sylvester-orchard}.}\label{syl-q}
\end{figure}

\begin{theorem}[Orchard problem]\label{mainthm-orchard}
Suppose that $P$ is a finite set of $n$ points in the plane. Suppose that $n \geq n_0$ for some sufficiently large absolute constant $n_0$. Then there are no more than $\lfloor n (n-3)/6\rfloor + 1$ lines that are \emph{$3$-rich}, that is they contain precisely $3$ points of $P$.  \textup{(}Here and in the sequel, $\lfloor x \rfloor$ denotes the integer part of $x$.\textup{)}
\end{theorem}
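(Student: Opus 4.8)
The plan is to derive the orchard bound from the structure theorem, reducing it to a sharp count of collinear triples on a cubic curve.

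First I would pass to the regime in which the structure theorem applies. Write $t_k$ for the number of lines meeting $P$ in exactly $k$ points. Counting collinear pairs gives $\binom n2 = \sum_{k\ge2}\binom k2 t_k \ge t_2 + 3t_3$, so if the conclusion failed, that is, $t_3 \ge \lfloor n(n-3)/6\rfloor + 2$, then $t_2 \le \binom n2 - 3t_3 \le n - 6$. In particular $P$ spans at most $Kn$ ordinary lines for an absolute constant $K$, so for $n$ large the structure theorem places all but $O(1)$ of the points of $P$ on a cubic curve $\gamma$; the same pair count bounds every line by $O(\sqrt n)$ points, so lines with many points never matter.

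Next I would dispose of the cases where $\gamma$ is reducible. If $\gamma$ is a conic together with a line, or three lines, or something more degenerate, then by B\'ezout a $3$-rich line is either a component of $\gamma$ or else meets $\gamma$ in exactly its three points of $P$, with at most two of these on any conic component and at most two on any line component (otherwise the line would itself be a component). Optimising the distribution of the $\sim n$ points over the components forces $t_3 \le (\tfrac16-\delta)n^2$ for an absolute $\delta>0$, contradicting $t_3 \ge n(n-3)/6+2$ once $n$ is large. Hence $\gamma$ is irreducible, and on its smooth locus there is an abelian group law $G$ for which three points are collinear exactly when they sum to a fixed element $\kappa$. Crucially, over $\R$ there are at most three solutions of $3x=0$ in $G$ (equivalently, the cubic has at most three real flexes), which is what prevents the dense triple-configurations --- such as the $12$ lines through the $9$ flexes of a complex cubic --- that make the problem behave quite differently over $\C$.

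The core of the argument is then a sharp Sylvester--Gallai-type inequality \emph{for points lying on an irreducible cubic}, namely $t_2 \ge |P\cap\gamma| - 3$, which I would prove by a Melchior-style argument exploiting the group law $G$, with cosets of finite cyclic subgroups of $G$ as the extremisers. Feeding this into $\binom{n'}2 \ge t_2 + 3t_3$, where $n' = |P\cap\gamma| = n - O(1)$, together with the necessary bookkeeping for the $O(1)$ points off $\gamma$ and for lines with four or more points, yields $t_3 \le \tfrac13\big(\binom n2 - n + 3\big) = n(n-3)/6 + 1$; integrality upgrades the right-hand side to $\lfloor n(n-3)/6\rfloor + 1$. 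I expect this last stage to be the main obstacle. One must both establish the sharp bound $t_2 \ge n - O(1)$ on an irreducible cubic --- far stronger than the $n/2$ available in general, and genuinely false on reducible cubics, as the B\"or\"oczky configurations show --- and control the bounded exceptional set: a priori a point off $\gamma$ could lie on many $3$-rich lines, so one must show it is forced onto a compensating number of ordinary lines, or else invoke the finer structural results to conclude that in this extremal regime no exceptional points occur.
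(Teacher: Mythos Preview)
Your opening reduction is exactly the paper's: double-counting pairs gives $t_2 + 3t_3 + \sum_{k\ge 4}\binom{k}{2}t_k = \binom{n}{2}$, so if $t_3 > \lfloor n(n-3)/6\rfloor + 1$ then $t_2 \le n$ and no line meets $P$ in more than $O(\sqrt n)$ points. The $O(\sqrt n)$ bound then immediately kills the line and conic-plus-line cases of the structure theorem, so your separate counting argument for reducible $\gamma$ is unnecessary once you invoke the full structure theorem rather than the weak one.

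The real divergence from the paper is your ``core'' step. You propose to prove the sharp inequality $t_2 \ge |P\cap\gamma| - 3$ for arbitrary finite subsets of an irreducible cubic by a ``Melchior-style argument exploiting the group law''. This is not a method: Melchior's argument is Euler's formula in the dual and has no interaction with the group law, and for an arbitrary subset $A$ of the curve the tangent at $a$ need not even meet a second point of $A$ (since $-2a$ may lie outside $A$), so the tangent-line count gives nothing. In group-theoretic terms you are asking to bound $|\{(a,b,c)\in A^3: a+b+c=0\}|$ by $n^2 - 3n + 6 + 3\sigma + \tau$ for every $n$-subset $A$ of $\R/\Z$ or $\R/\Z\times\Z/2\Z$, and there is no direct combinatorial route to this; it is essentially equivalent to the orchard bound itself restricted to cubics.

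The paper instead uses the \emph{full} structure theorem, which already places $P$ within $O(1)$ points of a specific coset $H\oplus x$ with $3x\in H$. From there it does not prove an inequality at all: it shows $P = H\oplus x$ \emph{exactly}, via three short claims exploiting the group law directly (no point off $\gamma$, since such a point would see $\gg n$ ordinary or $4$-rich lines; no point of $P\cap\gamma$ outside the coset; no point of the coset missing from $P$). Then the exact count for cosets (your extremisers) finishes. Your final parenthetical --- ``or else invoke the finer structural results to conclude that in this extremal regime no exceptional points occur'' --- is precisely this, and is the argument you should actually pursue; the ``sharp inequality plus bookkeeping'' route, as you yourself suspect, does not close.
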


This theorem is tight for large $n$, as noted by Sylvester \cite{sylvester-orchard}, and also subsequently by Burr, Gr\"unbaum and Sloane \cite{bgs}, who discuss this problem extensively. We will give these examples, which are based on irreducible cubic curves, in Proposition \ref{bgs-prop} below. In fact these are the \emph{only} examples where equality occurs for large $n$: see the remarks at the very end of \S \ref{orchard}. Again, there are counterexamples for small $n$. In particular, the example of a triangle, the midpoints of its sides and its centroid has $n = 7$ but $6$ lines containing precisely three points of $P$; by contrast, the bound of Theorem \ref{mainthm-orchard} is $5$ in this case. 

As observed in \cite{bgs}, lower bounds for the number $N_2$ of ordinary lines can be converted into upper bounds for the number $N_3$ of $3$-rich lines thanks to the obvious double-counting identity $\sum_{k=2}^n \binom{k}{2} N_k = \binom{n}{2}$ (with $N_k$ denoting the number of $k$-rich lines).  In particular, previously known lower bounds on the Dirac-Motzkin conjecture can be used to deduce upper bounds on the orchard problem.  However, one cannot deduce Theorem \ref{mainthm-orchard} in this fashion from Theorem \ref{mainthm}; this is related to the fact that the extremal examples showing the sharpness of the two theorems are quite different, as we shall see in Section \ref{examples-sec} below.

Underlying the proof of both of these results are structure theorems for sets with few ordinary lines, which are perhaps of independent interest. The most basic such result is the following.  We use the asymptotic notation $X=O(Y)$ or $X \ll Y$ to denote the bound $|X| \leq C Y$ for some absolute constant $C$.

\begin{theorem}[Weak structure theorem]\label{weak-struct} Suppose that $P$ is a finite set of $n$ points in the plane. Suppose that $P$ spans at most $Kn$ ordinary lines for some $K \geq 1$.  Suppose also that $n \geq \exp\exp(C K^C)$ for some sufficiently large absolute constant $C$.  Then all but at most $O(K^{O(1)})$ points of $P$ lie on an algebraic curve $\gamma$ of degree at most $3$. 
\end{theorem}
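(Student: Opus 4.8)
The plan is to pass to the projective dual, where Melchior's inequality turns the hypothesis ``$N_2 \leq Kn$'' into a rigid combinatorial picture, and then to feed that picture into an algebraic-geometry argument that produces the curve. First I would work in $\RP^2$ and replace $P$ by the dual arrangement $\mathcal{L}=\{p^\ast : p\in P\}$ of $n$ lines ($p^\ast$ being the line dual to $p$), under which ordinary lines of $P$ correspond exactly to double vertices of $\mathcal{L}$. Writing $t_k$ for the number of vertices of $\mathcal{L}$ of multiplicity $k$, the hypothesis is $t_2\leq Kn$, and the Euler-characteristic computation recalled in Section~\ref{melchior-sec} gives Melchior's inequality $\sum_{k\geq 4}(k-3)t_k\leq t_2-3\leq Kn$; hence all but $O(Kn)$ vertices of $\mathcal{L}$ are simple triple crossings. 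Translating back (equivalently, using $N_2\leq Kn$ together with the identity $\sum_k\binom{k}{2}N_k=\binom{n}{2}$), all but $O(Kn)$ of the $n^2$ ordered pairs $(p,q)\in P\times P$ lie on a line of $P$ through a third point: $P$ supports an extremely dense web of collinear triples, something it could never support \emph{exactly} by Theorem~\ref{syl-gal}.

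Next I would isolate the ``good'' part of $P$ and clear away the essentially linear case. Since $\sum_{p\in P}(\text{number of ordinary lines through }p)=2t_2\leq 2Kn$, a set $P_{\mathrm{good}}$ of at least $n/2$ points each lies on at most $4K$ ordinary lines; for $p\in P_{\mathrm{good}}$ the remaining $n-1$ points distribute among the non-trivial lines through $p$, all but $\leq 4K$ of which are $3$-rich, so $p$ shares a $3$-rich line with all but $O(K)$ of the other points --- thus a large subset of $P$ behaves like an approximate Sylvester--Gallai configuration. If $P_{\mathrm{good}}$ lies on one line $L$, a short count --- a point off $L$ joins all but $O(|P\setminus L|)$ points of $P\cap L$ by an ordinary line, so $|P\setminus L|\bigl(|P\cap L|-|P\setminus L|\bigr)\leq Kn$ --- shows that either $|P\setminus L|=O(K)$, and we are done with $\gamma=L$, or $|P\cap L|$ is only about $n/2$, in which case $L$ will reappear as a component of the cubic built below. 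Discarding the easy alternative, I may assume the configuration does not collapse onto a line and (refining the count) that no line carries substantially more of $P$ than a component of a cubic could.

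Finally I would manufacture the curve by algebraic geometry, complexifying to $\mathbb{CP}^2$ when convenient. Cubic forms span a $10$-dimensional space, so a cubic form $F$ can be fitted to any $9$ points of $P_{\mathrm{good}}$; the task is to make the dense web of $3$-rich lines force a single such $F$ to vanish on all but $O(K^{O(1)})$ points of $P$. The engine is of Cayley--Bacharach type: within $P$ one locates many ``$3\times 3$ grids'' of $3$-rich lines --- triples $\ell_1,\ell_2,\ell_3$ and $m_1,m_2,m_3$ of $3$-rich lines of $P$ all nine of whose crossings $\ell_i\cap m_j$ are points of $P$ --- and, since the two degenerate cubics $\ell_1\ell_2\ell_3$ and $m_1m_2m_3$ share those nine points, any cubic vanishing at eight of them vanishes at the ninth. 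So vanishing of $F$ propagates from grid to overlapping grid across the web; at each stage a running tally absorbs, into an exceptional set of size $O(K^{O(1)})$, the points that the web of grids fails to reach and the thin degenerate families (nine points on a conic, four on a line, sub-pencils) where uniqueness of the interpolating cubic breaks down.

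The genuinely hard part is this last step: arranging that the web of $3$-rich grids is connected enough for the Cayley--Bacharach propagation to reach all but $O(K^{O(1)})$ points of $P$, and that the interpolation seed meshes with the web so that $F$ is forced rather than merely permitted. Establishing these robustness properties seems to require several rounds of pigeonholing and passage to large subsets of $P$, and it is this iteration --- not the algebra --- that pushes the threshold on $n$ up to doubly exponential size in $K$; a quantitatively stronger incidence-geometric input would be what is needed to reduce it.
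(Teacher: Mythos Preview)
Your proposal has the right ingredients --- duality, Melchior's Euler-formula argument, and Chasles/Cayley--Bacharach --- but the step you yourself flag as ``genuinely hard'' is exactly where it falls short, and in two specific ways.

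First, you extract from Melchior only the vertex-multiplicity bound $\sum_{k\geq4}(k-3)N_k \leq N_2$. The full identity (equation~\eqref{ordinary-eq}) carries a second sum $\sum_{s\geq4}(s-3)M_s$ over the \emph{faces} of the dual arrangement, and it is this term that forces most cells of $\Gamma_P$ to be actual triangles, not merely most vertices to have degree six. That is what makes your ``$3\times 3$ grids'' findable at all: in the dual they appear as hexagons sitting inside long stretches of genuine triangular structure along each dual line $p^*$ (Proposition~\ref{bad-edges-prop}, Lemmas~\ref{hexcomp}, \ref{tri-cubic}, \ref{tri-cubic-detailed}). Knowing only that most primal lines are $3$-rich does not by itself produce such grids, let alone a connected web of them --- the connectivity is supplied precisely by the triangular face structure you did not record.

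Second, even once the hexagons are in hand, what the argument actually yields first (Proposition~\ref{basic-cubic-covering}) is that $P$ lies on a union of $O(K)$ cubic curves, one per maximal triangular stretch on a well-chosen dual line. Your propagation scheme hopes to land on a single cubic directly; the paper shows instead that collapsing $O(K)$ cubics to \emph{one} degree-$3$ curve with $O(K^{O(1)})$ exceptions is a separate and substantial task (Proposition~\ref{intermediate} followed by all of Section~\ref{somewhat-collinear}). In particular, the case where $P$ is spread over many lines must be eliminated by Proposition~\ref{many-rich}, which rests on a convexity argument and an Elekes--Nathanson--Ruzsa sum-product bound, and it is the iteration of \emph{that} estimate --- not ``several rounds of pigeonholing'' during Cayley--Bacharach propagation --- that produces the $n \geq \exp\exp(CK^C)$ threshold. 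Your outline contains no mechanism that could stand in for this step.
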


In fact we establish a slightly more precise statement, see Proposition \ref{new-struct} below. Note that we do not require the algebraic curve $\gamma$ to be irreducible; thus $\gamma$ could be an irreducible cubic, the union of a conic and a line, or the union of three lines.  As we shall see in later sections, cubic curves arise naturally in the study of point-line configurations with few ordinary lines, in large part due to the well-known abelian group structure (or pseudo-group structure) defined by the collinearity relation on such curves (or equivalently, by Chasles's version of the Cayley-Bacharach theorem, see Proposition \ref{chas}). The lower bound $n \geq \exp\exp(CK^C)$ is present for rather artificial reasons, and can likely be improved substantially.
\vspace{11pt}

\emph{Projective geometry.} Much of the paper is best phrased in the language of projective geometry. We recall for the convenience of the reader the notion of the projective plane $\R\P^2$ as $(\R^3 \setminus \{0\})/\sim$, where $(x,y,z) \sim (x', y', z')$ if and only if there is some $\lambda \neq 0$ such that $x' = \lambda x$, $y' = \lambda y$ and $z' = \lambda z$. We denote points of $\R\P^2$ with square brackets, thus $[x,y,z]$ is the equivalence class of $(x,y,z)$ under $\sim$. We have the embedding $\R^2 \hookrightarrow \R\P^2$ given by $(x,y) \mapsto [x, y, 1]$; in fact $\R\P^2$ may be thought of as $\R^2$ together with the \emph{line at infinity} consisting of points $[x,y,0]$ (modulo the equivalence relation $\sim$). For the point-line incidence problems considered in this paper, the projective and affine formulations are equivalent. Indeed given a finite set of points $P$ in $\R\P^2$, we may apply a generic projective transformation so as to move all points of $P$ to the affine plane $\R^2$ if desired, without affecting the number of ordinary lines or $3$-rich lines. This is illustrated in the figures in Section \ref{examples-sec}. \vspace{11pt}

For our purposes, there are two main advantages of working in projective space instead of affine space.  The first is to allow the use of projective transformations to normalise one's geometric configurations, for instance by moving a line to the line at infinity, or transforming a non-singular irreducible cubic curve into an elliptic curve in Weierstrass normal form.  The other main advantage is the ability to utilise \emph{point-line duality}. Given a point $p = [a,b,c]$, one may associate the line $p^* := \{ [x,y,z] : ax + by + cz = 0\}$, and conversely given a projective line $\ell = \{[x,y,z] : ax + by + cz = 0\}$ one may associate the point $\ell^* = [a,b,c]$. It is clear that $p \in \ell$ if and only if $\ell^* \in p^*$. 
Working in the dual can provide us with information that is difficult to access otherwise. We shall see this twice: once in Sections \ref{melchior-sec} and \ref{dual-cubic}, when we apply Euler's formula in the dual setting following an argument of Melchior \cite{melchior}, and then again in Section \ref{somewhat-collinear} where we will employ a convexity argument, due to Luke Alexander Betts,  in the dual setting.

Next, we give a structure theorem which is more precise than Theorem \ref{weak-struct}.

\begin{theorem}[Full structure theorem]\label{main-structure-theorem}
Suppose that $P$ is a finite set of $n$ points in the projective plane $\R\P^2$. Let $K > 0$ be a real parameter. Suppose that $P$ spans at most $Kn$ ordinary lines. Suppose also that $n \geq \exp\exp(CK^C)$ for some sufficiently large absolute constant $C$.  Then, after applying a projective transformation if necessary,  $P$ differs by at most $O(K)$ points \textup{(}which can be added or deleted\textup{)} from an example of one of the following three types:
\begin{enumerate}
\item $n-O(K)$ points on a line;
\item The set 
\begin{align}\nonumber
X_{2m} :=  \{ [ \cos & \frac{2\pi j}{m}, \sin \frac{2\pi j}{m}, 1]: 0 \leq j < m \} \\ & \cup \{ [-\sin \frac{\pi j}{m}, \cos \frac{\pi j}{m},0], 0 \leq j < m \}\label{x2m}
\end{align}
consisting of $m$ points on the unit circle and $m$ points on the line at infinity, for some $m = \frac{n}{2}+O(K)$;
\item A coset $H \oplus g$, $3g \in H$, of a finite subgroup $H$ of the non-singular real points on an irreducible cubic curve, with $H$ having cardinality $n+O(K)$ \textup{(}the group law $\oplus$ on such curves is reviewed in Section \ref{examples-sec} below\textup{)}.
\end{enumerate}
Conversely, every set of this type has at most $O(K n)$ ordinary lines.
\end{theorem}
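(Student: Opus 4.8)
The plan is to prove the two directions of the theorem separately, the forward (structural) direction being by far the harder. For that direction I would begin from the refinement of the weak structure theorem, Proposition~\ref{new-struct}, which already places all but $O(K^{O(1)})$ points of $P$ on an algebraic curve $\gamma$ of degree at most $3$, and then run a case analysis according to the factorisation of $\gamma$ into irreducible components: \textup{(A)} $\gamma$ a union of at most three lines; \textup{(B)} $\gamma$ an irreducible conic, possibly together with a line; \textup{(C)} $\gamma$ an irreducible cubic. In each case the aim is to either produce more than $Kn$ ordinary lines, which is a contradiction, or to reach one of the three normal forms, after which a final bootstrapping step improves the error $O(K^{O(1)})$ down to $O(K)$. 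In case \textup{(A)}, a line through a point of one component line and a point of another meets $\gamma$ in just those two points together with at most one point of the third line, so it is ordinary unless it passes through an exceptional point or through a point of $P$ on the third line; writing $a_1\geq a_2\geq a_3$ for the numbers of points of $P$ on the three lines, one finds that $a_2$ large forces either more than $Kn$ ordinary lines directly or (when $a_3$ is comparable) a multiplicative near-closure of the joining lines on the third line, which is impossible for large configurations since $\R^*$ has no large finite subgroup. Hence all but $O(K^{O(1)})$ points lie on a single line $L$, and re-running the idea --- a line through $p\notin L$ and a point of $L$ is ordinary unless it contains a second off-$L$ point, of which there are few --- shows that if $m$ points lie off $L$ then there are at least a constant times $m(n-2m)$ ordinary lines, so $m=O(K)$ and we are in type \textup{(i)}.

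In case \textup{(B)} a lone conic is impossible, since no three of its points are collinear and hence all $\binom{N}{2}$ chords would be ordinary ($N=n-O(K^{O(1)})$); thus $\gamma=C\cup L$. After a projective transformation sending $L$ to the line at infinity I would treat the sub-cases $C\cap L=\emptyset$, $|C\cap L|=1$, $|C\cap L|=2$ --- so that $C$ becomes an ellipse, a parabola, or a hyperbola --- using the fact that the third intersection of a chord of $C$ with $\gamma$ is governed by an addition law on the parameter of $C$, valued respectively in $\R/\Z$, $\R$, or $\R^*$. Counting both chords of $C$ and lines joining a point of $C$ to a point of $L$ shows that few ordinary lines force, first, that either $|P\cap C|=O(K)$ (reducing to type \textup{(i)} as above) or $|P\cap C|$ and $|P\cap L|$ both equal $\frac{n}{2}+O(K^{O(1)})$, and, secondly, the near-closure conditions that $\theta_1+\theta_2$ lies in the parameter set $\Psi$ of $P\cap L$ for all but $O(Kn)$ pairs $\theta_1,\theta_2$ in the parameter set $\Theta$ of $P\cap C$, and $\psi-\theta\in\Theta$ for all but $O(Kn)$ pairs $(\psi,\theta)\in\Psi\times\Theta$. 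Feeding these into the almost-group result, Proposition~\ref{almost-group}, forces $\Theta$ and $\Psi$ to each lie within $O(K)$ of a common finite subgroup of the ambient group; since $\R$ and $\R^*$ have no large finite subgroup the parabola and hyperbola sub-cases are contradictory, while the finite subgroups of $\R/\Z$ are exactly the equally-spaced sets, so the ellipse sub-case identifies $P$, up to $O(K)$ points, with $X_{2m}$ for some $m=\frac{n}{2}+O(K)$ --- type \textup{(ii)}.

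Case \textup{(C)} goes the same way using Chasles's theorem, Proposition~\ref{chas}: normalising the group law on the smooth real locus of $\gamma$ so that its identity $O$ is a flex, three smooth points are collinear exactly when $p\oplus q\oplus r=O$, so few ordinary lines forces $\ominus(p\oplus q)\in P':=P\cap\gamma$ for all but $O(Kn)$ pairs $p,q\in P'$, making $P'$ an approximate subgroup. The cuspidal case, and the nodal case with real tangent directions at the node, are excluded because their smooth real loci, $\cong\R$ and $\cong\R^*$, have no large finite subgroup; otherwise Proposition~\ref{almost-group} places $P'$ within $O(K)$ points of a coset $H\oplus g$ of a finite subgroup $H$. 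The relation $3g\in H$ drops out of the requirement that collinearity close up inside $H\oplus g$: writing $p_i=h_i\oplus g$ in $p_1\oplus p_2\oplus p_3=O$ gives $h_1\oplus h_2\oplus h_3=\ominus 3g$, which lies in $H$ for all $h_1,h_2\in H$ precisely when $3g\in H$; and a bootstrapping step as in case \textup{(A)} --- each exceptional point lies on at least $\frac{n}{2}-O(K^{O(1)})$ ordinary lines joining it to $H\oplus g$ --- improves the error to $O(K)$, giving type \textup{(iii)}. The main obstacle throughout is the combinatorial bookkeeping in cases \textup{(B)} and \textup{(C)}: one must run the ordinary-line counts precisely enough to extract clean ``all but a small proportion of pairs'' additive hypotheses, invoke a suitably quantitative form of the almost-group input, and check that the exceptional-point bootstrapping does not itself create too many ordinary lines.

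For the converse one checks each normal form directly, essentially as is done for the extremal configurations of Section~\ref{examples-sec} (Propositions~\ref{boroczky-examples} and~\ref{bgs-prop}): $n-O(K)$ points on a line $L$ together with $O(K)$ further points span only $O(K)$ lines through two off-$L$ points and at most $|P\setminus L|\cdot n=O(Kn)$ lines meeting $L$ in one point; $X_{2m}$ has exactly $m$ ordinary lines, namely the tangents to the circle, since a chord of the circle always passes through one of the $m$ points at infinity and a line through a circle point and a point at infinity passes through a second circle point unless it is tangent, so $X_{2m}$ together with $O(K)$ further points has $O(Kn)$ ordinary lines; and a coset $H\oplus g$ with $3g\in H$ has precisely one ordinary line through each of its non-$3$-torsion points, namely the tangent there, which meets $\gamma$ again at $\ominus 2p\in P$, while adding or deleting $O(K)$ points changes the count by $O(Kn)$.
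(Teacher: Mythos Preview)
Your overall architecture matches the paper's: start from Proposition~\ref{new-struct}, use the (pseudo-)group law and Proposition~\ref{almost-group} in the conic-plus-line and irreducible-cubic cases to obtain a weak structure theorem with $O(K^{O(1)})$ error, and then bootstrap the error to $O(K)$. The converse direction is fine.

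There is, however, a genuine gap in the bootstrapping. In case \textup{(C)} you assert that ``each exceptional point lies on at least $\frac{n}{2}-O(K^{O(1)})$ ordinary lines joining it to $H\oplus g$'', by analogy with case \textup{(A)}. This analogy fails: a line through $p\notin\gamma$ meets the cubic $\gamma$ in up to three points, all of which could perfectly well lie in $H\oplus g$. Nothing you have said rules out the possibility that \emph{every} line from $p$ to $H\oplus g$ is a trisecant of $H\oplus g$ (in which case there would be no ordinary lines through $p$ at all). The paper closes this gap with Lemma~\ref{elliptic-lemma}, whose proof is a genuinely delicate topological argument: one partitions the curve by the tangent lines from $p$, studies the order-preserving homeomorphisms between arcs induced by the secants through $p$, and derives a contradiction with the arithmetic-progression structure of $H\oplus g$ on each arc. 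The result is only that at least $n/1000$ lines through $p$ meet $H\oplus g$ in a single point; the paper explicitly remarks that getting this down to ``all but $O(1)$'' would ``lie far deeper''.

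The same issue arises in case \textup{(B)}, where you do not describe the bootstrap at all. To improve $O(K^{O(1)})$ to $O(K)$ there one must show that a point $p$ not on the circle or the line at infinity has $\gg n$ ordinary lines to $X_{2m}$; equivalently, that $p$ lies on only $O(1)$ chords of the $m$-th roots of unity. This is Proposition~\ref{poonen-rubinstein-prop}, essentially the Poonen--Rubinstein theorem on intersections of diagonals of regular polygons, and it is not elementary. (A minor related point: in your case \textup{(B)} the input to Proposition~\ref{almost-group} has parameter $O(K^{O(1)})$, not $O(K)$, because the $O(K^{O(1)})$ points of $P$ off $C\cup L$ can rescue up to $O(K^{O(1)}n)$ would-be-ordinary chords; so almost-group alone gives only $O(K^{O(1)})$ error, making the bootstrap essential there too.)
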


We have the following consequence, which can handle slowly growing values of $K$.

\begin{corollary}\label{k-to-infinity}
Suppose that $P$ is a finite set of $n$ points in the projective plane $\R\P^2$. Suppose that $P$ spans at most $n(\log \log n)^c$ ordinary lines for some sufficiently small constant $c>0$. Then, after applying a projective transformation, $P$ differs by at most $o(n)$ points from one of the examples \textup{(i)}, \textup{(ii)}, \textup{(iii)} detailed in Theorem \ref{main-structure-theorem} above. In particular we may add/remove $o(n)$ points to/from $P$ to get a set with at most $n + O(1)$ ordinary lines.  
\end{corollary}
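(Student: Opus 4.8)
The plan is to feed the slowly growing parameter $K = K(n) := (\log\log n)^{c}$ directly into the full structure theorem, Theorem \ref{main-structure-theorem}. Since $P$ spans at most $n(\log\log n)^{c} = Kn$ ordinary lines, the only hypothesis of that theorem that needs checking is $n \geq \exp\exp(CK^{C})$, where $C$ is the absolute constant appearing there. Taking logarithms twice, this reads $C(\log\log n)^{cC} \leq \log\log n$, which holds for all sufficiently large $n$ precisely when $cC < 1$; this is the quantitative content of the phrase ``$c$ sufficiently small'' (any $c < 1/C$ will do). For the bounded range of $n$ in which the inequality fails the corollary is vacuous under the $o(n)$ and $O(1)$ conventions, so we assume throughout that $n$ is large.

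Granting this, Theorem \ref{main-structure-theorem} says that, after a projective transformation, $P$ differs by at most $O(K) = O\big((\log\log n)^{c}\big) = o(n)$ points from an exact example $P_0$ of one of the three types (i), (ii), (iii), with $|P_0| = n + O(K)$. This is already the first assertion of the corollary, since a projective transformation preserves the number of ordinary lines and commutes with the operations of adding and deleting points, so the order in which these are applied is immaterial.

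For the final sentence we actually perform these $O(K) = o(n)$ additions and deletions, replacing $P$ by $P_0$, and bound the number of ordinary lines of $P_0$; here the converse clause of Theorem \ref{main-structure-theorem} only yields the too-weak bound $O(Kn)$, so one must invoke instead the sharp counts for the three families. A set of type (i) lies on a line and spans no ordinary lines. For a set $X_{2m}$ of type (ii) one has $m = n/2 + O(K)$, and the hypothesis that $P$ spans $o(n^{2})$ ordinary lines forces $m$ to be even (otherwise every one of the $\binom{m}{2}$ chords of the circle, having its direction point absent from the configuration, is ordinary); for $m$ even, $X_{2m}$ spans exactly $m = n/2 + O(K)$ ordinary lines, as recorded in Proposition \ref{boroczky-examples}. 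For a coset $H \oplus g$ of type (iii), Chasles's theorem (Proposition \ref{chas}) shows that a secant of the cubic through two points of the coset meets the curve again at a point lying inside the coset, so every connecting line which is not tangent to the cubic is $3$-rich; the ordinary lines are therefore exactly the tangent lines at the non-flex points of the coset, numbering $|H| - O(1) = n + O(K)$ (this is the computation underlying Proposition \ref{bgs-prop}). In every case $P_0$ spans at most $n + O(1)$ ordinary lines, which is the stated bound.

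In short, once Theorem \ref{main-structure-theorem} is in hand the corollary is essentially formal: its only genuine content is the double-logarithmic bookkeeping of the first paragraph — which is exactly where the smallness of $c$ is used — together with the family-by-family ordinary-line counts, all of which are established elsewhere in the paper. Accordingly I do not anticipate any substantial obstacle here; the one point demanding slight care is ruling out the parasitic type-(ii) possibility with $m$ odd, which is handled as indicated above by the quadratic gap between $o(n^{2})$ and $\binom{m}{2}$.
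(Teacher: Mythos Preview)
Your overall approach is exactly what the paper intends (no proof is given there): set $K = (\log\log n)^c$, check that $n \geq \exp\exp(CK^C)$ reduces to $C(\log\log n)^{cC} \leq \log\log n$, which holds for large $n$ once $cC < 1$, and read off Theorem \ref{main-structure-theorem}. The first assertion of the corollary, that $P$ differs by $O(K) = o(n)$ points from one of the model configurations, follows immediately.

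Your handling of the ``in particular'' clause, however, contains two slips. First, the parity argument in case (ii) is both wrong and unnecessary. By Proposition \ref{boroczky-examples}(i), $X_{2m}$ spans exactly $m$ ordinary lines for \emph{every} $m \geq 3$: the chord through the $j$th and $j'$th roots of unity meets the infinite point indexed by $(j+j') \bmod m$, which is always present in $X_{2m}$ regardless of parity. So one simply quotes that $m = n/2 + O(K) < n$, with no side condition on $m$. Second, and more substantively, in case (iii) you correctly compute that $H \oplus g$ spans $|H| - O(1) = n + O(K)$ ordinary lines, but then assert this is at most $n + O(1)$. Since $K = (\log\log n)^c \to \infty$, that step does not follow, and there is no easy repair: one cannot replace $H$ by a subgroup of order exactly $n$ without incurring $\gg n$ additions and deletions, as distinct cyclic subgroups of $\R/\Z$ of nearby orders typically have tiny intersection. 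The honest bound your argument yields is $n + O(K) = n + o(n)$; the paper's ``$n + O(1)$'' appears to be a mild informality (consistent with no proof being supplied), and you should state the conclusion as $n + o(n)$ rather than claim the sharper form.
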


Here, of course, $o(n)$ denotes a quantity which, after dividing by $n$, tends to zero as $n$ goes to infinity.\vspace{11pt}

\emph{Remark.} This corollary may, for all we know, be true with a much weaker assumption, perhaps even that $P$ spans $o(n^2)$ ordinary lines.  Very likely, if one could weaken the hypothesis $n \geq \exp\exp(CK^C)$ in Theorem \ref{weak-struct} then one could do so also in Theorem \ref{main-structure-theorem}.
\vspace{11pt}

\emph{Proof methods.} As mentioned previously, the starting point\footnote{One defect of this approach is that it breaks down totally in the complex case, and so we have nothing new to say here about ordinary lines or $3$-rich lines for configurations of complex lines in $\C\P^2$.} of our arguments will be Melchior's proof \cite{melchior} of the Sylvester-Gallai theorem using duality and the Euler formula $V-E+F=1$ for polygonal decompositions of the projective plane $\R\P^2$.  Melchior's argument uses at one point the obvious fact that all polygons have at least three sides to obtain an inequality implying the existence of ordinary lines.  The same argument also shows that if a point set $P$ spans very few ordinary lines, then almost all of the polygons in the dual configuration $\Gamma_P$ (cut out by the dual lines $p^*$ for $p \in P$) must in fact have \emph{exactly} three sides.  Because of this, it is possible to show in this case that the dual configuration contains large regions which have the combinatorial structure of a regular triangular grid.

The next key observation is that inside any triangular grid of non-trivial size, one can find ``hexagonal'' configurations of lines and points (see Figure \ref{hex}) which are dual to the configuration of lines and points arising in Chasles's version of the Cayley-Bacharach theorem (Proposition \ref{chas} below).  From this observation and some elementary combinatorial arguments, one can start placing large subsets of $P$ on a single cubic curve.  For instance, in Proposition \ref{basic-cubic-covering} we will be able to establish a ``cheap structure theorem'' asserting that a set of $n$ points with fewer than $Kn$ ordinary lines can be covered by no more than $500K$ cubic curves.  This observation turns out not to be new -- a closely related technique is used in a paper of Carnicer and God\'es \cite{carnicer-godes} concerning \emph{generalised principal lattices}, which arise in interpolation theory.  

In principle, this cheap structure theorem already reduces the underlying geometry from a two-dimensional one (the projective plane $\R\P^2$) to a one-dimensional one (the union of a number of cubic curves).  Unfortunately, the collinearity relation between distinct cubic curves is too complicated to handle directly.  Because of this, we must refine the previous combinatorial analysis to strengthen the structural control on a point set $P$ with few ordinary lines.  By studying the lines connecting a typical point $p$ in $P$ with all the other points in $P$ using the triangular grid analysis, one can obtain a more complicated partition of $P$ into cubic curves passing through $p$. A detailed statement may be found in Lemma \ref{cubic-covering-tech}. Comparing such a partition with the reference partition coming from the cheap structure theorem, one can obtain Proposition \ref{intermediate}, a structure theorem of intermediate strength. Roughly speaking this result asserts that most of the points in $P$ lie on a \emph{single} irreducible cubic curve, on a union of an irreducible conic and a bounded number of lines (with the points shared almost evenly between the conic and the lines), or on the union of a bounded number of lines only.

The next stage is to cut the number of lines involved down to one.  The key proposition\footnote{Unfortunately, our reduction to this key proposition is somewhat expensive with regards to the quantitative bounds, and is responsible for the double exponential lower bound required on $n$.} here is Proposition \ref{many-rich}. It asserts, roughly speaking, that a set of $n$ points on two or more lines, which contains $\gg n$ points on each line, must generate $\gg n^2$ ordinary lines.  This is a statement that fails in finite field geometries, and must use at some point the torsion-free nature of the real line $\R$ (see Section \ref{examples-sec} for more discussion of this point, particularly with regards to the ``near-counterexamples'' \eqref{p1} and \eqref{p2}).  There are two key cases of the proposition which need to be established.  The first is when the lines involved are all concurrent or, after a projective transformation, all parallel. In this case we use an argument of Betts, Proposition \ref{betts-case}, involving projective duality and convexity. The use of convexity here is where the torsion-free nature of $\R$ is implicitly used.  In the case when the lines are not concurrent, we instead rely on Menelaus's theorem to introduce various ratios of lengths, and then exploit a sum-product estimate of Elekes, Nathanson, and Ruzsa \cite{elekes-nathanson-ruzsa}. This latter result, stated in Proposition \ref{enr-prop-piecewise}, also implicitly exploits the torsion-free nature of $\R$.

The result of the above analysis is a yet stronger structure theorem for sets $P$ with few ordinary lines: $P$ is mostly placed in either an irreducible cubic curve, the union of an irreducible conic and a line, or on a single line. A detailed statement may be found in Proposition \ref{new-struct}.  The latter case, in which almost all points lie on a line, is easily studied. To deal with the other two cases one uses the abelian group structure on irreducible cubics, as well as the analogous pseudo-group structure on the union of a conic and a line.  The information that $P$ contains few ordinary lines can then be converted to an additive combinatorics property on finite subsets of an abelian group. Fairly standard tools from additive combinatorics then show that $P$ is almost a finite subgroup of that abelian group.  This allows us to rule out ``essentially torsion-free'' situations, such as that provided by singular cubic curves (except for the acnodal singular cubic curve), and eventually leads to the full structure theorem in Theorem \ref{main-structure-theorem}.

To solve the Dirac-Motzkin conjecture and the orchard problem for large $n$, we observe that potential counterexamples $P$ to either conjecture will have few ordinary lines and hence can be described by Theorem \ref{main-structure-theorem}.  This quickly implies that $P$ is close, up to projective transformation, to one of the known extremisers coming from roots of unity or from subgroups of elliptic curves, with a small number of additional points added or removed.  The remaining task is to compute the effect that these added/removed points have on the number of ordinary lines or $3$-rich lines in $P$.  Here, to get the strongest results, we will need a slight variant of a result of Poonen and Rubinstein \cite{poonen-rubinstein} in order to control the number of times a point may be concurrent with two roots of unity. See Proposition \ref{poonen-rubinstein-prop} for details.  

Of the two problems, the orchard problem turns out to be somewhat easier, and can be in fact established using only the intermediate structure theorem in Theorem \ref{intermediate} rather than the more difficult structure theorem in Theorem \ref{main-structure-theorem}.\vspace{11pt}

\emph{Acknowledgements.} The authors are greatly indebted to Luke Alexander Betts for indicating to us the proof of Proposition \ref{betts-case}. We are also grateful to Noga Alon, Boris Bukh, Frando Mariacci, Bjorn Poonen and Jozsef Solymosi for helpful comments and conversations, and to Frank de Zeeuw for pointing out the case of the acnodal singular cubic curve, which was not treated properly in the first draft of this paper. The first author thanks UCLA for providing entertaining working conditions for some of the period during which this work was completed, and the second author similarly thanks the Newton Institute for hospitality when another portion of this work was completed.  Finally, we would like to thank the anonymous referee for an extremely thorough reading of the paper. 

The first author was supported by an ERC Starting Investigators Grant. The second author was partially supported by a Simons Investigator award from the Simons Foundation and by NSF grant DMS-0649473.

\section{The key examples}\label{examples-sec}

The aim of this section is to describe the various key examples of sets with few ordinary lines or many $3$-rich lines. In particular we describe  the sets $X_{2m}$ in \eqref{x2m} and the Sylvester examples appearing in the various cases of the main structure theorem, Theorem \ref{main-structure-theorem}.  We will also mention some important ``near-counterexamples'' which do not actually exist as finite counterexamples to the structure theorem, but nevertheless are close enough to genuine counterexamples that some attention must be given in the analysis to explicitly exclude variants of these examples from the list of possible configurations.  All of these examples are connected to the group (or pseudo-group) structure on a cubic curve, or equivalently to Chasles's version of the Cayley-Bacharach theorem as described in Proposition \ref{chas} below. The main variation in the examples comes from the nature of the cubic curve being considered, which may or may not be irreducible and/or nonsingular.\vspace{11pt}

\emph{B\"or\"oczky and near-B\"or\"oczky examples.}  We begin with the sets $X_{2m}$ from \eqref{x2m}, together with some slight perturbations of these sets described by B\"or\"oczky (as cited in \cite{crowe-mckee}). These sets, it turns out, provide the examples of non-collinear sets of $n$ points with the fewest number of ordinary lines, at least for $n$ large. 

\begin{proposition}[B\"or\"oczky examples] \label{boroczky-examples}  Let $m \geq 3$ be an integer.  Then we have the following.
\begin{enumerate}
\item The set $X_{2m}$ contains $2m$ points and spans precisely $m$ ordinary lines. 
\item The set $X_{4m}$ together with the origin $[0,0,1]$ contains $4m + 1$ points and spans precisely $3m$ ordinary lines.
\item The set $X_{4m}$ minus the point $[0,1,0]$ on the line at infinity contains $4m - 1$ points and spans precisely $3m - 3$ ordinary lines.  
\item The set $X_{4m + 2}$ minus any of the $2m + 1$ points on the line at infinity contains $4m + 1$ points and spans $3m$ ordinary lines. 
\end{enumerate}
Thus, if we define a function $f : \N \rightarrow \N$ by setting $f(2m) := m$, $f(4m+1) := 3m$ and $f(4m - 1) := 3m - 3$, then there is an example of a set of $n$ points in $\R \P^2$, not all on a line, spanning $f(n)$ ordinary lines.\end{proposition}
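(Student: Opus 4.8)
The plan is to work in the complex model $\R^2\cong\C$, writing $\zeta_N:=e^{2\pi i/N}$, and to begin by isolating the one geometric fact that drives the whole proposition. For distinct $a,b\in\Z/N\Z$ the chord of the unit circle through $\zeta_N^a$ and $\zeta_N^b$ is perpendicular to the radius at angle $\pi(a+b)/N$, hence meets the line at infinity at exactly the point $[-\sin(\pi(a+b)/N),\cos(\pi(a+b)/N),0]$; letting $b\to a$ gives the companion fact that the tangent to the circle at $\zeta_N^a$ meets the line at infinity at $[-\sin(2\pi a/N),\cos(2\pi a/N),0]$. Comparing with \eqref{x2m} (taking $N=m$), this says that in $X_{2m}$ the chord $\zeta_m^a\zeta_m^b$ passes through the infinity point of index $a+b\bmod m$ and the tangent at $\zeta_m^a$ through the infinity point of index $2a\bmod m$ --- and in particular every such tangent really does pass through a point of $X_{2m}$. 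I would also record the easy uniqueness consequence that no chord carries a third circle point: if $\zeta_m^c$ lay on the line $\zeta_m^a\zeta_m^b$ then the chord $\zeta_m^a\zeta_m^c$ would have the same direction index, forcing $c=b$.

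Granting this, part (i) follows by classifying every line meeting $X_{2m}$ in at least two of the chosen points. If it meets the circle in $\geq2$ chosen points it is a chord, hence contains exactly two circle points together with exactly the one infinity point prescribed by its index, so it is $3$-rich. If it meets the circle in exactly one chosen point and at least one infinity point, it must be the tangent there, since any other line through $\zeta_m^a$ automatically picks up the second endpoint $\zeta_m^{j-a}$ and is a chord; such a tangent is $2$-rich. If it meets the circle in no chosen point, then it meets $X_{2m}$ only in infinity points, so it is either the line at infinity ($m$-rich, and $m\geq3$) or contains at most one point of $X_{2m}$. Hence the ordinary lines of $X_{2m}$ are exactly the $m$ (pairwise distinct) tangents at the $m$ circle points, giving precisely $m$ ordinary lines.

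For (ii)--(iv) the idea is to start from $X_{2m'}$ (with $m'=2m$ in (ii), (iii) and $m'=2m+1$ in (iv)), which has $m'$ ordinary lines by (i), and to bookkeep the ordinary lines destroyed and created by the single added or deleted point. In (ii), adding the centre $O=[0,0,1]$: $O$ lies on no tangent and not on the line at infinity, so the $2m$ tangents survive; $O$ lies on a chord precisely when it is a diameter, so each of the $m$ diameters becomes $4$-rich (still not ordinary); and the remaining lines through $O$ are the lines $Op_j$ for the $2m-m=m$ infinity points $p_j$ that are not diameter directions, each consisting only of $\{O,p_j\}$ and hence a new ordinary line --- total $2m+m=3m$. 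In (iii), deleting $[0,1,0]=p_0$ from $X_{4m}$: the tangents through $p_0$ are exactly the two vertical tangents $x=\pm1$ at $\zeta_{2m}^{0}=1$ and $\zeta_{2m}^{m}=-1$ (the $j$ with $2j\equiv0\pmod{2m}$), so $2$ ordinary lines are lost, while the $m-1$ vertical chords $\zeta_{2m}^a\zeta_{2m}^{-a}$ (those with $a\not\equiv0,m$) become $2$-rich --- total $(2m-2)+(m-1)=3m-3$. In (iv) I would first observe that the rotation by $2\pi/(2m+1)$ about the origin, extended to $\R\P^2$, preserves $X_{4m+2}$ and permutes the infinity points by $p_j\mapsto p_{j+2}$; since $\gcd(2,2m+1)=1$ this action is transitive and maps lines to lines, so it suffices to delete $p_0=[0,1,0]$. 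Then $2j\equiv0\pmod{2m+1}$ has the unique solution $j=0$, so exactly one tangent (at $\zeta_{2m+1}^{0}$) is lost, leaving $2m$, while the $m$ vertical chords $\zeta_{2m+1}^a\zeta_{2m+1}^{-a}$ with $a\not\equiv0$ become ordinary --- total $2m+m=3m$. In every case the line at infinity retains at least $3$ points and the richness of all other lines through the affected point is unchanged, so there are no further corrections.

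The concluding displayed assertion is then immediate: $f$ is well defined on $\N$ since every positive integer is of exactly one of the forms $2m$, $4m+1$, $4m-1$, and (for $m\geq3$, as in the hypothesis) $X_{2m}$ realizes $f(2m)=m$ by (i), $X_{4m}\cup\{[0,0,1]\}$ realizes $f(4m+1)=3m$ by (ii), and $X_{4m}\setminus\{[0,1,0]\}$ realizes $f(4m-1)=3m-3$ by (iii). I expect the only step requiring genuine care --- rather than mechanical bookkeeping --- to be the geometric lemma of the first paragraph, identifying the chord and tangent directions with the prescribed infinity points of $X_{2m}$ and checking that chords never carry a third circle point; once that is in place, the line classification in (i) and the perturbation counts in (ii)--(iv) are routine, the one mild subtlety being the congruence $2j\equiv0$ that decides which tangents pass through a given infinity point, whose behaviour differs for $N$ even and $N$ odd and is precisely what separates case (iii) from case (iv).
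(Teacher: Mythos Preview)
Your proposal is correct and follows essentially the same route as the paper: identify the chord/tangent direction formula (the paper records exactly your fact that the chord through $\zeta_m^a,\zeta_m^b$ meets the line at infinity at $[-\sin\frac{\pi(a+b)}{m},\cos\frac{\pi(a+b)}{m},0]$), deduce that the ordinary lines of $X_{2m}$ are exactly the $m$ tangents, and then for (ii)--(iv) bookkeep the tangents lost and chords promoted when a single point is added or removed. Your treatment is in fact slightly more systematic than the paper's, which simply lists the ordinary lines in each case against a picture; in particular your transitivity argument in (iv) via rotation by $2\pi/(2m+1)$ and your explicit use of the congruence $2j\equiv 0$ to distinguish the even and odd cases are cleaner than what the paper writes out.
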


\begin{proof}
This is a rather straightforward check, especially once one has drawn suitable pictures. Whilst the unit circle together with the line at infinity form a pleasant context for calculational work, drawing configurations involving the line at infinity is problematic.  In the four diagrams below, Figures \ref{boro-1-fig}, \ref{boro-2-fig}, \ref{boro-3-fig} and \ref{boro-4-fig}, we have applied a projective transformation to aid visualisation. First of all we applied a rotation about the origin through $\pi/12$, and followed this by the projective map $[x,y,z] \mapsto [-y,x,2z + x]$. The unit circle is then sent to the ellipse whose equation in the affine plane is $4x^2 + 3(y + \frac{1}{3})^2 = \frac{4}{3}$, while the line at infinity is sent to the horizontal line $y = 1$. The origin is mapped to itself, and the point $[0,1,0]$ at infinity now has coordinates $(-\cot (\pi/12),1) \approx (-3.73,1)$. In the pictures, ordinary lines are red and lines with three or more points of $P$ are dotted green.

It is helpful to note that the line joining \[ [ \cos \frac{2\pi j}{m}, \sin \frac{2\pi j}{m}, 1]\] and \[ [ \cos \frac{2\pi j'}{m}, \sin \frac{2\pi j'}{m}, 1]\] passes through the point \[ [-\sin \frac{\pi( j+j')}{m}, \cos \frac{\pi( j+ j')}{m},0]\] on the line at infinity (cf. the proof of Proposition \ref{quasi-group-law}).

For case (i), the ordinary lines are the $m$ tangent lines to the $m^{\operatorname{th}}$ roots of unity. The case $m = 6$ is depicted in Figure \ref{boro-1-fig}.

\begin{figure}[h!]
\includegraphics[scale=0.6]{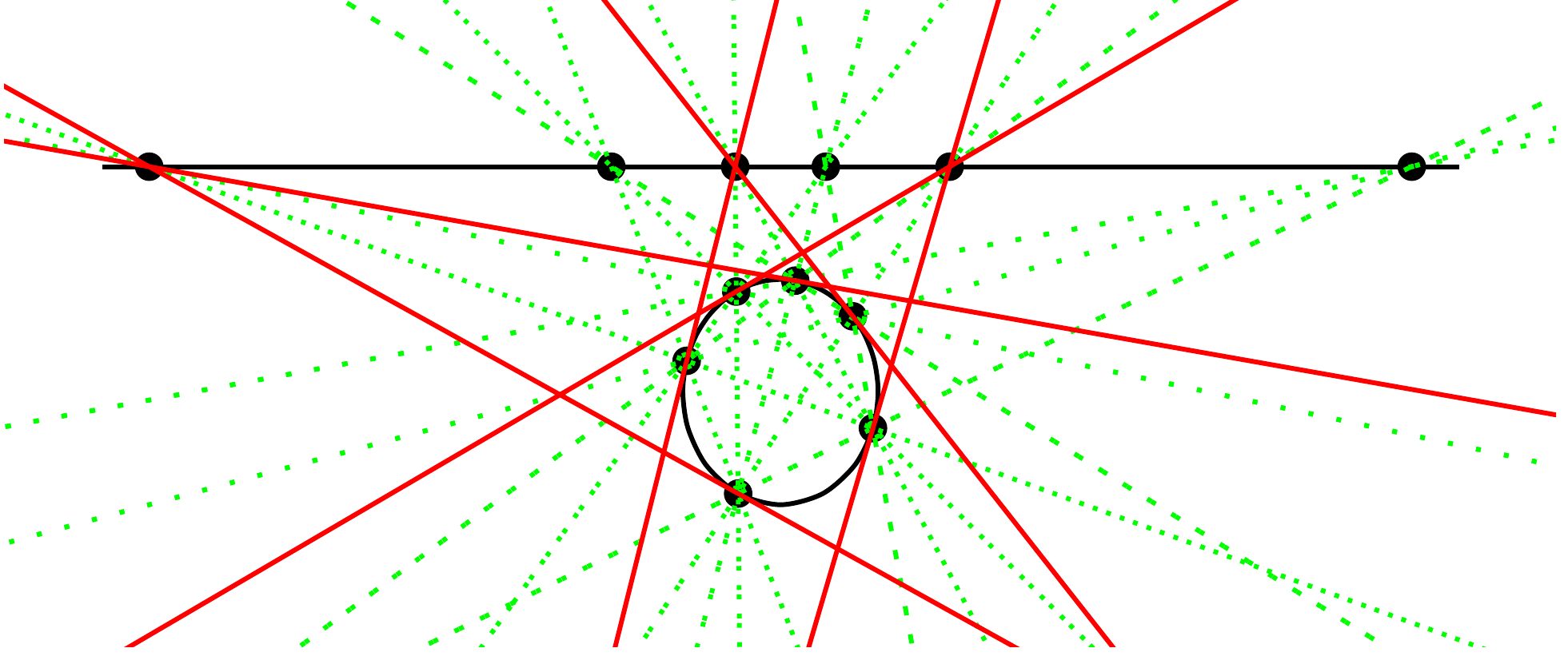}
\caption{The B\"or\"oczky example $X_{12}$, a set with $n = 12$ points and $6$ ordinary lines. The ordinary lines (in red) are just the tangent lines to the $6$th roots of unity on the unit circle.}
\label{boro-1-fig}
\end{figure}

In case (ii), the ordinary lines are the $2m$ tangent lines to the $2m^{\operatorname{th}}$ roots of unity together with the $m$ lines joining the origin $[0,0,1]$ to $[-\sin \frac{\pi j}{2m}, \cos \frac{\pi j}{2m}, 0]$, $j$ even.  The case $m = 6$ is depicted in Figure \ref{boro-2-fig}  .

\begin{figure}[h!]
\includegraphics[scale=0.6]{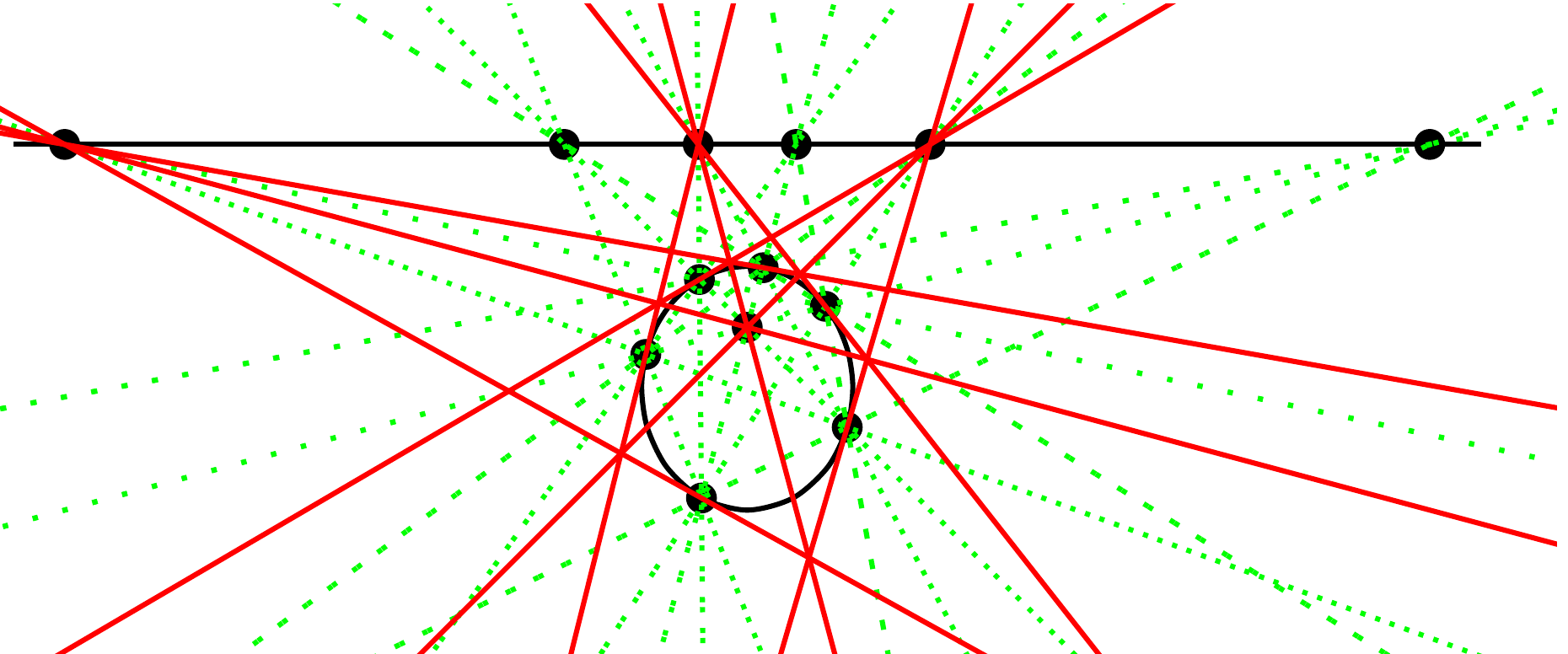}
\caption{The B\"or\"oczky example $X_{12}$ together with the origin $[0,0,1]$, a set with $n = 13$ points and $9$ ordinary lines. The ordinary lines (in red) are just the tangent lines to the $6$th roots of unity on the unit circle, plus 3 extra lines through the origin and 3 of the points on the line at infinity.}
\label{boro-2-fig}
\end{figure}

In case (iii), the ordinary lines are the $2m$ tangent lines to the $2m^{\operatorname{th}}$ root of unity \emph{except} for those at the points $[\pm 1,0,1]$, whose corresponding point at infinity has now been removed. However we do have $m - 1$ new ordinary lines, the vertical lines joining $[\cos \frac{\pi j}{m}, \sin \frac{\pi j}{m},1]$ and $[\cos \frac{\pi (2m - j)}{m}, \sin \frac{\pi(2 j-m)}{m},1]$ for $j = 1,\dots, m-1$. The case $m = 6$ is illustrated in Figure \ref{boro-3-fig}.
\begin{figure}[h!]
\includegraphics[scale=0.6]{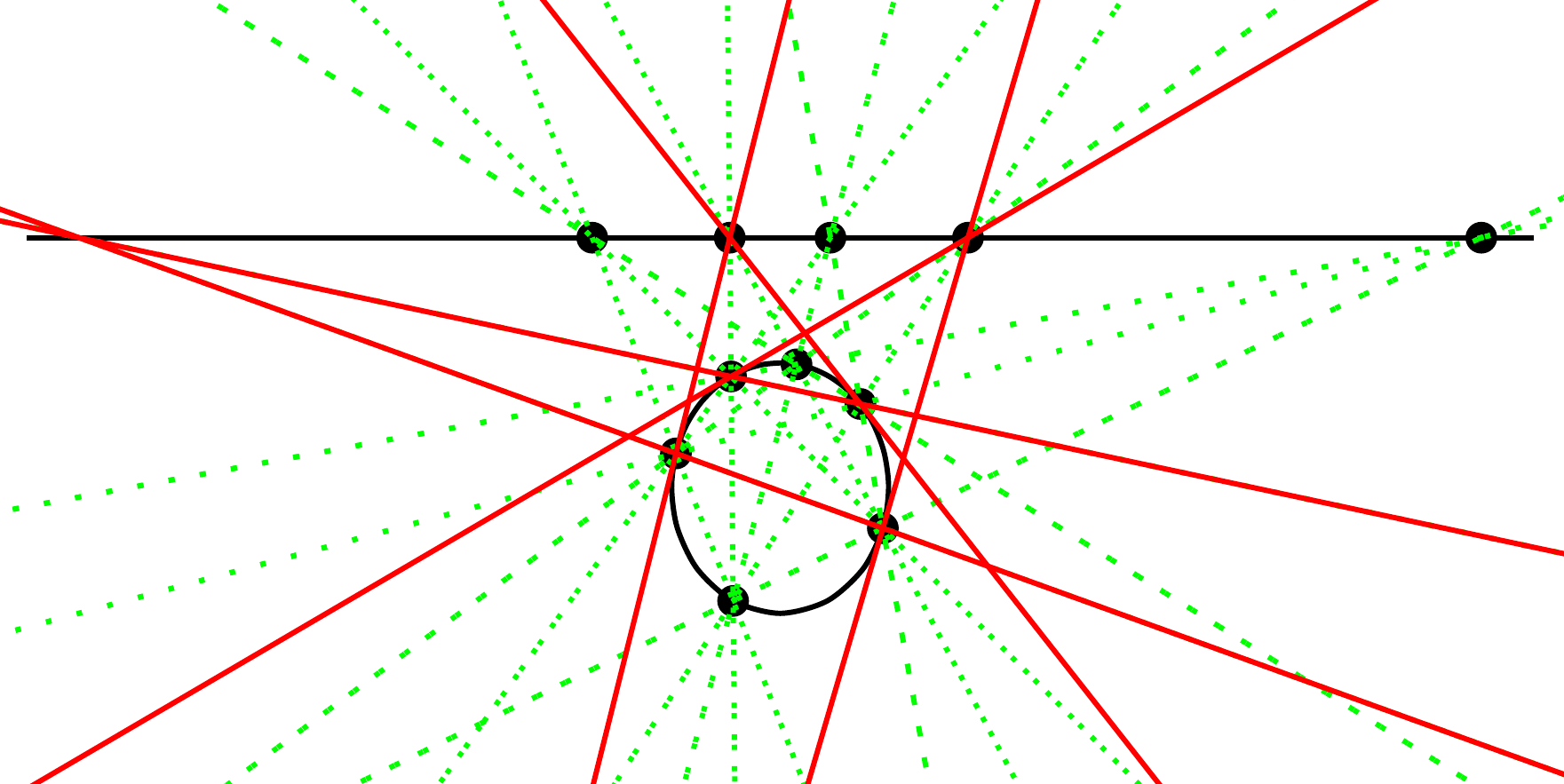}
\caption{The B\"or\"oczky example $X_{12}$ minus the point at infinity $[0,1,0]$, a set with $n = 11$ points and $6$ ordinary lines. The ordinary lines (in red) are the 4 tangent lines to the $6$th roots of unity on the unit circle not through the point at infinity, plus 2 extra lines passing through the point at infinity.}
\label{boro-3-fig}
\end{figure}

Finally, in case (iv) the ordinary lines are the $2m + 1$ tangent lines to the $(2m+1)^{\operatorname{th}}$ roots of unity \emph{except} for one whose corresponding point at infinity has been removed, together with $m$ new ordinary lines joining pairs of roots of unity. This is illustrated in Figure \ref{boro-4-fig} in the case $m = 2$. 
\begin{figure}[h!]
\includegraphics[scale=0.6]{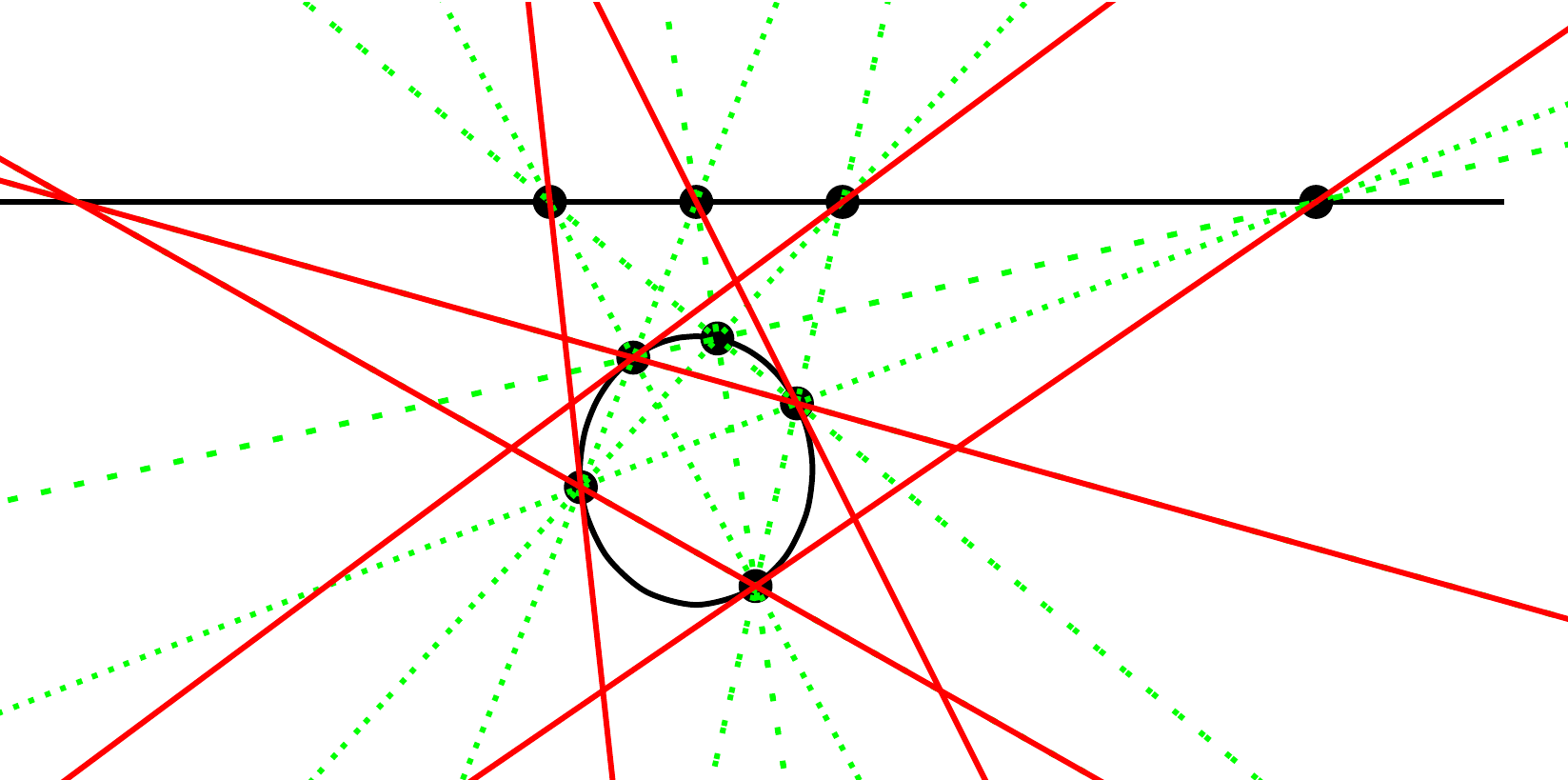}
\caption{The B\"or\"oczky example $X_{10}$ minus the point at infinity $[0,1,0]$, a set with $n = 9$ points and $6$ ordinary lines. The ordinary lines (in red) are the 4 tangent lines to the $6$th roots of unity on the unit circle not through the point at infinity, plus 2 extra lines passing through the point at infinity.}
\label{boro-4-fig}
\end{figure}

\end{proof}

We remark that Proposition \ref{boroczky-examples} illustrates a basic fact, namely that if one adds or removes $K$ points to an $n$-point configuration, then the number of ordinary lines (or $3$-rich lines) is modified by at most $O(Kn + K^2)$; this can be seen by first considering the $K=1$ case and then iterating.  This stability with respect to addition or deletion of a few points is reflected in the conclusions of the various structural theorems in this paper.

We may now state our more precise version of the Dirac-Motzkin conjecture for large $n$.

\begin{theorem}[Sharp threshold for Dirac-Motzkin]\label{dirac-motzkin-1}
Let the function $f : \N \rightarrow \N$ be defined by setting $f(2m) := m$, $f(4m+1) := 3m$ and $f(4m - 1) := 3m - 3$. There is an $n_0$ such that the following is true. If $n \geq n_0$ and if $P$ is a set of $n$ points in $\R \P^2$, not all on a line, then $P$ spans at least $f(n)$ ordinary lines. Furthermore if equality occurs then, up to a projective transformation, $P$ is one of the B\"or\"oczky examples described in Proposition \ref{boroczky-examples} above.  
\end{theorem}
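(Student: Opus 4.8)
The plan is to deduce Theorem \ref{dirac-motzkin-1} from the full structure theorem (Theorem \ref{main-structure-theorem}) and then to carry out an exact count of ordinary lines in each of the three resulting cases. Suppose $n$ is large and $P$ is a non-collinear set of $n$ points spanning at most $f(n)$ ordinary lines. Since $f(n) \leq 3\lfloor n/4\rfloor + O(1) < n$ for large $n$, Theorem \ref{main-structure-theorem} applies with $K = 1$, so after a projective transformation $P$ agrees, up to the insertion and deletion of $O(1)$ points, with an example of one of the types (i), (ii), (iii) listed there. Types (i) and (iii) will be excluded outright, and the substance of the argument is the analysis of type (ii).

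For type (i), $P$ has $n - O(1)$ points on a line $\ell$ and, being non-collinear, at least one point $q \notin \ell$. The lines joining $q$ to the points of $P$ on $\ell$ are pairwise distinct, and each of them meets $\ell$ in exactly one point of $P$; hence all but the $O(1)$ of them passing through a second point of $P \setminus \ell$ are ordinary, and $P$ has at least $n - O(1) > f(n)$ ordinary lines, a contradiction. For type (iii), let $A$ be the relevant coset, so $|A| = n + O(1)$ and $A$ lies on an irreducible cubic $\gamma$; normalising the group law on the smooth locus of $\gamma$ (Section \ref{examples-sec}) so that the identity is a real flex, three points are collinear exactly when they sum to $0$. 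A secant through two distinct points $a, b \in A$ fails to be ordinary precisely when its third intersection with $\gamma$ lies in $A$ and differs from $a$ and $b$; because $A$ is a coset with $3g \in H$, that third intersection always lies in $A$, and a direct count then shows $A$ itself has exactly $|A| - \delta$ ordinary lines, where $\delta \leq 3$ is the number of points of $A$ fixed by tripling (the inflectional tangents). Since moreover each point of $A$ lies on at most $3$ of these ordinary lines and on only $O(1)$ tangent lines of $\gamma$, while deleting a point of $A$ converts its $\sim |A|/2$ incident $3$-rich lines into ordinary ones, and inserting any point creates $\sim |A|$ new ordinary lines through it, one checks that after $O(1)$ modifications $P$ still has at least $n - O(1) > f(n)$ ordinary lines --- again a contradiction. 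So $P$ is of type (ii).

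Thus $P$ agrees, up to $O(1)$ insertions and deletions, with $X_N$ for some even $N = n + O(1)$, which consists of the $m := N/2$ vertices of a regular $m$-gon on a conic $C$ together with $m$ points on a line $\ell_\infty$; by the incidences of Proposition \ref{boroczky-examples} the only ordinary lines of $X_N$ are its $m$ tangent lines to $C$, since every secant of the $m$-gon already passes through a point of $\ell_\infty$ (and $\ell_\infty$ itself carries $m$ points). The task is to minimise the number of ordinary lines over all $n$-point sets obtained from some such $X_N$ by inserting $t = O(1)$ and deleting $s = O(1)$ points, and to show the minimum is $f(n)$, attained only by the B\"or\"oczky examples. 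I would run a finite case analysis on $s$, $t$, on the locations of the inserted points (on $C$, on $\ell_\infty$, or in general position), and on the locations of the deleted points (on $C$ or on $\ell_\infty$), computing in each case the net change in the ordinary-line count relative to $X_N$. Throughout, one must track the residue $N \bmod 4$, which governs how the $m$ tangent lines of $C$ are distributed among the points of $\ell_\infty$; this is what produces the dependence of $f$ on $n \bmod 4$, and in particular why odd $n$ is forced up to $\sim 3n/4$ rather than $\sim n/2$ ordinary lines. The one input that makes this accounting rigorous is a bound on how many secants of the $m$-gon can pass through a single inserted point --- such a coincidence turns a $3$-rich secant into a $4$-rich line and so perturbs the count elsewhere --- and this is exactly what the variant of the Poonen--Rubinstein theorem in Proposition \ref{poonen-rubinstein-prop} supplies.

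The main obstacle is this last case analysis. Each individual sub-case is elementary, but there are many of them, and one has to show simultaneously that the ordinary-line count never drops below $f(n)$ and that equality forces $P$ to be one of the four B\"or\"oczky families of Proposition \ref{boroczky-examples} (and nothing else). A secondary subtlety is that Theorem \ref{main-structure-theorem} only pins $P$ down up to a projective transformation and up to $O(1)$ exceptional points, so one must rule out ``exotic'' placements of those points --- several inserted points lying on a common secant, or an inserted point incident to unusually many secants of the $m$-gon --- which is precisely where the Poonen--Rubinstein bound is needed. Once type (ii) is settled, combining it with the exclusions of types (i) and (iii) gives the lower bound $f(n)$ together with the classification of the equality cases, and the matching constructions are furnished by Proposition \ref{boroczky-examples}; this completes the proof.
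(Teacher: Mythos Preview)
Your overall plan matches the paper's: apply the structure theorem, dispose of cases (i) and (iii) by showing they force at least $n-O(1)$ ordinary lines, and then carry out a finite case analysis near $X_{2m}$ using the Poonen--Rubinstein input. The paper packages the last step as Proposition~\ref{boroczky-sharp} via Corollary~\ref{prc}, and packages case (iii) as Lemma~\ref{lem8.1}.

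There is, however, a genuine gap in your handling of case (iii). You assert that ``inserting any point creates $\sim |A|$ new ordinary lines through it.'' For a point $p$ off the cubic $\gamma$, this amounts to saying that only $o(|A|)$ secants of the coset $A=H\oplus x$ pass through $p$. That is exactly the elliptic--curve analogue of Poonen--Rubinstein, which the paper explicitly leaves open (see the Remark after Lemma~\ref{elliptic-lemma}); Lemma~\ref{elliptic-lemma} only yields $\geq n/1000$ such ordinary lines, which is far below the threshold $f(n)\approx 3n/4$ you need. So this step, as written, does not go through.

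The paper's remedy (Lemma~\ref{lem8.1}) avoids this issue entirely by tracking a different pool of ordinary lines: the $|A|-O(1)$ tangent lines to $\gamma$ at points of $A$. The key fact is that \emph{any} point in the plane lies on at most six tangent lines to an irreducible cubic, so inserting a point destroys at most six of these tangent ordinary lines, and deleting a point of $A$ destroys only $O(1)$ of them (since $a$ is incident to its own tangent and to the tangents at the $O(1)$ solutions of $\ominus 2b=a$). Hence after $O(1)$ insertions and deletions one still has $n-O(1)$ ordinary tangent lines, which exceeds $f(n)$. You already noted the ``$O(1)$ tangent lines of $\gamma$'' fact for points of $A$; the missing piece is to invoke it for the \emph{inserted} points as well, and to make that --- rather than the creation of new ordinary lines through $p$ --- the mechanism that controls insertions.
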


\emph{Remark.} Note in particular that there is an essentially unique extremal example unless $n \equiv 1 \md{4}$, in which case there are two, namely examples (ii) and (iv) above. Note that all of the examples in (iv) are equivalent up to rotation.

Let us record, in addition to the B\"or\"oczky examples mentioned in Proposition \ref{boroczky-examples}, the following near-extremal example. 

\begin{proposition}[Near-B\"or\"oczky example]
The set $X_{4m}$ minus the point $[-\sin \frac{\pi}{2m}, \cos \frac{\pi}{2m},0]$ on the line at infinity contains $4m -1$ points and spans $3m$ ordinary lines.
\end{proposition}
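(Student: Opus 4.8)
The plan is to leverage the description of the ordinary lines of $X_{4m}$ already obtained in Proposition \ref{boroczky-examples}(i), together with the elementary collinearity fact recorded in its proof (and reproved in Proposition \ref{quasi-group-law}), and then simply to bookkeep the effect of deleting the single point $p_0 := [-\sin \frac{\pi}{2m}, \cos \frac{\pi}{2m}, 0]$. Write the $2m$ affine points of $X_{4m}$ as $a_j := [\cos\frac{2\pi j}{2m}, \sin\frac{2\pi j}{2m}, 1]$ and the $2m$ points at infinity as $b_k := [-\sin\frac{\pi k}{2m}, \cos\frac{\pi k}{2m}, 0]$, for $0 \le j,k < 2m$; note $b_k = b_{k+2m}$, so the index $k$ may be regarded as living in $\Z/2m\Z$, and $p_0 = b_1$. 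We will freely use that (a) the line through $a_j$ and $a_{j'}$ with $j \not\equiv j' \pmod{2m}$ passes through $b_{j+j'}$; (b) the tangent line to the unit circle at $a_j$ passes through $b_{2j}$; and (c) by Proposition \ref{boroczky-examples}(i), applied with parameter $2m$, the ordinary lines of $X_{4m}$ are exactly the $2m$ tangent lines at the points $a_j$. (We assume $m \ge 2$, so that $2m \ge 3$.)

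First I would enumerate the lines of $X_{4m}$ through $p_0 = b_1$. Apart from the line at infinity $\ell_\infty$, which contains all $2m$ of the $b_k$, any line through $b_1$ carrying a further point of $X_{4m}$ must carry some $a_j$, and is then forced to equal $s_j := \overline{a_j\, b_1}$. By (a), since $2j \equiv 1 \pmod{2m}$ is unsolvable ($2m$ being even), $s_j$ is the secant $\overline{a_j\, a_{1-j}}$ with $1-j \not\equiv j$. As a line meets the unit circle in at most two points and $\ell_\infty$ in exactly one, $s_j$ contains precisely the three points $a_j, a_{1-j}, b_1$ of $X_{4m}$; moreover $s_j = s_{1-j}$, and the involution $j \mapsto 1-j$ of $\Z/2m\Z$ is fixed-point-free, partitioning $\Z/2m\Z$ into $m$ two-element blocks, so there are exactly $m$ distinct lines $s_j$. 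Thus the lines of $X_{4m}$ through $p_0$ are $\ell_\infty$ (which is $2m$-rich) and the $m$ secants $s_j$ (each $3$-rich), and by (b), (c) none of them is an ordinary line of $X_{4m}$: the tangent at $a_j$ passes through $b_{2j}$, and $2j$ is even, hence $\ne 1$ in $\Z/2m\Z$.

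Then I would delete $p_0$ and compare. Passing from $X_{4m}$ to $P := X_{4m}\setminus\{p_0\}$ changes the number of points of a line only if that line passes through $p_0$, so every line not through $p_0$ keeps its richness; in particular the $2m$ tangent lines remain ordinary in $P$, and no line ordinary in $X_{4m}$ ceases to be ordinary. Among the lines through $p_0$: $\ell_\infty$ drops from $2m$ to $2m-1 \ge 3$ points, so stays non-ordinary; each secant $s_j$ drops from $3$ to $2$ points, hence becomes ordinary in $P$; and these $m$ new ordinary lines are pairwise distinct and distinct from the $2m$ tangent lines (a tangent contains just one $a_j$, a secant two). Conversely, any ordinary line of $P$ that was not ordinary in $X_{4m}$ must pass through $p_0$ and have had exactly three points there, hence be one of the $s_j$. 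Therefore $P$ spans exactly $2m + m = 3m$ ordinary lines, while $|P| = 4m - 1$, as claimed.

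There is no real obstacle here: the content is entirely the incidence geometry of the roots of unity on a circle together with the line at infinity, and the only points needing a little care are (i) that $p_0$ has an \emph{odd} index $k=1$, which is exactly why no tangent line and no self-paired secant passes through it, and (ii) the absence of double counting between the $m$ newly ordinary secants and the $2m$ pre-existing ordinary tangents. This is the mirror image of cases (iii)--(iv) of Proposition \ref{boroczky-examples}, where an infinity point of \emph{even} index is removed and one must additionally subtract the two tangent lines passing through it.
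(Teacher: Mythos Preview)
Your proof is correct and takes essentially the same approach as the paper: the paper's proof simply asserts that the ordinary lines are the $2m$ tangent lines together with the $m$ secants $\overline{a_j a_{j'}}$ with $j+j' \equiv 1 \pmod{2m}$, and you have supplied the careful bookkeeping verifying this (in particular, that no tangent passes through $b_1$ because $2j \equiv 1 \pmod{2m}$ is unsolvable, and that $\ell_\infty$ remains non-ordinary). Your argument is a fleshed-out version of the paper's one-line sketch.
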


\begin{proof}
This is illustrated in Figure \ref{boro-5-fig} in the case $m = 3$. The ordinary lines are the $2m$ tangent lines to the $2m^{\operatorname{th}}$ roots of unity as well as $m$ lines joining $[\cos \frac{\pi j}{m}, \sin \frac{\pi j}{m},1]$ and $[\cos \frac{\pi j'}{m}, \sin \frac{\pi j'}{m},1]$ with $j + j' \equiv 1 \md{2m}$.
\end{proof}

\begin{figure}[h!]
\includegraphics[scale=2.5]{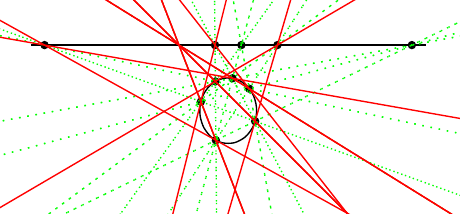}
\caption{The near-B\"or\"oczky example with $n = 11$ points and $9$ ordinary lines. The ordinary lines (in red) are the 6 tangent lines to the $6$th roots of unity on the unit circle plus 3 lines passing through the removed point $[-\sin \frac{\pi}{6}, \cos\frac{\pi}{6},0]$.}
\label{boro-5-fig}
\end{figure}

We may now state a still more precise result, which asserts that all configurations not equivalent to one of the above examples must necessarily have a significantly larger number of ordinary lines than $f(n)$, when $n$ is large. In fact there must be at least $n-O(1)$ ordinary lines in such cases.

\begin{theorem}[Strong Dirac-Motzkin conjecture]\label{dirac-motzkin-2}
There is an absolute constant $C$ such that the following is true. If $P$ is a set of $n$ points in $\R\P^2$, not all on a line,  spanning no more than $n - C$ ordinary lines then $P$ is equivalent under a projective transformation to one of the B\"or\"oczky examples or to a near-B\"or\"oczky example. 
\end{theorem}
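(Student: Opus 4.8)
The plan is to run potential near-extremisers through the Full Structure Theorem (Theorem \ref{main-structure-theorem}) and then perform a careful accounting of ordinary lines in each surviving case. Suppose $P$ has $n$ points, not all on a line, and spans at most $n - C$ ordinary lines. For $n$ large this is certainly at most $Kn$ with, say, $K = 1$, so (provided $n$ is large enough that $n \geq \exp\exp(C_0 K^{C_0})$) Theorem \ref{main-structure-theorem} applies: after a projective transformation, $P$ differs by $O(1)$ points from one of the three model configurations — (i) $n - O(1)$ points on a line, (ii) a set $X_{2m}$ with $m = n/2 + O(1)$, or (iii) a coset $H \oplus g$ with $3g \in H$ inside the smooth points of an irreducible cubic, $|H| = n + O(1)$. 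For small $n$ (below the threshold) we absorb everything into the constant $C$, since there are only finitely many configurations of each bounded size and we may take $C$ large enough that the hypothesis $n - C \le 0$ is vacuous there; so we may assume $n$ is as large as we like.

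The bulk of the work is a case analysis. In case (i), $P$ is $n - O(1)$ points on a line $\ell$ together with $O(1)$ points off $\ell$; one shows directly that such a configuration spans $\gg n$, in fact at least $n - O(1)$ ordinary lines is far from true — rather, it spans far more: each point off $\ell$ together with all but $O(1)$ points of $P \cap \ell$ gives an ordinary line, forcing $\gg n$ ordinary lines unless there are no points off $\ell$ at all, i.e. $P$ is collinear, which is excluded. (A small amount of care is needed when there are two or three points off $\ell$, since lines through two of them can be $3$-rich, but at most $O(1)$ such coincidences occur.) So case (i) yields, after fixing constants, more than $n - C$ ordinary lines and is eliminated. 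In case (iii) we invoke the analysis already behind Theorem \ref{main-structure-theorem}: the additive-combinatorics reduction shows that cosets of subgroups of genuinely torsion-containing-but-"essentially torsion-free" cubics — all singular cubics except the acnodal one, and all irreducible cubics that are not an ellipse-type real locus with the right torsion — produce $\gg n$ ordinary lines; in the remaining sub-case the cubic's real non-singular locus has the structure of $\R/\Z$ or $\R/\Z \times \Z/2\Z$, and a coset $H \oplus g$ with $3g \in H$ of a finite subgroup $H$ is projectively equivalent to a B\"or\"oczky-type configuration (this is the content that identifies $X_{2m}$, and the $+$origin / $-$point variants, as exactly the cosets with $3g \in H$). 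Adding or removing $O(1)$ points to such a configuration changes the ordinary line count by $O(n)$ in general but, crucially, one must show it changes it by more than any fixed constant $C$ \emph{unless} no points were added or removed and $g$ is chosen so that $P$ is literally one of the listed B\"or\"oczky or near-B\"or\"oczky examples. This last point — that a single stray point, or a slightly "wrong" choice of coset representative $g$, already costs $\omega(1)$ ordinary lines — is where the real content lies.

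That final quantitative stability claim is the main obstacle, and I expect it to be handled exactly as the sharp Dirac-Motzkin theorem (Theorem \ref{dirac-motzkin-1}) is: by an explicit count of how tangent lines to roots of unity, and lines through pairs of roots of unity, interact with an added or deleted point, using the Poonen--Rubinstein-type input (Proposition \ref{poonen-rubinstein-prop}) to bound how often a stray point can be concurrent with two roots of unity. Concretely: starting from $X_{2m}$ (or a $\pm$-variant), deleting a point on the unit circle destroys one tangent line but creates $\sim m$ new ordinary lines to that point's former neighbours; deleting a point at infinity destroys $\sim 1$ ordinary line but can create $\sim m$ vertical-type ordinary lines \emph{unless} the deleted point is exactly positioned as in B\"or\"oczky's (iii)/(iv) or the near-B\"or\"oczky example, where the arithmetic of the angle makes the new lines $3$-rich; adding the origin is ordinary-line-neutral only when $m$ is even (giving (ii)), and adding any other point creates $\gg m$ ordinary lines by the Poonen--Rubinstein bound. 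Since $m = n/2 + O(1)$, every deviation from the listed examples costs $\gg n \gg C$ ordinary lines, contradicting the hypothesis for $C$ chosen large. Assembling the cases, the only configurations with at most $n - C$ ordinary lines are, up to projective transformation, the B\"or\"oczky examples of Proposition \ref{boroczky-examples} and the near-B\"or\"oczky example, which is the assertion of the theorem. $\hfill\Box$
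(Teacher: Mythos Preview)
Your overall architecture is right—apply the structure theorem, eliminate cases (i) and (iii), and then do a careful perturbation analysis in case (ii) using the Poonen--Rubinstein input—but case (iii) contains a genuine error that breaks the argument.

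You claim that a coset $H \oplus g$ of a finite subgroup of an elliptic curve (or acnodal cubic) ``is projectively equivalent to a B\"or\"oczky-type configuration.'' This is false. The Sylvester examples live on an \emph{irreducible} cubic, whereas $X_{2m}$ lives on the reducible cubic given by a conic plus a line; these are projectively distinct. Indeed the paper remarks immediately after the statement of this theorem that the Sylvester examples furnish infinitely many projectively inequivalent configurations with $n - O(1)$ ordinary lines, showing that the threshold $n - C$ is sharp. So you cannot eliminate case (iii) by identifying it with case (ii).

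The correct way to dispose of case (iii) is Lemma \ref{lem8.1}: for $h \in H$, the tangent line to $\gamma$ at $h \oplus g$ meets the coset again only at $(-2h \ominus 3g) \oplus g$, and this is an ordinary line unless $3h \oplus 3g = 0$, which happens for at most three values of $h$. Thus $H \oplus g$ itself has $n - O(1)$ ordinary tangent lines. Crucially, no point of the plane lies on more than six tangents to an irreducible cubic (a degree argument via the polar conic), so adding or deleting $K$ points destroys at most $O(K)$ of these tangent ordinary lines. Hence any $P$ in case (iii) spans at least $n - O(1)$ ordinary lines and is eliminated once $C$ is large. This tangent-line count is the missing idea in your proposal.

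Your treatment of case (ii) is in the right spirit and matches the paper's Proposition \ref{boroczky-sharp}, though one quantitative remark: a single deviation from the listed examples does not cost $\gg n$ ordinary lines beyond what you already have—it costs an \emph{additional} $m - O(1) \approx n/2$ ordinary lines on top of the existing $m$ tangent lines, pushing the total above $2m - O(1) \approx n$, which is enough.
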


The threshold $n-C$ is sharp except for the constant $C$. Indeed we will shortly see that finite subgroups of elliptic curves of cardinality $n$ give examples of sets with $n-O(1)$ lines. This gives infinitely many new examples of sets with few ordinary lines which are inequivalent under projective transformation due to the projective invariance of the discriminant of an elliptic curve.\vspace{11pt}

\emph{Sylvester's cubic curve examples.} We turn now to Sylvester's examples of point sets coming from cubic curves, as further discussed by Burr, Gr\"unbaum and Sloane \cite{bgs}. While these do not provide the best examples of sets with few ordinary lines, it appears that consideration of them is essential in order to solve the Dirac-Motzkin problem. Of course, they also feature in the statement of our main structural result, Theorem \ref{main-structure-theorem}, and are optimal for the orchard problem (see Section \ref{orchard}). Finally, they provide essentially different examples of sets with $n + O(1)$ ordinary lines to any of those considered so far. 

For a leisurely discussion of all the projective algebraic geometry required in this paper, including an extensive discussion of cubic curves, we recommend the book \cite{bix}. 

Let $\gamma$ be any irreducible cubic curve.  It is known (see \cite[Chapter 12]{bix}) that $\gamma$ has a point of inflection, that is to say a point where the tangent meets $\gamma$ to order $3$. By moving this to the point $[0,1,0]$ at infinity, we may bring $\gamma$ into the form $y^2 = f(x)$ in affine coordinates, where $f(x)$ is a cubic polynomial. If $\gamma$ is smooth then it is called an \emph{elliptic curve}. An elliptic curve may have one or two components; these two cases are illustrated in Figure \ref{elliptic}. If $\gamma$ has a singular point then it may be transformed into one of the following three (affine) forms:

\begin{itemize}
\item (nodal case) $y^2 = x^2(x+1)$;
\item (cuspidal case) $y^2 = x^3$;
\item (acnodal case) $y^2 = x^2 (x-1)$.
\end{itemize}

See \cite[Theorem 8.3]{bix} for details. These three singular cases are illustrated in Figure \ref{singular-cubics}. 

\begin{figure}
\includegraphics[scale=0.7]{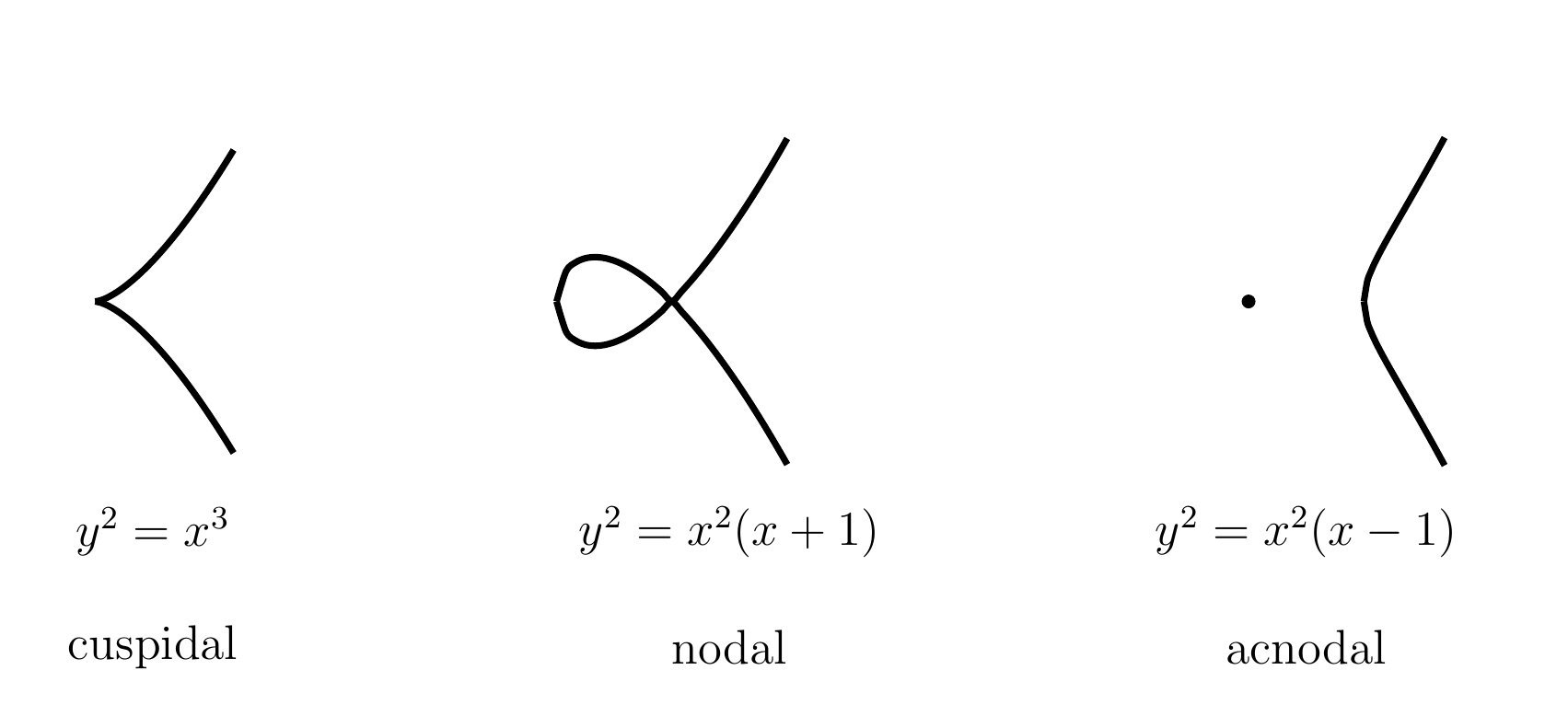}
\caption{The three different types of singular cubic curve. }
\label{singular-cubics}
\end{figure}

We remark that the classification of cubic curves over $\R$ has a long and honourable history dating back to Isaac Newton. \vspace{11pt}

\emph{The group law.} Suppose that $\gamma$ is an irreducible cubic curve, and write $\gamma^*$ for the set of nonsingular points of $\gamma$. If $\gamma$ is smooth then of course $\gamma = \gamma^*$, and in this case $\gamma$ is an elliptic curve. We may define an abelian group structure on $\gamma^*$ by taking the identity $O$ to be a point of inflection on $\gamma^*$ and, roughly speaking, $P \oplus Q \oplus R = O$ if and only if $P,Q,R$ are collinear. The ``roughly speaking'' refers to the fact that we must take appropriate account of multiplicity, thus $P \oplus P \oplus Q = O$ if the tangent to $\gamma$ at $P$ also passes through $Q$. The inverse $\ominus P$ of $P$ is defined using the fact that $\ominus P$, $O$ and $P$ are collinear. See \cite[Chapter 9]{bix} for more details, including a proof that this does indeed give $\gamma^*$ the structure of an abelian group.  

We have the following theorem regarding the nature of $\gamma^*$ as a group.

\begin{theorem}\label{gamma-gp} Let $\gamma$ be an irreducible cubic curve, and let $\gamma^*$ be the set of its nonsingular points. 
Then we have the following possibilities for $\gamma^*$, considered as a group:
\begin{itemize}
\item \textup{(elliptic curve case)} $\R/\Z$ or $\R/\Z \times \Z/2\Z$, depending on whether $\gamma$ has $1$ or $2$ connected components;
\item \textup{(nodal case)} $\R \times \Z/2\Z$;
\item \textup{(cuspidal case)} $\R$;
\item \textup{(acnodal case)} $\R/\Z$.
\end{itemize}
\end{theorem}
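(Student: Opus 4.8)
\emph{Plan.} In each of the four cases the aim is to exhibit a concrete parametrisation of $\gamma^*$ by the claimed group and to check that it carries the collinearity relation to the group operation. Throughout we use the normalisation recalled above and in \cite[Chapters 8, 9]{bix}: $\gamma$ is put in the form $y^2 = f(x)$ with $f$ a cubic polynomial, $O = [0,1,0]$ is the flex at infinity, $\ominus(x,y) = (x,-y)$ because $(x,y)$, $(x,-y)$, $O$ are collinear on a vertical line, and three affine points $(x_i,y_i)$ lie on an affine line $y = \ell(x)$ precisely when $x_1,x_2,x_3$ are the roots of the cubic $f(x) - \ell(x)^2$. Consequently, in the three singular (rational) cases the entire verification comes down to one determinantal identity, with the tangency (multiplicity) subcases handled the same way.

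\emph{Elliptic case.} Here we invoke the classical analytic uniformisation $\gamma(\C) \cong \C/\Lambda$ by a lattice $\Lambda$, under which the collinearity law becomes addition; see \cite[Chapter 9]{bix}. Complex conjugation is a continuous automorphism of $\C/\Lambda$ whose fixed-point set is $\gamma^* = \gamma(\R)$, and as a closed subgroup of the compact one-dimensional torus $\gamma(\C)$ this is a finite disjoint union of circles, so $\gamma^* \cong \R/\Z \times \Z/d\Z$ where $d$ is the number of connected components of $\gamma(\R)$. For a real elliptic curve $d \in \{1,2\}$, and comparing with the $2$-torsion $\gamma^*[2] = \{O\} \cup \{(x_0,0) : f(x_0) = 0\}$ shows that $d = 1$ exactly when $f$ has one real root (negative discriminant, $\gamma$ connected) and $d = 2$ exactly when $f$ has three real roots. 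This gives the two stated possibilities.

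\emph{Singular cases.} In each case we parametrise $\gamma^*$ by projecting away from the singular point $[0,0,1]$. For the cusp $y^2 = x^3$ this yields the bijection $t \mapsto [t,1,t^3]$ from $\R$ onto $\gamma^*$, with $O$ at $t = 0$ and the cusp the missing value $t = \infty$; three distinct points $[t_i,1,t_i^3]$ are collinear if and only if the $3\times 3$ determinant with rows $(t_i,1,t_i^3)$ vanishes, and by the Vandermonde--Schur identity this determinant equals $\pm(t_1+t_2+t_3)\prod_{i<j}(t_j - t_i)$, so collinearity is equivalent to $t_1+t_2+t_3 = 0$ and $\gamma^* \cong (\R,+)$. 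For the crunode $y^2 = x^2(x+1)$ the node's tangent lines $y = \pm x$ are real, so projection parametrises $\gamma^*$ by $\P^1(\R)$ with the two parameter values corresponding to the real branches at the node deleted and $O$ at $t = \infty$; in the coordinate $u = (t-1)/(t+1) \in \R^{\times}$ (so $O$ is $u = 1$) the analogous determinant factors as a Vandermonde times $u_1 u_2 u_3 - 1$, so $\gamma^* \cong (\R^{\times},\cdot) \cong \R \times \Z/2\Z$. For the acnode $y^2 = x^2(x-1)$ the tangent directions $y = \pm i x$ at the singular point are not real, so no real line through it is tangent and projection parametrises all of $\gamma^*$ by $\P^1(\R)$, with $O$ at $t = \infty$ and nothing deleted; writing $t = \cot\phi$ with $\phi \in \R/\pi\Z$, the cotangent addition formula turns collinearity of three distinct points into $\phi_1 + \phi_2 + \phi_3 \equiv 0 \pmod{\pi}$, so $\gamma^* \cong \R/\pi\Z \cong \R/\Z$.

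\emph{Main difficulty.} The point demanding the most care is the acnodal case --- the one missed in the first draft. Over $\C$ an acnodal cubic is a nodal cubic, which tempts one to conclude $\gamma^*(\R) \cong \R^{\times}$; but the relevant real form of the multiplicative group is here the anisotropic (norm-one) torus, namely the compact circle $\R/\Z$, and it is exactly this compactness --- the presence of torsion --- that makes the acnodal cubic behave like an elliptic curve rather than like the nodal or cuspidal ones in the structure theorem. Apart from the elliptic case, which genuinely relies on the analytic uniformisation of $\gamma(\C)$, every assertion reduces to the elementary symmetric-function identities above.
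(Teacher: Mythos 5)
Your proof is correct and takes essentially the same route as the paper, which simply records the explicit rational parametrisations $(t^2-1,t(t^2-1))$, $(t^2+1,t(t^2+1))$ with $t=\cot(\pi x)$, etc., refers to \cite{bix} for the elliptic case, and leaves the collinearity verifications to the reader; your coordinate $u=(t-1)/(t+1)$ is exactly the Silverman--Tate map $(x,y)\mapsto (y-x)/(y+x)$ that the paper quotes. The only difference is that you actually carry out the determinant/cotangent verifications and the real-points-of-$\C/\Lambda$ argument that the paper omits.
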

Once again, details may be found in \cite{bix}.  Thinking about the curves topologically, the theorem is reasonably evident. In the three singular cases isomorphisms $\phi : G \rightarrow \gamma^*$ can be given quite explicitly, as detailed in the following list.

\begin{itemize}
\item In the nodal case $y^2 = x^2(x+1)$, the map $\phi : \R \times \Z/2\Z \rightarrow \gamma^*$ defined by $\phi(x,\eps) = (t^2 - 1, t (t^2 - 1))$, where $t = \coth x$ if $\eps = 0$ and $t = \tanh x$ if $\eps = 1$ provides an isomorphism;

\item In the cuspidal case $y^2 = x^3$, the map $\phi : \R \rightarrow \gamma^*$ defined by $\phi(x) = (\frac{1}{x^3}, \frac{1}{x^2})$ provides an isomorphism; 

\item In the acnodal case $y^2 = x^2(x -1)$, the map $\phi : \R/\Z \rightarrow \gamma^*$ defined by $\phi(x) = (t^2 + 1, t (t^2 + 1))$, where $t = \cot(\pi x)$, provides an isomorphism. 
\end{itemize}
We leave the reader to provide the details. In the nodal case (for example) we recommend first proving that $(t^2 - 1, t(t^2 - 1)$, $(u^2 - 1, u(u^2 - 1))$ and $(v^2 - 1, v (v^2 - 1))$ are collinear if and only if $-v = (1 + tu)/(t + u)$.

The following maps in the other direction, described in Silverman \cite[III. 7]{silverman-tate} (the acnodal case is described in \cite[Exercise 3.15]{silverman-tate}) are perhaps even tidier. Here $\infty = [0,1,0]$.

\begin{itemize}
\item In the nodal case the map $(x,y) \mapsto (y - x)/(y+x)$ and $\infty \mapsto 1$ gives an isomorphism from $\gamma^*$ to $\R^* \cong \R \times \Z/2\Z$;

\item In the cuspidal case the map $(x,y) \mapsto x/y$ and $\infty \mapsto 1$ gives an isomorphism from $\gamma^*$ to $\R$;

\item In the acnodal case the map $(x,y) \mapsto -(x + iy)^2/x^3$ and $\infty \mapsto 1$ gives an isomorphism from $\gamma^*$ to the unit circle $S^1$ in the complex plane.
\end{itemize}

 \vspace{11pt}

\begin{figure}
\includegraphics{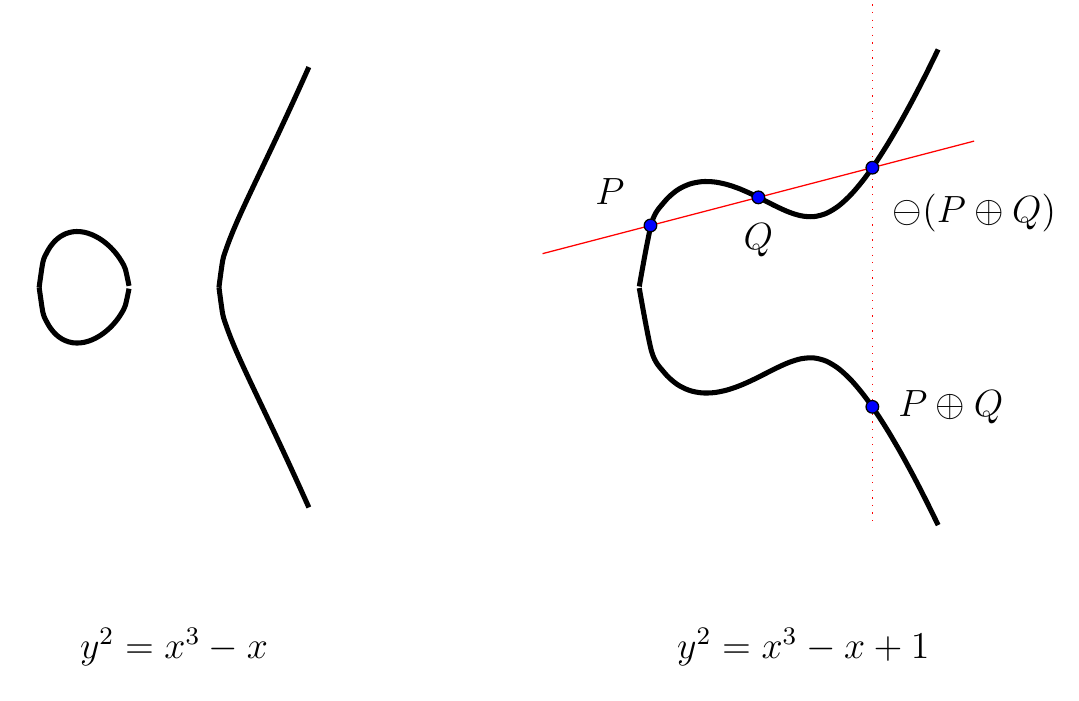}
\caption{Two elliptic curves, illustrating the group law and showing the two possibilites for the group structure. }
\label{elliptic}
\end{figure}

\emph{Sylvester's examples.} By a \emph{Sylvester example} $E_n$ we mean a set of $n$ points $P$ in the plane which corresponds to a subgroup of order $n$ of an irreducible cubic curve $\gamma$. If $n > 2$ the existence of such an example requires $\gamma$ to be either an elliptic curve or an acnodal\footnote{We do not know whether Sylvester himself was interested in the acnodal case. We thank Frank de Zeeuw for correcting an oversight in the first version of this paper by drawing it to our attention.} cubic curve, by the classification of the group structure of $\gamma$ described in Theorem \ref{gamma-gp}. 
A Sylvester example coming from an elliptic curve is depicted in Figure \ref{elliptic-curve-figure}. 

\begin{figure}
\includegraphics[scale=0.4]{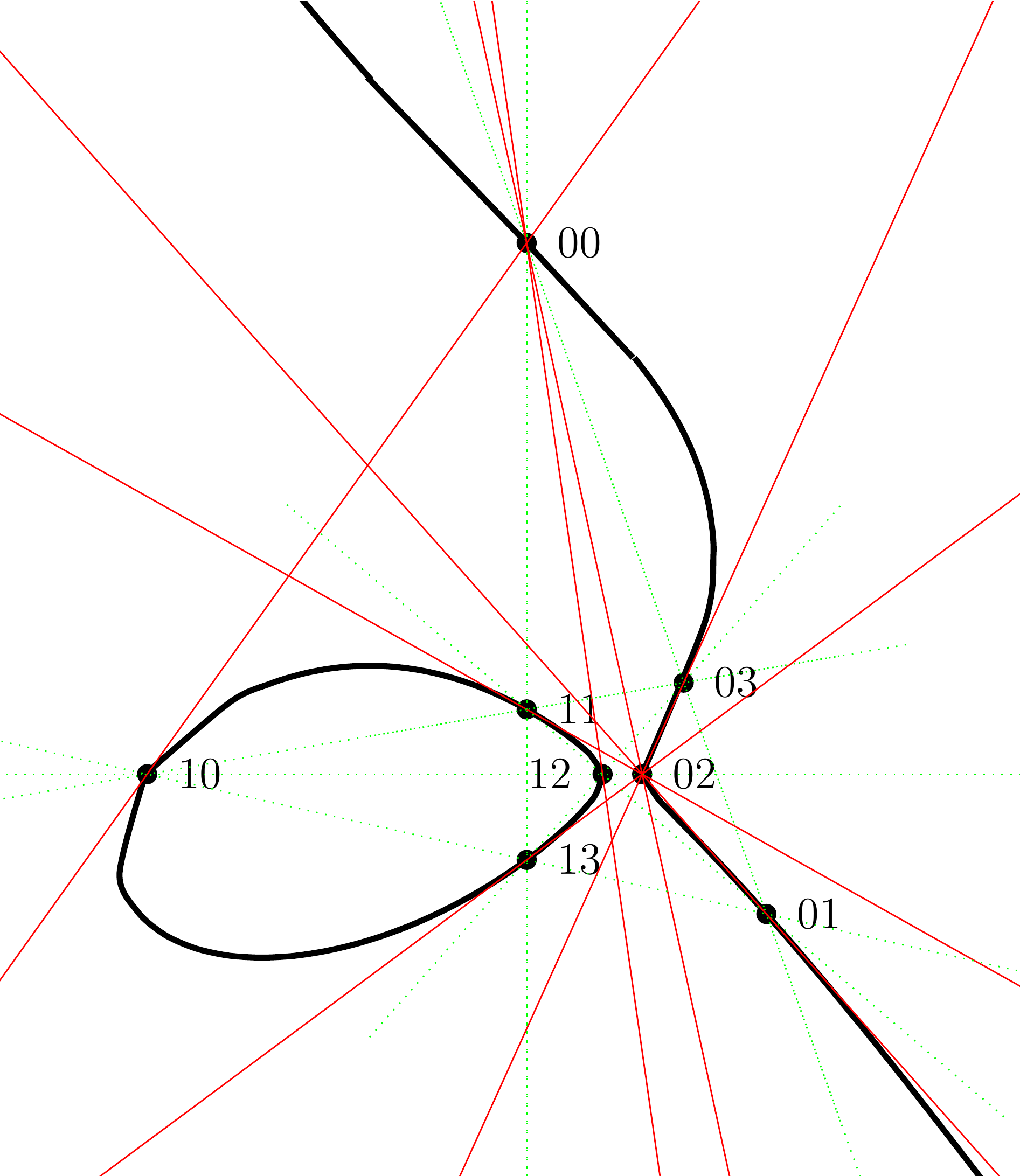}
\caption{A Sylvester example with $n = 8$, the subgroup being isomorphic to $\Z/2\Z \times \Z/4\Z$. The labels reflect the group structure, thus $03$ corresponds to the element $(0,3) \in \Z/2\Z \times \Z/4\Z$. This comes from an elliptic curve with equation $y^2 = x^3 - \frac{1}{36} x^2 - \frac{5}{36} x + \frac{25}{1296} = 0$ to which we have applied the projective transformation $[x,y,z] \mapsto [x,y, x+ y + z]$, so that the point at infinity maps to the point $(0,1)$ in the affine plane (which is then an inflection point for the curve). The are 7 ordinary lines, marked in red, and also $7$ $3$-rich lines, marked in dotted green.}
\label{elliptic-curve-figure}
\end{figure}

As it turns out, Sylvester examples have somewhat more ordinary lines than the B\"or\"oczky examples, namely $n+O(1)$ instead of $n/2+O(1)$ or $3n/4+O(1)$, and are thus not extremisers for the Dirac-Motzkin conjecture.  However, due to the more evenly distributed nature of the Sylvester examples, they have significantly more $3$-rich lines.  Indeed, the following is essentially established in \cite{bgs}.

\begin{proposition}\label{bgs-prop} Let $n \geq 3$, and let $E_n$ be a subgroup of order $n$ in $\gamma^*$, the group of nonsingular points of an irreducible cubic curve $\gamma$ \textup{(}which must be an elliptic curve or an acnodal cubic\textup{)}.  Then $E_n$ spans $n-1-2 \cdot\mathbf{1}_{3|n}$ ordinary lines and $\lfloor \frac{n(n-3)}{6} \rfloor + 1$ $3$-rich lines, where $\mathbf{1}_{3|n}$ is equal to $1$ when $3$ divides $n$ and zero otherwise. Furthermore, if $x \in E$ is such that $x \not \in E_n$ and $x \oplus x \oplus x \in E_n$ then $E_n \oplus x$ has $n-1$ ordinary lines and $\lfloor \frac{n(n-3)}{6} \rfloor$ $3$-rich lines.
\end{proposition}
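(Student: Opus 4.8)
The plan is to reduce everything to the group law $\oplus$ on $\gamma^*$ together with B\'ezout's theorem for the cubic $\gamma$. The basic dictionary I would set up first is: three \emph{distinct} points $a,b,c$ of $\gamma^*$ are collinear precisely when $a\oplus b\oplus c = O$, and more generally the line through two distinct points $a,b$ of $\gamma^*$ meets $\gamma$ again exactly at $\ominus(a\oplus b)$, with the convention that if $\ominus(a\oplus b)$ equals $a$ (resp.\ $b$) then this ``line through $a$ and $b$'' is in fact the tangent line to $\gamma$ at $a$ (resp.\ $b$). Since $\gamma$ is an irreducible cubic, B\'ezout's theorem gives that every line meets $\gamma$ in at most $3$ points, so no line contains more than $3$ points of any subset of $\gamma^*$; in particular $N_k = 0$ for $k \geq 4$.

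Next I would treat $E_n$ itself. The crucial closure property is that if $a,b \in E_n$ are distinct then the third intersection point $\ominus(a\oplus b)$ again lies in $E_n$, because $E_n$ is a subgroup. Consequently a line through two points of $E_n$ meets $E_n$ in exactly $\{a,b\}$ only when it is the tangent line at $a$ or at $b$; thus every ordinary line of $E_n$ is the tangent line at one of its two points, and every other line spanned by $E_n$ is $3$-rich. The tangent at $p \in E_n$ contains a second point of $E_n$, namely $\ominus 2p$, exactly when $\ominus 2p \neq p$, i.e.\ when $3p \neq O$ (if $3p=O$ then $p$ is an inflection point and its tangent meets $\gamma$, hence $E_n$, only at $p$); and distinct non-inflection $p$ give distinct tangent lines, since a line tangent at both $p$ and $q$ forces $q=\ominus 2p$ and $p=\ominus 2q$, whence $3p=O$. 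Therefore $N_2(E_n)=\#\{p\in E_n: 3p\neq O\}$. By Theorem~\ref{gamma-gp} the $3$-torsion subgroup of $\gamma^*$ is $\Z/3\Z$ in both the elliptic-curve and the acnodal cases, so $E_n$ contains exactly $\gcd(3,n)=1+2\cdot\mathbf{1}_{3\mid n}$ points killed by $3$, giving $N_2(E_n)=n-1-2\cdot\mathbf{1}_{3\mid n}$. For $N_3(E_n)$ I would count unordered triples $\{a,b,c\}$ of \emph{distinct} elements of $E_n$ with $a\oplus b\oplus c=O$: there are $n^2$ ordered solutions ($a,b$ free, $c$ forced), inclusion--exclusion removes the degenerate ones (two coordinates equal) to leave $n^2-3n+2t$ with $t=1+2\cdot\mathbf{1}_{3\mid n}$, and dividing by $6$ gives $N_3(E_n)=(n^2-3n+2t)/6$; a short case check on $n \bmod 3$ (noting $n(n-3)$ is always even, so only $n\bmod 6$ matters for the floor) identifies this with $\lfloor n(n-3)/6\rfloor+1$. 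Alternatively, once $N_2$ is known, the double-counting identity $N_2+3N_3=\binom n2$ (valid since $N_k=0$ for $k\geq 4$) produces $N_3$ immediately, which I would use as a cross-check.

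Finally, the coset $T=E_n\oplus x$ with $3x\in E_n$ and $x\notin E_n$. The role of the hypothesis $3x\in E_n$ is exactly that the closure property survives: if $a'=a\oplus x$ and $b'=b\oplus x$ are distinct points of $T$, then $\ominus(a'\oplus b')=\ominus(a\oplus b)\ominus 2x$ again lies in $T$, since $\ominus(a\oplus b)\ominus 3x\in E_n$. So once more every ordinary line of $T$ is a tangent line and every other spanned line is $3$-rich, $N_2(T)=\#\{p'\in T:3p'\neq O\}$, and $3p'=3p\oplus 3x=O$ iff $3p=\ominus 3x\in E_n$. When $3\nmid n$ the tripling map on $E_n$ is a bijection, so there is exactly one such $p$, giving $N_2(T)=n-1$, and double counting then pins down $N_3(T)$.

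The step I expect to be the main obstacle is not any single idea but the combinatorial/arithmetic bookkeeping, especially in the coset case: the number of solutions of $3p=\ominus 3x$ in $E_n$ — hence the precise values of $N_2(T)$ and $N_3(T)$ — genuinely depends on $n\bmod 3$ (when $3\mid n$ one has to check that $x\notin E_n$ forces $3x\notin 3E_n$, so there are no such $p$), and matching the resulting quadratic-in-$n$ count against $\lfloor n(n-3)/6\rfloor$ needs the same mod-$6$ care as in the subgroup case. The other point requiring vigilance is the multiplicity/tangency analysis, so that ``ordinary'' is correctly identified with ``tangent at a non-inflection point of the set'' and all remaining spanned lines are genuinely $3$-rich; this is where B\'ezout for the cubic and the precise group-law description of tangent lines (and of inflection points as the $3$-torsion) do the real work.
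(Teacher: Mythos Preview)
Your approach is essentially identical to the paper's: use B\'ezout to rule out $k$-rich lines for $k\geq 4$, identify the ordinary lines of $E_n$ (or the coset) as the tangent lines at non-$3$-torsion points via the group law, count the $3$-torsion using the group structure of $\gamma^*$, and then recover $N_3$ from the double-counting identity $N_2+3N_3=\binom{n}{2}$. The paper does exactly this, in somewhat less detail; your direct inclusion--exclusion count of $N_3$ is an extra cross-check the paper does not perform.

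Your caution about the coset case is well placed. The paper dismisses it in one sentence (``the only difference being that $E_n\oplus x$ does not contain any third roots of unity''), but as you note the count of $3$-torsion in the coset genuinely depends on $n\bmod 3$: when $3\nmid n$ the coset contains exactly one $3$-torsion point (so $N_2=n-1$), while when $3\mid n$ your argument that $x\notin E_n$ forces $3x\notin 3E_n$ shows there are none (so $N_2=n$). The stated pair $(N_2,N_3)=(n-1,\lfloor n(n-3)/6\rfloor)$ cannot hold simultaneously with $N_2+3N_3=\binom{n}{2}$ for any $n$, so the proposition as written is slightly off in the coset case; your bookkeeping would detect this, whereas the paper's proof glosses over it.
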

\begin{proof}  Let $N_2$ be the number of ordinary lines, and $N_3$ be the number of $3$-rich lines.  From B\'ezout's theorem no line can meet $E_n$ in more than three points, and so by double counting we have the identity
$$ N_2 + \binom{3}{2} N_3 = \binom{n}{2}.$$
A brief computation (splitting into three cases depending on the residue of $n$ modulo $3$) then shows that $N_3 = \lfloor \frac{n(n-3)}{6} \rfloor + 1$ if and only if $N_2 = n-1-2 \cdot \mathbf{1}_{3|n}$.  But from the group law the number of ordinary lines is precisely equal to the number of elements $a \in E_n$ such that $-2a$ is distinct from $a$, or in other words the number $n$ of elements in $E_n$ minus the number of third roots in $E_n$. But $\gamma^*$ is isomorphic as a group to either $\R/\Z$ or $(\R/\Z) \times (\Z/2\Z)$, and so $E_n$ is isomorphic to either $\Z/n\Z$ or to $(\Z/(n/2)\Z) \times (\Z/2\Z)$. It has $1 + 2 \cdot \mathbf{1}_{3|n}$ third roots in either case, and the claim follows.

The analysis in the shifted case $E_n \oplus x$ is analogous, the only difference being that $E_n \oplus x$ does not contain any third roots of unity.
\end{proof}

\emph{Remarks.}  For the sake of comparison, the $n$-point examples in Proposition \ref{boroczky-examples} can all be computed to have $n^2/8 + O(n)$ $3$-rich lines instead of $n^2/6 + O(n)$ for the Sylvester examples.  This discrepancy can be explained by the existence of a high-multiplicity line with $n/2+O(1)$ points in those examples. This absorbs many of the pairs of points that could otherwise be generating $3$-rich lines.

We note also that the acnodal case allows for a quite explicit construction of a set of $n$ points defining $\sim n^2/6$ $3$-rich lines, without the use of the Weierstrass $\wp$-function which would be necessary in the elliptic curve case. We leave the reader to supply the details, using the parametrisation detailed after the statement of Theorem \ref{gamma-gp}. We are not sure whether this point has been raised in the literature before.\vspace{11pt}

\emph{Near-counterexamples.} In addition to the actual examples coming from B\"or\"oczky's constructions and from elliptic curve subgroups, there are also some important ``near-counterexamples'' which do not \emph{directly} enter into the analysis (because they involve an infinite number of points, rather than a finite number), but which nevertheless appear to indirectly complicate the analysis by potentially generating spurious counterexamples to the structural theory of points with few ordinary lines. These then need to be eliminated by additional arguments.

As with the previously discussed examples, the near-counterexamples discussed here will lie on cubic curves.  But whilst the actual examples were on an elliptic curve, an acnodal singular cubic curve, or on the union of a conic and a line, the near-counterexamples will lie on three lines (which may or may not be concurrent), or on a non-acnodal singular cubic curve.

We first consider a near-counterexample on three concurrent lines.  Up to projective transformation, one can take the lines to be the parallel lines
\begin{align*}
\ell_1 &:= \{ [x_1,0,1]: x_1\in \R \} \cup \{ [1,0,0] \} \\
\ell_2 &:= \{ [x_2,1,1]: x_2\in \R \} \cup \{ [1,0,0] \} \\
\ell_3 &:= \{ [x_3,2,1]: x_3\in \R \} \cup \{ [1,0,0] \}.
\end{align*} 
Observe that $[x_1,0,1]$, $[x_2,1,1]$ and $[x_3,2,1]$ are colinear if and only if $x_1 + x_3 = 2x_2$.  Thus, if we consider the infinite point set
\begin{align}\nonumber
 P := \{ [n_1,0,1]: n_1 \in \Z \}  \cup \{ [n_2,1,1]: & n_2 \in \textstyle\frac{1}{2}\displaystyle \Z \} \\ & \cup \{ [n_3,2,1]: n_3 \in \Z \}\label{p1}
\end{align}
then there are no ordinary lines whatsoever; every line joining a point in $P \cap \ell_1$ with a point in $P \cap \ell_2$ meets a point in $P \cap \ell_3$, and similarly for permutations.  If $\Z$ could somehow have a non-trivial finite subgroup, then one could truncate this example into a counterexample to the Sylvester-Gallai theorem, i.e. a finite set with no ordinary lines.  Of course, this cannot actually happen, but this example strongly suggests that one needs to somehow use the torsion-free nature of the additive group $\R$ at some point in the arguments, for instance by exploiting arguments based on convexity, or by using additive combinatorics results exploiting the ordered nature of $\R$. One such example, a variant of which we prove in Lemma \ref{add-comb-lem-7}, is the trivial inequality $|A+B| \geq |A| + |B|-1$ for finite subsets $A,B$ of $\R$. This can be viewed as a quantitative version of the assertion that $\R$ has no non-trivial finite subgroups.

There is a similar near-counterexample involving three non-concurrent lines.  Again, after applying a projective transformation, we may work with the lines
\begin{align*}
\ell_1 &:= \{ [x,0,1]: x\in \R \} \cup \{ [1,0,0] \} \\
\ell_2 &:= \{ [0,y,1]: y\in \R \} \cup \{ [0,1,0] \} \\
\ell_3 &:= \{ [-z,1,0]: z\in \R \} \cup \{ [1,0,0] \}.
\end{align*} 
Observe that if $x,y,z \in \R^\times := \R \backslash \{0\}$, then $[x,0,1]$, $[0,y,1]$ and $[-z,1,0]$ are concurrent precisely when $z = x/y$.  Thus, if we consider the infinite point set
\begin{align}\nonumber
 P := \{ [2^{n_1},0,1]: n_1 \in \Z \} \cup \{ [0, & 2^{n_2},1]: n_2 \in \Z \}  \\ & \cup \{ [-2^{n_3},1,0]: n_3 \in \Z \}\label{p2}
 \end{align}
then again there are no ordinary lines: every line joining a point in $P \cap \ell_1$ with a point in $P \cap \ell_2$ meets a point in $P \cap \ell_3$, and similarly for permutations.  As before, this example suggests that the (essentially) torsion-free nature of the multiplicative group $\R^{\times}$ must somehow come into play at some point in the argument.

Finally, we give an example that lies on a cuspidal singular cubic curve, which after projective transformation can be written as
$$ \gamma := \{ [x,y,z]: y z^2 = x^3 \}.$$
Removing the singular point at $[0,1,0]$, we may parameterise the smooth points $\gamma^*$ of this curve by $\{ [t, t^3, 1]: t \in \R \}$.  
One can compute after a brief determinant computation that three distinct smooth points $[t_1,t_1^3,1]$, $[t_2,t_2^3,1]$ and $[t_3,t_3^3, 1]$ on the curve are concurrent precisely when $t_1 + t_2 + t_3 = 0$.  Thus, if one sets $P$ to be the infinite set
\begin{equation}\label{p3}
 P := \{ [n,n^3,1]: n \in \Z \}
 \end{equation}
then there are very few ordinary lines - indeed only those lines that are tangent to $\gamma$ at one point $[n,n^3,1]$ and meet $\gamma$ at a second point $[(-2n), (-2n)^3,1]$ for some $n \in \Z \backslash \{0\}$ will be ordinary.  This example can be viewed as a degenerate limit of the Sylvester examples $E_n$ when the discriminant is sent to zero and $n$ sent to infinity.  Again, finitary versions of this example can be ruled out, but only after one exploits the torsion-free nature of the group associated to $\gamma^*$, which in this case is isomorphic to $\R$.  Similar remarks also apply to nodal singular cubic curves such as $\{ [x,y,z]: y^2 z = x^3 + x^2 z\}$, the smooth points of which form a group isomorphic to $\R \times (\Z/2\Z)$, which is essentially torsion free in the sense that there are no large finite subgroups.

A variant of the example \eqref{p3} lies on the union
$$ \{ [x,y,z]: yz = x^2 \} \cup \{ [x,y,z]: z = 0 \}$$
of a parabola and the line at infinity.  Observe that two points $[t_1,t_1^2,1]$, $[t_2,t_2^2,1]$ on the parabola and a point $[0,t_3,1]$ on the line at infinity with $t_1,t_2,t_3 \in \R$ are concurrent if and only if $t_3 = t_1 + t_2$.  Thus, the infinite set
\begin{equation}\label{p4}
P := \{ [n,n^2,1]: n \in \Z \} \cup \{ [0,n,1]: n \in \Z \},
\end{equation}
which can be viewed as a degenerate limit of a B\"or\"oczky example, has very few ordinary lines (namely, the line tangent to the parabola at one point $[n,n^2,1]$ and also passing through $[0,2n,1]$).

The existence of these near-counterexamples forces us to use a somewhat \emph{ad hoc} case-by-case analysis. Tools such as Chasles's version of the Cayley-Bacharach theorem, which are valid for all cubic curves, get us only so far. They must be followed up by more specialised arguments exploiting the torsion or lack thereof in the group structure. In this way we can rule out near-counterexamples involving triples of lines, or singular irreducible cubics, until only the B\"or\"oczky and Sylvester type of examples remain.

\section{Melchior's proof of the Sylvester--Gallai theorem}\label{melchior-sec}

In this section we review Melchior's proof \cite{melchior} of the Sylvester-Gallai theorem. As mentioned in the introduction, this is the starting point for all of our arguments.  

\begin{sylvester-gallai-rpt}[Sylvester-Gallai, again]
Suppose that $P$ is a finite set of points in the plane, not all on one line. Then $P$ spans at least one ordinary line.
\end{sylvester-gallai-rpt}

\begin{proof} Let $P$ be a set of $n$ points in $\R\P^2$. Consider the dual collection $P^* := \{ p^*: p \in P \}$ of $n$ lines in $\R\P^2$.  These lines determine a graph\footnote{Strictly speaking, $\Gamma_P$ determines a \emph{drawing} of a graph in the projective plane, because we are viewing the edges as curves in $\R\P^2$ rather than abstract pairs of vertices, but we shall abuse notation by identifying a graph with its drawing.} $\Gamma_P$ in $\R\P^2$ whose vertices are the intersections of pairs of lines $p_1^*, p_2^*$ (or equivalently points $\ell^*$, where $\ell$ is a line joining two or more points of $P$), and whose edges are (projective) line segments of lines in $P^*$ connecting two vertices of $\Gamma_P$ with no vertex in the interior. Note that as the points in $P$ were assumed not to lie on one line, every line in $P^*$ must meet at least two vertices of $\Gamma_P$; in particular, the graph $\Gamma_P$ contains no loops.  (It is however possible for a line to meet exactly two vertices in $\Gamma_P$, in which case those two vertices are joined by two edges, rather than one.)  Also, by construction, each vertex of $\Gamma_P$ is incident to at least two lines in $P^*$.  As such, the graph $\Gamma_P$ partitions the projective plane $\R\P^2$ into some number $V$ of vertices, some number $E$ of edges, and some number $F$ of faces, each of which is the projective image of a polygon.  In particular, each face has at least three edges, and any edge is incident to two distinct faces.

By Euler's formula in the projective plane $\R\P^2$ we have\footnote{The Euler characteristic $\chi(\R\P^2)=1$ of the projective plane is of course half of the Euler characteristic $\chi(S^2) = 2$ of the sphere, as the latter is a double cover of the former.}
\begin{equation}\label{euler} V - E + F = 1.\end{equation}
To proceed further, suppose that for each $k = 2,3,4,\dots$ the set $P$ has $N_k$ lines containing precisely $k$ points of $P$. Then $V$, which by duality is the number of lines defined by pairs of points in $P$, satisfies
\begin{equation}\label{v-eq} V = \sum_{k = 2}^n N_k.\end{equation}
Furthermore the degree $d(\ell^*)$ of a vertex $\ell^*$ in our graph is twice the number of lines in $P^*$ passing through $\ell^*$, which is $2|P \cap \ell|$. Thus, summing over all lines $\ell$, 
\begin{equation}\label{e-eq} 2E = \sum_{\ell} d(\ell^*) = 2\sum_{\ell} |P\cap \ell| = \sum_{k = 2}^n 2k N_k.\end{equation}
Finally, for $s = 3,4,5,\dots$ write $M_s$ for the number of faces in $\Gamma_P$ with $s$ edges. Since each edge is incident to exactly two faces, we have
\begin{equation}\label{f-eq} 2E = \sum_{s = 3}^n s M_s.\end{equation}
Combining \eqref{euler}, \eqref{v-eq}, \eqref{e-eq} and \eqref{f-eq} gives the following expression for $N_2$, the number of ordinary lines:
\begin{equation}\label{ordinary-eq} N_2 = 3 + \sum_{k = 4}^n (k-3)N_k + \sum_{s = 4}^n (s-3) M_s.  \end{equation}
It follows immediately that $N_2 \geq 3$, which of course implies the Sylvester-Gallai theorem.
\end{proof}

After discarding the non-negative term $\sum_{s=4}^n (s-3) M_s$, the equation \eqref{ordinary-eq} implies \emph{Melchior's inequality}
$$
N_2 \geq 3 + \sum_{k = 4}^n (k-3)N_k.$$
In this paper, however, we will need to save the term $\sum_{s=4}^n (s-3) M_s$, as it gives crucial control on the geometry of the dual configuration $\Gamma_P$, ensuring that this configuration resembles a triangulation when $N_2$ is small.  More precisely, we have the following proposition.

\begin{proposition}[Few bad edges]\label{bad-edges-prop}
Suppose that $P$ is a set of $n$ points in the projective plane $\R\P^2$, not all on a line, and suppose that $P$ has at most $Kn$ ordinary lines. Consider the planar graph $\Gamma_P$ obtained by dualising $P$ as described above. Then $\Gamma_P$ is an ``almost triangulation''  in the following sense. Say that an edge of $\Gamma_P$ is \emph{good} if both of its vertices have degree $6$, and if both faces it adjoins are triangles. Say that it is \emph{bad} otherwise. Then the number of bad edges in $\Gamma_P$ is at most $16Kn$.
\end{proposition}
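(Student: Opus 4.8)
The idea is to run a weighted count over the vertices and faces of $\Gamma_P$, charging each bad edge to either a vertex of degree $\neq 6$ or a face that is not a triangle, and then to bound the number of such ``defective'' vertices and faces using the identity \eqref{ordinary-eq}. First I would isolate the two sources of badness: an edge is bad precisely because one of its (at most two) endpoints has degree $\neq 6$, or one of its two adjoining faces has more than three edges (recall every face has at least three edges). So it suffices to bound separately the number of edges incident to a vertex of degree $\neq 6$ and the number of edges incident to a non-triangular face, and then add.

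**Bounding edges at non-triangular faces.** Each non-triangular face with $s \geq 4$ edges contributes at most $s$ bad edges, so the total number of edges adjoining a non-triangular face is at most $\sum_{s \geq 4} s M_s \leq 4 \sum_{s\geq 4}(s-3)M_s$ (using $s \leq 4(s-3)$ for $s \geq 4$). By \eqref{ordinary-eq}, $\sum_{s\geq 4}(s-3)M_s \leq N_2 \leq Kn$, so this source contributes $O(Kn)$ bad edges, with an explicit constant.

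**Bounding edges at vertices of degree $\neq 6$.** Here is where the main work lies. Vertices of degree $\neq 6$ split into those of degree $> 6$, which correspond to $k$-rich lines of $P$ with $k \geq 4$ and are controlled directly by the $\sum_{k\geq 4}(k-3)N_k$ term in \eqref{ordinary-eq}, and those of degree $< 6$, i.e. degree $2$ or $4$ (degree $2$ vertices are dual to ordinary lines). The degree-$2$ vertices are few — there are exactly $N_2 \leq Kn$ of them — and each contributes at most $2$ bad edges. The genuinely delicate case is degree-$4$ vertices (dual to $3$-rich lines, $N_3$ of which can be as large as $\sim n^2$), which are \emph{not} directly bounded by \eqref{ordinary-eq}. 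The trick is a discharging/averaging argument: combine the Euler relation with the face-degree count to show that if a vertex has degree $4$ (or more generally low degree) then the faces around it must on average be ``large'', so that low-degree vertices are forced to be adjacent to non-triangular faces or to high-degree vertices, which are already counted. Concretely, I would sum the angle-type quantity $\sum_{\text{faces } f}(6 - s(f)) + \sum_{\text{vertices } v}(6 - d(v))$; using $2E = \sum_v d(v) = \sum_s sM_s$ and Euler's formula \eqref{euler} this equals $6(V - E + F) = 6$, a fixed constant, so the total positive ``excess'' $\sum_{d(v)<6}(6-d(v)) + \sum_{s<6}(6-s)M_s = 6 + \sum_{d(v)>6}(d(v)-6) + \sum_{s>6}(s-6)M_s$; since there are no faces with fewer than $3$ edges, $\sum_{s<6}(6-s)M_s$ involves only $s=3,4,5$ and telescopes against the $\sum(s-3)M_s \leq Kn$ bound, while $\sum_{d(v)>6}(d(v)-6) = \sum_{k\geq 7}(k-6)N_k \leq \sum_{k\geq 4}(k-3)N_k \leq Kn$ by \eqref{ordinary-eq}. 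This pins down $\sum_{d(v)\in\{2,4\}}(6-d(v)) = O(Kn)$, hence the number of vertices of degree $2$ or $4$ is $O(Kn)$; as each has bounded degree it contributes $O(Kn)$ edges. Vertices of degree $\geq 7$: their number, weighted by degree, is $\sum_{k\geq 7} kN_k \leq 7\sum_{k\geq 7}(k-6)N_k = O(Kn)$, so again $O(Kn)$ incident edges.

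**Assembling the constant.** Summing the contributions — $O(Kn)$ from non-triangular faces, $O(Kn)$ from degree-$2$ and degree-$4$ vertices, $O(Kn)$ from degree-$\geq 7$ vertices — gives a bound of the form $CKn$ for an explicit $C$; one then just has to be a little careful with the numerology (keeping track of the factors $4$ from $s \leq 4(s-3)$, the factor $2$ from two endpoints / two faces per edge, and the coefficients in the telescoping of $\sum_{s=3,4,5}(6-s)M_s$) to check that the accumulated constant is at most $16$. The main obstacle is genuinely the degree-$4$ vertices: one must resist the temptation to bound $N_3$ directly (impossible, it can be quadratic) and instead see that the global Euler identity \eqref{ordinary-eq}, rewritten in the ``$6$-excess'' form above, already controls the number of \emph{bad} (as opposed to all) degree-$4$ vertices, because a degree-$4$ vertex all of whose surrounding structure is perfectly triangular-hexagonal simply does not exist locally in isolation — it forces a defect nearby that is paid for by the $N_k$ and $M_s$ terms. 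Making that bookkeeping precise, with the constant coming out below $16$, is the crux.
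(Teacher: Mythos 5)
There is a genuine gap, and it comes from a misreading of the dual graph. In $\Gamma_P$ a vertex $\ell^*$ has degree $2|P \cap \ell|$ (each of the $|P\cap\ell|$ dual lines through $\ell^*$ contributes two edge-ends), so ordinary lines give degree-$4$ vertices, $3$-rich lines give degree-$6$ vertices -- that is exactly why degree $6$ is the ``good'' degree -- and $k$-rich lines with $k \geq 4$ give degree $\geq 8$; there are no degree-$2$ vertices at all. Your ``genuinely delicate case'' of degree-$4$ vertices dual to $3$-rich lines therefore does not exist: the degree-$4$ vertices are precisely the $N_2 \leq Kn$ ordinary lines, and the vertices of degree $>6$ are controlled directly by $\sum_{\ell:\, d(\ell^*)>6} d(\ell^*) = \sum_{k\geq 4} 2kN_k \leq 8\sum_{k\geq 4}(k-3)N_k \leq 8Kn$ from \eqref{ordinary-eq}. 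The paper's proof is exactly your first two steps (charge bad edges to non-triangular faces, giving $\sum_{s\geq4} sM_s \leq 4Kn$, and to vertices of degree $\neq 6$, giving $\leq 12Kn$), with no discharging needed; the whole difficulty you locate in the degree-$4$ vertices is an artefact of the wrong degree count.

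Moreover, the discharging argument you propose to handle that case is itself incorrect. The identity $\sum_f (6-s(f)) + \sum_v (6-d(v)) = 6(V-E+F)$ is false: since $\sum_v d(v) = \sum_f s(f) = 2E$, the left-hand side equals $6(V+F) - 4E = 6 + 2E$ by \eqref{euler}, not $6$. The correct Euler-type identity weights faces by $6 - 2s(f)$, in which case the $s=3$ term vanishes; as you wrote it, the face sum $\sum_{s\in\{3,4,5\}}(6-s)M_s$ contains $3M_3$, which is typically of order $n^2$ and certainly does not ``telescope against'' $\sum_{s\geq4}(s-3)M_s \leq Kn$. Indeed your conclusion would bound the number of degree-$4$ vertices by $O(Kn)$, which under your own identification (degree $4$ $\leftrightarrow$ $3$-rich lines) would bound $N_3$ by $O(Kn)$ -- contradicting your correct observation that $N_3$ can be quadratic. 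Once the degrees are fixed ($d(\ell^*) = 2|P\cap\ell|$), no such argument is needed and the paper's bookkeeping gives the stated $16Kn$ immediately.
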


\begin{proof} From \eqref{ordinary-eq} we have
\begin{equation}\label{first-eq}
 \sum_{s=4}^n s M_s \leq 4 \sum_{s=4}^n (s-3)M_s \leq 4N_2 \leq 4Kn.
\end{equation}
Secondly, let us observe that
\begin{equation}\label{second-eq} \sum_{\ell : d(\ell^*) \neq 6} d(\ell^*) \leq 12Kn.\end{equation}
To see this, recall that $d(\ell^*) = 2 |P\cap\ell|$. We thus obtain
\begin{align*} \sum_{\ell : d(\ell^*) > 6} d(\ell^*)  = 2\sum_{\ell : |P\cap\ell| > 3} & |P\cap\ell|  \\ & = \sum_{k \geq 4} 2kN_k  \leq 8 \sum_{k \geq 4} (k-3) N_k  \leq 8Kn.\end{align*}
Noting also that 
\[ \sum_{\ell : d(\ell^*) = 4} d(\ell^*) = 2\sum_{\ell : |P\cap\ell| = 2} |P\cap\ell| = 4 N_2 \leq 4Kn,\] 
\eqref{second-eq} follows.

Now we can place an upper bound on the number $B$ of bad edges. Each face with $s > 3$ sides gives $s$ bad edges, and each vertex $\ell^*$ with degree $d(\ell^*) \neq 6$ gives $d(\ell^*)$ bad edges.  As these are the only sources of bad edges, we have
\[ B \leq \sum_{s > 3} s M_s + \sum_{\ell : d(\ell^*) \neq 6} d(\ell^*) \leq 16Kn,\] 
by \eqref{first-eq} and \eqref{second-eq}.
\end{proof}

\section{Triangular structure in the dual and cubic curves}\label{dual-cubic}

In general, the number of edges overall in $\Gamma_P$ is expected to be of the order of $n^2$ (cf. Beck's theorem \cite{beck}).  Thus, when $K$ is small, Proposition \ref{bad-edges-prop} should be viewed as an assertion that almost all the edges of $\Gamma_P$ are good. For instance, it shows that any dual line $p^*$, $p \in P$ should contain at most $O(K)$ bad edges on the average.  Intuitively, this suggests that $\Gamma_P$ is an ``almost triangulation'' in which most vertices have degree $6$ and most faces are triangles. In Section \ref{section-5} we will use this information to put the points of $P$ on a small number of cubic curves, which will be our starting point for more powerful structural theorems on $P$.

By a \emph{cubic curve} we mean a set of points in $\R\P^2$ of the form
\begin{align*} \{[X,Y,Z]: & a_1X^3 + a_2X^2 Y + a_3XY^2 + a_4Y^3 + a_5X^2 Z \\ & + a_6XYZ + a_7Y^2 Z  + a_8XZ^2 + a_9YZ^2 + a_{10}Z^3 = 0 \}\end{align*}
for some coefficients $a_1,\ldots,a_{10} \in \R$, not all zero, or in other words the locus of a nontrivial homogeneous polynomial of degree $3$. Note that we do not assume this polynomial to be irreducible. In particular, we consider the union of three lines, as well as the union of a conic and a line, to be examples of cubic curves.

A key observation in our arguments will be the fact that pockets of true triangular structure in the dual $\Gamma_P$ signify a collection of points of $P$ lying on a single cubic curve. Results of this type may be found in Lemmas \ref{tri-cubic} and \ref{tri-cubic-detailed} below. A key ingredient will be the following incredibly classical fact from projective geometry, usually known as the \emph{Cayley-Bacharach theorem} (although the case we require was proven by Chasles \cite{chasles}, prior to the more general results of Cayley \cite{cayley} and Bacharach \cite{bacharach}).

\begin{proposition}[Chasles]\label{chas}  Suppose that two sets of three lines define nine distinct points of intersection in $\R\P^2$.  Then any cubic curve passing through eight of these points also passes through the ninth.
\end{proposition}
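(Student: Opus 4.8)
The plan is to run the classical ``add one point, then factor repeatedly'' argument. Write the first triple of lines as the zero sets of linear forms $\ell_1,\ell_2,\ell_3$ and the second triple as the zero sets of $m_1,m_2,m_3$; let $L_i,M_j$ denote the corresponding lines and set $p_{ij} := L_i \cap M_j$, so that the nine points form a $3\times 3$ array. The two cubic forms $A := \ell_1\ell_2\ell_3$ and $B := m_1m_2m_3$ both vanish at all nine $p_{ij}$. Let $D$ be a cubic form vanishing at eight of the nine points; after relabelling (permuting rows and columns) we may assume the omitted point is $p_{33}$, and we must show $D(p_{33}) = 0$. The observation that makes everything work is that the line $L_1$ already contains \emph{three} of the eight points we are given, namely $p_{11}, p_{12}, p_{13}$.

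First I would choose an auxiliary point $r \in L_1$ with $r \neq p_{11},p_{12},p_{13}$. Since $L_1$ meets $M_j$ only in $p_{1j}$ (here one uses that $L_i \neq M_j$, forced by the nine points being distinct), the form $B$ is nonzero at $r$, so there is a scalar $c$ with $D(r) = c\,B(r)$; put $D' := D - cB$. Then $D'$ is a cubic form vanishing at the eight points and at $r$, hence at the four distinct collinear points $p_{11},p_{12},p_{13},r$ of $L_1$. By B\'ezout (a curve of degree $d$ meeting a line in more than $d$ points must contain that line) we get $D' = \ell_1 Q$ for a quadratic form $Q$. Now $Q$ vanishes at $p_{21},p_{22},p_{23}$, which lie on $L_2$ but not on $L_1$, so that $\ell_1$ is nonzero there and $Q$ must vanish; these are three distinct collinear points, so again by B\'ezout $Q = \ell_2 \ell$ for a linear form $\ell$. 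Finally $\ell$ vanishes at the two distinct points $p_{31},p_{32}$, which lie on $L_3$ but on neither $L_1$ nor $L_2$; two distinct points determine a line, so $\ell$ is a scalar multiple of $\ell_3$. Therefore $D'$ is a scalar multiple of $A$, whence $D = (\mathrm{scalar})\,A + cB$; since both $A$ and $B$ vanish at $p_{33}$, so does $D$.

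The only place requiring care is the repeated assertion that the various $p_{ij}$ do not lie on the ``wrong'' lines $L_1$ or $L_2$ used above; this is exactly where the hypothesis that the nine intersection points are distinct gets used. For instance, if $p_{2j}$ lay on $L_1$ then $L_1,L_2,M_j$ would be concurrent at that point, forcing $p_{1j}=p_{2j}$; and likewise $L_i = M_j$ would make $L_i\cap M_j$ fail to be a single point. These are one-line checks, but they all have to be made. I would also note that the argument never uses irreducibility of $D$ (or of $A,B$), so no separate treatment of unions of lines, or of a conic plus a line, is needed, and it degrades gracefully if $A$ and $B$ happen to be proportional. An alternative route is a pure dimension count — show the eight points impose independent linear conditions on the $10$-dimensional space of cubic forms, so that the cubics through them are exactly the pencil $\langle A,B\rangle$ — but proving that independence means exhibiting, for each of the eight points, a union of three lines through the other seven and missing that point, and in degenerate configurations this bookkeeping is fussier than the B\'ezout argument; so I would present the version above.
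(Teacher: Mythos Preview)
Your argument is correct and is in fact the standard classical proof of Chasles's theorem via successive factoring. The paper itself does not supply a proof of this proposition: it treats the result as classical and simply refers the reader to \cite{eisenbud-green-harris} and the blog post \cite{tao-blog-cayley-bacharach} for discussion. So your write-up goes beyond what the paper provides, and the approach you give (pass to the pencil $\langle A,B\rangle$ by adjusting at an auxiliary point of $L_1$, then peel off $\ell_1,\ell_2,\ell_3$ in turn using B\'ezout) is exactly the argument one finds in those references. Your remarks about where the distinctness hypothesis enters are accurate and worth keeping.
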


This situation is shown in Figure \ref{chasles-fig} below. See \cite{eisenbud-green-harris} or the blog post \cite{tao-blog-cayley-bacharach} for a discussion of this result, including its link to Pappus's theorem, Pascal's theorem, and the associativity of the group law on an elliptic curve. 

\begin{figure}\includegraphics{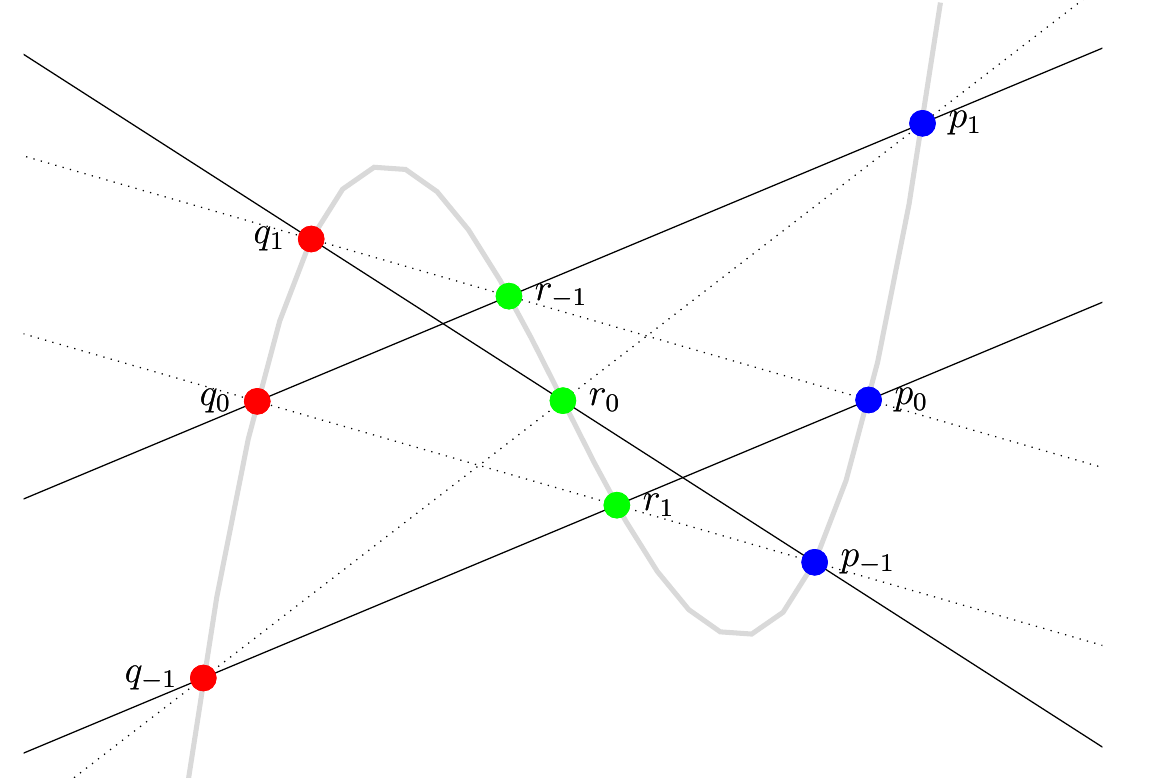}
\caption{Chasles's theorem. There are two sets of three lines: the solid lines $\overline{\{p_0,q_{-1},r_1\}}$, $\overline{\{p_1, q_0, r_{-1}\}}$, $\overline{\{p_{-1}, q_0, r_1\}}$ and the dotted lines $\overline{\{p_0, q_1, r_{-1}\}}$, $\overline{\{p_1, q_{-1}, r_0\}}$ and $\overline{\{p_{-1}, q_0, r_1\}}$. The nine points of intersection $p_{-1}$, $p_0$, $p_1$, $q_{-1}$, $q_0$, $q_1$, $r_{-1}$, $r_0$ and $r_1$ are all distinct. Any cubic curve passing through $8$ of these points also passes through the $9$th; one such curve is shown.}
\label{chasles-fig}
\end{figure}

Proposition \ref{chas} allows one to establish some duality relationships\footnote{Let us note once again that rather similar results were obtained earlier, in a completely different context, by Carnicer and God\'es \cite{carnicer-godes}.} between triangular grids and cubic curves. We first define what we mean by a triangular grid:

\begin{definition}[Triangular grid]\label{trio}  Let $I,J,K$ be three discrete intervals in $\Z$ \textup{(}thus $I$ takes the form $\{ i \in \Z: i_- \leq i \leq i_+\}$ for some integers $i_-, i_+$, and similarly for $J$ and $K$\textup{)}.  A \emph{triangular grid} with dimensions $I,J,K$ is a collection of lines $(p_i^*)_{i \in I}, (q_j^*)_{j \in J}, (r_k^*)_{k \in K}$ in $\R\P^2$, which we will view as duals of not necessarily distinct points $p_i, q_j, r_k$ in $\R\P^2$, obeying the following axioms:
\begin{itemize}
\item[(i)] If $i \in I, j \in J, k \in K$ are integers with $i+j+k=0$, then the lines $p_i^*, q_j^*, r_k^*$ are distinct and meet at a point $P_{ijk}$.  Furthermore, this point $P_{ijk}$ is not incident to any line in the grid which is not already identical to one of the lines $p_i^*, q_j^*, r_k^*$.  Thus, for instance, if $i' \in I$ is such that $p_{i'}^* \neq p_i^*, q_j^*, r_k^*$, then $p_{i'}^*$ cannot contain $P_{ijk}$.
\item[(ii)]  If $i \in I, j,j' \in J, k,k' \in K$ are such that $i+j+k=i+j'+k'=0$ and $0 < |j-j'| \leq 2$ \textup{(}or equivalently $0 < |k-k'| \leq 2$\textup{)}, then the intersection points $P_{ijk}$ and $P_{ij'k'}$ are distinct. In particular, this forces $q_j^* \neq q_{j'}^*$ and $r_k^* \neq r_{k'}^*$.  Similarly for cyclic permutations of $i,j,k$ and of $j',k'$.
\end{itemize}
\end{definition}

An example of a triangular grid is depicted in Figure \ref{triangular-grid}. 

\begin{figure}\includegraphics[scale=0.8]{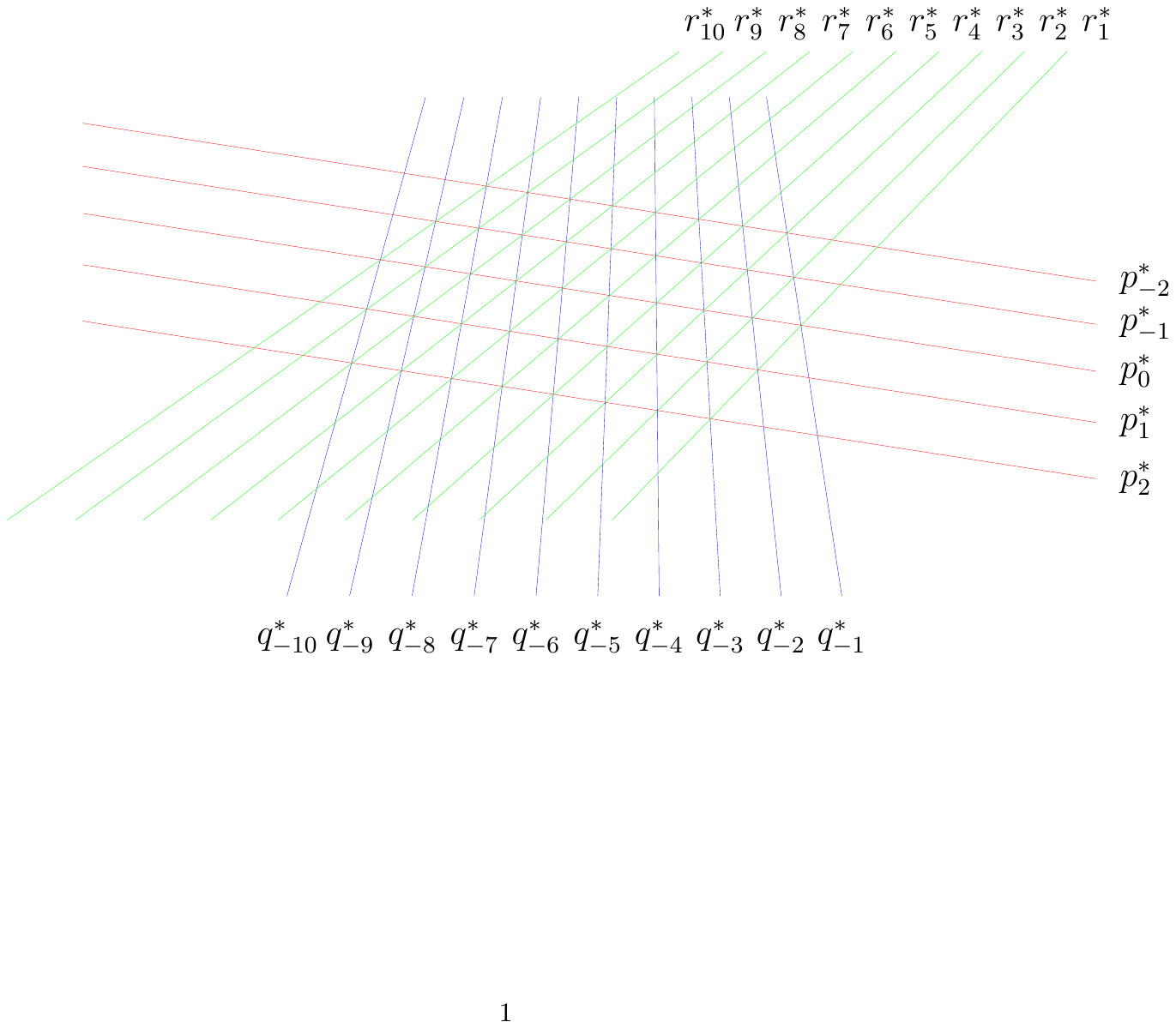}
\caption{A triangular grid with dimensions $\{-2,\dots,2\}$, $\{-10,\dots,-1\}$ and $\{1,\dots,10\}$.}
\label{triangular-grid}
\end{figure}

The following basic consequence of Proposition \ref{chas} drives our whole argument.

\begin{lemma}[Completing a hexagon]\label{hexcomp}  Let $i_0,j_0,k_0$ be integers with $i_0+j_0+k_0=0$, let $I := \{i_0-1,i_0,i_0+1\}$, $J := \{j_0-1,j_0,j_0+1\}$, $K := \{k_0-1,k_0,k_0+1\}$, and let $(p_i)_{i \in I}, (q_j)_{j \in J}, (r_k)_{k \in K}$ be triples of points whose duals form a triangular grid with dimensions $I,J,K$.  Then the nine points $(p_i)_{i \in I}, (q_j)_{j \in J}, (r_k)_{k \in K}$ are distinct, and any cubic curve which passes through eight of them passes through the ninth.
\end{lemma}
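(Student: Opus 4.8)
The plan is to deduce the lemma directly from Chasles's theorem (Proposition~\ref{chas}) by locating inside the $3\times 3\times 3$ triangular grid exactly the ``hexagonal'' configuration of six lines and nine points drawn in Figure~\ref{chasles-fig}. After translating the index intervals I may assume $i_0=j_0=k_0=0$, so $I=J=K=\{-1,0,1\}$. The triples $(i,j,k)\in\{-1,0,1\}^3$ with $i+j+k=0$ are $(0,0,0)$ together with the six permutations of $(1,-1,0)$; for each of these, axiom~(i) of Definition~\ref{trio} asserts that $p_i^\ast,q_j^\ast,r_k^\ast$ are concurrent, so dually the points $p_i,q_j,r_k$ are collinear. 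The six non-central triples can be cyclically ordered so that consecutive ones share exactly one of the nine points $p_i,q_j,r_k$, exhibiting them as the six lines of a hexagon; the two alternating families give two triples of lines: $\mathcal A$, namely the lines through $\{p_1,q_0,r_{-1}\}$, through $\{p_{-1},q_1,r_0\}$, and through $\{p_0,q_{-1},r_1\}$; and $\mathcal B$, namely the lines through $\{p_1,q_{-1},r_0\}$, through $\{p_0,q_1,r_{-1}\}$, and through $\{p_{-1},q_0,r_1\}$. A direct check of the nine pairings of a line of $\mathcal A$ with a line of $\mathcal B$ shows their intersection points are precisely the nine points $p_i,q_j,r_k$: the six pairs adjacent in the hexagon produce the ``outer'' points $p_{\pm1},q_{\pm1},r_{\pm1}$, and the three ``long diagonal'' pairs produce the ``central'' points $p_0,q_0,r_0$. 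Granting that the nine points are distinct and the six lines of $\mathcal A\cup\mathcal B$ are distinct, Proposition~\ref{chas} applies and yields exactly the conclusion of the lemma.

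The substance therefore lies in verifying distinctness. That the three points within each family are distinct ($p_{-1},p_0,p_1$ pairwise distinct, and cyclically) is immediate from axiom~(ii), which directly forces $q_j^\ast\ne q_{j'}^\ast$ whenever $0<|j-j'|\leq 2$ (and cyclically), the auxiliary indices it requires being available throughout the $3\times3\times3$ box. For two points from different families, say $p_i$ and $q_j$: if $-i-j\in K$ -- which fails only for $(i,j)\in\{(1,1),(-1,-1)\}$ -- then $p_i^\ast\ne q_j^\ast$ follows at once from the ``distinct'' clause of axiom~(i) applied to the valid triple $(i,j,-i-j)$. By the cyclic symmetry of the three families and the symmetry $(i,j,k)\mapsto(-i,-j,-k)$, the only surviving case is $p_1^\ast=q_1^\ast$, which I would rule out by a short propagation argument: if $p_1^\ast=q_1^\ast=:\ell$, then $\ell$ contains the grid points $P_{1,0,-1}$ (which lies on $p_1^\ast$) and $P_{0,1,-1}$ (which lies on $q_1^\ast$), and these are distinct by axiom~(ii) and both lie on $r_{-1}^\ast$, forcing $\ell=r_{-1}^\ast$; similarly $\ell$ contains $P_{1,-1,0}$ and $P_{-1,1,0}$, which are distinct and both lie on $r_0^\ast$, forcing $\ell=r_0^\ast$; hence $r_{-1}^\ast=r_0^\ast$, contradicting axiom~(ii). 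Once the nine family lines are known to be distinct, distinctness of the six lines of $\mathcal A\cup\mathcal B$ follows: a line of $\mathcal A$ and a line of $\mathcal B$ are the duals of grid points agreeing in exactly one coordinate and differing by at most $2$ in the other two, hence distinct by axiom~(ii); and two lines within $\mathcal A$ (or within $\mathcal B$) are the duals of grid points $P_{ijk},P_{i'j'k'}$ agreeing in no coordinate, so were they equal the common point would lie on a grid line not among its own three, which axiom~(i) forbids.

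I expect the distinctness bookkeeping of the second paragraph to be the only real obstacle: it is entirely elementary, but one must invoke the correct clause of the grid axioms in each of the finitely many cases, the single genuinely non-automatic case being the collapse $p_1^\ast=q_1^\ast$ handled above. Once distinctness is in hand, the lemma is an immediate application of Proposition~\ref{chas} to the pair of triples $\mathcal A,\mathcal B$.
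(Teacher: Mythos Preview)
Your proposal is correct and follows essentially the same route as the paper: reduce to Chasles's theorem (Proposition~\ref{chas}) after verifying that the nine points are distinct, with the only non-automatic case being $p_1\neq q_1$ (and its symmetric variants). Your treatment is in fact more thorough than the paper's, which leaves the identification of the six lines and their pairwise distinctness implicit; for the corner case the paper argues slightly more directly---if $p_1^\ast=q_1^\ast$ then $P_{1,-1,0}$ and $P_{-1,1,0}$ both lie on $p_1^\ast=q_1^\ast$ and on $r_0^\ast$ and hence coincide, contradicting axiom~(ii)---but your propagation argument is equally valid.
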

\begin{proof}  By relabeling, we may assume that $i_0=j_0=k_0=0$, thus the nine points are $p_{-1},p_0,p_1,q_{-1},q_0,q_1,r_{-1},r_0,r_1$. Once it is shown that these nine points are distinct, their duals form a ``hexagon''  as depicted in Figure \ref{hex} below as part of a larger triangular grid. The configuration in Figure \ref{hex}, however, is precisely the dual of the configuration of 9 points appearing in Chasles's theorem (see Figure \ref{chasles-fig}), and the claim then follows.

It remains to establish the distinctness of the nine points.  By applying Definition \ref{trio} (ii) to the intersections of $p_i^*,q_0^*,r_{-i}^*$ for $i=-1,0,1$ we see that the $p_{-1},p_0,p_1$ are distinct; similarly for $q_{-1},q_0,q_1$ and $r_{-1},r_0,r_1$.  Next, from Definition \ref{trio}(i) we see that $p_i$ and $q_j$ are distinct as long as $-1 \leq i+j \leq 1$, and similarly for cyclic permutations.  The only remaining claim left to check, up to permutations and reflections, is that $p_1$ and $q_1$ are distinct.  But if these two points coincided, then the intersections of $p_1^*, q_{-1}^*, r_0^*$ and $p_{-1}^*, q_1^*, r_0^*$ would then also coincide, contradicting Definition \ref{trio}(ii).  \end{proof}

\begin{figure}
\includegraphics[scale=0.8]{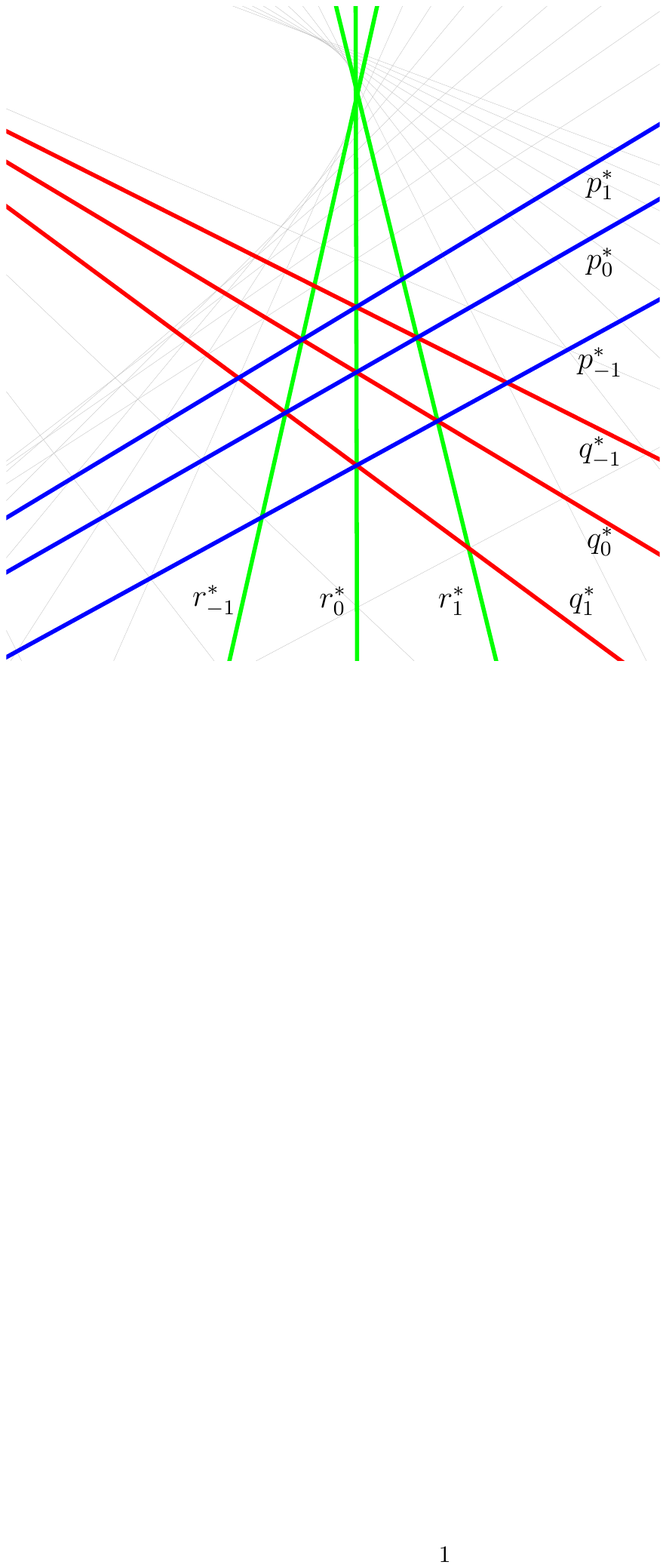}
\caption{The dual of the configuration in Chasles's theorem, showing a ``hexagon'' formed by the duals of two sets of three lines. The light grey lines are duals of other points on the cubic curve shown in Figure \ref{chasles-fig}, specifically those points in longer arithmetic progressions (in the group law on $\gamma$) containing the $p_i$, $q_j$, $r_k$.  We have included them mainly for aesthetic interest, but also as a more complicated example of a triangular grid.}
\label{hex}
\end{figure}

We now iterate the above proposition.

\begin{lemma}\label{tri-cubic}
Suppose that $m \geq 4$ is an integer and that $i_-, i_+$ are integers with $2 \leq i_+ \leq m-2$ and $2-m \leq i_- \leq -1$.
Suppose that we have a collection of points $(p_i)_{i_- \leq i \leq i_+}$, $(q_j)_{-m \leq j \leq -1}$ and $(r_k)_{1 \leq k \leq m}$ in $\R\P^2$ whose duals form a triangular grid with the indicated dimensions \textup{(}The case $i_- = -2$, $i_+ = 2$ and $m = 10$ is illustrated in Figure \ref{triangular-grid}\textup{)}. Then all of the points $p_i, q_j, r_k$ lie on a single cubic curve $\gamma$. 
\end{lemma}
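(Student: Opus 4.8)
\emph{Proof proposal.} The plan is to produce a single cubic curve $\gamma$ and then show, by repeated application of Lemma \ref{hexcomp}, that $\gamma$ passes through every point of the grid. Two standard facts are used throughout. First, the cubic curves in $\R\P^2$ are parametrised by a $9$-dimensional projective space (ten coefficients up to scaling), and passing through a prescribed point is a single linear condition, so through any eight points there passes at least a pencil of cubics. Second, the restriction of a triangular grid to sub-intervals $I' \subseteq I$, $J' \subseteq J$, $K' \subseteq K$ is again a triangular grid (the axioms of Definition \ref{trio} are inherited by sub-configurations), so Lemma \ref{hexcomp} may be applied to any ``hexagon'' of consecutive indices $\{i_0-1,i_0,i_0+1\}$, $\{j_0-1,j_0,j_0+1\}$, $\{k_0-1,k_0,k_0+1\}$ (with $i_0+j_0+k_0=0$) that sits inside our grid.

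First I would start the iteration. A single hexagon does not suffice on its own, since by Lemma \ref{hexcomp} its nine points impose only eight conditions on cubics, and hence lie on a whole pencil rather than determining a unique curve. To pin down a curve I would use two hexagons $H_1, H_2$ inside the grid sharing exactly seven points; such a pair exists because $|I| = i_+ - i_- + 1 \geq 4$ (from $i_+ \geq 2$, $i_- \leq -1$) and $|J| = |K| = m \geq 4$. The cubics through $H_1$ form a pencil; imposing passage through one of the two points of $H_2 \setminus H_1$ leaves at least one cubic $\gamma$, and this $\gamma$ then contains all nine points of $H_1$ together with that extra point, hence eight of the nine points of $H_2$, hence — by Lemma \ref{hexcomp} applied to $H_2$ — all nine points of $H_2$. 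Fixing this $\gamma$ once and for all, we have a cubic through every point of the union $H_1 \cup H_2$ of two overlapping hexagons; in particular two of the three index-ranges for which $\gamma$ is now known to ``see'' the corresponding points already have length at least $4$.

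The remaining task is propagation: enlarge, one index at a time, the set of indices $i$ (resp.\ $j$, $k$) for which $\gamma$ is known to pass through $p_i$ (resp.\ $q_j$, $r_k$). To adjoin, say, the point $p_a$ just beyond the current $P$-range, one selects a hexagon of the grid with $p$-indices $\{a-2,a-1,a\}$ and with $q$- and $r$-indices lying inside the already-covered $Q$- and $R$-ranges; then eight of its nine points are already on $\gamma$, and Lemma \ref{hexcomp} forces the ninth, $p_a$, onto $\gamma$. Such a hexagon has centre $(a-1,j_0,k_0)$ with $(a-1)+j_0+k_0=0$ and $[j_0-1,j_0+1]$, $[k_0-1,k_0+1]$ inside the covered ranges; as $j_0+k_0$ then varies over an interval about $0$ whose radius grows with the covered ranges, the only obstruction is that $|a-1|$ be not too large, and the hypotheses $2 \leq i_+ \leq m-2$ and $2-m \leq i_- \leq -1$ are precisely calibrated to make the extension go all the way out to $a=i_\pm$. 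The analogous extensions of the $Q$- and $R$-ranges out to $j=-1,-m$ and $k=1,m$ are handled symmetrically, now using that $|I|$ is not too large relative to $m$. One grows all three ranges outward in parallel — keeping each of length at least $3$ (indeed at least $4$ after the bootstrap) and never overshooting the grid's own dimensions — until $\gamma$ is known to pass through every point with $i\in[i_-,i_+]$, $j\in[-m,-1]$, $k\in[1,m]$, which is the assertion of the lemma.

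The main obstacle is purely organisational: one must order the one-index extensions, and choose the hexagon used at each step, so that the eight ``old'' points are genuinely already on $\gamma$ while the ninth is genuinely the new point. This is why the three index-ranges must be kept growing roughly together (each of length $\geq 4$), and it is exactly at the boundary extensions — reaching $i_\pm$ and $\pm m$ — that the quantitative hypotheses $m \geq 4$, $2 \leq i_+ \leq m-2$, $2-m \leq i_- \leq -1$ are needed; an elementary arithmetic check at those extremes shows they are both necessary and sufficient. No individual step goes beyond one application of Lemma \ref{hexcomp} together with a dimension count for cubics, so once the scheme of extensions is fixed the remaining verification is routine; the figure accompanying the statement (the case $i_-=-2$, $i_+=2$, $m=10$) is a convenient guide to the bookkeeping.
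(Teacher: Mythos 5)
Your proposal is correct and follows essentially the same route as the paper: fix a cubic through a nine-point seed by linear algebra/dimension counting, then sweep across the grid one point at a time with Lemma \ref{hexcomp}, the hypotheses on $i_-,i_+$ and $m$ entering only to keep each completing hexagon inside the grid. The only cosmetic difference is the seed -- the paper takes the nine points $p_{-1},p_0,p_1,p_2,q_{-3},q_{-2},q_{-1},r_1,r_2$ directly (four $p$'s, so the alternating $q$/$r$ extensions can start at once), whereas you seed with a hexagon plus one point of an overlapping hexagon, which covers only three $p$'s and so forces you to adjoin $p_2$ (or $p_{-2}$) before the $q,r$ ranges can be grown; a covered $p$-range of length $3$ alone would stall, so your parenthetical ``length at least $3$'' should really be ``length at least $4$'', exactly as in the paper's bookkeeping.
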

\begin{proof}   Consider the nine points $p_{-1},p_0,p_1,p_2,q_{-3},q_{-2},q_{-1}, r_1, r_2$.  The space of cubic homogeneous polynomials is a vector space of dimension $10$, and so by straightforward linear algebra there is a cubic curve $\gamma$ containing these nine points $p_{-1}, p_0, p_1, p_2, q_{-3}, q_{-2}, q_{-1}, r_1$ and $r_2$.  (Note that it is not necessary for the nine points to be distinct in order to obtain this claim.)  We will now claim that all the remaining points $p_i,q_j,r_k$ in the configuration also lie on $\gamma$.

Firstly, by applying Lemma \ref{hexcomp} to the set
$$
p_{-1}, p_0, p_1, q_{-3}, q_{-2}, q_{-1}, r_1, r_2, r_3
$$
we see that as eight of the points already lie in $\gamma$, the ninth point $r_3$ must also. We now know that the 10 points
$$
p_{-1}, p_0, p_1, p_2, q_{-3}, q_{-2}, q_{-1}, r_1, r_2, r_3
$$
all lie on $\gamma$.
Now apply Lemma \ref{hexcomp} to the set
$$
p_{0}, p_1, p_2, q_{-4}, q_{-3}, q_{-2}, r_1, r_2, r_3.
$$
We conclude that $q_{-4}$ also lies on $\gamma$, so now the 11 points
$$
p_{-1}, p_0, p_1, p_2, q_{-4}, q_{-3}, q_{-2}, q_{-1}, r_1, r_2, r_3
$$
all lie on $\gamma$.

Next apply Lemma \ref{hexcomp} to the set
$$
p_{-1}, p_0, p_1, q_{-4}, q_{-3}, q_{-2},  r_2, r_3, r_4
$$
to conclude that $r_{4}$ lies on $\gamma$. We now know that the 12 points
$$
p_{-1}, p_0, p_1, p_2, q_{-4}, q_{-3}, q_{-2}, q_{-1}, r_1, r_2, r_3, r_4
$$
all lie on $\gamma$.

By shifting the $q$ indices down by one and $r$ indices up by one repeatedly, we may then inductively place $q_{-k}$ and $r_k$ in $\gamma$ for all $4 \leq k \leq m$.  Finally, by applying Lemma \ref{hexcomp} inductively to the sets
$$
p_{i-1}, p_{i}, p_{i+1}, q_{-i-3}, q_{-i-2}, q_{-i-1}, r_1, r_2, r_3
$$
for $i=2,\ldots,i_+-1$ (noting that $-i-3 \geq -i_+-2 \geq -m$) we may place $p_i$ in $\gamma$ for all $2 < i \leq i_+$, and similarly by applying Lemma \ref{hexcomp} inductively to 
$$
p_{-i-1}, p_{-i}, p_{-i+1}, q_{-3}, q_{-2}, q_{-1}, r_{i+1}, r_{i+2}, r_{i+3}
$$
for $i=1,\ldots,-i_--1$ we can also place $p_{-i}$ in $\gamma$ for all $1 < i \leq -i_-$. This concludes the proof of the claim.
\end{proof}

This lemma is already enough to imply our most basic structural result for sets with few ordinary lines, Proposition \ref{basic-cubic-covering}. To get stronger results, such as Proposition \ref{intermediate}, we need to perform a deeper analysis. The new feature in the following lemma is the last statement.

\begin{lemma}\label{tri-cubic-detailed}
Suppose that $L \geq 10$ and that $m \geq 10L$. Suppose that we have a collection of $4L + 1 + 2m$ points $(p_i)_{-2L \leq i \leq 2L}$, $(q_j)_{-m \leq j \leq -1}$ and $(r_k)_{1 \leq k \leq m}$ in $\R\P^2$ whose duals form a $(4L+1) \times m \times m$ triangular grid with the indicated dimensions.  Assume furthermore that the points $p_i,q_j,r_k$ are all distinct.  Then all of the points $p_i, q_j, r_k$ lie on a single cubic curve $\gamma$, each irreducible component of which contains at least $L$ of the points $p_i, q_j, r_k$. 
\end{lemma}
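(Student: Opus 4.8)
The plan is to build on Lemma~\ref{tri-cubic}, which under our hypotheses (note that $L\geq 10$ and $m\geq 10L$ certainly force $2\leq 2L\leq m-2$ and $2-m\leq -2L\leq -1$, the dimension conditions needed there) already places all $4L+1+2m$ points $p_i,q_j,r_k$ on a single cubic curve $\gamma$. If $\gamma$ has only one irreducible component --- in particular if $\gamma$ is an irreducible cubic, or a triple line --- there is nothing left to prove, since that component contains all $N:=4L+1+2m\geq L$ points. So I may assume $\gamma$ has at least two irreducible components, each of which is then a line or an irreducible conic. Fix one such component $\gamma_0$, and let $\gamma_1$ be the reduced union of the components other than $\gamma_0$, so that $\gamma_1$ has degree at most $2$, shares no component with $\gamma_0$, and $\gamma=\gamma_0\cup\gamma_1$ as point sets. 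I will show $\gamma_0$ contains at least $L$ of the points $p_i,q_j,r_k$; suppose for contradiction that it contains at most $L-1$ of them.

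Since every one of our points lies on $\gamma_0\cup\gamma_1$, at most $L-1$ of them miss $\gamma_1$. In particular at least $(4L+1)-(L-1)=3L+2$ of the $4L+1$ points $p_i$ lie on $\gamma_1$, so there is an index $i$ with $|i|\leq L$ and $p_i\in\gamma_1$ (only $2L$ of the admissible indices have $|i|>L$). Fix such an $i$. The values of $j$ with $-m\leq j\leq -1$ and $1\leq -i-j\leq m$ then fill an interval of length at least $m-|i|\geq m-L$, and for all but at most $2(L-1)$ of these $j$ we have $q_j\in\gamma_1$ and $r_{-i-j}\in\gamma_1$ simultaneously; call the remaining $G\geq m-3L+2$ values of $j$ \emph{good}. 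For each good $j$, Definition~\ref{trio}(i) says that $p_i^*,q_j^*,r_{-i-j}^*$ meet at $P_{i,j,-i-j}$, which dually means that $p_i,q_j,r_{-i-j}$ are collinear; by hypothesis these three points are distinct, and all three lie on $\gamma_1$.

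The key observation is now elementary: a line containing three distinct points of a curve of degree at most $2$ must, by B\'ezout, be an irreducible component of that curve. Hence for every good $j$ the line through $p_i,q_j,r_{-i-j}$ is one of the (at most two) line components $\mu$ of $\gamma_1$, and consequently $P_{i,j,-i-j}=\mu^*$. If $\gamma_1$ is an irreducible conic it has no line component, so no good $j$ can exist, contradicting $G\geq m-3L+2>0$. If $\gamma_1$ is a single line $\mu$, then $P_{i,j,-i-j}=\mu^*$ for \emph{all} good $j$, so Definition~\ref{trio}(ii) forces distinct good values of $j$ to differ by more than $2$; but $G$ integers pairwise more than $2$ apart span an interval of length at least $3(G-1)+1\geq 3(m-3L+2)-2$, which cannot lie inside the $j$-interval of length $\leq m$ when $m\geq 10L$. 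Finally, if $\gamma_1=\mu_1\cup\mu_2$ is a union of two distinct lines, then --- since the good $j$ arise by deleting at most $2(L-1)$ integers from an interval of length $\geq m-L\geq 9L$, leaving at most $2L-1$ runs of total length $\geq m-3L+2$, hence a run of at least $3$ consecutive good values (using $m\geq 10L$) --- there exist good $j_0,j_0+1,j_0+2$; among the three corresponding collinear triples, two lie on a common line $\mu_a$ by pigeonhole, so the two associated points $P_{i,\cdot,\cdot}$ both equal $\mu_a^*$ while the two values of $j$ differ by at most $2$, again contradicting Definition~\ref{trio}(ii). Applying this argument to every irreducible component of $\gamma$ in turn completes the proof.

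The part that needs the most care is this reducible case: one must be a little careful about what ``$\gamma_1$'' is (take the reduced union of the components other than $\gamma_0$, and ignore multiplicities in the defining polynomial), and the passage from Definition~\ref{trio}(ii) to a genuine contradiction splits into the single-line and two-line subcases above. This is exactly the place where the strong asymmetry $m\geq 10L$ of the grid --- rather than a symmetric bound --- is used in an essential way, since it guarantees that each fixed admissible $i$ sees enough good values of $j$ to exhibit a forbidden pair (or triple) of equal grid vertices.
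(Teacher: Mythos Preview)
Your proof is correct, and it takes a genuinely different route from the paper's.  The paper enlarges $\gamma$ (by throwing in extra lines) so as to be in one of two explicit structural cases --- three distinct lines, or an irreducible conic together with a line --- and then, working inside a carefully chosen window $\Omega$ of triples $(i,j,k)$, tracks which of the three (resp.\ two) components each of $p_i,q_j,r_k$ must lie on, eventually exhibiting $L$ specific points on every component.  Your argument instead fixes an arbitrary component $\gamma_0$, assumes it contains fewer than $L$ of the points, and observes that then almost every collinear triple $p_i,q_j,r_{-i-j}$ lies entirely on the complementary curve $\gamma_1$ of degree at most $2$; B\'ezout then forces each such triple to span a \emph{line component} of $\gamma_1$, so that the dual vertices $P_{i,j,-i-j}$ collapse onto at most two points, contradicting the separation axiom in Definition~\ref{trio}(ii).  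The trade-off is that the paper's constructive bookkeeping yields more precise information (it actually identifies which $p_i$, $q_j$, $r_k$ sit on which component), whereas your contradiction argument is shorter and does not require separating the ``three lines'' and ``conic plus line'' cases at the top level --- the only case split you need is on the number of line components of $\gamma_1$.  Both approaches use the asymmetry $m\geq 10L$ in an essential way; yours makes this especially transparent in the run-length/pigeonhole counts.
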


\begin{proof} Note from Definition \ref{trio} and the distinctness of the $p_i,q_j,r_k$ that the intersection points $P_{ijk} = p_i^* \cap q_j^* \cap r_k^*$ in the grid are all distinct.

That all the $p_i, q_j, r_k$ lie on a single cubic curve $\gamma$ follows from Lemma \ref{tri-cubic}.

If $\gamma$ is already an irreducible cubic then we are done. By enlarging $\gamma$ via the addition of extra lines if necessary, we may otherwise suppose that we are in one of the following two cases:

\emph{Case 1:} $\gamma$ is the union of three distinct lines $\ell, \ell', \ell''$;

\emph{Case 2:} $\gamma$ is the union of an irreducible conic $\sigma$ and a line $\ell$. 

In each case we are to show that all irreducible components contain at least $L$ points $p_i, q_j, r_k$.

In Case 1, consider a triple of points $p_i, q_j, r_k$ with $i + j + k = 0$. Since $p_i, q_j, r_k$ are collinear and lie on $\ell \cup \ell' \cup \ell''$, one of the following two possibilities holds: 
\begin{enumerate}
\item One of the three lines $\ell,\ell',\ell''$ is incident to all three of $p_i, q_j, r_k$ (i.e. the line $\overline{\{p_i,q_j,r_k\}}$ is one of $\ell, \ell'$, or $\ell''$);
\item $p_i, q_j, r_k$ lie on one of each of the lines $\ell, \ell', \ell''$ (for instance, one could have $p_i \in \ell', q_j \in \ell'', r_k \in \ell$, or any of the other five possible permutations).  Note that we allow a point to lie on more than one of the lines $\ell,\ell',\ell''$.
\end{enumerate}

First of all note that (i) cannot hold for more than three triples $(i,j,k)$ with $i + j + k = 0$. Indeed, as observed previously, the intersection points $P_{ijk} := p_i^* \cap q_j^* \cap r_k^*$ are all distinct, and so the lines containing $\{p_i, q_j, r_k\}$ are distinct for distinct triples $(i,j,k)$.

Let $\Omega$ be the set of triples $(i,j,k)$ with $i + j + k = 0$, $-L \leq i \leq L$, $-2L \leq j < -L$ and $L < k \leq 2L$. Suppose that (i) holds for some triple $(i,j,k) \in \Omega$ and that $p_i, q_j, r_k$ all lie on $\ell$ (say). We now consider the triples $(i', j', k') \in \Omega$ with $i \neq i'$, $j \neq j'$, $k \neq k'$.  With at most two exceptions, (ii) holds for any such triple. Fix one of these triples for which (ii) holds.  One of the points $p_{i'}, q_{j'}, r_{k'}$ then lies on $\ell$. Suppose that $p_{i'}$ lies in $\ell$.  But, noting that $1 \leq -i' - j \leq m$, we see that $p_{i'}, q_j$ and $r_{-i' - j}$ are collinear and so $r_{-i' - j}$ lies on $\ell$ as well. That is, the lines containing $\{ p_i, q_j, r_k\}$ and $\{p_{i'}, q_j, r_{-i'-j}\}$ are the same. This is a contradiction as we noted above.  Similarly, if $q_{j'}$ lies in $\ell$, then $1 \leq -i -j' \leq m$ and we can conclude that the lines containing $\{p_i,q_j,r_k\}$ and $\{p_i,q_{j'},r_{-i-j'}\}$ are again coincident, a contradiction.  Finally, if $r_{k'}$ lies in $\ell$, then  $-m \leq -i-k' \leq 1$ and the lines containing $\{p_i,q_j,r_k\}$ and $\{p_i,q_{-i-k'},r_{k'}\}$ are coincident, again a contradiction.

It follows that, whenever $(i,j,k) \in \Omega$, we are in case (ii) and not in (i), that is to say the points $p_i, q_j, r_k$ lie on one of each of the lines $\ell, \ell', \ell''$, but do not all lie on one of the lines $\ell$, $\ell'$, or $\ell''$. Suppose without loss of generality that $p_0 \in \ell$, $q_{-2L} \in \ell'$, $r_{2L} \in \ell''$.   
If $q_{-2L + 1} \in \ell''$ then the concurrent lines $p_{-1},q_{-2L+1},r_{2L}$ all lie in $\ell''$; as $(-1,-2L+1, 2L) \in \Omega$, we obtain a contradiction. Similarly, if $q_{-2L+1} \in \ell$, then $p_0, q_{-2L+1},r_{2L+1}$ all lie in $\ell$, again a contradiction.  Thus $q_{-2L + 1} \in \ell'$, which implies $r_{2L-1} \in \ell''$. Repeating this argument we see that in fact all of the points $q_j$, $-2L \leq j < -L$, lie on $\ell'$ and all of the points $r_k$, $L < k \leq 2L$, lie on $\ell''$. Finally, considering the triple $(i, -2L - i, 2L) \in \Omega$, we see that all of the points $p_i$, $0 \leq i < L$, lie on $\ell$. We have established that each of the lines $\ell,\ell',\ell''$ contains at least $L$ of the points $p_i, q_j, r_k$, concluding the proof of the lemma in Case 1.

We turn now to Case 2, where the argument is very similar.  Consider once again a triple of points $p_i, q_j, r_k$ with $i + j + k = 0$. These lie on a line. By B\'ezout's theorem, there are two cases:
\begin{enumerate}
\item $p_i, q_j, r_k$ all lie on $\ell$;
\item two of $p_i, q_j, r_k$ lie on $\sigma$ and the other lies on $\ell$. 
\end{enumerate}
If (i) ever holds for some triple $(i,j,k)$ then there is at most one such triple. Suppose it holds for some triple $(i,j,k) \in \Omega$. There are again many triples $(i',j',k') \in \Omega$ with $i \neq i'$, $j \neq j'$, $k \neq k'$. For any such triple, two of $p_{i'}, q_{j'}, r_{k'}$ lie on $\sigma$ and the other lies on $\ell$. Suppose, without loss of generality, that $p_{i'} \in \ell$. Then, noting that $1 \leq -i' - j \leq m$, we see that $p_{i'}, q_j, r_{-i'-j}$ are collinear and so $r_{-i'-j}$ lies on $\ell$ as well, and thus (i) also holds for the triple $(i', j, -i' - j)$. This leads to a contradiction exactly as before.

It follows that, whenever $(i,j,k) \in \Omega$, two of the points $p_i, q_j, r_k$ lie on $\sigma$ and the other lies on $\ell$. If $p_0 \in \sigma$ then one of $q_{-j}, r_j$ lies on $\sigma$ and the other lies on $\ell$, for each $j$ with $L < j \leq 2L$. Thus both $\sigma$ and $\ell$ contain at least $L$ of the points $p_i, q_j, r_k$.  If, on the other hand, $p_0 \in \ell$, then all the $q_{-j}, r_j$ with $L < j \leq 2L$ lie on $\sigma$. But for each $i$, $|i| \leq L-1$,  there are some $j,k$ with $-2L \leq j < -L$ and $L < k \leq 2L$ such that $(i,j,k) \in \Omega$, and so $p_i \in \ell$ for all these $i$ too. Thus both $\ell$ and $\sigma$ contain at least $L$ of the points $p_i, q_j, r_k$ in this case also.
\end{proof}

We remark that this analysis can be pushed further in order to say something about the distribution of the points $p_i, q_j, r_k$ on (for example) three lines $\ell, \ell',\ell''$. One could most probably give some kind of complete classification of (say) $100 \times 100 \times 100$ triangular grids. However it is also possible to take a self-contained additive-combinatorial approach, leading to better bounds, and this is the technique we pursue in Section \ref{somewhat-collinear}. 

To conclude this section let us remark that a number of beautiful pictures of triangular structures arising from cubic curves (for various different types of cubic) may be found in the paper \cite{carnicer-gasca}, another work in the interpolation theory literature. 

\section{Almost triangular structure and covering by cubics}\label{section-5}

Recall that if $P \subset \R\P^2$ is a set of points then $\Gamma_P$ is the graph defined by the dual lines $p^*$, $p \in P$. We now know (Proposition \ref{bad-edges-prop}) that if $P$ has few ordinary lines then $\Gamma_P$ has a highly triangular structure. We also understand (Lemma \ref{tri-cubic}) that triangular structure in $\Gamma_P$ corresponds to points of $P$ lying on a cubic curve. In this section we put these facts together to prove some of the structural results stated in the introduction. 

The main result is Lemma \ref{cubic-covering-tech}, whose statement and proof are somewhat technical. To convey the main idea (and because we will need it later, and because it may be of independent interest) we first establish the following much easier result. This result also comes with better bounds -- indeed it says something even if one only knows that $P$ spans $o(n^2)$ ordinary lines -- than our more technical later result.

\begin{proposition}[Cheap structure theorem]\label{basic-cubic-covering}
Suppose that $P$ is a finite set of $n$ points in the plane. Suppose that $P$ spans at most $Kn$ ordinary lines for some $K \geq 1$.  Then $P$ lies on the union of $500K$ cubic curves.
\end{proposition}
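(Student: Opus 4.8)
The plan is to combine the "few bad edges" information from Proposition \ref{bad-edges-prop} with the "triangular grid yields a cubic" principle of Lemma \ref{tri-cubic}, iterating a greedy removal of cubic curves. Dualize $P$ to the graph $\Gamma_P$ in $\R\P^2$; since $P$ spans at most $Kn$ ordinary lines, Proposition \ref{bad-edges-prop} tells us $\Gamma_P$ has at most $16Kn$ bad edges, where an edge is good iff both endpoints have degree $6$ and both adjoining faces are triangles. The key geometric point is that a region of $\Gamma_P$ consisting entirely of good edges is combinatorially a piece of the regular triangular lattice, and inside such a region (once it is large enough) one finds a triangular grid in the sense of Definition \ref{trio} of dimensions large enough to invoke Lemma \ref{tri-cubic}; the nine-or-more points whose duals are the lines of this grid then all lie on a single cubic curve.

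First I would make precise the passage from "a vertex with many incident good edges" to "a triangular grid of controlled size". Fix a point $p \in P$ and look at the dual line $p^*$: it is subdivided by $\Gamma_P$ into edges, and on average (over $p \in P$) only $O(K)$ of these are bad, since the total number of bad edges is $O(Kn)$ and each bad edge lies on at most two dual lines. So for a typical $p$, the line $p^*$ contains a long run of consecutive good edges; along such a run all vertices have degree $6$ and all adjacent faces are triangles, which forces the local structure on both sides of $p^*$ to be a strip of the triangular lattice. Extending outward, a run of good edges of length $\gg 1$ along $p^*$, together with the forced triangulated strips above and below, gives a triangular grid containing $p$ with dimensions, say, $3 \times m \times m$ for some bounded-below $m$; more carefully, to feed Lemma \ref{tri-cubic} we need dimensions at least $4 \times m \times m$ with $m \ge 4$ and the index bounds of that lemma, which a good-edge run of bounded length comfortably supplies. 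Hence $p$, together with $\Omega(1)$ other points of $P$, lies on a common cubic curve.

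Now I would run the greedy argument. Repeatedly: if the remaining set $P'$ (initially $P$) still has more than, say, $n/2$ points, then since deleting points only decreases the number of ordinary lines, $P'$ still spans at most $Kn \le 2K|P'|$ ordinary lines, so the bad-edge count for $\Gamma_{P'}$ is $O(K|P'|)$, and by the averaging step above there is a point $p \in P'$ whose dual line carries a long good-edge run, yielding a cubic curve $\gamma$ through $p$ and through at least a fixed fraction — or at least a fixed number $c$ — of points of $P'$; wait, I should be careful: the grid produced has only $O(1)$ rows in the $p$-direction but $m$ can be taken proportional to the length of the good run, and a positive proportion of dual lines have a good run of length $\gg |P'|$ on average only if $K$ is small, so the honest statement is that we extract a cubic through $\gg |P'|/K^{O(1)}$ points. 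Remove those points and repeat. Each step removes a $\gg 1/K^{O(1)}$ fraction, so after $O(K^{O(1)} \log n)$ steps we are down to $\le n/2$ points — but that is too many curves. To get the clean bound $500K$, one instead argues that at each stage one can find a single cubic covering a \emph{constant fraction} of the current point set (not merely $1/K$ of it): among the $\ge |P'|/2$ points with few bad edges on their dual line, a pigeonhole on which triangular-lattice "chunk" they fall into shows that one chunk, hence one cubic, absorbs $\gg |P'|$ of them with an absolute implied constant, losing only $O(K)$ points to the bad edges overall across all stages. Iterating, the number of cubics needed is $O(\log n)$ — and to reach exactly $O(K)$ one sharpens the fraction using that the deficiency is $O(K|P'|)$ so the per-step loss telescopes; the bookkeeping is exactly what is carried out to land on $500K$.

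The main obstacle is precisely this last point: turning the qualitative "good edges force local triangular structure, which forces a cubic" into a quantitative statement with the \emph{right} dependence on $K$ — namely that one cubic curve can be chosen to contain a positive (absolute) proportion of the points, with all the slack absorbed into an additive $O(K)$ term rather than a multiplicative $K^{O(1)}$ factor. Getting the constant down to $500$ (as opposed to merely $O(K)$) requires tracking the bad-edge budget carefully through the iteration and being efficient in how a long good-edge run is converted into a triangular grid of the dimensions demanded by Lemma \ref{tri-cubic}. Everything else — the dualization, Euler-formula input via Proposition \ref{bad-edges-prop}, and the hexagon-completion mechanism of Lemma \ref{hexcomp}/Lemma \ref{tri-cubic} — is already in place.
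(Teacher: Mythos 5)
Your ingredients (Proposition \ref{bad-edges-prop} and the triangular-grid-to-cubic mechanism of Lemma \ref{tri-cubic}) are the right ones, but the greedy iteration you build on them has two genuine gaps. First, the recursion's hypothesis fails: deleting points does \emph{not} only decrease the number of ordinary lines --- removing a point from a $3$-rich line turns that line into an ordinary line of the remaining set, so after stripping off the points on one cubic the residual set $P'$ can span vastly more than $O(K|P'|)$ ordinary lines (for instance, deleting the $m$ points at infinity from $X_{2m}$ leaves $\sim m^2/2$ ordinary lines). Hence you cannot re-apply Proposition \ref{bad-edges-prop} to $P'$, and the whole iterative scheme collapses at the second step. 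Second, even granting the recursion, your own accounting yields $O(K^{O(1)}\log n)$ curves, and the promised repair --- that one cubic absorbs an absolute constant fraction of the points and that ``the bookkeeping lands on $500K$'' --- is asserted but never argued; nothing in your sketch produces a bound on the number of curves that is independent of $n$ and linear in $K$.

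The paper avoids iteration entirely with a one-shot argument that you are missing. After defining a \emph{really good} edge (all paths of length two from both endpoints consist of good edges) and counting that the number of edges failing this is at most $500Kn$, the pigeonhole principle produces a \emph{single} point $p$ whose dual line $p^*$ carries at most $500K$ such edges. These cut $p^*$ into at most $500K$ segments of consecutive really good edges, some possibly degenerating to single vertices. Now every other point $q \in P$ has $q^*$ meeting $p^*$, hence meeting one of these segments or vertices: a genuine segment $S$ places $q$, via Lemma \ref{tri-cubic}, on a cubic $\gamma_S$ depending only on $S$, while a degenerate vertex $v$ places $q$ on the line $v^*$ (itself a cubic). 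So all of $P$ is covered by at most $500K$ cubics in one step --- no removal, no constant-fraction claim, no telescoping. One also needs the degenerate case, treated first in the paper, in which some dual line meets fewer than $500K$ vertices of $\Gamma_P$: then $P$ is covered outright by that many lines, and excluding this case guarantees every dual line meets at least three vertices, a hypothesis your grid-extraction step tacitly requires.
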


\begin{proof}  We first dispose of a degenerate case.  Suppose that one of the dual lines $p^*$, $p \in P$, meets fewer than $500K$ points in $\Gamma_P$.  Then every dual line meets one of these points, which means that $P$ is covered by at most $500K$ lines.  As every line is already a cubic curve, we are done in this case.  Thus we may assume that each dual line $p^*$ meets at least $500K$ points in $\Gamma_P$.  In particular, it meets at least three points of $\Gamma_P$.

Recall the definition of a ``good edge'' of $\Gamma_P$: an edge both of whose vertices have degree $6$, and where both faces adjoining it are triangles. 

Let us say that an edge is \emph{really good} if all paths of length two from both of its endpoints consist entirely of good edges. If we have a segment $S$ of $l \geq 1$ consecutive edges on $p^*$, all of which are really good, then the structure of $\Gamma_P$ is locally that of a triangular grid with dimensions $\{-2,\ldots,2\}$, $\{-l-4,\ldots,-1\}$, $\{1,\ldots,l+4\}$; note that the distinct intersection property of Definition \ref{trio}(i) is automatic since every dual line is assumed to meet at least three points in $\Gamma_P$.  Applying Lemma \ref{tri-cubic}, we conclude that if $S$ is such a segment of consecutive really good edges, containing at least one edge, then the set of $q \in P \setminus \{p\}$ for which $q^*$ meets $S$ all lie on a cubic curve $\gamma_S$ (which also contains $p$). 

If an edge is not really good, we say that it is \emph{somewhat bad}.  We know, by Proposition \ref{bad-edges-prop}, that the number of bad edges is at most $16 K n$. Now associated to any somewhat bad edge $e$ is a path of length $1$, $2$ or $3$ whose first edge is $e$ and whose last edge is bad, and which is furthermore the only bad edge on that path (take a minimal path starting in $e$ and ending in a bad edge). The number of paths of length $3$ of the form bad-good-good is at most $16 K n \times 5 \times 5$, since each vertex of a good edge has degree $6$. Taking account of paths of length $2$ and 1 as well, we obtain an upper bound of $500 K n$ for the number of somewhat bad edges.  

By the pigeonhole principle there is a line $p^*$ which contains $t  \leq 500 K$ somewhat bad edges. These somewhat bad edges partition $p^*$ into $t$ segments of consecutive really good edges (a segment may have length zero). Let the segments with at least one edge be $S_1,\dots, S_{t'}$, and let the segments of length zero, which are simply vertices, consist of vertices $v_{t'+1}, \dots, v_{t}$. 

If $q \in P \setminus \{p\}$, then $q^*$ meets $p^*$ either in a vertex of one of the $S_i$, or in one of the additional vertices $v_j$.  In the former case, as discussed previously, Lemma \ref{tri-cubic} places $q$ in a cubic curve $\gamma_{S_i}$ depending on $S_i$.  In the latter case, $q$ lies in the dual line $v_j^*$.  Such a dual line can be thought of as a (degenerate) cubic curve. Taking the union of all these cubic curves, of which there are at most $t' + (t-t') \leq 500K$, gives the result.
\end{proof}

Proposition \ref{basic-cubic-covering} is already a fairly strong structure theorem for sets with few ordinary lines. It is possible that the ordinary lines in a union of $O(1)$ cubics can be analysed directly, though this certainly does not seem to be straightforward. Fortunately, there is much more to be extracted from Proposition \ref{bad-edges-prop} and the results of Section \ref{dual-cubic}, enabling us to prove more precise statements that  refine Proposition \ref{basic-cubic-covering}, albeit with somewhat worse explicit constants. 

The next lemma is the main technical result of this section. In an effort to make the paper more readable, we have formulated it so that, once it is proven, we will have no further need of the dual graph $\Gamma_P$ and consequences of Melchior's inequality.

\begin{lemma}\label{cubic-covering-tech}  
Suppose that $P$ is a set of $n$ points in the plane. Suppose that $P$ spans at most $Kn$ ordinary lines for some $K \geq 1$, and let $L \geq 10$ be a parameter.   Suppose that $P$ cannot be covered by a collection of $4L$ concurrent lines.  Then for every $p \in P$ there is a partition $P = \{p\} \cup \Sigma_{1,p} \cup \dots \cup \Sigma_{c_p, p}$ with the following properties:
\begin{enumerate}
\item For $i = 1,\dots,c_p$ the points of $\Sigma_{i,p}$ lie on a \textup{(}not necessarily irreducible\textup{)} curve $\gamma_{i,p}$ of degree at most three, which also contains $p$;
\item If $\gamma_{i,p}$ is not a line, then each irreducible component of it contains at least $L$ points of $P$;
\item If $\gamma_{i,p}$ is not a line, then the points of $\Sigma_{i,p}$ may be partitioned into pairs $(q,r)$ such that $p,q,r$ are collinear, and no other points of $P$ are on the line joining $p,q$ and $r$;
\item We have the upper bound $\sum_{p \in P} c_p \leq 2^{19} L^3 K n$ on the average size of $c_p$.
\end{enumerate}
\end{lemma}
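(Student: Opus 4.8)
\emph{Proof plan.}  The plan is to fix a point $p \in P$ and to run the triangular-grid analysis of Section~\ref{dual-cubic} along the dual line $p^*$ inside the graph $\Gamma_P$, just as in the proof of Proposition~\ref{basic-cubic-covering}, but this time insisting that the grids we extract be thick enough in the direction transverse to $p^*$ to be fed into the more refined Lemma~\ref{tri-cubic-detailed} rather than merely Lemma~\ref{tri-cubic}.  First, we may assume $n > 2^{19} L^3 K$, since otherwise the claim is trivial: take $c_p = n-1$, with each $\Sigma_{i,p}$ a single point $q \in P \setminus \{p\}$ lying on the line $\overline{pq}$ through $p$, so that $\sum_p c_p = n(n-1) \leq 2^{19} L^3 K n$.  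In particular $n$ is large compared with any fixed power of $K$ and $L$.  Recall from Proposition~\ref{bad-edges-prop} that $\Gamma_P$ has at most $16Kn$ bad edges, and recall that the hypothesis that $P$ is not covered by $4L$ concurrent lines forces every dual line $p^*$ to meet more than $4L$ vertices of $\Gamma_P$ (the lines through $p$ carrying at least two points of $P$ already cover $P$); this is a mild non-degeneracy which, together with $n$ large, prevents the grids we build from collapsing.

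To each edge $e$ of $\Gamma_P$, which lies on a unique dual line $q^*$, attach its \emph{transverse column}: the portion of the would-be triangular grid obtained by moving up to $2L$ lattice steps away from $q^*$ in each of the two directions transverse to $q^*$.  When it exists, this column has only $O(L^2)$ edges.  Call $e$ \emph{clean} if every edge of its transverse column exists and is good (in particular $e$ itself is good, so both its endpoints have degree $6$), and \emph{unclean} otherwise; an unclean edge has a bad edge somewhere in its transverse column.  The unclean edges divide $p^*$ (a projective, hence cyclic, line) into maximal runs of consecutive clean edges.  A maximal clean run of length at least $10L$ --- a \emph{long run} --- realises $p^*$ as the central line of a $(4L+1) \times m \times m$ triangular grid in the sense of Definition~\ref{trio}, with $m \geq 10L$ and all edges good: cleanness supplies the transverse direction, and the distinctness of the dual points follows from the combinatorial rigidity of all-good regions of $\Gamma_P$.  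By Lemma~\ref{tri-cubic-detailed}, the $4L+1+2m$ points dual to the grid lines lie on a cubic curve $\gamma$ each irreducible component of which carries at least $L$ of them, and $p$ is among these points, so $p \in \gamma$.  Let $\Sigma_{i,p}$ be the set of $q \in P \setminus \{p\}$ whose dual $q^*$ passes through a vertex of the long run.  Since every such vertex has degree $6$, each such $q^*$ meets $p^*$ at the dual of a $3$-rich line $\{p,q,r\}$; this both pairs the points of $\Sigma_{i,p}$ as $(q,r)$ with no other point of $P$ on $\overline{pqr}$ --- giving property~(iii) --- and shows $q \in \gamma$, so that $\gamma_{i,p} := \gamma$ satisfies (i) and (ii).  For each vertex $v$ of $p^*$ not lying in a long run --- i.e. in an unclean stretch or in a short clean run --- take instead the line-type piece $\Sigma_{i,p} := (v^* \cap P) \setminus \{p\}$ on the line $\gamma_{i,p} := v^*$, which contains $p$ because $v$ is the dual of a line through $p$; here (ii) and (iii) are vacuous.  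Since each $q \neq p$ is seen at the vertex $\overline{pq}^* \in p^*$, these pieces partition $P \setminus \{p\}$.

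It remains to bound $\sum_p c_p$.  Write $u_p$ for the number of unclean edges on $p^*$.  The number of maximal clean runs on $p^*$ is at most $u_p + 1$, so there are at most $u_p + 1$ long runs (and hence cubic pieces); the vertices lying in short clean runs number at most $10L(u_p+1)$; and the interior vertices of unclean stretches number at most $u_p$.  Hence $c_p \leq 12L\,(u_p + 1)$.  Since each edge of $\Gamma_P$ lies on exactly one dual line, $\sum_p u_p$ equals the total number of unclean edges; and because a transverse column has $O(L^2)$ edges, each of the at most $16Kn$ bad edges lies in the transverse column of only $O(L^2)$ edges, so $\sum_p u_p = O(L^2 K n)$.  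Therefore
$$ \sum_p c_p \;\leq\; 12L\Bigl(\sum_p u_p + n\Bigr) \;=\; 12L \cdot O(L^2 K n) \;=\; O(L^3 K n), $$
and carrying the absolute constants through (the $16$ of Proposition~\ref{bad-edges-prop}, the explicit $O(L^2)$ for a transverse column, the threshold $10L$ for a long run) keeps this comfortably below $2^{19} L^3 K n$.

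The main obstacle is making the ``clean edge / transverse column'' machinery rigorous.  One must check (a) that a long clean run really does present $\Gamma_P$ near $p^*$ as an honest $(4L+1) \times m \times m$ triangular grid obeying Definition~\ref{trio} with all $4L+1+2m$ of its dual points \emph{distinct}, so that Lemma~\ref{tri-cubic-detailed} applies --- this requires extracting the combinatorial rigidity of all-good regions of $\Gamma_P$, and is where the ``not covered by $4L$ concurrent lines'' hypothesis is needed, to rule out the transverse extension collapsing onto fewer than $4L+1$ distinct lines --- and (b) that the transverse column contains only $O(L^2)$ edges, i.e. that the ``move transverse to $q^*$'' exploration does not branch and grow exponentially in $L$ before it either closes up or meets a bad edge, which again follows because an all-good region of $\Gamma_P$ is locally the triangular lattice, in which the transverse direction is canonical.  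Neither point is deep, but the bookkeeping they entail is where essentially all of the work, and the ungainly constant $2^{19}L^3$, resides.
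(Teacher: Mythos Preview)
Your approach is essentially the same as the paper's.  What you call a \emph{clean} edge the paper calls an \emph{extremely good} edge: one for which all paths of length $2L$ from both endpoints consist entirely of good edges (so the paper takes the full $2L$-ball rather than just your transverse column, but this makes no difference to the $O(L^2)$ counting).  The paper counts extremely good edges exactly as you do --- each slightly bad edge is connected by a short path through good edges to a genuinely bad edge, and in an all-good region such a path can be taken to turn at most once, giving the $O(L^2)$ factor --- arriving at $\sum_p b_p \leq 2^{15} L^2 K n$ slightly bad edges in total, and then $c_p \leq (10L+2) b_p + 14L$.

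The one point where the paper is more careful than your sketch is your obstacle~(a), the distinctness of the $4L+1+2m$ grid points.  The ``not covered by $4L$ concurrent lines'' hypothesis gives you that every dual line meets at least $4L+1$ vertices, which handles Definition~\ref{trio}(ii); but it does \emph{not} by itself prevent a maximal run of extremely good edges from occupying almost all of the (projectively closed, hence cyclic) line $p^*$, in which case the transverse lines $q_j^*$ and $r_k^*$ of the grid would wrap around and collide.  The paper deals with this by an explicit three-way case split when forming the segments $S_1,\ldots,S_t$: if $p^*$ has more than $14L$ edges and one run contains all but at most $4L$ of them, the paper trims exactly $4L$ edges off that run before calling it $S_1$, so that there are at least $4L$ edges of $p^*$ outside the segment and hence all transverse grid lines are distinct.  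The at most $14L$ extra vertices this trimming (or the degenerate case $|p^*| \leq 14L$) can produce are absorbed into the additive $+14L$ in the bound for $c_p$.
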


\begin{proof}
The proof of this lemma is basically the same as the last one, except now we work with a considerably enhanced notion of what it means to be a ``really good edge'' . Call an edge \emph{extremely good} if all paths of length $2L$ from both of its endpoints consist entirely of good edges. In the last proposition, we only needed paths of length $2$. If an edge $e$ is not extremely good, let us say that it is \emph{slightly bad}. 

We now count the number of slightly bad edges by an argument similar to that used to prove the previous proposition.  Let $e$ be a slightly bad edge, and let $r$ be the length of the shortest path from an endpoint of $e$ to a vertex of a bad edge. Thus $0 \leq r \leq 2L-1$, and there is a vertex $v$ of a bad edge $e'$ that is at distance exactly $r$ from an endpoint $w$ of $e$.  Then all paths of length up to $r$ from either of the vertices of $e$ are good, which means that the $r$-neighbourhood of $e$ has the combinatorial structure of a triangular grid, and also that $v$ lies on the boundary of this neighbourhood and has degree six.  Among other things, this implies that among all the paths of length $r$ from $v$ to $w$, there is a path that changes direction only once.  To describe this path, as well as the slightly bad edge $e$, one could specify the bad edge $e'$, followed by an endpoint $v$ of that bad edge of degree six, followed by an edge emanating from $v$, which is followed along for some length $r_1$ to a vertex of degree six, at which point one switches to one of the other four available directions and follows that direction for a further length $r_2$, with $r_1+r_2 \leq r$ (so in particular $0 \leq r_1,r_2 \leq 2L-1$), until one reaches a vertex $w$, at which point the slightly bad edge $e$ is one of the six edges adjacent to $w$.  From Proposition \ref{bad-edges-prop} and simple counting arguments, we may thus bound the total number of slightly bad edges $e$ crudely by
$$ 16 K n \times 2 \times 6 \times 2L \times 4 \times 2L \times 6 \leq 2^{15} K L^2n$$
and so we conclude that the number of slightly bad edges is at most $2^{15} K L^2n$.  One could save a few powers of two here by being more careful, but we will not do so.


Suppose that there are $b_p$ slightly bad edges on $p^*$. Then
\begin{equation}\label{bp-bound} \sum_{p \in P} b_p \leq 2^{15} L^2 K n.\end{equation}

If we have a segment of $m \geq 10L$ consecutive edges on $p^*$, all of which are extremely good, and with $p^*$ containing at least $4L$ additional edges beyond these $m$, then the structure of $\Gamma_P$ is locally that of a triangular grid of dimensions $\{-2L,\ldots,2L\}$, $\{-m,\ldots,-1\}$, $\{1,\ldots,m\}$.  Note that as we are assuming that $P$ cannot be covered by $4L$ concurrent lines, every dual line $p^*$ meets at least $4L+1$ distinct points in $\Gamma_P$, ensuring the disjointness property in Definition \ref{trio}(ii).  Indeed, the fact that each dual line meets at least $4L+1$ distinct points, and that $p^*$ contains at least $4L$ additional edges beyond the $m$ consecutive edges, ensures that all the lines in this triangular grid are distinct.

Thus if $S$ is such a segment then, by Lemma \ref{tri-cubic-detailed}, the set $\Sigma_S$ of all $q \in P \setminus \{p\}$ for which $q^*$ meets $S$ all lie on a cubic curve $\gamma_S$ which contains $p$, and each component of which contains at least $L$ points of $P$. Furthermore, since the lines $q^*$ meet the vertices of $S$ in pairs (since each such vertex certainly has degree $6$) the points of $\Sigma_S$ may be divided into pairs $(q,r)$ such that $p,q,r$ are collinear, and no other point of $P$ lies on the line joining $p,q$ and $r$. Compare with conclusion (iii) of this lemma.

The line $p^*$ is divided into $b_p$ segments, each containing one or more vertices, by the slightly bad edges.  We then create some subsegments $S_1,\ldots,S_t$ by the following rule:

\begin{enumerate}
\item If $p^*$ contains at most $14L$ edges in all, then we set $t=0$, so no subsegments $S_1,\ldots,S_t$ are created;
\item If $p^*$ contains more than $14L$ edges, and one of the segments $S$ cut out by the slightly bad edges contains all but at most $4L$ of the edges, we set $t=1$, and define $S_1$ to be a subsegment of $S$ omitting precisely $4L$ edges;
\item In all other cases, we set $S_1,\dots, S_t$ be those segments cut out by the slightly bad edges with at least $10L$ edges.
\end{enumerate}

We then let $v_{t + 1}, \dots, v_{c_p}$ be the vertices not contained in any of the $S_1,\ldots,S_t$.  By construction, we see that we always have $t \leq b_p$ and that the number $c_p-t$ of remaining vertices $v_i$ is at most $\max( 14L, 4L, (10L+1)b_p) \leq (10L+1)b_p + 14L$.  We thus have
\begin{equation}\label{cp-bound} c_p \leq (10L + 2)b_p + 14L.\end{equation} 
Define $\Sigma_{i,p} := \Sigma_{S_i}$ and $\gamma_{i,p} := \gamma_{S_i}$ for $i \leq t$, and for $i \geq t+1$ let $\gamma_{i,p}$ be the line $v^*_i$ and take $\Sigma_{i,p}$ to consist of the points of $P \setminus \{p\}$ lying on this line.

This collection of cubics and lines has properties (i), (ii) and (iii) claimed in the lemma. The bound (iv) follows immediately from \eqref{bp-bound} and \eqref{cp-bound} and the crude bound $(10L+2) 2^{15} L^2 K + 14L \leq 2^{19} L^3 K$, valid for $L \geq 10$ and $K \geq 1$.
\end{proof}

We are now in a position to prove a result which is still not quite as strong as our main structure theorem, Theorem \ref{main-structure-theorem}, but is still considerably more powerful (albeit with worse explicit constants) than the rather crude statement of Theorem \ref{basic-cubic-covering}. 

\begin{proposition}[Intermediate structure theorem]\label{intermediate}
Suppose that $P$ is a finite set of $n$ points in the plane. Suppose that $P$ spans at most $Kn$ ordinary lines for some $K \geq 1$. Then one of the following three alternatives holds:
\begin{enumerate}
\item $P$ lies on the union of an irreducible cubic $\gamma$ and an additional $2^{75} K^5$ points.
\item  $P$ lies on the union of an irreducible conic $\sigma$ and an additional $2^{64} K^4$ lines. Furthermore, $\sigma$ contains between $\frac{n}{2} - 2^{76} K^5$ and $\frac{n}{2} + 2^{76} K^5$ points of $P$, and $P \setminus \sigma$ spans at most $2^{62} K^4 n$ ordinary lines.
\item $P$ lies on the union of $2^{16} K$ lines and an additional $2^{87} K^6$ points.
\end{enumerate}
\end{proposition}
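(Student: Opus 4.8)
\emph{Overview.} The plan is to compare, for a typical point $p\in P$, the partition of $P$ furnished by Lemma \ref{cubic-covering-tech} against one fixed covering of $P$ by cubic curves, and to use B\'ezout's theorem to force the two to be compatible. Set $L := 2^{14}K$, so that $4L = 2^{16}K$. If $P$ can be covered by $4L$ concurrent lines, conclusion (iii) holds with no extra points, so assume otherwise and fix, via Proposition \ref{basic-cubic-covering}, a covering of $P$ by at most $500K$ cubic curves; decomposing each into irreducible components yields a \emph{reference family} $\mathcal D$ of at most $1500K$ irreducible curves --- lines, irreducible conics, and irreducible cubics --- whose union contains $P$. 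The only arithmetic fact we need from the choice of $L$ is that \emph{any irreducible curve of degree $\le 3$ containing $\ge L$ points of $P$ lies in $\mathcal D$}, since otherwise B\'ezout bounds its intersection with $\bigcup\mathcal D\supseteq P$ by $3\cdot3\cdot 1500K<L$.

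\emph{Good points and the B\'ezout collapse.} Since $\sum_{p\in P}c_p\le 2^{19}L^3Kn$, for a fixed small absolute constant $\delta$ all but $O(\mathrm{poly}(K))$ of the points satisfy $c_p<\delta n$; call these \emph{good}. Fix also an absolute constant $\eta$ with $\eta>10\delta$ and call an irreducible curve \emph{heavy} if it contains more than $\eta n$ points of $P$. The heart of the argument: if $\gamma$ is a heavy curve and $p$ is any good point, the $>\eta n$ points of $P$ on $\gamma$ are spread among fewer than $\delta n$ pieces of $p$'s partition, so some piece $\Sigma_{i,p}$ contains more than $9$ of them; the curve $\gamma_{i,p}$ (degree $\le 3$, through $p$) then meets $\gamma$ in more than $9$ points, hence shares a component with $\gamma$, and in fact $\gamma$ is one of the $\le 3$ components of $\gamma_{i,p}$. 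Thus $p$ lies on $\gamma$ when $\gamma$ is a heavy irreducible cubic, and $p$ lies on $\gamma$ together with a line of $\mathcal D$ (another component of $\gamma_{i,p}$, which by Lemma \ref{cubic-covering-tech}(ii) has $\ge L$ points, hence lies in $\mathcal D$) when $\gamma$ is a heavy conic. Two distinct heavy irreducible cubics would therefore force every good point into their $\le 9$-point intersection, absurd; a heavy conic cannot coexist with a heavy cubic; and two distinct heavy conics are excluded (a good point would have to lie in the bounded union of the intersections of the two conics with one another and with the lines of $\mathcal D$). Hence exactly one of: a unique heavy irreducible cubic $\gamma$ exists and every good point lies on it, giving (i); a unique heavy conic $\sigma$ exists and every good point lies on $\sigma$ together with one of the $\le 1500K$ lines of $\mathcal D$, giving the skeleton of (ii); or no heavy conic or cubic exists, in which case the large pieces of the good points' partitions are carried by lines of $\mathcal D$ and (iii) follows --- in each case after absorbing $O(\mathrm{poly}(K))$ stray points, which is where the stated exponents arise.

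\emph{The $\tfrac n2$ balance in (ii).} Write $s:=|P\cap\sigma|$. Any line meeting $P\cap\sigma$ in two points and containing no third point of $P$ is ordinary; since a point of $P\setminus\sigma$ lies on at most $s/2$ secants of $\sigma$, at most $(n-s)s/2$ secants of $\sigma$ are non-ordinary, so $P$ spans at least $\binom s2-(n-s)\tfrac s2=\tfrac s2(2s-n-1)$ ordinary lines; comparing with $Kn$ gives $s\le\tfrac n2+O(K)$. For the reverse bound, use the previous paragraph to put all but $O(\mathrm{poly}(K))$ of the $n-s$ points of $P\setminus\sigma$ on the lines of $\mathcal D$; a cross-incidence count on those $\le1500K$ lines then shows (when $n-s\ge s$) that all but $O(\mathrm{poly}(K))$ of them lie on a single line $\ell_0$; finally count the ordinary lines joining $\ell_0$ to $\sigma$: for $a\in P\cap\ell_0$, a point $z\in P\cap\sigma$ fails to give an ordinary line through $a$ only when the involution of $\sigma$ centred at $a$ sends $z$ into $P\cap\sigma$, and the pairs $\{z,z'\}\subseteq P\cap\sigma$ arising this way are indexed by the condition $\overline{zz'}\cap\ell_0\in P$, so number at most $\binom s2$; this produces at least $s(n-2s)-O(\mathrm{poly}(K)\cdot n)$ ordinary lines, forcing $s\ge\tfrac n2-O(K)$ (since $s>\eta n$, the alternative $s=O(K)$ is excluded). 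The same counts bound the number of ordinary lines spanned by $P\setminus\sigma$ by $O(\mathrm{poly}(K)\cdot n)$.

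\emph{Main obstacle.} The genuinely delicate points are (a) the $\tfrac n2$ balance, especially the lower bound on $s$, which needs the reduction to a single line $\ell_0$ and the involution count with all errors kept polynomial in $K$; and (b) in case (iii), and for conics or cubics that are rich but not heavy, ensuring the exceptional set is polynomial in $K$ rather than merely $o(n)$ --- controlling these components (which B\'ezout still places in $\mathcal D$) needs a more refined threshold analysis than the crude $\eta n$ cutoff above. It is precisely here that the ``no fourth point'' clause of Lemma \ref{cubic-covering-tech}(iii), together with the ordered, torsion-free nature of $\R$ (used implicitly through the convexity underlying the secant counts, of the type $|A+B|\ge|A|+|B|-1$), does the essential work, and it is the reason the constants in the statement are as large as they are.
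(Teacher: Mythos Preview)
Your overall architecture---fix a reference family $\mathcal D$ of $\le 1500K$ irreducible curves via Proposition \ref{basic-cubic-covering}, then play the partitions from Lemma \ref{cubic-covering-tech} off against $\mathcal D$ using B\'ezout---is exactly right and is what the paper does. Your secant count for the upper bound $s\le \tfrac n2+O(K)$ is clean and in fact sharper than needed. But there are two genuine gaps in the execution.

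\emph{The lower bound on $s$.} Your route is: put $P\setminus\sigma$ on the lines of $\mathcal D$, then claim ``a cross-incidence count'' forces almost all of it onto a single line $\ell_0$, then count ordinary lines between $\ell_0$ and $\sigma$. The middle step is the problem: reducing a set on $O(K)$ lines with few ordinary lines to essentially one line is \emph{exactly} the content of Proposition \ref{main-line-prop}, which is proved in Section \ref{somewhat-collinear} using Betts's convexity argument and the Elekes--Nathanson--Ruzsa sum-product bound, and which logically sits \emph{after} Proposition \ref{intermediate}. So this step is circular, and your final paragraph's invocation of torsion-freeness and $|A+B|\ge|A|+|B|-1$ mis-locates the difficulty: Proposition \ref{intermediate} is deliberately proved \emph{without} any such input.

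\emph{The sub-heavy curves.} With your threshold $\eta n$, a conic or cubic in $\mathcal D$ carrying, say, $\tfrac{\eta}{2}n$ points falls into case (iii), and $1500K$ such curves could contribute $\Theta(Kn)$ exceptional points, not $O(K^{O(1)})$. You flag this but do not close it.

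The paper closes both gaps with a single idea that you are missing: instead of looking at the partition of an arbitrary good point, pigeonhole to pick $p'\in P'\cap\gamma_1$ \emph{on the rich curve} with $c_{p'}\le 2^{61}K^4 n/n_0$, where $n_0=|P'\cap\gamma_1|$. Because $p'\in\gamma_1$ but $p'\notin\gamma_j$ for $j\ne 1$, every non-line $\gamma_{i,p'}$ must have $\gamma_1$ as a component. Now Lemma \ref{cubic-covering-tech}(iii) does real work: when $\gamma_1$ is a conic and $\gamma_{i,p'}=\gamma_1\cup\ell$, the pairing of $\Sigma_{i,p'}$ into collinear triples $(p',q,r)$ with $p'\in\gamma_1$ forces exactly one of $q,r$ onto $\gamma_1$ (a line through $p'$ meets the conic once more), so $|\Sigma_{i,p'}|=2|\Sigma_{i,p'}\cap\gamma_1|$. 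Summing over $i$ gives $n\le 2n_0+O(m\,c_{p'})$ directly, hence $n_0\ge \tfrac n2-O(K^{O(1)})$, with no reduction to a single line and no convexity. The same pairing shows that a line through $p'\in\sigma$ with two points of $P\setminus\sigma$ must come from one of the $c_{p'}$ line pieces, and summing $\sum_{p'}c_{p'}\le 2^{61}K^4 n$ gives the ordinary-line bound on $P\setminus\sigma$. Finally, this argument only needs $n_0\ge 2^{76}K^5$ to bootstrap (since $c_{p'}\le 2^{61}K^4 n/n_0$), so the threshold can be taken polynomial in $K$ rather than $\eta n$, which is what makes the exceptional set in (iii) of size $O(K^6)$.
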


\emph{Remark.} The explicit expressions such as $2^{75} K^5$ in the above proposition could of course be replaced by the less specific notation $O(K^{O(1)})$ if desired, and the reader may wish to do so in the proof below as well.\vspace{11pt}

\begin{proof}  If $P$ can be covered by $60000K \leq 2^{16} K$ concurrent lines then we are of course done, so we will assume that this is not the case.

By Proposition \ref{basic-cubic-covering} we know that $P$ is covered by at most $500 K$ cubic curves.  By breaking each of these curves up into irreducible components, we may thus cover $P$ by distinct irreducible cubic curves $\gamma_1,\dots,\gamma_m$ for some
\begin{equation}\label{m-def}
m \leq 1500K.
\end{equation}
By B\'ezout's Theorem, no pair of distinct irreducible curves intersects in more than $9$ points, and so there is a set $P' \subset P$, with
$$ |P \backslash P'| \leq 9 \binom{m}{2} \leq 2^{24} K^2,$$
such that each point of $P'$ lies on just one of the curves $\gamma_i$. 

Suppose first of all that one of the $\gamma_i$, say $\gamma_1$, is an irreducible cubic and contains at least $2^{76} K^5$ points of $P$. Then it also contains at least $2^{75} K^5$ points of $P'$.  Write $n_0 := |P' \cap \gamma_1|$: thus $n_0 \geq 2^{75} K^5$. 

By construction and \eqref{m-def}, $P$ is not covered by $40m$ concurrent lines.  Applying Lemma \ref{cubic-covering-tech} with $L := 10m$, we see that for each $p' \in P$ we may partition $P$ as $\{p'\} \cup \Sigma_{1,p'} \cup \dots \cup \Sigma_{c_{p'},p'}$, where $\sum_{p' \in P} c_{p'} \leq 2^{19} (10 m)^3 K n \leq 2^{61} K^4 n$ and each $\Sigma_{i,p'}$ is contained in some (not necessarily irreducible) cubic $\gamma_{i,p'}$ containing $p'$ which is either a line, or has the property that each irreducible component of it contains at least $10m$ points of $P$. 

By the pigeonhole principle, there is some $p' \in P' \cap \gamma_1$ with the property that $c_{p'} \leq 2^{61} K^4 n/n_0$. Fix this $p'$.  By B\'ezout's theorem, an irreducible curve of degree at most three that is not already one of the $\gamma_j$ meets $P$ in no more than $9m$ points, and so we infer that each $\gamma_{i,p'}$ is either a line, or else every irreducible component of it is one of the $\gamma_j$. Since $p'$ lies on $\gamma_1$ but not on any other $\gamma_i$, we infer that all the $\gamma_{i,p'}$ are lines except that one of them, say $\gamma_{1,p'}$, may be $\gamma_1$. Furthermore, none of the lines $\gamma_{j,p'}$, $j = 2,\dots,c_{p'}$, which all contain $p'$, coincides with any of $\gamma_2,\dots,\gamma_m$. By another application of B\'ezout's theorem, each of them contains at most $3m$ points of $P$.  

It follows that 
\begin{align*}
n = |P|   & \leq  |P \cap \gamma_1| + \sum_{j = 2}^{c_{p'}} |P \cap \gamma_{j,p'}| \\ 
& \leq  n_0 + 2^{24} K^2 + 3mc_{p'}  \leq n_0 + \frac{2^{74} K^5 n}{n_0}.
\end{align*}
Since $n_0 \geq 2^{75} K^5$, we conclude that $n_0 \geq n/2$, which when inserted again into the above inequality gives $n_0 \geq n - 2^{75} K^5$, which is option (i) of Proposition \ref{intermediate}. 

The analysis of option (ii) goes along similar lines but is a little more complicated. Suppose now that one of the $\gamma_i$, say $\gamma_1$, is an irreducible conic and contains at least $2^{76} K^5$ points of $P$.  Once again, it also contains at least $2^{75} K^5$ points of $P'$. Write $n_0 := |P' \cap \gamma_1|$; thus $n_0  \geq 2^{75} K^5$.

By Lemma \ref{cubic-covering-tech} as before we may, for each $p' \in P$, partition $P$ as $\{p'\} \cup \Sigma_{1,p'} \cup \dots \cup \Sigma_{c_{p'},p'}$ with $\sum_{p'} c_{p'} \leq 2^{19} (10 m)^3 Kn \leq 2^{61} K^4 n$ and each $\Sigma_{i,p'}$ contained in some (not necessarily irreducible) curve $\gamma_{i,p'}$ of degree at most three containing $p'$ which is either a line, or has the property that each irreducible component contains at least $10 m$ points of $P$.

By the pigeonhole principle as before, we may find $p' \in P' \cap \gamma_1$ such that $c_{p'} \leq 2^{61} K^4 n/n_0$.  Now fix this $p'$.

Suppose that $\gamma_{i,p'}$ is not a line. Then, by B\'ezout's theorem as above, each irreducible component of $\gamma_{i,p'}$ is one of the $\gamma_j$. Since $p' \in \gamma_{i,p'}$, and $p'$ lies on $\gamma_1$ but not on any other $\gamma_j$, one of the irreducible components of $\gamma_{i,p'}$ is $\gamma_1$. Thus for each $i$ one of the following is true:
\begin{enumerate}
\item $\Sigma_{i,p'}$ is contained in a line through $p'$;
\item $\Sigma_{i,p'}$ is contained in the conic $\gamma_1$;
\item $\Sigma_{i,p'}$ is contained in the union of the conic $\gamma_1$ and a line $\gamma_{j_i}$.\end{enumerate}
Now recall Lemma \ref{cubic-covering-tech}. Item (iii) of that lemma asserts that in cases (ii) and (iii) above the points of $\Sigma_{i,p'}$ may be divided into collinear triples $(p',q,r)$. This immediately rules out option (ii). For those $i$ satisfying (iii) we see that $|\Sigma_{i,p'}| = 2|\Sigma_{i,p'} \cap \gamma_1|$. For those $i$ satisfying (i) it follows from B\'ezout's theorem that $|\Sigma_{i,p'}| \leq 3m$.

Let $I$ be the set of indices $i$ for which $\Sigma_{i,p'}$ is not contained in a line through $p'$, that is to say for which option (iii) above holds.
It follows that
\[ 
n = |P|  = 1 + \sum_{i = 1}^{c_{p'}} |\Sigma_{i,p'}| \leq 1 + 2\sum_{i \in I} |\Sigma_{i,p'} \cap \gamma_1| + 3mc_{p'}
\]
However, any line through $p'$ meets $\gamma_1$ (which contains $p'$) in at most one other point, and so
\[ \sum_{i \in I} |\Sigma_{i,p'} \cap \gamma_1| \leq |P \cap \gamma_1| + c_{p'}.\]
Since
$$ |P \cap \gamma_1| \leq |P' \cap \gamma_1| + |P \backslash P'| \leq n_0 + 2^{24} K^2$$
we conclude that
\[ n \leq 2n_0 + 2^{25} K^2 + (3m+2) c_{p'} + 1 \leq 2n_0 + \frac{2^{74} K^5 n}{n_0}.\]
Since $n_0 \geq 2^{75} K^5$, this is easily seen to imply that $n_0 \geq n/4$, and hence $n_0 \geq n/2 - 2^{76} K^5$ and $c_{p'} \leq 2^{63} K^4$. In the converse direction, we have
$$ n \geq \sum_{i \in I} |\Sigma_{i,p'}| = 2\sum_{i \in I} |\Sigma_{i,p'} \cap \gamma_1| $$
and so
$$ |P \cap \gamma_1| \leq \frac{n}{2} + 1 + |\bigcup_{i \not \in I} \Sigma_{i,p'} \cap \gamma_1|.$$
For $i \not \in I$, $\Sigma_{i,p'} \cap \gamma_1$ consists of at most one point, and so
$$ |P \cap \gamma_1| \leq \frac{n}{2} + 1 + c_{p'} \leq \frac{n}{2} + 2^{76} K^5$$
and hence $\gamma_1$ contains between $\frac{n}{2} - 2^{76} K^5$ and $\frac{n}{2} - 2^{76} K^5$ elements of $P$.

Looking back at the three possibilities (i), (ii) and (iii) above,  we see that the other points lie in the union of the lines $\gamma_{i,p'}$ and $\gamma_j$, of which there are at most $c_{p'} + m \leq 2^{64} K^4$. 

To complete the proof that we are in case (ii) claimed in the proposition, we need to give an upper bound for the number of ordinary lines spanned by the set $P \setminus \gamma_1$. Such a line could be ordinary in $P$, but there are at most $Kn$ such lines. Otherwise, such a line passes through a point $p' \in P \cap \gamma_1$, and contains precisely two points in $P \setminus \gamma_1$. Let us say that such a line is \emph{bad}. The number of bad lines arising from $p' \in P \setminus P'$, a set of cardinality at most $2^{24} K^2$, is at most $2^{23} K^2 n$. Suppose then that $p' \in P' \cap \gamma_1$. As above, for each such $p'$ we have a partition $P = \{p'\} \cup \Sigma_{1,p'} \cup \dots \cup \Sigma_{c_{p'},p'}$, and we now know that $\Sigma_{i,p'}$ is either contained in a line through $p'$ or is contained in the union of $\gamma_1$ and a line. Furthermore in the latter case we know from Lemma \ref{cubic-covering-tech} (iii) that every line though $p'$ and a point of $\Sigma_{i,p'}$ passes through precisely two other points of $P$, one on $\gamma_1$ and the other not. Therefore it is not bad. The number of bad lines through $p'$ is thus at most $c_{p'}$, and so the total number of bad lines arising from $p' \in P \cap \gamma_1$ is at most $\sum_{p'} c_{p'} \leq 2^{61} K^4 n$. Statement (ii) of the proposition follows immediately.

We have now considered all cases in which any irreducible cubic or conic from amongst the $m$ curves $\gamma_i$ contains more than $2^{76} K^5$ points of $P$. If this is not the case, the only curves among the $\gamma_i$ containing more than $2^{76} K^5$ points of $P$ are lines. Thus $P$ may be covered by $m \leq 2^{16} K$ lines and at most $2^{76} K^5 m \leq 2^{87} K^6$ points, which gives option (iii).
\end{proof}

\section{Unions of lines}\label{somewhat-collinear}

Suppose that $P$ is a set of $n$ points spanning at most $Kn$ ordinary lines. We know from Proposition \ref{intermediate} that all but $O(K^{O(1)})$ points of $P$ lie on an irreducible cubic, an irreducible conic and some lines, or some lines. The aim of this section is to reduce the number of lines, in all cases, to at most one. The main result of this section is the following theorem, which may again be of independent interest.

\begin{proposition}\label{main-line-prop}
Suppose that a set $P \subset \R\P^2$ of size $n$ lies on a union $\ell_1 \cup \dots \cup \ell_m$ of lines, and that $P$ spans at most $Kn$ ordinary lines. Suppose that $n \geq n_0(m,K)$ is sufficiently large. Then all except at most $3K$ of the points of $P$ lie on a single line.
\end{proposition}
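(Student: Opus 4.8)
The plan is to reduce everything to two ingredients: a single substantial lemma — call it Proposition \ref{many-rich} — asserting that a point set lying on a union of lines, two of which each carry a fixed positive proportion of the points, must span $\gg n^2$ ordinary lines, together with a short double‑counting argument on the heaviest line. Granting the lemma, here is Step 1. We may assume $m\ge 2$, the case $m=1$ being trivial. Write $n_i:=|P\cap\ell_i|$ and order the lines so that $n_1\ge n_2\ge\dots\ge n_m$, and set $\delta:=1/(4(m-1))$. If $n_2\ge\delta n$ then $\ell_1$ and $\ell_2$ each contain at least $\delta n$ points of $P$, so by Proposition \ref{many-rich} the set $P$ spans $\gg_{m} n^2$ ordinary lines; for $n\ge n_0(m,K)$ this is larger than $Kn$, contradicting the hypothesis. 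Hence $n_2<\delta n$, and so the number $k:=n-n_1=|P\setminus\ell_1|$ of points lying off the heaviest line satisfies $k\le\sum_{i\ge 2}n_i\le(m-1)n_2<n/4$.

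Step 2 is a direct count on $\ell_1$ that improves this to an absolute multiple of $K$. If $k=0$ we are done, so assume $k\ge 1$ and put $Q:=P\setminus\ell_1$, so $|Q|=k<n/4$. Fix $q\in Q$. Since $q\notin\ell_1$, the lines $\overline{qp}$ for $p\in P\cap\ell_1$ are $n_1$ distinct lines, and each meets $\ell_1$ only at $p$; hence any third point of $P$ on $\overline{qp}$ must lie in $Q\setminus\{q\}$. As each $q'\in Q\setminus\{q\}$ lies on exactly one line through $q$, at most $k-1$ of these $n_1$ lines can fail to be ordinary, so at least $n_1-k+1=n-2k+1$ of them are ordinary lines of $P$. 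An ordinary line of the form $\overline{qp}$ with $p\in\ell_1$ has $q$ as its unique point off $\ell_1$, so the families of such lines attached to distinct $q\in Q$ are disjoint. Therefore, with $N_2$ the number of ordinary lines,
$$ Kn \;\ge\; N_2 \;\ge\; k(n-2k+1) \;>\; k\cdot \tfrac{n}{2},$$
using $k<n/4$. Hence $k<2K\le 3K$, which is the claimed bound (with room to spare).

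The main obstacle is Proposition \ref{many-rich} itself; everything above is bookkeeping. This is exactly the point where one must exploit the torsion‑free nature of $\R$: the near‑counterexamples \eqref{p1} and \eqref{p2} show that three lines can each carry $\gg n$ points with \emph{no} ordinary lines at all over $\Z$ or $\R^{\times}$, so no purely incidence‑combinatorial argument can work. I would prove it by separating cases according to whether the lines carrying many points are concurrent. In the concurrent case (equivalently, after a projective transformation, the parallel case), I would pass to the dual plane and run a convexity argument there — the use of convexity is precisely where the order structure of $\R$ enters. In the non‑concurrent case I would instead use Menelaus's theorem to attach to each collinear triple a multiplicative ratio of signed segment lengths, and then invoke a sum–product estimate of Elekes–Nathanson–Ruzsa type (\cite{elekes-nathanson-ruzsa}) to force a positive proportion of these ratios to be ``new'', thereby producing $\gg n^2$ ordinary lines. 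Reducing the general many‑lines configuration to these two base cases while controlling the dependence on $m$ is the delicate part, and is what ultimately forces the strong lower bound $n\ge n_0(m,K)$.
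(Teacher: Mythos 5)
Your Step 2 is fine: it is essentially the paper's Lemma \ref{big-line-lem} (with slightly better constants), and the disjointness/counting argument there is correct. The gap is in Step 1. The lemma you invoke is not Proposition \ref{many-rich} as actually stated and proved: that proposition assumes \emph{every} one of the lines $\ell_1,\dots,\ell_m$ carries at least $\eps n$ points of $P$, whereas you apply it to a configuration in which only $\ell_1$ and $\ell_2$ are known to be rich and $\ell_3,\dots,\ell_m$ may carry anything from $0$ up to $\delta n$ points. This is not a cosmetic difference. A line $\ell_k$ with $n_k$ points of $P$ can absorb up to about $n_k\min(n_1,n_2)$ of the pairs $(p_1,p_2)\in(P\cap\ell_1)\times(P\cap\ell_2)$, so lines of intermediate richness (several lines with a small but positive fraction of the points, or a single line with, say, $n/\log n$ points) can between them absorb essentially all $\Theta(n^2)$ such pairs; a finite truncation of the near-counterexample \eqref{p1}, with a third line placed midway between two rich parallel lines, kills every $\ell_1$--$\ell_2$ pair. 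Nor can you retreat to $P\cap(\ell_1\cup\ell_2)$ and apply the two-line case of the lemma, because ordinary lines of that subset need not be ordinary in $P$: the discarded points, which may be a positive proportion of $P$, can destroy up to $|P\setminus(\ell_1\cup\ell_2)|\cdot n\sim n^2$ of them. So "spans $\gg_m n^2$ ordinary lines" does not follow from what you have assumed, and the strengthened two-rich-lines statement you rely on is left unproved (your closing sketch, via the concurrent/dual-convexity case and the Menelaus plus Elekes--Nathanson--Ruzsa case, is the paper's proof of the \emph{all-lines-rich} version, i.e.\ Propositions \ref{betts-case} and \ref{not-betts-case}; it does not by itself give the version with only two rich lines).

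The missing ingredient is precisely the paper's deduction of Proposition \ref{main-line-prop} from Proposition \ref{many-rich}: order the densities $\eps_1\geq\dots\geq\eps_m$, let $j$ be the least index with $\eps_{j+1}<\frac12(\eps_j/m)^6$ (or $j=m$), apply Proposition \ref{many-rich} to $P_j=P\cap(\ell_1\cup\dots\cup\ell_j)$ --- on which every line is $\eps_j$-rich --- and then observe that the at most $m\eps_{j+1}n$ points of $P\setminus P_j$ can destroy at most $m\eps_{j+1}n^2$ of the resulting ordinary lines, which the choice of $j$ makes negligible. Iterating the recursion $\eps_{i+1}\geq\frac12(\eps_i/m)^6$ gives $\eps_j\geq\exp(-e^{Cm})$ and hence $\gg\exp(-e^{Cm})n^2$ ordinary lines of $P$, which is what forces $n_0(m,K)\sim K\exp\exp(Cm)$. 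This cascade over intermediate-density lines is genuinely needed, not bookkeeping; your argument is incomplete without it or some substitute for it.
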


We first handle the (easy) case where one line has almost all the points. For this, one does not need to know that $P$ is contained in the union of a few lines.

\begin{lemma}\label{big-line-lem}
Suppose that $P \subset \R\P^2$ is a set of size $n$, that $P$ spans at most $Kn$ ordinary lines, and that at least $\frac{2}{3}n$ of the points of $P$ lie on a single line $\ell$. Then in fact all except at most $3K$ of the points lie on $\ell$.
\end{lemma}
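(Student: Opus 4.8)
The plan is a direct double-counting argument. Write $P_1 := P \cap \ell$ and $P_2 := P \setminus \ell$, and set $n_1 := |P_1|$, $n_2 := |P_2|$, so that $n_1 + n_2 = n$ and by hypothesis $n_1 \geq \frac{2}{3}n$, hence $n_2 \leq \frac{1}{3}n$. The goal is to show $n_2 \leq 3K$; we may of course assume $n_2 \geq 1$.

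First I would fix a point $q \in P_2$ and count how many of the lines $\overline{qp}$, $p \in P_1$, are ordinary. The key observations are: (a) any line through $q$ other than $\ell$ meets $\ell$ in exactly one point, so it contains at most one point of $P_1$; in particular the $n_1$ lines $\overline{qp}$ ($p \in P_1$) are pairwise distinct; and (b) if $\overline{qp}$ is not ordinary, it contains a third point of $P$, which cannot lie on $\ell$ — otherwise $\overline{qp}$ would contain two distinct points of $\ell$ and hence equal $\ell$, contradicting $q \notin \ell$ — and so lies in $P_2 \setminus \{q\}$. Since each of the $n_2 - 1$ points of $P_2 \setminus \{q\}$ lies on exactly one line through $q$, at most $n_2 - 1$ of the distinct lines $\overline{qp}$ can be non-ordinary. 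Thus there are at least $n_1 - (n_2 - 1)$ points $p \in P_1$ for which $\overline{qp}$ is an ordinary line.

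Next I would sum over $q \in P_2$. An ordinary line $\overline{qp}$ arising this way contains exactly one point of $P_2$, namely $q$, and exactly one point of $P_1$, namely $p$, so it is counted exactly once across all $q \in P_2$. Therefore
\[ Kn \;\geq\; N_2 \;\geq\; \sum_{q \in P_2} \big( n_1 - n_2 + 1 \big) \;=\; n_2\,( n_1 - n_2 + 1 ). \]
Finally, since $n_1 \geq \frac{2}{3}n$ we have $n_1 - n_2 = 2n_1 - n \geq \frac{n}{3}$, so $Kn \geq n_2 \cdot \frac{n}{3}$, giving $n_2 \leq 3K$, as required.

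I do not anticipate any real obstacle here. The only points needing a moment's care are the two collinearity remarks in (a) and (b), and the bookkeeping ensuring the ordinary lines produced for distinct $q \in P_2$ are genuinely distinct — all of which follow at once from the single fact that a line containing two points of $\ell$ must be $\ell$ itself.
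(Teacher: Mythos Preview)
Your proof is correct and follows essentially the same approach as the paper's: for each point $q$ off $\ell$, count the ordinary lines joining $q$ to $P\cap\ell$, observe that any non-ordinary such line must use a further point off $\ell$, and sum over $q$ with the no-double-counting observation. Your writeup is in fact more careful than the paper's (which is quite terse about the distinctness of the lines and the absence of double-counting), and your intermediate bound $n_1-n_2+1$ is marginally sharper than the paper's $n/3$, but the argument and conclusion are the same.
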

\begin{proof}
Let $p \in P \setminus \ell$. Then $p$ forms at least $2n/3$ lines with the points of $P \cap \ell$. At most $n/3$ of these contain another point of $P$, and so at least $n/3$ of them are ordinary. Therefore the number of ordinary lines is at least $|P \setminus \ell|n/3$, and the claim follows immediately.
\end{proof}

Suppose now, and for the rest of the section, that $P \subset \ell_1 \cup \dots \cup \ell_m$.
The opposite extreme to that considered by the above lemma is when \emph{all} the lines $\ell_i$ contain many points of $P$. The next result, which is the key technical step in the proof of Proposition \ref{main-line-prop} (and is in fact rather stronger than that proposition), may be of independent interest. 

\begin{proposition}\label{many-rich}
Suppose that $m \geq 2$ and that a set $P \subset \R\P^2$ of size $n$ lies on a union $\ell_1 \cup \dots\cup\ell_m$ of lines, and that at least $\eps n$ points of $P$ lie on each of the lines $\ell_i$.  Suppose that $m, \frac{1}{\eps} \leq n^{\frac{1}{10000}}$. Then $P$ spans at least $\eps^{12} n^2/m^6$ ordinary lines.
\end{proposition}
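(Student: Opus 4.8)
The plan is to separate cases according to the mutual position of the lines $\ell_1,\dots,\ell_m$, after disposing of the easy extremes. First a reduction: if some line, say $\ell_1$, meets $P$ in at least $\tfrac23 n$ points, then the argument of Lemma \ref{big-line-lem} applies — each $p\in\ell_2\cap P$ forms at least $\tfrac23 n$ lines with $\ell_1\cap P$, of which at most $\tfrac13 n$ pass through a second point of $P$, so summing over the $\geq\eps n$ points of $\ell_2\cap P$ gives at least $\tfrac13\eps n^2$ ordinary lines, far more than required. So we may assume every $\ell_i$ meets $P$ in between $\eps n$ and $\tfrac23 n$ points; an incidence count then also forces $m\ll 1/\eps$, so polynomial losses in $m$ are affordable. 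I also record the key structural fact that, since $P\subseteq\ell_1\cup\dots\cup\ell_m$, any line \emph{other} than the $\ell_i$ contains at most $m$ points of $P$; this is what will keep blocking by points on the remaining lines under control. It now suffices to treat the case where the $\ell_i$ are all concurrent, and the case where they are not.

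\textbf{Concurrent case.} After a projective transformation moving the common point to infinity we may take the $\ell_i$ to be parallel, without changing the count of ordinary lines. A line spanned by a point of $\ell_i\cap P$ and a point of $\ell_j\cap P$ then meets every remaining $\ell_k$ in exactly one point, and collinearity of points on three of the parallel lines is an affine-linear condition on their heights. Dualising turns each pencil $P\cap\ell_i$ into a family of concurrent lines, and the statement that a pair of these has no third dual line through its intersection becomes an extremal/convexity assertion; this is precisely what Betts's argument (Proposition \ref{betts-case}) exploits, and it is here that the order structure of $\R$ — its torsion-freeness — enters. This produces $\gg\eps^{O(1)}m^{-O(1)}n^2$ unblocked pairs, hence that many ordinary lines.

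\textbf{Non-concurrent case.} Choose three of the lines, say $\ell_1,\ell_2,\ell_3$, in general position, forming a triangle. Parametrising $P\cap\ell_i$ by the signed ratio in which each of its points divides the two triangle-vertices lying on $\ell_i$ gives sets $S_1,S_2,S_3\subset\R^\times$ with $|S_i|\geq\eps n-O(m)$, and by Menelaus's theorem a triple of points, one on each side, is collinear exactly when the product of the three ratios is $-1$. Consequently, for each pair $\{i,j\}$, the number of pairs in $(\ell_i\cap P)\times(\ell_j\cap P)$ whose spanning line meets $P$ on the third side equals one and the same quantity $E:=|\{(x_1,x_2,x_3)\in S_1\times S_2\times S_3: x_1x_2x_3=-1\}|$. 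Hence the number of ordinary lines of $P$ with both points on two of $\ell_1,\ell_2,\ell_3$ is at least $\bigl(|S_1||S_2|+|S_1||S_3|+|S_2||S_3|\bigr)-3E$ minus a correction for spanning lines blocked instead by a point on $\ell_4\cup\dots\cup\ell_m$; using that such a blocking line carries at most $m$ points of $P$, one argues that for a suitable choice of the three lines this correction is of strictly lower order. It remains to push $E$ below a constant proportion of the main term. Passing to logarithms of absolute values converts $x_1x_2x_3=-1$ into an additive equation; the elementary bound $|X+Y|\geq|X|+|Y|-1$ (a quantitative form of torsion-freeness, cf. Lemma \ref{add-comb-lem-7}) already forbids $E$ from being within $O(n)$ of its maximum, and forces $|S_1|,|S_2|,|S_3|$ to be comparable in the extremal regime. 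To gain a constant proportion one notes that $E$ that large would give $S_1,S_2,S_3$ near-maximal multiplicative energy, hence large subsets of small multiplicative doubling; feeding these into the Elekes–Nathanson–Ruzsa sum-product estimate (Proposition \ref{enr-prop-piecewise}) — applied via the Möbius transformations relating the ratio functions attached to different reference triangles, the Möbius maps supplying the needed convexity — contradicts the lack of additive structure that such a set must have as positions on a line. This yields $E\leq\bigl(1-c\eps^{O(1)}\bigr)\max_{i\neq j}|S_i||S_j|$, and tracking the constants gives at least $\eps^{12}n^2/m^6$ ordinary lines.

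\textbf{The main obstacle.} The genuine difficulty is the non-concurrent case, and within it the regime where nearly every Menelaus triple is realised and the three ratio sets have essentially equal size: this is the finitary shadow of the infinite near-counterexamples \eqref{p1} and \eqref{p2}, it is false over finite fields, and eliminating it is exactly where torsion-freeness of $\R$ is indispensable — through the sum-product estimate here, and through convexity in Betts's argument in the concurrent case. A secondary but fiddly point is arranging the choice of the three lines (or an averaging over triangles) so that blocking by the remaining $m-3$ lines is a lower-order term rather than swamping the main estimate.
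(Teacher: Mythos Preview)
Your high-level decomposition into the concurrent and non-concurrent cases is correct and matches the paper's, and your treatment of the concurrent case via Proposition \ref{betts-case} is fine.  The non-concurrent case, however, contains a genuine gap.

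You fix a triangle $\ell_1,\ell_2,\ell_3$, set $E$ to be the number of Menelaus triples on it, and lower-bound the number of ordinary lines by $|S_1||S_2|+|S_1||S_3|+|S_2||S_3|-3E$ minus a ``correction'' for blocking by $\ell_4,\ldots,\ell_m$.  Your bound on $E$ via Lemma \ref{add-comb-lem-7-mult} is fine (indeed it already gives a constant-proportion saving, so your subsequent appeal to Balog--Szemer\'edi--Gowers and Elekes--Nathanson--Ruzsa for this step is unnecessary).  The problem is the correction term.  The assertion that ``such a blocking line carries at most $m$ points of $P$'' does not make the correction lower order: for a fixed pair $(i,j)\subset\{1,2,3\}$ the number of pairs blocked by a single additional line $\ell_k$ can be as large as $\min(|S_i||S_k|,|S_j||S_k|)\asymp (n/m)^2$, so summing over the $m-3$ remaining lines gives a correction of order $(m-3)(n/m)^2$, which swamps the main term $3(n/m)^2$ once $m$ is moderately large.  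No averaging over the choice of triangle rescues this, since the same count shows the total blocking over all triples is of order $m(n/m)^2\binom{m}{2}$, still dominating $\sum_{i<j}|S_i||S_j|$.  For $m=3$ your argument is complete, but for $m\geq 4$ it is not.

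The paper's route through the non-concurrent case is structurally different.  Rather than fixing a triangle and trying to absorb all blocking, it picks (via the dual Sylvester--Gallai theorem) just \emph{two} lines $\ell_i,\ell_j$ whose intersection lies on no other $\ell_k$, and then shows that blocking by distinct third lines $\ell_k,\ell_{k'}$ is \emph{mutually incompatible}: heavy blocking by $\ell_k$ forces $P\cap\ell_i$ to be a ``grid'' with respect to the ratio map $\phi_{i,j,k}$, heavy blocking by $\ell_{k'}$ forces the inequivalent ratio map $\phi_{i,j,k'}$, and Lemma \ref{lem7.9} (this is where Elekes--Nathanson--Ruzsa is actually used) says a set cannot be a grid for two inequivalent ratio maps.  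Thus essentially one line $\ell_k$ absorbs almost all the blocking; one then shows $P\cap\ell_k$ contains a grid strictly larger than the largest grid on any $\ell_i$, contradicting maximality (Lemma \ref{lem7.12} and the end of the proof of Proposition \ref{not-betts-case}).  Your phrase ``M\"obius transformations relating the ratio functions attached to different reference triangles'' is pointing at exactly this mechanism, but you have placed it inside the bound for $E$ on a fixed triangle, where it does no work; it is needed instead to rule out simultaneous blocking by several lines.
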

\emph{Remark.} The exponent $\frac{1}{10000}$ could certainly be improved somewhat, but a really significant improvement -- beyond $\frac{1}{100}$, say -- would require new methods.

The proof of this proposition is quite long. Before embarking upon it we show how to derive Proposition \ref{main-line-prop} as a consequence.\vspace{11pt}

\emph{Deduction of Proposition \ref{main-line-prop} from Proposition \ref{many-rich} and Lemma \ref{big-line-lem}}. Reorder the lines so that $n_1 \geq n_2 \geq \dots \geq n_m$, where $n_i := |P \cap \ell_i|$. Set $\eps_j := n_j/n$. If $\eps_2 \leq 1/3m$ then $\eps_1 \geq 2/3$ and we are done by Lemma \ref{big-line-lem}, so suppose that $\eps_2 \geq 1/3m$. Write $P_j := P \cap (\ell_1 \cup \dots \cup \ell_j)$, $j = 2,3,\dots, m$. By Proposition \ref{many-rich}, the set $P_j$ determines at least $\eps_j^6 n^2/m^5$ ordinary lines (here we have used the trivial lower bound $|P_j| \geq |P_1| \geq n/m$). Since $|P \setminus P_j| \leq m \eps_{j+1}n$, the number of these which fail to be ordinary lines in $P$ is bounded above by $m \eps_{j+1} n^2$. Let $j$ be the least index such that $\eps_{j+1} < \frac{1}{2}(\eps_j/m)^{6}$ or, if there is no such index, set $j := m$. Then it follows that the number of ordinary lines in $P$ is at least $\frac{1}{2}(\eps_j/m)^{6} n^2$. We have the lower bound $\eps_j \geq \exp(-e^{Cm})$, and so $P$ spans $\gg \exp (-e^{Cm})n^2$ ordinary lines. If $n \geq n_0(m,K)$ is sufficiently large this is greater than $Kn$, so we obtain a contradiction. \endproof

\emph{Remark.} We note that $n_0(m,K)$ can be taken to have the shape $n_0(m,K) \sim K \exp\exp (Cm)$.\vspace{11pt}

We may now focus our attention on establishing Proposition \ref{many-rich}. We will divide into two quite different cases, according as the lines $\ell_i$ all intersect at a point or not. 

\begin{proposition}\label{betts-case}
Suppose that $m \geq 2$ and that $P$ lies on  a union $\ell_1 \cup \dots\cup\ell_m$ of lines, all of which pass through a point, and that at least $\eps n$ points of $P$ lie on each of the lines $\ell_i$. Then $P$ spans at least $\eps^2n^2/50$ ordinary lines.
\end{proposition}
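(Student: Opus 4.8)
\emph{Sketch of a possible proof.} The plan is to reduce to the parallel case and then run a convexity argument of ``escaping line'' type; this is the place where the torsion-freeness of $\R$ must enter, since an analogous statement fails over $\F_p$ (a finite truncation of \eqref{p1} has no ordinary lines at all).

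First I would apply a projective transformation sending the common point of $\ell_1,\dots,\ell_m$ to infinity, so that the lines become parallel, and use the remaining affine freedom to take them to be the vertical lines $\{x=1\},\dots,\{x=m\}$. Write $A_i\subseteq\R$ for the set of heights of the $\ge\eps n$ points of $P$ lying on $\{x=i\}$. Since each $\ell_i$ already carries at least three points, no $\ell_i$ is ordinary, so every ordinary line is a non-vertical line meeting exactly two of the $\ell_i$. Let $D$ be the vertical diameter of $P$, and after translating assume all heights lie in $[0,D]$.

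The key observation is an escape (convexity) principle: a line $\ell$ through a point $(i,a)$ of $\ell_i$ and a point $(i+1,b)$ of $\ell_{i+1}$ on two \emph{consecutive} lines is automatically ordinary once its slope $b-a$ is large enough in absolute value. Indeed, on $\ell_j$ the height of $\ell$ equals $a+(b-a)(j-i)$; if $|b-a|>D$ then for $j\ge i+2$ these heights form an arithmetic progression that leaves the band $[0,D]$ immediately and never returns, while for $j\le i-1$ they run off in the opposite direction and also leave $[0,D]$, so $\ell$ meets $P$ only in $(i,a)$ and $(i+1,b)$. This step uses essentially that $\R$ is an ordered, torsion-free field: over $\F_p$ there is no ``outside the band'' into which to escape. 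The same mechanism is captured quantitatively by the trivial sumset bound $|A+B|\ge|A|+|B|-1$ of Lemma \ref{add-comb-lem-7}.

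The remaining step, which I expect to be the main obstacle, is to show that at least $\eps^2 n^2/50$ such escaping lines exist. For a fixed consecutive pair $(\ell_i,\ell_{i+1})$ the escaping lines that leave the band upward correspond to pairs $(a,b)\in A_i\times A_{i+1}$ with $2b-a$ above the top of the band (and symmetrically for those leaving downward, or for the pair $(\ell_{i+1},\ell_i)$), so a double count over all consecutive pairs and both directions should produce $\gg\eps^2 n^2$ pairs from the bounds $|A_i|\ge\eps n$ together with the fact that the band has width exactly $D$. The difficulty is that the points on a line may be concentrated near the middle of the band, where they generate no escaping lines; to get around this I would choose the consecutive pair and the escape direction adaptively — some line must carry points near height $0$ and some near height $D$, and every line has a neighbour among the $\ell_j$ — and, if necessary, use the further projective freedom fixing the point at infinity (M\"obius reparametrisations of $x$ together with the induced height rescalings) to normalise the vertical extents of a bounded number of the lines before counting. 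Making the bookkeeping close with the clean constant appearing in $\eps^2 n^2/50$ is where most of the work lies. The whole argument may alternatively be phrased in the dual plane, as a statement about the parallelogram grid cut out by two parallel pencils and how much of it survives being hit by a third pencil.
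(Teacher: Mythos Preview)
Your overall plan --- use the order structure of $\R$ via some convexity/escape argument --- is on the right track, but the execution has a real gap that I do not see how to close with the tools you describe.

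The escape criterion $|b-a|>D$ is essentially vacuous: since $a,b\in[0,D]$ you always have $|b-a|\le D$, with equality only for the single pair $\{a,b\}=\{0,D\}$. So this produces at most one ordinary line per consecutive pair. You then quietly switch to the one-sided condition $2b-a>D$, but that only guarantees escape to the right; for an interior pair $(\ell_i,\ell_{i+1})$ you would simultaneously need $2a-b<0$ to escape to the left, and more importantly the count of such pairs can be negligible. Concretely, take $m=3$, let $D$ be enormous, and put all but one point of each $A_i$ in a short interval $[D/2,D/2+N]$ (with a single stray point of $A_1$ at $0$ and of $A_3$ at $D$ to realise the diameter). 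Then $|A_i|\sim N$ but the number of pairs $(a,b)\in A_i\times A_{i+1}$ with $2b-a>D$ is $O(N)$, not $\gg N^2$. This configuration still has $\gg N^2$ ordinary lines, but they are lines through $\ell_1$ and $\ell_3$ that stay inside the band and simply miss the half-integers on $\ell_2$ --- not escaping lines at all. Your ``adaptive choice of pair and direction'' and ``further projective freedom'' do not obviously help here: the bad pairs are consecutive and the projective maps preserving the pencil of verticals only rescale heights, which does not separate the clusters.

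The paper's argument is genuinely different and avoids this difficulty. It passes to the \emph{dual}: the $m$ concurrent lines become $m$ points at infinity, and $P$ becomes a collection $\mathcal L$ of lines in $m$ parallel classes, each class of size $\ge t=\eps n$; ordinary lines of $P$ become ordinary (double) intersection points of $\mathcal L$. The key lemma is: if $\mathcal L$ is $t$-parallel and not all parallel, some line of $\mathcal L$ carries $\ge t/2$ ordinary points. Proof: take the convex hull of the set $T$ of triple intersection points; a vertex $v$ of this hull lies on three lines $\ell_1,\ell_2,\ell_3\in\mathcal L$, and the open rays from $v$ lying outside the hull meet no point of $T$. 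Each of the $\ge t$ lines parallel to the ``middle'' line $\ell_2$ crosses one of the two outer rays, so one ray carries $\ge t/2$ ordinary points. Then one iterates, removing one line at a time. The convex-hull step is exactly the place where the order on $\R$ enters, and it does so globally rather than through a local ``escape'' bound between two fixed neighbouring lines.
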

\begin{proof} We are greatly indebted to Luke Alexander Betts, a second year undergraduate at Trinity College, Cambridge, who showed us the following argument. For brevity we give a slightly crude version of the argument he showed us. 

Applying a projective transformation, we may assume without loss of generality that all the lines pass through the origin $[0,0,1]$ in the affine part of $\R\P^2$. Dualising, these $m$ lines become points on the line at infinity, and the sets $P \cap \ell_i$ become sets of parallel lines. If $\mathcal{L}$ is a set of lines in $\R^2$, we say that a point is \emph{ordinary} for $\mathcal{L}$ if it lies on precisely two of the lines in $\mathcal{L}$.  We say that $\mathcal{L}$ is $t$-parallel if, for every line $\ell \in \mathcal{L}$, there are at least $t$ other lines parallel to it. Finally, we say that a point lying on three or more of the lines from $\mathcal{L}$ is a \emph{triple point}. The dual statement to Proposition \ref{betts-case} (with $t$ replacing $\eps n$) is then the following.\vspace{11pt}

\noindent\textbf{Proposition.} Let $t>0$ be a real number.  \emph{Suppose that $\mathcal{L}$ is a $t$-parallel set of lines in $\R^2$, and that not all the lines of $\mathcal{L}$ are parallel. Then there are at least $t^2/50$ ordinary points for $\mathcal{L}$. }\vspace{11pt}

The heart of the matter is the following lemma.

\begin{lemma}\label{ordinary}
Suppose that $\mathcal{L}$ is $t$-parallel, but not all the lines of $\mathcal{L}$ are parallel. Then there is a line $\ell \in \mathcal{L}$ containing at least $t/2$ ordinary points for $\mathcal{L}$.
\end{lemma}
\begin{proof}
If there are no triple points determined by $\mathcal{L}$ then the conclusion is immediate, as every line intersects at least $t+1 > t/2$ other lines. If there are triple points determined by $\mathcal{L}$, let $T$ be the set of them. Let $v$ be a vertex of the convex hull of $T$, lying on lines $\ell_1,\ell_2,\ell_3 \in \mathcal{L}$. There are open rays (half-lines) $\ell_1^+,\ell_2^+,\ell_3^+$ emanating from $v$ which do not intersect $T$. Suppose without loss of generality that the rays $\ell_1^+, \ell_3^+$ lie on either side of the line $\ell_2$, as depicted in Figure \ref{betts-diagram}. Each of the $t$ other lines in $\mathcal{L}$ parallel to $\ell_2$ meets either $\ell_1^+$ or $\ell_3^+$, and so at least one of these rays meets at least $t/2$ lines in $\mathcal{L}$. All of these points of intersection must, by construction, be ordinary points for $\mathcal{L}$.
\end{proof}

\begin{figure}\includegraphics{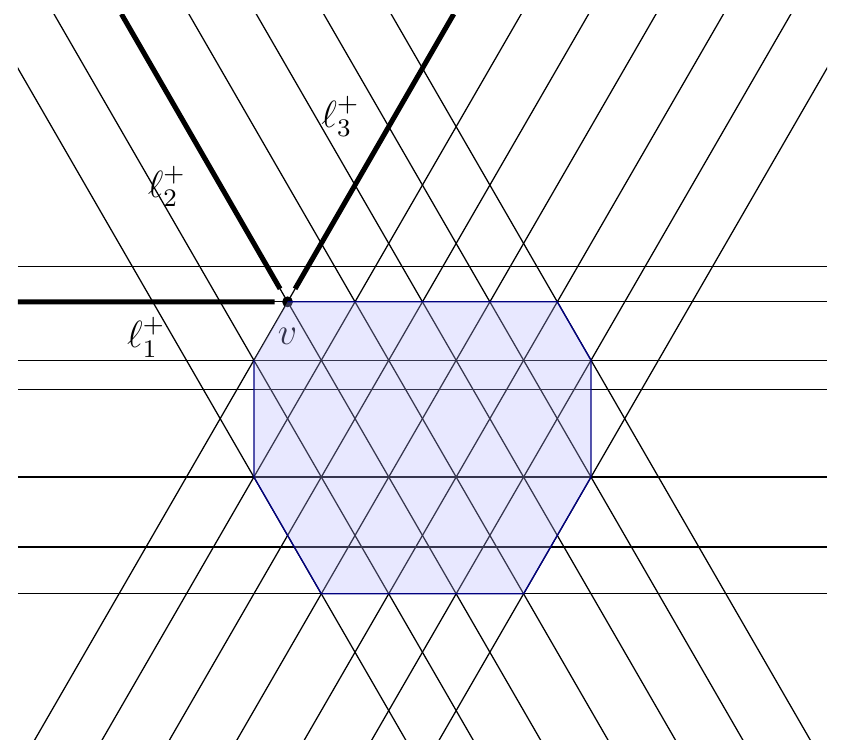}
\caption{Figure relevant to the proof of Lemma \ref{ordinary}. Here, $t = 6$ and the ray $\ell_3^+$ meets $4 > 6/2$ lines in $\mathcal{L}$. All of these points of intersection are ordinary as they lie outside the convex hull of the triple points of $\mathcal{L}$, shaded in blue.}
\label{betts-diagram}
\end{figure}
 
 Now let us return to the main problem, the dual form of Proposition \ref{betts-case} stated above. Note that if $\mathcal{L}'$ is formed by removing at most $t/5$ lines from $\mathcal{L}$ then it is still $4t/5$-parallel. Thus, by $\lceil t/5 \rceil$ applications of Lemma \ref{ordinary} we may inductively find distinct lines $\ell_1,\dots, \ell_{\lceil t/5\rceil}$ such that $\ell_i$ contains at least $2t/5$ ordinary points for $\mathcal{L} \setminus \{\ell_1,\dots, \ell_{i-1}\}$. These are not necessarily ordinary points for $\mathcal{L}$, but any such point that is not lies on one of $\ell_1,\dots,\ell_{i-1}$. Since it also lies on $\ell_i$, there are at most $i-1 < t/5$ such points, and so $\ell_i$ contains at least $t/5$ ordinary points of $\mathcal{L}$. Each ordinary point of $\mathcal{L}$ lies on at most two of the lines $\ell_i$, so we get at least $t^2/50$ ordinary points in total.
\end{proof}

We have now established Proposition \ref{betts-case}, which is the particular case of Proposition \ref{many-rich} in which all the lines $\ell_i$ pass through a single point. We turn now to the case in which this is not so. The next proposition, together with Proposition \ref{betts-case}, immediately implies Proposition \ref{many-rich} and hence the main result of the section, Proposition \ref{main-line-prop}.

\begin{proposition}\label{not-betts-case}
Suppose that $m \geq 2$ and that a set $P \subset \R\P^2$ of size $n$ lies on a union $\ell_1 \cup \dots\cup\ell_m$ of lines, not all of which pass through a single point, and that at least $\eps n$ points of $P$ lie on each of the lines $\ell_i$.  Suppose that $m, \frac{1}{\eps} \leq n^{\frac{1}{10000}}$. Then $P$ spans $\gg\eps^{12} n^2/m^6$ ordinary lines.\end{proposition}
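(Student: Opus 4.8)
The plan is to introduce coordinates via Menelaus's theorem and then invoke the Elekes--Nathanson--Ruzsa sum--product estimate. Since $\ell_1,\dots,\ell_m$ are not all concurrent, some three of them bound a genuine triangle: if every triple of the lines met in a single point then all $m$ would pass through a common point. Fix affine coordinates on each line of such a triangle, say $\ell_1,\ell_2,\ell_3$. By Menelaus, points $X\in\ell_1$, $Y\in\ell_2$, $Z\in\ell_3$ are collinear exactly when $u_1(X)u_2(Y)u_3(Z)=-1$, where $u_i$ is the ratio of the two signed distances from the point on $\ell_i$ to the two vertices of the triangle lying on $\ell_i$; each $u_i$ is a fixed M\"obius function of the affine coordinate, so $\log|u_i|$ is, on each of two half-lines, strictly monotone and either strictly convex or strictly concave. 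Partitioning each $P\cap\ell_i$ into $O(1)$ pieces on which the relevant signs are constant and $\log|u_i|$ is monotone and convex (or concave), the collinearity relation becomes, on each choice of pieces, an equation $f_1(x)+f_2(y)+f_3(z)=0$ with $f_1,f_2,f_3$ strictly convex or strictly concave --- exactly the hypothesis under which Proposition~\ref{enr-prop-piecewise} gives a power-saving bound on the number of solutions $(x,y,z)$ in finite sets. The same applies to any non-concurrent triple among the $\ell_i$.

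The combinatorial core is a double count. Every point of $P$ lies on one of the $\ell_i$, so every line spanned by $P$ other than the $\ell_i$ is a \emph{transversal}, meeting each $\ell_i$ in at most one point. For a pair $(\ell_i,\ell_j)$, with $n_i:=|P\cap\ell_i|\ge\eps n$, there are exactly $n_in_j$ transversals through a point of $P\cap\ell_i$ and one of $P\cap\ell_j$, and each of these that is not ordinary must also meet $P$ on a third line $\ell_k$; hence, writing $R_{ijk}$ for the number of transversals meeting $P$ on each of $\ell_i,\ell_j,\ell_k$ and $O_{ij}$ for the number of ordinary lines through $P\cap\ell_i$ and $P\cap\ell_j$, we get $O_{ij}\ge n_in_j-\sum_{k\ne i,j}R_{ijk}$. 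Because the $\ell_i$ are not all concurrent, every pair of them lies in a triangle; so if $\sum_{k}R_{ijk}\le\frac12 n_in_j$ for even one pair we are done, as then the number of ordinary lines is at least $O_{ij}\ge\frac12\eps^2n^2\gg\eps^{12}n^2/m^6$. Otherwise $\sum_kR_{ijk}>\frac12 n_in_j$ for all pairs; summing over pairs and putting $Q:=\sum_{i<j}n_in_j\gg m^2\eps^2n^2$ gives $3\sum_{i<j<k}R_{ijk}>\frac12 Q$, so by pigeonhole some triple $(\ell_i,\ell_j,\ell_k)$ satisfies $R_{ijk}\gg Q/m^3\gg\eps^2n^2/m$.

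If that triple is non-concurrent, then $R_{ijk}$ is the number of solutions of the Menelaus equation from the first paragraph, which contradicts the power-saving bound of Proposition~\ref{enr-prop-piecewise} once $n$ is large relative to $m$ and $1/\eps$ --- this is where the hypothesis $m,1/\eps\le n^{1/10000}$ is used, and where the ordered, torsion-free nature of $\R$ enters (the estimate being false over finite fields, cf.\ the near-counterexample~\eqref{p2}). If the triple is concurrent, the transversals through it instead obey an additive relation in reciprocal-distance coordinates, and one argues with Proposition~\ref{betts-case} on the sub-configuration carried by the lines concurrent with $\ell_i,\ell_j,\ell_k$ (which cannot be all $m$ lines): if that sub-configuration is large it already produces the required ordinary lines, and if it is small the excess richness must instead sit on a non-concurrent triple, returning us to the previous case. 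Assembling these cases --- and bookkeeping the $O(m^2)$ points lying on two or more of the $\ell_i$, together with the polynomial-in-$m$ factors from the pigeonholing --- yields the bound $\gg\eps^{12}n^2/m^6$. I expect the real difficulty to lie almost entirely in the sum--product step: extracting genuinely usable strictly convex functions from Menelaus's relation and, more importantly, having a version of the Elekes--Nathanson--Ruzsa inequality robust enough that its power saving survives the splitting into $O(1)$ monotonicity pieces and still beats the polynomial-in-$m$ losses; the double count, the reduction to a triangle, and the concurrent case (via Proposition~\ref{betts-case}) should be comparatively routine.
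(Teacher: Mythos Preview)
Your proposal has a genuine gap at the sum--product step, and it is not merely a matter of technical robustness. You claim that Proposition~\ref{enr-prop-piecewise} ``gives a power-saving bound on the number of solutions $(x,y,z)$'' to an equation $f_1(x)+f_2(y)+f_3(z)=0$ with strictly convex/concave $f_i$. It does not: that proposition is a sumset expansion statement ($\max(|A-A|,|f(A)-f(A)|)\gg |A|^{5/4}$), not a solution-count bound, and no such bound exists in general. Indeed, the near-counterexample~\eqref{p2} is precisely a configuration on three non-concurrent lines in which $R_{ijk}\asymp n^2$: with $\ell_1,\ell_2,\ell_3$ the coordinate axes and line at infinity, the Menelaus relation becomes $\log x-\log y-\log z=0$ (strictly convex/concave pieces as you say), and the sets $\{2^k:1\le k\le N\}$ on each line produce $\asymp N^2$ collinear triples. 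So your pigeonholed triple with $R_{ijk}\gg \eps^2 n^2/m$ need not contradict anything.

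The paper's proof deals with exactly this obstruction, and its architecture is quite different from your double count. Rather than bounding $R_{ijk}$, it observes (via Balog--Szemer\'edi--Gowers applied to the Menelaus relation) that large $R_{ijk}$ forces $P\cap\ell_i$ to be a \emph{grid} with respect to the ratio map $\phi_{i,j,k}$: a set whose image has small multiplicative doubling. The Elekes--Nathanson--Ruzsa input is then used not to bound solutions, but to prove the rigidity statement (Lemma~\ref{lem7.9}) that a single set cannot be a grid for two inequivalent ratio maps. This forces essentially a \emph{unique} $\ell_k$ to absorb almost all pairs from $X_i\times X_j$ (Lemma~\ref{lem7.12}), and then a robust version of $|U\cdot V|\ge |U|+|V|-O(1)$ (Lemma~\ref{add-comb-lem-7-mult}) shows the resulting grid on $\ell_k$ is strictly larger than the largest grid in $P$, a contradiction. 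Your concurrent/non-concurrent dichotomy and double count are reasonable scaffolding, but without the grid-uniqueness mechanism (or some other way to rule out the geometric-progression structure of~\eqref{p2}) the argument cannot close.
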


The proof proceeds via several lemmas. It also requires 
some additive-combinatorial ingredients not needed elsewhere in the paper, which we collect in the appendix. It is convenient, for this portion of the argument, to work entirely in the affine plane. Let us begin, then, by supposing that a projective transformation has been applied so that all lines $\ell_i$ and their intersections lie in the affine plane $\R^2$. 

Suppose that $\ell$ is a line. Then by a \emph{ratio map} on $\ell$ we mean a map 
\[ \psi = \psi_{q,q'}: \ell \rightarrow \R \cup \{\infty\}\] of the form
\[ \psi_{q,q'}(p) = \frac{\length(pq)}{\length(pq')},\] where $q,q'$ are distinct points on $\ell$ and the lengths $\length(p q),\length( p q')$ are \emph{signed} lengths on $\ell$. 
We say that the ratio maps $\psi_{q,q'}$ and $\psi_{q',q}$ are \emph{equivalent}, but otherwise all ratio maps are deemed inequivalent.  Note that such ratio maps implicitly appeared in the analysis of the infinite near-counterexample \eqref{p2}.

An ordered triple of lines $\ell_i, \ell_j, \ell_k$ not intersecting in a single point defines two ratio maps $\phi_{i,j,k} : \ell_i \rightarrow \R \cup \{\infty\}$ and $\tilde\phi_{i,j,k} : \ell_j \rightarrow \R \cup \{\infty\}$ via
\[ \phi_{i,j,k} := \psi_{\ell_i \cap \ell_j, \ell_i \cap \ell_k}  \qquad \mbox{and} \qquad \tilde\phi_{i,j,k} := \psi_{\ell_i \cap \ell_j, \ell_j \cap \ell_k}.\]
We will make considerable use of these maps in what follows, as well as of the following definition.

\begin{definition}[Quotient set]
Suppose that $X \subset \R \cup \{\infty\}$ is a set. Then we write $\mathcal{Q}(X)$ for the set of all quotients $x_1/x_2$ with $x_1, x_2 \in X$ and $x_1, x_2 \notin \{0,\infty\}$. 
\end{definition}

The following definition depends on the parameter $n$, which is the number of points in the set $P$. Since this is fixed throughout the section, we do not indicate dependence on it explicitly.

\begin{definition}
Let $A$ be a finite subset of some line $\ell$, and let $\psi$ be a ratio map on $\ell$. Then we say that $A$ is a $\psi$-grid if $A$ is a union of at most $n^{\frac{1}{30}}$ sets $S$ such that $|\mathcal{Q}(\psi(S))| \leq n^{1+ \frac{1}{10}}$. We say that $A$ is a \emph{grid} if it is a $\psi$-grid for some ratio map $\psi$ on $\ell$.
\end{definition}

\begin{lemma}\label{lem7.9}
Let $\ell$ be a line, and suppose that $\psi$ and $\psi'$ are inequivalent ratio maps on $\ell$. Suppose that $A$ is a $\psi$-grid and that $A'$ is a $\psi'$-grid. Then $|A \cap A'| \ll n^{1 - \frac{1}{25}}$.
\end{lemma}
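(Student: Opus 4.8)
The plan is to localise the statement to a single ``cell'' and then reduce it to a one-dimensional sum--product estimate. Write $A = S_1 \cup \dots \cup S_a$ and $A' = T_1 \cup \dots \cup T_b$ with $a,b \le n^{1/30}$, where $|\mathcal{Q}(\psi(S_i))| \le n^{1+\frac1{10}}$ for every $i$ and $|\mathcal{Q}(\psi'(T_j))| \le n^{1+\frac1{10}}$ for every $j$. Then $A \cap A' = \bigcup_{i,j}(S_i \cap T_j)$ is a union of at most $n^{1/30}\cdot n^{1/30} = n^{1/15}$ sets, so it suffices to show $|B| \ll n^{67/75}$ for each $B := S_i \cap T_j$, since then $|A \cap A'| \le n^{1/15}\cdot n^{67/75} \ll n^{1-\frac1{25}}$. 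As $X \mapsto \mathcal{Q}(\psi(X))$ is monotone in $X$, we get $|\mathcal{Q}(\psi(B))| \le n^{1+\frac1{10}}$ and likewise $|\mathcal{Q}(\psi'(B))| \le n^{1+\frac1{10}}$, and the problem is now entirely one-dimensional, living on the line $\ell$.

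Next I would normalise coordinates. Fixing a projective coordinate on $\ell$ and composing it with a M\"obius transformation leaves $\mathcal{Q}(\psi(B))$, $\mathcal{Q}(\psi'(B))$ and $|B|$ unchanged, so I may assume the zero and pole of $\psi$ lie at $0$ and $\infty$. In these coordinates $\psi$ is the identity map up to a harmless scalar, so $\mathcal{Q}(\psi(B))$ is just the ratio set $B/B$, after deleting the at most two points of $B$ at $0$ or $\infty$; thus $|B/B| \le n^{1+\frac1{10}}+2$. The zero and pole $c,d$ of $\psi'$ now form a pair with $\{c,d\}\ne\{0,\infty\}$ --- this is the one place the hypothesis $\psi\not\sim\psi'$ is used --- and $\mathcal{Q}(\psi'(B))$ is the set of cross ratios $(x,x';c,d)$ over pairs of coordinates $x,x'$ of points of $B$, which equals $C/C$ for $C := \{\,(x-c)/(x-d) : x \in B\,\}$ (read in the evident degenerate way when $c$ or $d$ is $0$ or $\infty$). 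Hence $|C/C| \le n^{1+\frac1{10}}+2$, and $x \mapsto (x-c)/(x-d)$ maps $B$ bijectively onto $C$ and, precisely because $\{c,d\}\ne\{0,\infty\}$, is \emph{not} of the form $x\mapsto\lambda x$.

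The crux is then the following one-dimensional input, a multiplicative form of the Elekes--Nathanson--Ruzsa theorem: if $B\subseteq\R$ is finite and $C$ is the image of $B$ under a fractional-linear map not of the form $x\mapsto\lambda x$, then $|B| \ll \max(|B/B|,|C/C|)^{\theta}$ for some absolute $\theta<1$ (one may take $\theta=\tfrac45$, via the usual Szemer\'edi--Trotter argument). This is the sort of estimate packaged in Proposition \ref{enr-prop-piecewise}, and it is here that the torsion-free ordering of $\R$ is genuinely used; the near-counterexample \eqref{p2} shows that the non-dilation hypothesis is essential. To apply it one first disposes of the sub-cases where $\{c,d\}$ meets $\{0,\infty\}$: there $C$ is an honest affine image $C = \alpha B + \beta$ or $C = \alpha B^{-1} + \beta$ with $\alpha\beta\ne0$, so that a set and an affine translate of it (or of its reciprocal) both have small ratio set --- the classical sum--product situation; one then reduces the remaining sub-case, with $c,d$ both finite and nonzero, to one of those by passing to $B^{-1}$ and translating. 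Since $(1+\tfrac1{10})\cdot\tfrac45 = \tfrac{66}{75} < \tfrac{67}{75}$, this yields $|B| \ll (n^{1+\frac1{10}})^{4/5} \ll n^{67/75}$, which closes the argument.

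The step I expect to be the real obstacle is securing the sum--product estimate in precisely this ``ratio-set / fractional-linear'' form with an exponent safely below $\tfrac{134}{165}$, together with the slightly fiddly reduction of the last sub-case (two finite nonzero base points) to the affine one. The remaining ingredients --- the cell decomposition, the M\"obius normalisation, and the accounting for the handful of points where $\psi$ or $\psi'$ vanishes or has a pole --- are routine once that estimate is in hand.
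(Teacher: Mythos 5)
Your outer skeleton is the same as the paper's: decompose $A \cap A'$ into at most $n^{1/15}$ cells $S_i \cap T_j$, note that the quotient-set bounds pass to each cell, and win provided each cell has size at most roughly $n^{22/25}$; the M\"obius normalisation placing the base points of $\psi$ at $0$ and $\infty$ is legitimate (and slightly slicker than the paper's affine coordinates), since $\mathcal{Q}(\psi(B))$ is a set of cross-ratios. The gap is exactly the step you flag at the end: the ``multiplicative ENR'' input is asserted rather than proved, and as you state it it is false. If $g(x) = \lambda/x$ then $C = g(B)$ satisfies $C/C = (B/B)^{-1}$, so taking $B$ a geometric progression gives $\max(|B/B|,|C/C|) \ll |B|$, even though $g$ is not of the form $x \mapsto \lambda x$. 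The correct hypothesis is that $g$ does not stabilise the pair $\{0,\infty\}$, i.e.\ is neither a dilation nor an inversion; in your application this is precisely what $\{c,d\} \neq \{0,\infty\}$ (the inequivalence of $\psi$ and $\psi'$) buys you, so the application can be repaired, but the lemma you lean on must be restated and, more importantly, proved -- it is not what Proposition \ref{enr-prop-piecewise} says, which concerns difference sets and piecewise convex functions.

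Deriving the corrected multiplicative statement from Proposition \ref{enr-prop-piecewise} is in fact the whole content of the paper's proof, and it is the part you have omitted: split the cell according to the sign of $\psi$, pass to $Y = \log|\psi(X)|$ so that the two quotient-set bounds become $|Y-Y|, |f(Y)-f(Y)| \leq 2n^{1+\frac{1}{10}}$ with $f = \log \circ \psi' \circ \psi^{-1} \circ \exp$, and then compute $f''$ explicitly: it is singular at no more than two points and vanishes at no more than one, so $\R$ splits into at most four intervals on which $f$ is strictly convex or concave, and the inequivalence hypothesis is exactly what rules out the degenerate case $f'' \equiv 0$ ($f$ affine), which is the logarithmic incarnation of the dilation/inversion counterexample above. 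Only after this does Proposition \ref{enr-prop-piecewise} give $\max(|Y-Y|,|f(Y)-f(Y)|) \gg |Y|^{5/4}$ and hence the cell bound $\ll n^{\frac{4}{5}(1+\frac{1}{10})} = n^{22/25}$. Your proposed shortcut for the sub-case with $c,d$ finite and nonzero, ``passing to $B^{-1}$ and translating,'' does not obviously work either: writing $(x-c)/(x-d) = 1 + (d-c)/(x-d)$ exhibits $C$ as an affine image of $(B-d)^{-1}$, not of $B$, and you have no ratio-set control on $(B-d)^{-1}$, since translation destroys the bound on $B/B$. So some version of the convexity computation still has to be carried out; as written, the proof of the crucial one-dimensional estimate is missing.
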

\begin{proof} Without loss of generality we may assume that $\ell$ is the $x$-axis parametrised as $\{(t,0) : t \in \R\}$. By abuse of notation we identify $A$ and $A'$ with subsets of $\R$. The ratio maps $\psi, \psi'$ are given by $\psi(t) = (t+a)/(t+b)$, $\psi'(t) = (t + a')/(t + b')$ with $a \neq b, a' \neq b'$ and $\{a,b\} \neq \{a', b'\}$.  

Suppose that $A$ is a $\psi$-grid and that $A'$ is a $\psi'$-grid. Write $A = \bigcup_{i = 1}^{n^{1/30}} S_i$ and $A' = \bigcup_{j = 1}^{n^{1/30}} S'_j$ where $|\mathcal{Q}(\psi(S_i))|$ and $|\mathcal{Q}(\psi'(S'_j))|$ are both at most $n^{1 + \frac{1}{10}}$. It suffices to show that $|S_i \cap S'_j| \ll n^{\frac{22}{25}}$.

Suppose that $X$ is the set of all $x \in S_i \cap S'_j$ for which $\psi(x) > 0$. Since $X$ is contained in both $S_i$ and $S'_j$, $|\mathcal{Q}(\psi(X))|$ and $|\mathcal{Q}(\psi'(X))|$ are both at most $n^{1 + \frac{1}{10}}$.
Writing $Y := \{\log \psi(x) : x \in X\}$, we see that
\begin{equation}\label{yfy} |Y - Y|, |f(Y) - f(Y)| \leq 2n^{1 + \frac{1}{10}},\end{equation} where $f(y) = \log \circ \psi' \circ \psi^{-1} \circ \exp(y) = $
\begin{align*}  &  = \log ((b - a')e^y + a' - a) - \log((b - b')e^y + b' - a) \\ & = \log (Ae^y + B) - \log (A' e^y + B'),\end{align*} say, and $f(Y) := \{f(y) : y \in Y, f(y) \, \mbox{is defined}\}$. Note that we do not have $AA' = 0$ and $B B' = 0$ (there are four cases to consider).  
We thus compute
\[ f''(y) =  \frac{e^y (b - a)(b' - a') (A A' e^{2y} - B B')}{(A e^y + B)^2 (A' e^y + B')^2}.\]
This is continuous except when $e^y = -B'/A'$ or $-B/A$, and nonzero except when $e^{2y} = BB'/AA'$. It follows that $\R$ may be split into at most $4$ pieces on which $f$ is defined and strictly concave/convex. By Proposition \ref{enr-prop-piecewise}, this implies that at least one of $|Y - Y|, |f(Y) - f(Y)|$ has size $\gg |Y|^{5/4}$. Comparing with \eqref{yfy} we see that $|X| = |Y| \ll n^{\frac{22}{25}}$. 

An almost identical argument applies when $X$ is the set of all $x \in S_i \cap S'_j$ for which $\psi(x) < 0$, taking $Y := \{ \log (-\psi(x)) : x \in X\}$: now we have $f(y) = \log (-Ae^y + B) - \log (-A' e^y + B')$, but the rest of the argument is the same. 

Putting these two cases together gives $|S_i \cap S'_j| \ll n^{\frac{22}{25}}$, which is what we wanted to prove.\end{proof}

Ratio maps may be used in understanding the metric properties of intersections of lines as a consequence of Menelaus's theorem, illustrated in Figure \ref{menelaus-fig}.
\begin{figure}
\includegraphics[scale=0.8]{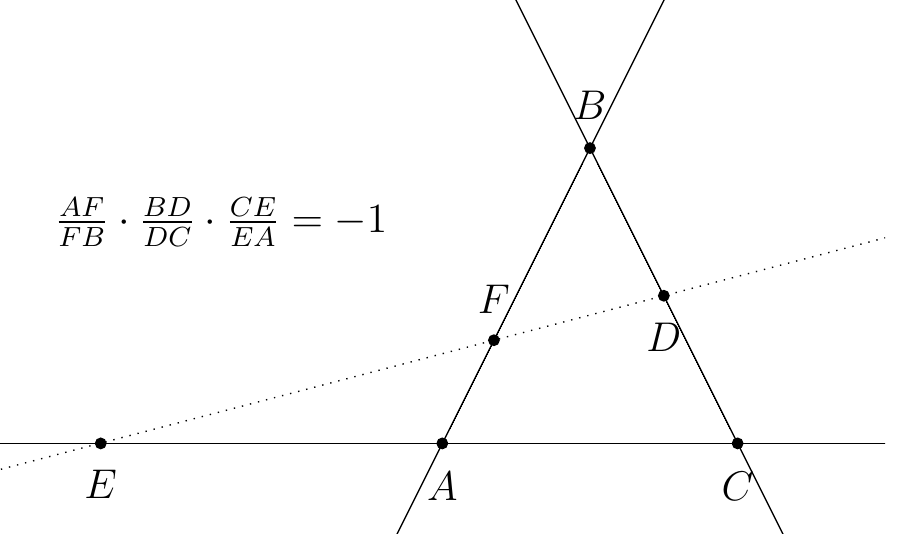} 
\label{menelaus-fig}
\caption{An illustration of Menelaus's theorem. The lengths are signed.}
\end{figure}
\begin{lemma}\label{intersect-lem}
Let $\ell_i, \ell_j, \ell_k$ be three lines not meeting at a point. Let $X_i \subset \ell_i$ and $X_j \subset \ell_j$, and let $\Gamma \subset X_i \times X_j$ be a set of pairs, with neither $X_i$ nor $X_j$ containing $\ell_i \cap \ell_j$.  Let $X_k \subset \ell_k$ be the set of points on $\ell_k$ formed by intersecting the lines $\overline{\{x_i ,x_j\}}$, $(x_i, x_j) \in \Gamma$, with $\ell_k$.  Then
\[ |X_k| =  | \{ \phi_{i,j,k}(x_i)/\tilde\phi_{i,j,k}(x_j) : (x_i, x_j) \in \Gamma\} | .\]
\end{lemma}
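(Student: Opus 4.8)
The plan is to use Menelaus's theorem to convert the collinearity of $x_i$, $x_j$ and their common transversal's intersection $x_k$ with $\ell_k$ into an identity between signed length ratios, and then to recognise this identity as the statement that $x_k \in \ell_k$ is the image of the pair $(x_i,x_j)$ under an \emph{injective} ratio map on $\ell_k$.

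I would first name the triangle cut out by the three lines: since $\ell_i,\ell_j,\ell_k$ are not concurrent, the points $A := \ell_j\cap\ell_k$, $B := \ell_k\cap\ell_i$ and $C := \ell_i\cap\ell_j$ are distinct, with $\ell_i = \overline{BC}$, $\ell_j=\overline{CA}$ and $\ell_k=\overline{AB}$. Fix $(x_i,x_j)\in\Gamma$. Since neither $X_i$ nor $X_j$ contains $C=\ell_i\cap\ell_j$ we have $x_i\neq x_j$, so the line $\overline{\{x_i,x_j\}}$ is defined and (setting aside the degenerate pair with $\overline{\{x_i,x_j\}}=\ell_k$) meets $\ell_k$ in a unique point $x_k$, possibly at infinity. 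The points $x_i,x_j,x_k$ lie on the three sides of the triangle $ABC$ and are collinear, so Menelaus's theorem gives
\[
\frac{\length(Bx_i)}{\length(x_iC)}\cdot\frac{\length(Cx_j)}{\length(x_jA)}\cdot\frac{\length(Ax_k)}{\length(x_kB)} = -1,
\]
with signed lengths taken along each line. Using $\length(PQ)=-\length(QP)$ and rearranging, this is equivalent to
\[
\frac{\length(x_kA)}{\length(x_kB)} = \frac{\length(x_iC)/\length(x_iB)}{\length(x_jC)/\length(x_jA)}.
\]

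Now I would read off both sides in terms of ratio maps. Since $\phi_{i,j,k}=\psi_{\ell_i\cap\ell_j,\,\ell_i\cap\ell_k}=\psi_{C,B}$ and $\tilde\phi_{i,j,k}=\psi_{\ell_i\cap\ell_j,\,\ell_j\cap\ell_k}=\psi_{C,A}$, the right-hand side equals $\phi_{i,j,k}(x_i)/\tilde\phi_{i,j,k}(x_j)$ --- and it is never of the form $0/0$, precisely because $x_i,x_j\neq C$. The left-hand side is $\psi_{A,B}(x_k)$, the ratio map on $\ell_k$ with base points $\ell_k\cap\ell_j=A$ and $\ell_k\cap\ell_i=B$, which in the paper's notation is $\phi_{k,j,i}$. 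Thus for every $(x_i,x_j)\in\Gamma$ we get the clean identity $\phi_{k,j,i}(x_k)=\phi_{i,j,k}(x_i)/\tilde\phi_{i,j,k}(x_j)$. It remains to note that a ratio map $\psi_{q,q'}$ with $q\neq q'$ is injective on its line: choosing an affine parameter $t$ on $\ell_k$ with $A$ at $t=\alpha$ and $B$ at $t=\beta$ (and $\alpha\neq\beta$ since $A\neq B$), the map $\psi_{A,B}$ has the form $t\mapsto(t-\alpha)/(t-\beta)$, a M\"obius transformation, hence extends to a bijection $\ell_k\to\R\cup\{\infty\}$ (sending the point at infinity of $\ell_k$ to $1$ and $B$ to $\infty$). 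Injectivity gives $|X_k|=|\phi_{k,j,i}(X_k)|$, while $\phi_{k,j,i}(X_k)=\{\phi_{i,j,k}(x_i)/\tilde\phi_{i,j,k}(x_j):(x_i,x_j)\in\Gamma\}$ by the identity above; combining these proves the lemma.

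I do not anticipate a serious obstacle: the proof is just Menelaus's theorem together with the injectivity of a M\"obius transformation. The only points needing care are pinning down the sign convention in Menelaus's theorem so that the final identity carries no spurious sign, matching up the base points of the three ratio maps $\phi_{i,j,k}$, $\tilde\phi_{i,j,k}$ and $\psi_{A,B}=\phi_{k,j,i}$ correctly, and the harmless bookkeeping of the point at infinity of $\ell_k$, dealt with by treating $\ell_k$ as a projective line throughout.
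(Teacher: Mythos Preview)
Your proposal is correct and follows essentially the same route as the paper: apply Menelaus's theorem to the triangle formed by $\ell_i,\ell_j,\ell_k$, identify the resulting signed-ratio identity with $\psi_{A,B}(x_k)=\phi_{i,j,k}(x_i)/\tilde\phi_{i,j,k}(x_j)$, and conclude by the injectivity of the ratio map on $\ell_k$. Your write-up is in fact a bit more explicit than the paper's, which simply records that $DB/DC$ ``uniquely determines the point $D$'' without spelling out the M\"obius-transformation picture or naming $\phi_{k,j,i}$.
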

\begin{proof} Apply Menelaus's theorem with $AC = \ell_i$, $AB = \ell_j$ and $BC = \ell_k$. Suppose that $x_i \in \ell_i$ and $x_j \in \ell_j$, and write $E = x_i$, $F = x_j$ in the diagram. Then $D$ is the point at which $\overline{\{x_i,x_j\}}$ intersects $\ell_k$.  Note that $\phi_{i,j,k}(x_i) = EA/EC$, $\tilde\phi_{i,j,k}(x_j) = FA/FB$.
By Menelaus' theorem it follows that $\phi_{i,j,k}(x_i)/\tilde\phi_{i,j,k}(x_j) = DB/DC$. This ratio uniquely determines the point $D$, and the lemma follows.
\end{proof}

\begin{lemma}\label{lem7.10a} Suppose that $P$ is a set of $n$ points lying on a union $\ell_1 \cup \dots \cup \ell_m$ of lines, that the lines $\ell_i$ are not all concurrent, and that at least $\eps n$ points of $P$ lie on each line $\ell_i$. Suppose that $m, \frac{1}{\eps} \leq n^{\frac{1}{10000}}$. Then either $P$ spans at least $\eps^2n^2/8$ ordinary lines, or else there are at least two values of $i$ such that $P \cap \ell_i$ contains a grid with size $\gg \eps^4 n/m^2$.
\end{lemma}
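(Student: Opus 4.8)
The plan is to argue by contradiction against the first alternative: assume $P$ spans fewer than $\eps^2 n^2/8$ ordinary lines, and produce the two grids. Since $m,1/\eps\le n^{1/10000}$ with $n$ large, all error terms of size $O(m^2 n)$ will be negligible next to $\eps^2 n^2$. First I would set up the combinatorial skeleton: write $P_i^\circ$ for the points of $P$ lying on $\ell_i$ and on no other $\ell_j$ (so $|P_i^\circ|\ge\eps n-m^2=(1-o(1))\eps n$), fix a non-concurrent triple $\ell_1,\ell_2,\ell_3$ (available since the $\ell_i$ are not all concurrent), and observe that for the pair $(\ell_1,\ell_2)$ at most $\eps^2 n^2/8$ of the $\ge(1-o(1))\eps^2 n^2$ pairs $(x_1,x_2)\in P_1^\circ\times P_2^\circ$ span an ordinary line. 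Discarding also the $O(m^2 n)$ pairs whose spanning line passes through an intersection point $\ell_i\cap\ell_j$ leaves $\ge 0.7\eps^2 n^2$ pairs each of whose spanning line meets $P$ at a further point, lying on a well-defined line $\ell_c$ with $c\notin\{1,2\}$; a pigeonhole over $c$ then hands back a single $\ell_c$ carrying $\ge\eps^2 n^2/(2m)$ of these pairs. The subtlety here is that this $\ell_c$ might pass through $\ell_1\cap\ell_2$, making the triple concurrent; I would use the hypothesis that the \emph{whole} family is not concurrent (so there is always a further line off the offending pencil onto which the analysis can be transferred) to arrange that the winning triple, relabelled $(\ell_a,\ell_b,\ell_c)$, is non-concurrent. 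Putting $X_a:=(P\cap\ell_a)\setminus\{\ell_a\cap\ell_b,\ell_a\cap\ell_c\}$ and $X_b$ analogously, we then have a set $\Gamma\subseteq X_a\times X_b$ with $|\Gamma|\ge\eps^2 n^2/(2m)$ whose spanning lines meet $P\cap\ell_c$.

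The analytic core is to convert this into multiplicative structure via Menelaus. Because $\ell_a,\ell_b,\ell_c$ are not concurrent, $\phi:=\phi_{a,b,c}=\psi_{\ell_a\cap\ell_b,\,\ell_a\cap\ell_c}$ is a genuine, hence injective, ratio map on $\ell_a$, and $\tilde\phi:=\tilde\phi_{a,b,c}=\psi_{\ell_a\cap\ell_b,\,\ell_b\cap\ell_c}$ a genuine ratio map on $\ell_b$, both avoiding $\{0,\infty\}$ on $X_a$ resp.\ $X_b$. Lemma \ref{intersect-lem} applied to $\Gamma$ says that the points of $\ell_c$ cut out by the lines $\overline{\{x_a,x_b\}}$, $(x_a,x_b)\in\Gamma$, number exactly $|\{\phi(x_a)/\tilde\phi(x_b):(x_a,x_b)\in\Gamma\}|$, and since these points all lie in $P\cap\ell_c$ this is $\le n$. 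Writing $A:=\phi(X_a)$ and $B:=\tilde\phi(X_b)$ (subsets of $\R^\times$ of size $(1-o(1))\eps n$) and transporting $\Gamma$ to $A\times B$, I will have $\ge\eps^2 n^2/(2m)$ pairs $(\alpha,\beta)\in A\times B$ with $\alpha/\beta$ lying in a fixed set $R$ of size $\le n$.

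From here the argument is standard additive combinatorics. Cauchy--Schwarz gives multiplicative energy $E^\times(A,B)\ge(\eps^2 n^2/(2m))^2/|R|\gg(\eps/m^2)(|A||B|)^{3/2}$ (using $n\le(2/\eps)(|A||B|)^{1/2}$). Feeding this into the multiplicative Balog--Szemer\'edi--Gowers theorem from the appendix yields $A'\subseteq A$, $B'\subseteq B$ with $|A'|,|B'|\gg(\eps/m^2)^{O(1)}\eps n$ and $|A'/B'|\ll(\eps/m^2)^{-O(1)}n$; the multiplicative Ruzsa triangle inequality then gives $|A'/A'|,|B'/B'|\ll(\eps/m^2)^{-O(1)}n$, which since $m,1/\eps\le n^{1/10000}$ is at most $n^{1+1/10}$. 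Hence $\phi^{-1}(A')\subseteq P\cap\ell_a$ is a $\phi$-grid (a single piece $S$ with $|\mathcal{Q}(\phi(S))|\le n^{1+1/10}$), $\tilde\phi^{-1}(B')\subseteq P\cap\ell_b$ is a $\tilde\phi$-grid, and both have size $\gg(\eps/m^2)^{O(1)}\eps n$; tracking the small absolute exponents appearing in Balog--Szemer\'edi--Gowers and Ruzsa recovers a bound of the shape $\gg\eps^4 n/m^2$ on their sizes. This contradicts the failure of the second alternative and completes the proof.

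The step I expect to be the main obstacle is the combinatorial reduction of the first paragraph: pinning down a \emph{single} non-concurrent triple of lines responsible for $\gg\eps^2 n^2/\mathrm{poly}(m)$ collinear point-triples. A blind pigeonhole can return a concurrent triple, and concurrent triples behave genuinely differently: there the relation between the three coordinates is additive ($1/s_c=\lambda/s_a+\mu/s_b$) rather than multiplicative, so one only extracts additive progressions, which are not grids in the sense defined here (compare the near-counterexample \eqref{p1}). Routing around this using that the full family of lines is not concurrent, while keeping the polynomial loss in $m$ and $\eps$ under control — both through this reduction and through Balog--Szemer\'edi--Gowers — is the real work; the Menelaus step and the energy argument are essentially forced once Lemma \ref{intersect-lem} and the additive-combinatorial toolbox are in hand.
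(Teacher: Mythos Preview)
Your overall strategy --- pigeonhole a single third line $\ell_c$ onto which many secants from a fixed pair land, then convert via Menelaus and Lemma \ref{intersect-lem} to a restricted product-set bound, then apply Balog--Szemer\'edi--Gowers plus Ruzsa --- is exactly the paper's, and your bound-tracking in the analytic part is correct.

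The gap is precisely where you placed it. You do not actually resolve the concurrent-triple problem; your proposed workaround (``transfer the analysis to a line off the offending pencil'') is not a proof. If the pigeonholed $\ell_c$ passes through $\ell_1\cap\ell_2$, switching to a new pair and re-pigeonholing may produce a new concurrent triple, and there is no evident termination argument that keeps the losses polynomial in $m,1/\eps$.

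The paper's resolution is a one-line trick you missed: apply the \emph{dual} Sylvester--Gallai theorem to the family $\{\ell_1,\ldots,\ell_m\}$ (which is not all concurrent, by hypothesis) to obtain a pair $\ell_i,\ell_j$ through whose intersection \emph{no other $\ell_k$ passes}. Starting the pigeonhole from this specific pair guarantees a priori that whatever $\ell_k$ wins is not through $\ell_i\cap\ell_j$, so the triple $(\ell_i,\ell_j,\ell_k)$ is automatically non-concurrent and the ratio maps $\phi_{i,j,k},\tilde\phi_{i,j,k}$ are genuine. From there the paper does exactly what you outline: with $|\Gamma|\ge\eps^2 n^2/(8m)$, Corollary \ref{bsg-cor} (i.e.\ BSG followed by Ruzsa) with $\delta=\eps^2/(8m)$ produces $X'_i,X'_j$ of size $\gg\delta^2 n\gg\eps^4 n/m^2$ with $|\mathcal{Q}(\phi_{i,j,k}(X'_i))|,|\mathcal{Q}(\tilde\phi_{i,j,k}(X'_j))|\ll\delta^{-11}n<n^{1+1/10}$, so each is a grid (a single piece suffices). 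In short: your proof becomes complete once you replace the vague ``transfer'' step by the dual Sylvester--Gallai choice of the initial pair.
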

\begin{proof} By the dual version of the Sylvester-Gallai theorem, there is some pair of lines $\ell_i, \ell_j$ such that no other line passes through $\ell_i \cap \ell_j$. 
Each of $\ell_i, \ell_j$ contains at least $\eps n$ points of $P$, at least $\eps n - 1 \geq \eps n/2$ of which are not the intersection point $\ell_i \cap \ell_j$. If $P$ spans fewer than $\eps^2 n^2/8$ ordinary lines then there is some $k$ such that for at least $\eps^2 n^2/8m$ pairs $p_i \in \ell_i, p_j \in \ell_j$ ($p_i, p_j \neq \ell_i \cap \ell_j$) the line $\overline{\{p_i ,p_j\}}$ meets $\ell_k$ in a point of $P$.  Write $X_i := (\ell_i \cap P) \setminus (\ell_i \cap\ell_j)$, $X_j := (\ell_j \cap P) \setminus (\ell_i \cap\ell_j)$ and $\Gamma \subset X_i \times X_j$ for the set of pairs $(p_i, p_j) \in X_i \times X_j$ for which $\overline{\{p_i, p_j\}}$ meets $\ell_k$ in a point of $X_k = \ell_k \cap P$. By Lemma \ref{intersect-lem} it follows that 
\[ n \geq |X_k| \geq | \{ \phi_{i,j,k}(x_i)/\tilde\phi_{i,j,k}(x_j) : (x_i, x_j) \in \Gamma\} | .\]
By Corollary \ref{bsg-cor} and the hypothesis on $m$ and $\frac{1}{\eps}$ it follows that there are sets $X'_i \subset X_i$, $X'_j \subset X_j$ with $|X'_i|, |X'_j| \gg \eps^4 n/m^2$ and\[ |\mathcal{Q}(\phi_{i,j,k}(X'_i))|, |\mathcal{Q}(\tilde\phi_{i,j,k}(X'_j) )|  \ll m^{11} n/\eps^{22}  < n^{1 + \frac{1}{10}}.\] Thus certainly
$X'_i$ is a $\phi_{i,j,k}$-grid and $X'_j$ is a $\tilde\phi_{i,j,k}$-grid. 
\end{proof}

As a result of this lemma we may, in proving Proposition \ref{not-betts-case}, restrict attention to sets where $P \cap \ell_i$ contains a large grid for at least two values of $i$. We study this situation further in the next lemma, whose proof is a little involved.

\begin{lemma} \label{lem7.12}  Suppose $P$ is a set of $n$ points lying on a union of lines $\ell_1 \cup \dots \cup \ell_m$, where $m \leq n^{\frac{1}{10000}}$.  Suppose that $i \neq j$ and that $X_i \subset P \cap \ell_i$, $X_j \subset P \cap \ell_j$ are grids, both of size at least $\eps' n$, where $\eps' \gg n^{-\frac{1}{1000}}$, and neither containing $\ell_i \cap \ell_j$. Then either $P$ spans $\gg (\eps')^3 n^2$ ordinary lines, or else there is a line $\ell_k$, not passing through $\ell_i \cap \ell_j$, and a grid $X_k \subset P \cap \ell_k$, such that all but at most $\eps'  |X_i||X_j|/25$ pairs $(x_i , x_j ) \in X_i \times X_j$ are such that the line $\overline{\{x_i, x_j\}}$ meets $\ell_k$ in a point of $X_k$. 
\end{lemma}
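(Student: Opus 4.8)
The plan is to run a dichotomy. Assume $P$ spans fewer than $c(\eps')^3 n^2$ ordinary lines for a small absolute constant $c>0$; I will show that then some line among $\ell_1,\dots,\ell_m$ captures almost all pairs. Since $|X_i|,|X_j|\geq\eps' n$ we have $|X_i||X_j|\geq(\eps')^2 n^2$, and an ordinary line contains at most one pair of $X_i\times X_j$ (two points of $X_i$ on it would make it $\ell_i$, which is not ordinary). Hence at least $(1-c\eps')|X_i||X_j|$ of the pairs $(x_i,x_j)$ are \emph{bad}: $\overline{\{x_i,x_j\}}$ contains a third point of $P$, which lies neither on $\ell_i$ nor on $\ell_j$, hence on some $\ell_k$ with $k\neq i,j$. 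Put $v:=\ell_i\cap\ell_j$; for $k$ with $\ell_k\not\ni v$ let $C_k$ be the set of pairs whose connecting line meets $\ell_k$ in a point of $P$, and let $\Gamma_v$ be the set of bad pairs all of whose third points lie on lines through $v$.

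I would first bound $\Gamma_v$. The lines $\ell_i,\ell_j$ and the $\ell_k\ni v$ are concurrent, so pigeonholing over the at most $m$ lines through $v$, if $|\Gamma_v|$ were a substantial fraction of $|X_i||X_j|$ we would get $\gg|X_i||X_j|/m$ pairs whose connecting line meets a fixed line $\ell_{k_0}\ni v$ at a point of $P$. In affine coordinates in which collinearity on these three concurrent lines is an \emph{additive} relation (as in the near-counterexample \eqref{p1}), this yields $\gg|X_i||X_j|/m$ pairs $(u,u')$ with $u+cu'$ in a set of size $\leq n$; Corollary \ref{bsg-cor} then produces a large subset of $X_i$ with small difference set in these coordinates. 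But $X_i$ is a $\psi$-grid, so these coordinates are a Möbius image of a set of small quotient set, and — exactly as in the proof of Lemma \ref{lem7.9} — the Elekes--Nathanson--Ruzsa estimate (Proposition \ref{enr-prop-piecewise}) forbids this unless the set is polynomially small, contradicting $|X_i|\geq\eps' n\gg n^{1-1/1000}$. Thus $|\Gamma_v|\leq\eps'|X_i||X_j|/50$.

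Next, the lines $\ell_k\not\ni v$. Fix a polynomially small threshold $\delta\gg n^{-1/100}$ and suppose $|C_k|\geq\delta|X_i||X_j|$. Since $\ell_i,\ell_j,\ell_k$ are not concurrent, Lemma \ref{intersect-lem} shows $(x_i,x_j)\mapsto\phi_{i,j,k}(x_i)/\tilde\phi_{i,j,k}(x_j)$ takes at most $|P\cap\ell_k|\leq n$ values on $C_k$; Corollary \ref{bsg-cor} gives $X_i'\subseteq X_i$, $X_j'\subseteq X_j$ with $|X_i'|,|X_j'|\gg\delta^{O(1)}\eps' n$ that are a $\phi_{i,j,k}$-grid and a $\tilde\phi_{i,j,k}$-grid respectively. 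As a subset of a $\psi$-grid, $X_i'$ is also a $\psi$-grid, so if $\phi_{i,j,k}\not\sim\psi$ then Lemma \ref{lem7.9} forces $|X_i'|\ll n^{1-1/25}$, a contradiction. Hence $\phi_{i,j,k}\sim\psi$ and $\tilde\phi_{i,j,k}\sim\psi'$. Since $\phi_{i,j,k}=\psi_{v,\,\ell_i\cap\ell_k}$, this pins $\ell_i\cap\ell_k$ to be the reference point of $\psi$ other than $v$ (so in particular $v$ must be a reference point of $\psi$, else no such $\ell_k$ exists), and similarly $\ell_j\cap\ell_k$ is forced; so there is at most one rich line $\ell_{k_1}$, namely the line through those two forced points. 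Therefore the bad pairs not captured by $\ell_{k_1}$ number at most
$$c\eps'|X_i||X_j|\;+\;\tfrac{1}{50}\eps'|X_i||X_j|\;+\;m\delta|X_i||X_j|\;<\;\tfrac{1}{25}\eps'|X_i||X_j|$$
once $c$ is small and $m\delta$ is tiny, using $m\leq n^{1/10000}$. If $\ell_{k_1}$ is not among $\ell_1,\dots,\ell_m$ (or does not exist) this drops the bad-pair count below $(1-c\eps')|X_i||X_j|$, contradicting our assumption; so $\ell_{k_1}$ exists, lies among our lines, misses $v$, and captures all but $\leq\eps'|X_i||X_j|/25$ pairs.

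Finally I would let $X_k\subseteq P\cap\ell_{k_1}$ be the set of points $\overline{\{x_i,x_j\}}\cap\ell_{k_1}$ over the captured pairs and check it is a grid. By Lemma \ref{intersect-lem}, $\rho_{k_1}(x_k)=\phi_{i,j,k_1}(x_i)/\tilde\phi_{i,j,k_1}(x_j)$ with $\rho_{k_1}=\psi_{\ell_j\cap\ell_{k_1},\,\ell_i\cap\ell_{k_1}}$; combined with $\phi_{i,j,k_1}\sim\psi$, $\tilde\phi_{i,j,k_1}\sim\psi'$ and the grid decompositions of $X_i$ and $X_j$, the $\rho_{k_1}$-image of $X_k$ sits inside a set assembled from the (small quotient set) pieces of $\psi(X_i)$ and $\psi'(X_j)$; feeding this together with $|X_k|\leq|P\cap\ell_{k_1}|\leq n$ into the structure theory for sets of small quotient set (the additive-combinatorial tools of the appendix) produces the decomposition of $X_k$ into few pieces of small quotient set, i.e.\ the required grid. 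The main obstacle is the quantitative bookkeeping: Balog--Szemer\'edi--Gowers and the rigidity of grids (Lemma \ref{lem7.9}) only interact usefully when the extracted subsets exceed $n^{1-1/25}$ in size, so every polynomial loss must be absorbed within the budget $\eps'\gg n^{-1/1000}$, $m\leq n^{1/10000}$; and, as elsewhere in the section, the concurrent-line case must be linearised to feed the sum--product estimate rather than handled through the non-concurrent Menelaus identity.
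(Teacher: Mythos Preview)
Your overall architecture matches the paper's: assume few ordinary lines, show at most one ``rich'' $\ell_k$ absorbs nearly all pairs, and that this $\ell_k$ does not pass through $v=\ell_i\cap\ell_j$. Your treatment of the concurrent case (your $\Gamma_v$) is the paper's Fact~1 with essentially the same BSG$+$ENR contradiction, and your uniqueness argument for non-concurrent $\ell_k$ is precisely the paper's Fact~2 (the paper also routes through the ambient grid $\psi$ to get $\phi_{i,j,k}\sim\psi\sim\phi_{i,j,k'}$). So far, so good.

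The genuine gap is in your final step: showing that the set $X_k$ of captured intersection points on $\ell_{k_1}$ is a grid. Your plan is to use $\rho_{k_1}(X_k)\subseteq \psi(X_i)^{\pm1}/\psi'(X_j)^{\pm1}$ together with the grid decompositions of $X_i,X_j$ and the size bound $|X_k|\leq n$. But this does not work: if $U$ and $V$ each have quotient sets of size $\leq n^{1+1/10}$, a subset $W\subseteq U/V$ of size $\leq n$ can still have $|\mathcal Q(W)|$ as large as $|\mathcal Q(U)|\cdot|\mathcal Q(V)|\sim n^{2+1/5}$; there is no Pl\"unnecke-type inequality that bounds $|\mathcal Q(W)|$ from the data you have. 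Splitting over pairs of pieces $(S_a,S'_b)$ gives $n^{1/15}$ pieces, each with quotient set possibly $\sim n^{2+1/5}$, which is far outside the grid definition. The ``structure theory for sets of small quotient set'' in the appendix (BSG, robust sumset bounds, ENR) always requires a \emph{many-pairs-hitting-a-small-set} hypothesis, which you have not set up for $X_k$.

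The paper supplies exactly this missing hypothesis by reversing the roles: it first restricts to \emph{well-covered} points $x_k\in P\cap\ell_{k_1}$ (those with $\geq \eta(\eps')^2 n$ pairs $(x_i,x_j)$ whose line passes through $x_k$), discarding the poorly-covered ones at a cost of $\leq\eta|X_i||X_j|$ pairs. Now any subset $Y$ of the well-covered set with $|Y|\geq\eta n$ carries a dense graph $\Gamma\subset Y\times X_i$ whose lines land in $X_j$; applying Lemma~\ref{intersect-lem} with the roles $(k,i,j)$ and then Corollary~\ref{bsg-cor} extracts $Y'\subseteq Y$ of size $\gg n^{1-1/200}$ with $|\mathcal Q(\phi_{k,i,j}(Y'))|<n^{1+1/10}$. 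Iterating covers the well-covered set by $\ll n^{1/200}<n^{1/30}$ such pieces, which is the grid $X_k$. This ``well-covered $+$ reverse BSG'' step is the idea you are missing.
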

\begin{proof} Write $\eta := \eps'/100$. Suppose that $P$ does not contain at least $(\eps')^3 n^2/100 = \eta(\eps' n)^2$ ordinary lines. Then there are at least $(1 - \eta) |X_i||X_j|$ pairs $(x_i, x_j) \in X_i \times X_j$ such that $\overline{\{x_i, x_j\}}$ meets some other line $\ell_k$. Write $\Gamma_k \subset X_i \times X_j$ for the set of pairs $(x_i, x_j) \in X_i \times X_j$ such that $\overline{\{x_i, x_j\}}$ meets $\ell_k$ in a point of $P$. Thus $\sum_k |\Gamma_k| \geq (1 - \eta) |X_i||X_j|$. We claim that there is at most one value of $k$ for which $|\Gamma_k| \geq \eta |X_i||X_j|/m$, and that for this $k$ (if it exists) the line $\ell_k$ does not pass through $\ell_i \cap \ell_j$. Note that if $|\Gamma_k| \geq \eta |X_i||X_j|/m$ then certainly $|\Gamma_k| \geq \delta n^2$, where $\delta = n^{-1/250}$. 

This claim is a consequence of the following two facts. 

\emph{Fact 1.} If $\ell_k$ passes through $\ell_i \cap \ell_j$ then $|\Gamma_k| < \delta n^2$. 

\emph{Fact 2.} If we have two lines $\ell_k, \ell_{k'}$, neither passing through $\ell_i \cap \ell_j$, then at least one of $|\Gamma_k|, |\Gamma_{k'}|$ has size at most $\delta n^2$.

\emph{Proof of Fact 1.}  Suppose that $\ell_k$ passes through $\ell_i \cap \ell_j$, but that $|\Gamma_k| \geq \delta n^2$. 
Here (and in the proof of Fact 2 below) we apply an affine transformation so that $\ell_i$ is the $x$-axis and $\ell_j$ is the $y$-axis. Suppose that $\ell_k$ is the line $\{(t, \lambda_k t) : t \in \R\}$. Suppose that $X_i = \{(a,0) : a \in A\}$ and $X_j = \{(0,b) : b \in B\}$ and, by slight abuse of notation,  identify $\Gamma_k$ with a subset of $A \times B$ in the obvious way. A short computation confirms that the intersection of the line through $(a,0) \in \ell_i$ and $(0,b) \in \ell_j$ with $\ell_k$ is the point $( \frac{1}{1/x + \lambda_k/y}, \frac{\lambda}{1/x + \lambda_k/y} )$. By Corollary \ref{bsg-cor} there are sets $A' \subset A$ and $B' \subset B$ with $|A'| , |B'|\gg \delta n$ and $|\frac{1}{A'} - \frac{1}{A'}| \ll  \delta^{-11}n \ll n^{1 + \frac{1}{10}}$. Since $X_i$ is a grid, there is some ratio function $\psi(t) = (t + \alpha)/(t + \beta)$, $\alpha \neq \beta$, such that  $A' \subset \bigcup_{i = 1}^{n^{1/30}} S_i$, where $|\mathcal{Q}(\psi(S_i))| \leq n^{1 + \frac{1}{10}}$ for each $i$. Let $A''$ be the largest of the intersections $A' \cap S_i$; then 
\[ |A''| \gg \delta n^{1 - \frac{1}{30} }, \, |\mathcal{Q}(\psi(A''))| \leq n^{1 + \frac{1}{10}} , \, 
 |\frac{1}{A''} - \frac{1}{A''}| \ll n^{1 + \frac{1}{10}}.\] Writing $Y := 1/A''$ and $f(y) := \log(1 + \alpha y) - \log(1 + \beta y)$ we thus have \begin{equation}\label{y-bounds} |Y| \gg \delta n^{1 - \frac{1}{30}}, \, |Y - Y|, |f(Y) - f(Y)| \ll n^{1 + \frac{1}{10}}.\end{equation}
However \[ f''(y) = \frac{\beta^2}{(1 + \beta y)^2} - \frac{\alpha^2}{(1 + \alpha y)^2} = (\alpha - \beta)\frac{2\alpha \beta y - (\alpha + \beta)}{(1 + \alpha y)^2 (1 + \beta y)^2}\] is continuous away from $y = -1/\alpha$ and $y = - 1/\beta$ and has just one real zero, and therefore one may divide $\R$ into at most 4 intervals on the interior of which $f$ is defined and strictly convex/concave. By Theorem \ref{enr-prop-piecewise} it follows that one of $|Y - Y|$, $|f(Y) - f(Y)|$ has size $\gg |Y|^{5/4}$. This comfortably contradicts \eqref{y-bounds} for large $n$. 

\emph{Proof of Fact 2.} Suppose that $|\Gamma_k|, |\Gamma_{k'}| \geq \delta n^2$. By Lemma \ref{intersect-lem} and the fact that neither $\ell_k$ nor $\ell_{k'}$ contains more than $n$ points we have
\[  | \{ \phi_{i,j,k}(x_i)/\tilde\phi_{i,j,k}(x_j) : (x_i, x_j) \in \Gamma_{k}\}| \leq n\] and \[ | \{ \phi_{i,j,k'}(x_i)/\tilde\phi_{i,j,k'}(x_j) : (x_i, x_j) \in \Gamma_{k'} \} | \leq n .\]
Applying Corollary \ref{bsg-cor} exactly as before we deduce that there are sets $X^{(k)}_i, X^{(k')}_i \subset X_i$ and sets $X^{(k)}_j, X^{(k')}_j \subset X_j$, all of size $\gg \delta n > n^{1 - \frac{1}{25}}$, such that all of $|\mathcal{Q}(\phi_{i,j,k}(X^{(k)}_i)|$, $|\mathcal{Q}(\phi_{i,j,k'}(X^{(k')}_i))|$, $|\mathcal{Q}(\tilde\phi_{i,j,k}(X^{(k)}_j))|$ and $|\mathcal{Q}(\tilde\phi_{i,j,k'}(X^{(k')}_j))|$ have size $\ll\delta^{-12}n < n^{1 + \frac{1}{10}}$. Thus $X^{(k)}_i$ is a $\phi_{i,j,k}$-grid, $X^{(k')}_i$ is a $\phi_{i,j,k'}$-grid, $X^{(k)}_j$ is a $\tilde \phi_{i,j,k}$-grid and $X^{(k')}_j$ is a $\tilde \phi_{i,j,k'}$-grid. Since $X_i$ and $X_j$ are themselves grids, and since grids corresponding to inequivalent ratio functions intersect in a set of size no more than $n^{1 - \frac{1}{25}}$ by Lemma \ref{lem7.9}, we see that  $\phi_{i,j,k} \sim \phi_{i,j,k'}$ and $\tilde\phi_{i,j,k} \sim \tilde\phi_{i,j,k'}$. 

It follows immediately from the definition of $\phi_{i,j,k}, \phi_{i,j,k'}, \tilde\phi_{i,j,k}, \tilde\phi_{i,j,k'}$ that $\ell_i \cap \ell_k = \ell_i \cap \ell_{k'}$ and $\ell_j \cap \ell_k = \ell_j \cap \ell_{k'}$, and therefore $\ell_k = \ell_{k'}$. This is contrary to assumption, and so we have established Fact 2.

This completes the proof of the claim. It follows that there is some $k$ such that at least $(1 - 2\eta)|X_i||X_j|$ pairs $x_i \in X_i, x_j \in X_j$ are such that $\overline{\{x_i, x_j\}}$ meets $P \cap \ell_k$, and furthermore $\ell_k$ does not pass through $\ell_i \cap \ell_j$. If $x_k \in P \cap \ell_k$, we say that $x_k$ is \emph{well-covered} if there are at least $\eta(\eps')^2 n$ pairs $(x_i , x_j) \in X_i \times X_j$ such that $\overline{\{x_i, x_j\}}$ passes through $x_k$. Since $|P \cap \ell_k| \leq n$, the number of pairs $(x_i, x_j)$ used up by poorly-covered $x_k$ is no more than $\eta(\eps')^2n^2 \leq \eta |X_i||X_j|$. Thus, for at least $(1 - 3\eta)|X_i||X_j|$ pairs $(x_i, x_j)$, the line $\overline{\{x_i ,x_j\}}$ meets $\ell_k$ in a well-covered point $x_k$.  Write $\tilde X_k \subset P \cap \ell_k$ for the set of well-covered points; we are going to show that there is a grid $X_k$ occupying almost all of $\tilde X_k$. 

Note that, by definition of well-covered, for any $Y \subset \tilde X_k$ there is a graph $\Gamma \subset Y \times X_i$, $|\Gamma| \geq \eta (\eps')^2 |Y||X_i|$, such that each line $\overline{\{y , x_i\}}$ meets $X_j$ whenever $(y, x_i) \in \Gamma$. By Lemma \ref{intersect-lem} and the trivial bound $|X_j| \leq n$, this implies that
\[ | \{ \phi_{k,i,j}(y)/\tilde\phi_{k,i,j}(x_i) : (y, x_i) \in \Gamma \} | \leq n.\]

Suppose that $|Y| \geq \eta n$. Then $|\Gamma| \gg \eta^2 (\eps')^3 n^2 \gg n^{2-\frac{1}{200}}$. By Corollary \ref{bsg-cor} there are sets $Y' \subset Y$ and $X'_i \subset X_i$, $|Y'|, |X'_i| \gg n^{1-\frac{1}{200}}$, such that $ | \mathcal{Q}(\phi_{i,j,k}(Y'))|  < n^{1 +\frac{1}{10}}$.
Applying this argument repeatedly, we see that all of $\tilde X_k$ except for a set of size at most $\eta n$ can be covered by disjoint sets $Y'$ with these properties. There are at most $Cn^{\frac{1}{200}} < n^{\frac{1}{30}}$ of these sets, and so the union of them, $X_k$ say, is a $\phi_{i,j,k}$-grid.

Finally, note that the number of pairs $(x_i, x_j)$ for which $\overline{\{ x_i ,x_j\}}$ passes through $\tilde X_k \setminus X_k$ is at most $n |\tilde X_k \setminus X_k| \leq  \eta n$.  For all other pairs, $\overline{\{x_i, x_j\}}$ passes through the grid $X_k$.

At last, this completes the proof of the lemma.
\end{proof}

We are now in a position to complete the proof of Theorem \ref{not-betts-case} and hence of all the other results in this section. \vspace{11pt}

\emph{Proof of Theorem \ref{not-betts-case}.} Suppose that $P$ lies on $\ell_1 \cup \dots \cup \ell_m$, with at least $\eps n$ points on each line and not all of the lines through a single point.  Suppose that $m, \frac{1}{\eps} \leq n^{\frac{1}{10000}}$. By Lemma \ref{lem7.10a} we are done unless there are two values $i,j \in \{1,\dots,m\}$ such that $P \cap \ell_i$, $P \cap \ell_j$ both contain grids of size at least $\eps' n$, with $\eps' \gg \eps^4/m^2 > n^{-\frac{1}{1000}}$. Suppose without loss of generality that $\ell_i$ contains the largest grid amongst all grids in $P$; call this $X_i$. Let $X_j$ be a grid in $P \cap \ell_j$ of size at least $\eps' n$, and apply Lemma \ref{lem7.12} to these two grids $X_i, X_j$. If the first conclusion of that lemma holds then $P$ contains $\gg (\eps' )^3 n^2 \gg \eps^{12} n^2/m^6$ ordinary lines, and we are done. Otherwise there is a line $\ell_k$, not passing through $\ell_i \cap \ell_j$, and a grid $X_k \subset P \cap \ell_k$, such that for all $(x_i, x_j)$ in a set $\Gamma$ of size at least $(1 - 4\eta) |X_i||X_j|$, the line $\overline{\{x_i,x_j\}}$ meets $\ell_k$ in a point of $X_k$.

By Lemma \ref{intersect-lem} we have
\[ |\{ \phi_{i,j,k}(x_i)/\tilde\phi_{i,j,k}(x_j) : (x_i, x_j) \in \Gamma\}| \leq |X_k|.\]
It follows from Lemma \ref{add-comb-lem-7-mult} that 
\[ |X_k| \geq |X_i| + |X_j| - 4 - 4\sqrt{2\eta |X_i||X_j|} .\]
Since $\eta = \eps'/100$ and $n$ is sufficiently large, this is strictly greater than $|X_i|$, contrary to the assumption that $X_i$ was a grid in $P$ of largest size.
\endproof

In this rather long section, we have proved results about sets $P$, contained in a union $\ell_1 \cup \dots \cup \ell_m$ of lines, spanning few ordinary lines. They are plausibly of independent interest. Let us now, however, return to our main task and combine what we have established with the main results of previous sections. By combining Proposition \ref{main-line-prop} (using the bound on $n_0(m,K)$ noted after Proposition \ref{many-rich}) with Proposition \ref{intermediate} we obtain the following structural result. From the qualitative point of view at least, this supersedes all previous results in the paper (and, in particular, implies Theorem \ref{weak-struct} as a corollary).

\begin{proposition}\label{new-struct}
Suppose that $P$ is a set of $n$ points in $\R\P^2$ for some $n \geq 100$ spanning at most $Kn$ ordinary lines, where $1 \leq K \leq c(\log \log n)^c$ for some sufficiently small absolute constant $c$. Then $P$ differs in at most $O(K^{O(1)})$ points from a subset of a set of one of the following three types:
\begin{enumerate}
\item An irreducible cubic curve;
\item The union of an irreducible conic and a line;
\item A line.
\end{enumerate}
\end{proposition}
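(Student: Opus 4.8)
The plan is to feed Proposition \ref{intermediate} into Proposition \ref{main-line-prop}, handling the three alternatives of the former in turn: case (i) will be immediate, and the other two will each be a single application of Proposition \ref{main-line-prop} after deleting $O(K^{O(1)})$ points. Before starting, I would record the one arithmetic point behind the hypothesis $K\leq c(\log\log n)^c$: by the estimate $n_0(m,K)\sim K\exp\exp(Cm)$ noted after Proposition \ref{many-rich}, an application of Proposition \ref{main-line-prop} to a set lying on $m$ lines and spanning at most $K'n$ ordinary lines requires only $n\geq C'K'\exp\exp(Cm)$; in the applications below $m$ and $K'$ are both $O(K^{O(1)})$, the dominant contribution being an $m$ of size $O(K^4)$, so the requirement reduces to $K^4\ll\log\log n$, which is guaranteed once $c$ is small enough. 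With this in hand, apply Proposition \ref{intermediate}.

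If alternative (i) holds, $P$ lies on the union of an irreducible cubic $\gamma$ and at most $2^{75}K^5$ further points, so $P$ differs in at most $2^{75}K^5=O(K^{O(1)})$ points from the subset $P\cap\gamma$ of $\gamma$. This is conclusion (1), and there is nothing more to do.

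If alternative (iii) holds, $P$ lies on a union $\ell_1\cup\dots\cup\ell_m$ with $m\leq 2^{16}K$, together with at most $2^{87}K^6$ further points; put $P':=P\cap(\ell_1\cup\dots\cup\ell_m)$, so $|P\setminus P'|\leq 2^{87}K^6$. By the stability remark following Proposition \ref{boroczky-examples} (deleting $t$ points changes the ordinary-line count by $O(tn+t^2)$), $P'$ spans at most $Kn+O(K^6 n)=O(K^6 n)$ ordinary lines, and since $|P'|\geq n/2$ for $n$ large this is at most $K'|P'|$ for some $K'=O(K^6)$. Proposition \ref{main-line-prop} then places all but at most $3K'$ points of $P'$ on a single line $\ell$, so $P$ differs in at most $2^{87}K^6+3K'=O(K^{O(1)})$ points from the subset $P'\cap\ell$ of $\ell$; this is conclusion (3). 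Alternative (ii) is dealt with identically: there $Q:=P\setminus\sigma$ satisfies $|Q|=\frac{n}{2}+O(K^5)$, lies on a union of at most $2^{64}K^4$ lines, and by hypothesis spans at most $2^{62}K^4 n=O(K^4|Q|)$ ordinary lines, so Proposition \ref{main-line-prop} puts all but $O(K^4)$ of its points on a single line $\ell$; then $(P\cap\sigma)\cup(Q\cap\ell)$ is a subset of the union $\sigma\cup\ell$ of an irreducible conic and a line, from which $P$ differs in at most $O(K^4)$ points, which is conclusion (2).

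The only genuine subtlety is the bookkeeping in the first paragraph: one must verify that the degradation $K\mapsto O(K^6)$ of the ordinary-line constant and the blow-up to $m=O(K^4)$ lines, both incurred in passing through Proposition \ref{intermediate}, still leave $n$ comfortably above the double-exponential threshold $n_0(m,K')\sim K'\exp\exp(Cm)$. This is exactly why the hypothesis reads $K\leq c(\log\log n)^c$ with a small exponent, rather than the weaker $K\leq c\log\log n$ that would suffice if only $m=O(K)$ lines were ever involved. Everything else is routine.
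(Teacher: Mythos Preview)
Your proposal is correct and follows essentially the same route as the paper: apply Proposition \ref{intermediate}, dispose of the irreducible-cubic case immediately, and in the remaining two cases feed the linear part into Proposition \ref{main-line-prop} to cut the number of lines down to one. Your write-up is actually more careful than the paper's own three-sentence proof (which appears to have a typo in its case labelling): you correctly note that in case (iii) one must first delete the $O(K^6)$ stray points and invoke the stability remark before Proposition \ref{main-line-prop} applies, and you track explicitly that the $m=O(K^4)$ lines arising in case (ii) are what force the exponent $c$ in the hypothesis $K\leq c(\log\log n)^c$ to be small.
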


\begin{proof}  We apply Proposition \ref{intermediate}.  In case (iii) we are already done.  In cases (i) and (ii) we see that, after removing an irreducible conic if necessary, that we have $\geq n/2 - O(K^{O(1)})$ points on $O(K^{O(1)})$ lines determining at most $O(K^{O(1)}) n$ ordinary lines.  By Proposition \ref{main-line-prop} and the hypothesis $K \leq (\log \log n)^c$, all but $O(K^{O(1)})$ of these points lie on a single line, and the claim follows.
\end{proof}

\section{The detailed structure theorem}\label{detailed-structure}

We turn now to the proof of the detailed structure theorem, Theorem \ref{main-structure-theorem}.  We first establish a slightly weaker version in which the linear bounds $O(K)$ have been relaxed to polynomial bounds $O(K^{O(1)})$. This somewhat weaker statement is already sufficient for our main application, the proof of the Dirac--Motzkin conjecture for large $n$.  At the end of this section we indicate how to recover the full strength of Theorem \ref{main-structure-theorem}.

\begin{theorem}\label{main-weak}
Suppose that $P$ is a finite set of $n$ points in the projective plane $\R\P^2$. Suppose that $P$ spans at most $Kn$ ordinary lines for some $K \geq 1$, and suppose that $n \geq \exp\exp(CK^C)$ for some sufficiently large absolute constant $C$. Then, after applying a projective transformation if necessary,  $P$ differs by at most $O(K^{O(1)})$ points from an example of one of the following three types:
\begin{enumerate}
\item $n-O(K^{O(1)})$ points on a line;
\item The set $X_{2m}$ defined in \eqref{x2m} for some $m = \frac{1}{2}n - O(K^{O(1)})$;
\item A coset $H \oplus x$, $3x \in H$, of some finite subgroup $H$ of the real points on an irreducible cubic curve with $H$ having cardinality $n+O(K^{O(1)})$.
\end{enumerate}
\end{theorem}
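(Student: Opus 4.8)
The plan is to bootstrap from the intermediate structure theorem. Since $n \geq \exp\exp(CK^C)$ comfortably implies $K \leq c(\log\log n)^c$, Proposition~\ref{new-struct} applies and shows that $P$ agrees, up to the addition or deletion of $O(K^{O(1)})$ points, with a subset of one of: (a) a line; (b) the union of an irreducible conic $\sigma$ and a line $\ell$; or (c) an irreducible cubic curve $\gamma$. Because adding or removing $O(K^{O(1)})$ points alters the number of ordinary lines by only $O(K^{O(1)}n)$ (the stability remark after Proposition~\ref{boroczky-examples}), I may assume from the outset that $P$ \emph{is} such a subset and that it spans at most $K'n$ ordinary lines for some $K' = O(K^{O(1)})$. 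Case (a) is already conclusion (i), so the content lies in cases (b) and (c).

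In case (c), let $\gamma^*$ be the smooth locus of $\gamma$ with its abelian group law $\oplus$ from Section~\ref{examples-sec}, and regard $A := P$ as a finite subset of $\gamma^*$ (discarding the at most one singular point costs nothing). Three distinct smooth points of $\gamma$ are collinear exactly when they $\oplus$-sum to the identity $O$, with the usual tangency conventions, so the number of ordinary lines of $A$ equals, up to an $O(|A|)$ error from tangent and inflectional lines, one half the number of pairs $(a,b) \in A \times A$ with $\ominus(a \oplus b) \notin A$. Hence the hypothesis forces
$$ |\{(a,b) \in A \times A : \ominus(a \oplus b) \notin A\}| = O(K' n), $$
equivalently the additive energy of $A$ in $\gamma^*$ is $(1 - O(K'/|A|))|A|^3$, essentially the maximum possible. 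A standard additive-combinatorics argument --- Balog--Szemer\'edi--Gowers followed by a robust Kneser/Freiman-type stability theorem in whichever of the groups $\R/\Z$, $\R/\Z \times \Z/2\Z$, $\R$, $\R \times \Z/2\Z$ is relevant --- then shows that $A$ differs in $O(K'^{O(1)})$ points from a coset $H \oplus x$ of a finite subgroup $H \leq \gamma^*$, and the requirement that $\ominus(a \oplus b)$ land back in $H \oplus x$ translates precisely into $3x \in H$, with $|H| = n + O(K'^{O(1)})$. By Theorem~\ref{gamma-gp}, a subgroup of $\gamma^*$ this large forces $\gamma$ to be an elliptic curve or an acnodal cubic (the nodal and cuspidal cases have at most two torsion points); this is exactly the step that excludes the essentially torsion-free near-counterexamples such as \eqref{p3}, and it yields conclusion (iii).

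Case (b) runs in parallel, using the pseudo-group structure on $\sigma \cup \ell$ (the analogue for a conic-and-line of the group law on a cubic; cf.\ Proposition~\ref{quasi-group-law}), and noting that Proposition~\ref{new-struct} (via Proposition~\ref{intermediate}) already forces roughly $n/2$ points onto each of $\sigma$ and $\ell$. Over $\R$ there is, up to projective transformation, a unique irreducible conic, so the configuration is classified by how $\ell$ meets $\sigma$. When $\ell$ misses $\sigma$ the pseudo-group is $\R/\Z$ --- with collinearity governed by the additive relation $a + b + c = 0$, as in the computation that the chord through $[\cos\tfrac{2\pi j}{m}, \sin\tfrac{2\pi j}{m}, 1]$ and $[\cos\tfrac{2\pi j'}{m}, \sin\tfrac{2\pi j'}{m},1]$ passes through the infinite point indexed by $j+j'$ --- and the same energy/stability argument places $A$ within $O(K'^{O(1)})$ points of a coset of a finite cyclic subgroup; after a projective transformation normalising $\sigma$ to the unit circle and $\ell$ to the line at infinity this is precisely the set $X_{2m}$ of \eqref{x2m} with $m = \tfrac n2 + O(K^{O(1)})$, giving conclusion (ii). When $\ell$ is instead tangent to $\sigma$ or crosses it in two real points, the associated pseudo-group is essentially torsion-free --- these are the situations underlying the near-counterexamples \eqref{p1}, \eqref{p2}, \eqref{p4} --- so it has no large finite subgroups, and for $n$ as large as assumed no such $P$ exists; this sub-case is vacuous.

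The step I expect to be the main obstacle is the additive-combinatorial one: upgrading ``$\ominus(a \oplus b) \in A$ for all but $O(K'|A|)$ pairs'' to ``$A$ is within a polynomial-in-$K$ number of points of a genuine coset of a \emph{finite} subgroup'', with bounds strong enough to survive under the double-exponential lower bound on $n$. This means running Balog--Szemer\'edi--Gowers, then a $99\%$-strength Kneser (or Freiman $3k-4$) theorem in groups such as $\R/\Z$ that do contain arbitrarily large finite subgroups, together with separate quantitative ``no large finite subgroup'' inputs for $\R$ and $\R \times \Z/2\Z$ to kill the torsion-free cases; some care is also needed with the $O(|A|)$ tangent-line corrections when passing between the geometric and additive pictures. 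Finally, the sharpening of the polynomial bounds $O(K^{O(1)})$ obtained here to the linear bounds $O(K)$ of Theorem~\ref{main-structure-theorem} is a separate refinement, carried out at the end of the section.
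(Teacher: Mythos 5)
Your proposal follows essentially the same route as the paper: reduce via Proposition~\ref{new-struct} to the line, conic-plus-line, and irreducible-cubic cases, transfer the few-ordinary-lines hypothesis through the group law (Theorem~\ref{gamma-gp}) and the quasi-group law (Proposition~\ref{quasi-group-law}) into a ``99\%-closed under addition'' statement, apply the BSG-plus-Kneser stability result (the paper's Proposition~\ref{almost-group}, proved in Appendix~\ref{add-comb-app}) to get a near-coset of a finite subgroup with $3x \in H$, and use the absence of large finite subgroups in $\R$ and $\R \times \Z/2\Z$ to exclude the nodal/cuspidal cubics and the tangent/secant conic-line configurations, leaving exactly conclusions (i)--(iii). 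This is the paper's argument (Lemmas~\ref{irreducible-case} and~\ref{reducible-case}), so the proposal is correct in approach.
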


We now prove this theorem.
We already know from Proposition \ref{new-struct} that a set with at most $Kn$ ordinary lines must mostly lie on a line, the union of a conic and a line, or an irreducible cubic curve, and the proof of Theorem \ref{main-structure-theorem} proceeds by analysing the second two possibilities further. 

The key to doing this is the fact that collinearities on (possibly reducible) cubic curves are related to group structure. This is particularly clear in the case of an irreducible cubic, as we briefly discussed in Section \ref{examples-sec} . However one can see some group structure even in somewhat degenerate cases.

An important ingredient in our analysis is the following result of a fairly standard type from additive combinatorics, which may be thought of as a kind of structure theorem for triples of sets $A, B, C$ with few ``arithmetic ordinary lines''. 

\begin{almost-group-rpt}
Suppose that $A, B, C$ are three subsets of some abe- lian group $G$, all of cardinality within $K$ of $n$, where $K \leq \eps n$ for some absolute constant $\eps > 0$. Suppose that there are at most $Kn$ pairs $(a,b) \in A \times B$ for which $a +b \notin C$. Then there is a subgroup $H \leq G$ and cosets $x + H, y + H$such that $|A \triangle (x + H)|, |B \triangle (y + H)|, |C \triangle (x + y + H)| \leq 7K$.
\end{almost-group-rpt}

This result will not be at all surprising to the those initiated in additive combinatorics, but we do not know of a convenient reference for it. We supply a complete proof in Appendix \ref{add-comb-app}.

Suppose that $P$ is mostly contained in a cubic curve $\gamma$ which is not a line. We subdivide into two cases according to whether $\gamma$ is irreducible or not.

\begin{lemma}[Configurations mostly on an irreducible cubic]\label{irreducible-case}
Suppose that $P$ is a set of $n$ points in $\R\P^2$ spanning at most $Kn$ ordinary lines. Suppose that all but $K$ of the points of $P$ lie on an irreducible cubic curve $\gamma$, and suppose that $n \geq CK$ for a suitably large absolute constant $C$. Then there is a coset $H \oplus x$ of $\gamma$ with $3x = x \oplus x \oplus x \in H$ such that $|P \triangle (H \oplus x)| = O(K)$. In particular, $\gamma$ is either an elliptic curve or an acnodal cubic.
\end{lemma}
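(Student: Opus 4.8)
\emph{Proof sketch.} The plan is to transfer the hypothesis into additive combinatorics and apply Proposition \ref{almost-group}. Let $P' := P \cap \gamma^*$ be the set of points of $P$ lying on the smooth locus of $\gamma$. Since at most $K$ points of $P$ lie off $\gamma$ and $\gamma$ has at most one singular point, $|P \triangle P'| = O(K)$, so it is enough to find a subgroup $H$ and an element $x$ with $|P' \triangle (H \oplus x)| = O(K)$ and $3x \in H$. We regard $P'$ as a finite subset of the abelian group $G := \gamma^*$. The basic point is that for (not necessarily distinct) $p, q \in \gamma^*$ the third point of intersection of the line $\overline{\{p,q\}}$ with $\gamma$ is exactly $\ominus(p \oplus q)$; in particular, three distinct points of $\gamma^*$ are collinear if and only if they sum to $O$. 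I would therefore apply Proposition \ref{almost-group} with $A = B := P'$ and $C := \ominus P'$. Each of these has cardinality $n - O(K)$, hence lies within $O(K)$ of $n$, and the requirement $O(K) \leq \eps n$ is met since $n \geq CK$ with $C$ large.

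The main work is to check the hypothesis of Proposition \ref{almost-group}: that the number of pairs $(p,q) \in P' \times P'$ with $p \oplus q \notin \ominus P'$ --- equivalently, for which the third intersection point $r := \ominus(p \oplus q)$ of $\overline{\{p,q\}}$ with $\gamma$ fails to lie in $P'$ --- is $O(Kn)$. There are only $O(n)$ pairs with $p = q$, so we may assume $p \neq q$. If $r \in \{p,q\}$ then $\overline{\{p,q\}}$ is tangent to $\gamma$ at one of $p, q$, and this pins down the other point given the first, contributing only $O(n)$ pairs. The point $r$ cannot be the singular point of $\gamma$, since a line through a singular point of a cubic meets the curve there with multiplicity at least two, hence in at most one further point, so it cannot pass through two distinct points of $\gamma^*$. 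In the remaining case $r$ is a smooth point distinct from $p, q$, so $r \notin P' = P \cap \gamma^*$ forces $r \notin P$; hence the only points of $P$ on $\overline{\{p,q\}}$ are $p$, $q$, and possibly some points of $P \setminus \gamma$. If there is no such point, $\overline{\{p,q\}}$ is an ordinary line of $P$, and these account for at most $2 N_2 \leq 2Kn$ ordered pairs. If $\overline{\{p,q\}}$ passes through some $z \in P \setminus \gamma$, then for each fixed $z$ every line through $z$ meets $\gamma^*$ in at most three points by B\'ezout, contributing at most six ordered pairs of $P'$, and there are at most $n$ such lines meeting $P'$; since there are at most $K$ choices of $z$, this case contributes $O(Kn)$. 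Altogether the number of bad pairs is $O(Kn)$. \emph{This counting, and in particular the control of the pairs whose connecting line is tangent to $\gamma$ or passes through one of the exceptional points of $P \setminus \gamma$, is the step I expect to be least automatic.}

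Proposition \ref{almost-group} then produces a subgroup $H \leq G$ and elements $u, v \in G$ with $|P' \triangle (u \oplus H)|$, $|P' \triangle (v \oplus H)|$ and $|\ominus P' \triangle (u \oplus v \oplus H)|$ all equal to $O(K)$. Because $|H| \geq |P'| - O(K) = n - O(K)$ is far larger than $O(K)$, the first two bounds force the cosets $u \oplus H$ and $v \oplus H$ to coincide; writing $x := u$ we obtain $|P' \triangle (x \oplus H)| = O(K)$. Applying the inverse map and using $\ominus H = H$ gives $|\ominus P' \triangle (\ominus x \oplus H)| = O(K)$, and comparison with the third bound (again since $|H|$ is large) gives $\ominus x \oplus H = 2x \oplus H$, i.e. $3x = x \oplus x \oplus x \in H$. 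Since $|P \triangle P'| = O(K)$, this yields $|P \triangle (H \oplus x)| = O(K)$ with $3x \in H$, as required.

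Finally, $H$ is a \emph{finite} subgroup of $\gamma^*$ of cardinality $n - O(K) \geq 3$ (using $n \geq CK$). By Theorem \ref{gamma-gp}, $\gamma^*$ is isomorphic to $\R/\Z$ or $(\R/\Z) \times (\Z/2\Z)$ when $\gamma$ is an elliptic curve, to $\R \times (\Z/2\Z)$ in the nodal case, to $\R$ in the cuspidal case, and to $\R/\Z$ in the acnodal case. The groups $\R$ and $\R \times (\Z/2\Z)$ contain no finite subgroup of order larger than $2$, so those two cases are impossible; hence $\gamma$ is an elliptic curve or an acnodal cubic, which completes the proof.
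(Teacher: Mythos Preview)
Your proof is correct and follows essentially the same route as the paper's: pass to $P' = P \cap \gamma^*$, apply Proposition~\ref{almost-group} with $A = B = P'$ and $C = \ominus P'$, deduce the coset structure, and then compare cosets to extract $3x \in H$. The paper is terser---it simply asserts that $P'$ spans $O(Kn)$ ordinary lines and hence $\ominus p_1 \ominus p_2 \in P'$ for all but $O(Kn)$ pairs---whereas you explicitly enumerate the bad pairs (diagonal, tangent, singular, ordinary, and through $P\setminus\gamma$); but the substance is the same.
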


\begin{proof} 
Let $\gamma^*$ be the smooth points of $\gamma$, which we give a group law as in Section \ref{examples-sec}.

Set $P' := P \cap \gamma^*$. Then $|P'| = |P| + O(K)$, and $P'$ spans at most $O(Kn)$ ordinary lines. If $p_1, p_2 \in \gamma^*$ are distinct then the line joining $p_1$ and $p_2$ meets $\gamma^*$ again in the unique point $ \ominus p_1 \ominus p_2$. This assumption implies that $\ominus p_1 \ominus p_2 \in P'$ for all but at most $O(Kn)$ pairs $p_1, p_2 \in P'$. Applying Proposition \ref{almost-group}, it follows that there is a coset $H \oplus x$ of $\gamma^*$ such that $|P \triangle (H \oplus x)| = O(K)$ and also $|P \triangle (H \ominus 2x)| = O(K)$. From this it follows that $|(H \oplus x) \triangle (H \ominus 2x)| = O(K)$, which implies that $3x \in H$ if $n \geq CK$ is large enough.

If $\gamma$ is not an elliptic curve or an acnodal cubic then the group $\gamma^*$ is isomorphic to $\R$ or to $\R \times \Z/2\Z$, and neither of these groups has a finite subgroup of size larger than $2$.\end{proof}

We turn now to the consideration of sets $P$ which are almost contained in the union of an irreducible conic $\sigma$ and a line $\ell$. The union $\sigma \cup \ell$ is a reducible cubic and so does not have a \emph{bona fide} group law. There is, however, a very good substitute for one as the following proposition shows. In what follows we write $\sigma^* := \sigma \setminus (\sigma \cap \ell)$ and $\ell^* := \ell \setminus (\sigma \cap \ell)$. Note that the intersection $\sigma \cap \ell$ has size $0, 1$ or $2$. 

\begin{proposition}[Quasi-group law]\label{quasi-group-law}
Suppose that $\sigma$ is an irreducible conic and that $\ell$ is a line. Then there is an abelian group $G = G_{\sigma, \ell}$ with operation $\oplus$ and bijective maps $\psi_{\sigma} : G \rightarrow \sigma^*$, $\psi_{\ell} : G \rightarrow \ell^*$ such that $\psi_{\sigma}(x), \psi_{\sigma}(y)$ and $\psi_{\ell}(z)$ are collinear precisely if $x \oplus y \oplus z = 0$. Furthermore $G_{\sigma, \ell}$ is isomorphic to $\Z/2\Z \times \R$ if $|\sigma \cap \ell| = 2$,  to $\R$ if $|\sigma \cap \ell| = 1$ and to $\R/\Z$ if  $|\sigma \cap \ell| = 0$.
\end{proposition}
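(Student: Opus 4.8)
The plan is to reduce everything to explicit normal forms by projective transformations and then read off the group structure. Since a nondegenerate real conic with a real point is projectively equivalent to $\{[X,Y,Z]: XZ = Y^2\}$ (the pointless conic being uninteresting, as then $\sigma^* = \emptyset$), I would first apply a projective transformation so that $\sigma$ is this standard conic, parametrised bijectively by $\RP^1 = \R \cup \{\infty\}$ via $a \mapsto P(a) := [1,a,a^2]$ and $\infty \mapsto [0,0,1]$. A one-line determinant computation then yields the secant formula: the line through $P(u)$ and $P(v)$ — interpreted as the tangent at $P(u)$ when $u = v$ — is $\{uvX - (u+v)Y + Z = 0\}$, with $\{uX - Y = 0\}$ in the limiting case $v = \infty$. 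The key structural point is that these coefficients are the elementary symmetric functions of $u,v$.

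Next I would exploit the stabiliser of $\sigma$ in $\mathrm{PGL}_3(\R)$, which acts on $\sigma \cong \RP^1$ as the full $\mathrm{PGL}_2(\R)$, to move $\ell$ into a normal form determined only by $|\sigma \cap \ell| \in \{0,1,2\}$: one can take $\ell = \{Y = 0\}$ (meeting $\sigma$ at the parameters $0$ and $\infty$) when $|\sigma \cap \ell| = 2$; $\ell = \{X = 0\}$ (tangent at the parameter $\infty$) when $|\sigma \cap \ell| = 1$; and $\ell = \{X + Z = 0\}$ (meeting $\sigma$ at the complex parameters $\pm i$) when $|\sigma \cap \ell| = 0$. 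In each case I would parametrise $\ell^*$ naturally — by $\R^\times$, by $\R$, and by $\RP^1$ respectively — substitute the secant formula into the equation of $\ell$, and solve for the parameter of the third intersection point. The resulting relations between the parameters $u,v$ of two points of $\sigma^*$ and the parameter of the point of $\ell^*$ on their join are, respectively: $uvw = -1$; $u + v = s$ (with $s$ the affine parameter on $\ell^* = \{X=0\} \setminus \{[0,0,1]\}$); and, writing $u = \cot\alpha$ and $v = \cot\beta$, the third point has parameter $\cot(\alpha + \beta)$, by the cotangent addition formula.

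From these three relations the group $G$ and the maps $\psi_\sigma, \psi_\ell$ present themselves. For $|\sigma \cap \ell| = 2$ one takes $G = \Z/2\Z \times \R$ with $\psi_\sigma(\varepsilon, a)$ and $\psi_\ell(\varepsilon, a)$ the points of parameter $-(-1)^\varepsilon e^a$; the sign twist is arranged precisely so that $uvw = -1$ translates into $(\varepsilon_1 + \varepsilon_2 + \varepsilon_3,\, a_1 + a_2 + a_3) = (0,0)$. For $|\sigma \cap \ell| = 1$ one takes $G = \R$, $\psi_\sigma(x) = P(x)$ and $\psi_\ell(z)$ the point of affine parameter $-z$, so that $u + v = s$ becomes $x + y + z = 0$. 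For $|\sigma \cap \ell| = 0$ one takes $G = \R/\pi\Z \cong \R/\Z$, $\psi_\sigma(\alpha) = P(\cot\alpha)$ and $\psi_\ell(z)$ the point of $\ell$ with cotangent-parameter $-z$, so that collinearity becomes $\alpha + \beta + z \equiv 0 \pmod{\pi}$. In each case one then checks that $\psi_\sigma$ and $\psi_\ell$ are honest bijections onto $\sigma^*$ and $\ell^*$; this is exactly where the definitions $\sigma^* = \sigma \setminus (\sigma \cap \ell)$, $\ell^* = \ell \setminus (\sigma \cap \ell)$ are used, together with the observation that the join of two points of $\sigma^*$, and the tangent at a point of $\sigma^*$, never pass through a point of $\sigma \cap \ell$, so the third intersection point really does land in $\ell^*$.

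The work here is almost entirely bookkeeping rather than conceptual: correctly tracking the excluded points $\sigma \cap \ell$ and the points at infinity of the various parametrisations (the tangent case $u = v$ being handled automatically by the secant formula), and pinning down the $\Z/2\Z$-twist in the first case. One could alternatively try to extract the quasi-group law abstractly from the reducible cubic $\sigma \cup \ell$ via Chasles's theorem (Proposition \ref{chas}), but because $\sigma \cup \ell$ is reducible its ``group law'' is precisely the relation above rather than a genuine elliptic group law, so the direct parametrisation is both cleaner and also yields the isomorphism type of $G$, which is part of the claim.
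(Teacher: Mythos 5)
Your proof is correct and takes essentially the same route as the paper: normalise by a projective transformation to an explicit model, compute the collinearity relation in coordinates in each of the three cases, and read off the group $G$ together with explicit bijections $\psi_\sigma,\psi_\ell$. The differences are cosmetic — you keep a single model $XZ=Y^2$ for the conic and move the line via the stabiliser acting as $\mathrm{PGL}_2(\R)$ (using the secant formula and, in the disjoint case, the cotangent addition formula), whereas the paper picks a convenient combined normal form in each case (parabola plus chord, parabola plus line at infinity, unit circle plus line at infinity) — and the resulting relations ($uvw=-1$, $u+v+w=0$, angle addition modulo $1$) are the same.
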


\begin{proof}
This is certainly a known result, but it is also an easy and fun exercise to work through by hand, as we now sketch. If $|\sigma \cap \ell| = 2$, we may apply a projective transformation to move the two points of intersection to $[0,0,1]$ and $[0,1,0]$, and $\sigma^*$ to the parabola $\{[a,a^2,1] : a \in \R^\times \}$ and $\ell^*$ to $\{[0,-b,1] : b \in \R^{\times} \}$. We note that $[a_1,a_1^2,1]$, $[a_2, a_2^2, 1]$ and $[0,-b,1]$ are collinear if and only if $b = a_1 a_2$.

Consider the maps $\psi_{\ell} : \Z/2\Z \times \R \rightarrow \ell^*$ defined by $\psi_{\ell}(\eps, x) = [0, -b, 1]$ where $b = (-1)^\eps e^{-x}$ and
$\psi_{\sigma} : \Z/2\Z \times \R \rightarrow \sigma^*$ defined by $\psi_{\sigma}(\eps, x) = [a, a^2, 1]$ where $a = (-1)^\eps e^x$. Then we see that the maps $\psi_{\ell}, \psi_{\sigma}$ are bijections and that the claimed collinearity property holds.

Now suppose that we are in case (ii), that is to say $|\sigma \cap \ell| = 1$. 
Applying a projective transformation, we may suppose that the point of intersection is $[0,1,0]$ and move $\sigma^*$ to the parabola $\{ [a,a^2,1] : a \in \R \}$ and $\ell^*$ to the line at infinity $\{ [1,-b,0]: b \in \R \}$.
Now note that if $[a_1,a_1^2,1]$, $[a_2, a_2^2, 1]$ and $[1,-b,0]$ are distinct and collinear, then $a_1 + a_2 + b = 0$ (cf. the near-example \eqref{p4}).  

Finally suppose that $\sigma$ and $\ell$ do not intersect. Applying a projective transformation we may map $\ell$ to the line at infinity $\{ [\sin \pi \theta, \cos \pi \theta, 0]: \theta \in \R/\Z\}$. As $\sigma$ is disjoint from $\ell$ it must be a compact conic section in $\R^2$, that is to say an ellipse. By a further affine transformation we may assume that it is in fact the unit circle $\{[\cos 2\pi \theta, \sin 2 \pi \theta,1] : \theta \in \R/\Z\}$. By elementary trigonometry it may be verified that the points $[\cos 2\pi \alpha_1, \sin 2 \pi \alpha_1, 1]$, $[\cos 2\pi \alpha_2, \sin 2 \pi \alpha_2, 1]$ and $[\sin \pi \beta, \cos \pi \beta, 0]$ are collinear if  and only if $\alpha_1 + \alpha_2 + \beta = 0$, thus in this case the result is true with $\psi_{\sigma}(\theta) = [\cos 2\pi \theta, \sin 2 \pi \theta,1] $ and $\psi_{\ell}(\theta) = [\sin \pi \theta, \cos \pi \theta, 0]$.
\end{proof}

We may now derive the following consequence, analogously to Lemma \ref{irreducible-case}.

\begin{lemma}[Conic and line]\label{reducible-case}
Suppose that $P \subset \R\P^2$ is a set of $n \geq n_0(K)$ points, all except $K$ of which lie on the union of an irreducible conic $\sigma$ and a line $\ell$. Suppose that $P$ defines at most $Kn$ ordinary lines, and suppose that $P$ has $n/2 + O(K)$ points on each of $\sigma$ and $\ell$. Then, after a projective transformation, $P$ differs from one of the sets $X_{n'}$ by at most $O(K)$ points. 
\end{lemma}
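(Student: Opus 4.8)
The plan is to transport the collinearity structure on $\sigma\cup\ell$ into additive combinatorics via the quasi-group law of Proposition \ref{quasi-group-law}, apply Proposition \ref{almost-group}, and then recognise the resulting configuration as one of the $X_{2m}$.

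\emph{Set-up.} Let $G=G_{\sigma,\ell}$ with the bijections $\psi_\sigma:G\to\sigma^*$ and $\psi_\ell:G\to\ell^*$ supplied by Proposition \ref{quasi-group-law}, and put $A:=\psi_\sigma^{-1}(P\cap\sigma^*)$, $B:=\psi_\ell^{-1}(P\cap\ell^*)$ and $C:=-A$. Since $|\sigma\cap\ell|\leq 2$ and all but $K$ points of $P$ lie on $\sigma\cup\ell$ with $\tfrac n2+O(K)$ on each, the sets $A$, $B$, $C$ all have cardinality within $O(K)$ of $\tfrac n2$, and $P$ has only $O(K)$ points outside $\sigma^*\cup\ell^*$.

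\emph{The additive hypothesis.} Fix $a\in A$, $b\in B$ and let $L_{a,b}:=\overline{\{\psi_\sigma(a),\psi_\ell(b)\}}$; this is a genuine line different from $\ell$. It meets $\ell$ only at $\psi_\ell(b)$, and it meets $\sigma$ at $\psi_\sigma(a)$ together with a second point which, by the collinearity rule of Proposition \ref{quasi-group-law}, is $\psi_\sigma(-a-b)$ and which one checks lies in $\sigma^*$ (were it in $\sigma\cap\ell$, then $L_{a,b}$ would meet $\ell$ in two distinct points and hence equal $\ell$). Consequently, whenever $-a-b\notin A$ we have $L_{a,b}\cap P\cap(\sigma^*\cup\ell^*)=\{\psi_\sigma(a),\psi_\ell(b)\}$, so $L_{a,b}$ is an ordinary line of $P$ unless it passes through one of the $O(K)$ points of $P$ off $\sigma^*\cup\ell^*$. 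Since each ordinary line of $P$ equals $L_{a,b}$ for at most one pair $(a,b)$, and each point off $\sigma^*\cup\ell^*$ lies on at most $|A|\leq n$ of the lines $L_{a,b}$, the number of pairs $(a,b)\in A\times B$ with $a+b\notin C$ is at most $Kn+O(Kn)=O(Kn)$.

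\emph{Additive combinatorics and classification.} We may now invoke Proposition \ref{almost-group} (its parameters ``$n$'' and ``$K$'' taken of order $n$ and $K$ respectively, using $n\geq n_0(K)$ to ensure ``$K$''$\leq\eps$``$n$''): there are a subgroup $H\leq G$ and cosets $u+H$, $v+H$ with $|A\triangle(u+H)|$, $|B\triangle(v+H)|$, $|(-A)\triangle(u+v+H)|$ all $O(K)$. Comparing the first and third bounds, $-A$ is $O(K)$-close to both $-u+H$ and $u+v+H$; as $|H|=\tfrac n2+O(K)$ dwarfs $O(K)$, these cosets coincide, i.e. $2u+v\in H$. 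Now $H$ is a finite subgroup of $G$ of size $\sim\tfrac n2\gg 1$, so by the classification in Proposition \ref{quasi-group-law} we cannot have $|\sigma\cap\ell|\in\{1,2\}$ (as $\R$ and $\Z/2\Z\times\R$ have no finite subgroup of order exceeding $2$). Hence $|\sigma\cap\ell|=0$, $G\cong\R/\Z$, and $H$ is the subgroup of $N$-th roots with $N:=|H|=\tfrac n2+O(K)$. Put $\sigma$ and $\ell$ in the normal form of Proposition \ref{quasi-group-law} for this case, so that $\sigma$ is the unit circle, $\ell$ the line at infinity, $\psi_\sigma(\theta)=[\cos2\pi\theta,\sin2\pi\theta,1]$ and $\psi_\ell(\theta)=[\sin\pi\theta,\cos\pi\theta,0]$. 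A rotation of the plane acts on $G=\R/\Z$ by a translation (compatibly with the collinearity rule $\alpha_1+\alpha_2+\beta=0$), so we may further normalise $u=0$, whence $2u+v\in H$ forces $v+H=H$. Then $P\cap\sigma^*$ and $P\cap\ell^*$ differ by $O(K)$ points from $\psi_\sigma(H)$ and $\psi_\ell(H)$ respectively, and a direct comparison with \eqref{x2m} (the circle part of $X_{2N}$ being $\psi_\sigma(H)$ and the line part being $\{[-\sin\tfrac{\pi j}{N},\cos\tfrac{\pi j}{N},0]:0\leq j<N\}=\psi_\ell(H)$) gives $|P\triangle X_{2N}|=O(K)$; since $2N=n+O(K)$ this is the claim.

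\emph{Main obstacle.} The only genuinely delicate step is the derivation of the clean additive hypothesis in the second paragraph: one must keep careful track of which lines $L_{a,b}$ are \emph{forced} to be ordinary and correctly absorb the $O(K)$ stray points of $P$ and the at most two points of $\sigma\cap\ell$, so that the input to Proposition \ref{almost-group} is genuinely ``at most $O(Kn)$ bad pairs''. Everything afterwards is bookkeeping with the group isomorphisms of Proposition \ref{quasi-group-law} and the explicit parametrisations.
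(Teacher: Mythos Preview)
Your proof is correct and follows essentially the same route as the paper: pull back to the quasi-group $G_{\sigma,\ell}$ via Proposition \ref{quasi-group-law}, feed the resulting almost-closure into Proposition \ref{almost-group}, use the finite-subgroup classification to force $|\sigma\cap\ell|=0$, and rotate to identify the configuration with $X_{2m}$. The only cosmetic difference is that the paper applies Proposition \ref{almost-group} to pairs $(a_1,a_2)\in A\times A$ on the conic (checking $-a_1-a_2\in B$), whereas you use pairs $(a,b)\in A\times B$ (checking $-a-b\in A$); both encodings of the collinearity relation work equally well, and your derivation of the ``at most $O(Kn)$ bad pairs'' hypothesis is in fact spelled out more carefully than the paper's.
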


\begin{proof} 
Write $P' := P \cap (\sigma \cup \ell)$. Set $P_{\sigma} := P \cap \sigma^*$ and $P_{\ell} := P \cap \ell^*$. Then $|P_{\sigma}| + |P_{\ell}| = |P| + O(K)$, and $P'$ spans at most $O(Kn)$ ordinary lines. Consider also the pull-backs $A := \psi_{\sigma}^{-1}(P_{\sigma})$ and $B := \psi_{\ell}^{-1}(P_{\ell})$, where $\psi_{\ell}, \psi_{\sigma}$ are the ``quasi-group law'' maps introduced in the preceding proposition. Both $A$ and $B$ are subsets of $G_{\sigma, \ell}$, a group for which there are three possibilities, detailed in Proposition \ref{quasi-group-law}.

The assumption about ordinary lines implies that $\ominus a_1 \ominus a_2 \in B$ for all but at most $O(K n)$ pairs $a_1, a_2 \in A$. Applying Lemma \ref{almost-group}, it follows that there is a subgroup $H \leq G_{\gamma}$ and cosets $x \oplus H,  -2x \oplus H$ such that 
 $|A \triangle (x \oplus H)|, |B \triangle (-2x \oplus  H)| = O(K)$.

If $n \geq CK$ for large enough $C$ then it follows that $|\sigma \cap \ell| = 0$ since (with reference to the three possibilities for $G_{\sigma, \ell}$ described in Proposition \ref{quasi-group-law}) neither $\Z/2\Z \times \R$ nor $\R$ has a finite subgroup of size larger than 2. Applying a projective transformation, we may assume that $\ell$ is the line at infinity and, as in the proof of Proposition \ref{quasi-group-law}, that $\sigma$ is the unit circle. We may apply a further rotation so that $x = 0$, that is to say $|A \triangle H| = O(K)$ and $|B \triangle H| = O(K)$. 

All finite subgroups of $\R/\Z$ are cyclic and so we have $H = \{j/m : j \in \{0,1,\dots, m-1\}\}$ for some $m$, that is to say $H$ consists of the (additive) $m^{\operatorname{th}}$ roots of unity. But then $\psi_{\sigma}(H) \cup \psi_{\ell}(H)$ is precisely the set $X_{n'} = X_{2m}$ described in the introduction and in the statement of Theorem \ref{main-structure-theorem}. 
\end{proof}

Putting Lemmas \ref{irreducible-case} and \ref{reducible-case} together with the main result of the previous section, Proposition \ref{new-struct}, we immediately obtain Theorem \ref{main-weak}. The remainder of this section is devoted to establishing our most precise structure theorem, Theorem \ref{main-structure-theorem}. Let us remind the reader that this is the same as Theorem \ref{main-weak}, only the polynomial error terms $O(K^{O(1)})$ are replaced by linear errors $O(K)$. The reader interested in the proof of the Dirac--Motzkin conjecture for large $n$ may proceed immediately to the next section, where Theorem \ref{main-weak} is already sufficient.\vspace{11pt}

\emph{Proof of Theorem \ref{main-structure-theorem}. } The converse claim to this theorem already follows from the analysis in Section \ref{examples-sec}, so we focus on the forward claim.  We may assume that the constant $C$ is sufficiently large.  We then apply Theorem \ref{main-weak} to obtain (after a projective transformation) that $P$ differs by $O(K^{O(1)})$ points from one of the three examples (i), (ii), (iii) listed. Our task is to bootstrap this $O(K^{O(1)})$ error to a linear error $O(K)$.

Suppose first that case (i) holds, thus all except $O(K^{O(1)})$ points of $P$ lie on a line $\ell$.  Then every point $p$ in $P$ that does not lie on $\ell$ forms at least $n-O(K^{O(1)})$ lines with a point in $P \cap \ell$.  At most $O(K^{O(1)})$ of these can meet a further point in $P$, so each point in $P \backslash \ell$ produces at least $n-O(K^{O(1)})$ ordinary lines connecting that point with a point in $P \cap \ell$.  We conclude that the number of ordinary lines is at least $(n-O(K^{O(1)})) |P \backslash \ell|$; since there are at most $Kn$ ordinary lines, we conclude that $|P \backslash \ell| = O(K)$, and the claim follows.

Now suppose that case (ii) holds, thus $P$ differs by $O(K^{O(1)})$ points from $X_{2m}$ for some $m = \frac{1}{2}n-O(K^{O(1)})$.  To analyse this we need the following result, essentially due to Poonen and Rubinstein \cite{poonen-rubinstein}.

\begin{proposition}\label{poonen-rubinstein-prop}
Let $\Pi_n \subset \C \equiv \R^2$ denote the regular $n$-gon consisting of the $n^{\operatorname{th}}$ roots of unity. Then no point other than the origin or an element of $\Pi_n$ lies on more than $C$ lines joining pairs of vertices in $\Pi_n$, for some absolute constant $C$.
\end{proposition}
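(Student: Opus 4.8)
The plan is to reduce the statement to a counting problem about a single Möbius involution and then to invoke the structure theory of vanishing sums of roots of unity, which is in essence what Poonen and Rubinstein do. Write $\zeta := e^{2\pi i/n}$, so that $\Pi_n = \{\zeta^a : 0 \le a < n\}$. A direct computation shows that the line joining two vertices $\zeta^a, \zeta^b$ is
\[ \{ z \in \C : z + \zeta^{a+b}\bar z = \zeta^a + \zeta^b \}, \]
so a point $z_0$ with $|z_0| \ne 1$ lies on this line precisely when $\zeta^b = \phi_{z_0}(\zeta^a)$, where $\phi_{z_0}(u) := (z_0 - u)/(1 - \bar z_0 u)$ is the Blaschke involution attached to $z_0$, an involution of the unit circle. (If $|z_0| = 1$ but $z_0 \notin \Pi_n$, then $z_0$ lies on no such line at all, since the line through $\zeta^a$ and $\zeta^b$ meets the unit circle only in $\zeta^a$ and $\zeta^b$; so that case is trivial.) Hence the number of lines through $z_0$ is, up to a factor of essentially $\tfrac12$, the number of $n$-th roots of unity $u$ with $\phi_{z_0}(u)$ again an $n$-th root of unity. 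When $z_0 = 0$ the map $\phi_{z_0}$ is the rotation $u \mapsto -u$, which is exactly the degenerate case (giving the $n/2$ diameters when $n$ is even); when $z_0 \in \Pi_n$ it degenerates at $u = z_0$. For every other $z_0$ we must bound this count by an absolute constant.

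First I would record the rigidity coming from three concurrent chords. If $z_0$ lies on three distinct chords $\{\zeta^{a_i},\zeta^{b_i}\}$, $i = 1,2,3$, then their direction parameters $s_i := a_i + b_i \bmod n$ are distinct (two chords through a common point cannot be parallel), and eliminating $z_0$ and $\bar z_0$ from the three line equations gives
\[ (\zeta^{a_1}+\zeta^{b_1}-\zeta^{a_2}-\zeta^{b_2})(\zeta^{s_1}-\zeta^{s_3}) = (\zeta^{a_1}+\zeta^{b_1}-\zeta^{a_3}-\zeta^{b_3})(\zeta^{s_1}-\zeta^{s_2}). \]
After the obvious cancellations this is a vanishing sum of at most twelve roots of unity with coefficients $\pm 1$. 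By the classical structure theory of such sums (Mann, Conway--Jones; this is the ingredient used by Poonen and Rubinstein), every vanishing sum of a bounded number of roots of unity is a $\Z$-combination of a bounded number of ``primitive'' relations, each supported, after a rotation, on a coset of $\mu_p$ for a prime $p$ at most the number of terms, here $p \in \{2,3,5,7,11\}$. Consequently the exponents occurring in the relation above are confined, modulo each of these small primes, to one of a bounded list of patterns.

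From here the plan is to feed this rigidity back. Fixing two of the chords through $z_0$ pins down $z_0$ and $\bar z_0$ as explicit elements of $\Q(\zeta)$, and every further chord through $z_0$ then satisfies one of the bounded family of Diophantine relations just obtained, so that the chords through $z_0$ fall into $O(1)$ arithmetic-progression families in the exponent variables, each of which, by one more application of the chord equation forcing all its members to pass through the \emph{same} point, has length $O(1)$; summing over the families gives the required absolute bound $C$. The main obstacle is precisely this last passage, from ``every triple of concurrent chords is rigid'' to ``at most $C$ chords are concurrent'', uniformly in $n$: this is exactly the delicate Diophantine case analysis carried out by Poonen and Rubinstein in their count of the intersection points of the diagonals of a regular polygon. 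Their analysis is insensitive to whether $z_0$ lies inside, on, or outside the circle, and treats chords rather than diagonals with no essential change, since everything is governed by the single identity $z + \zeta^{a+b}\bar z = \zeta^a + \zeta^b$; so we obtain the proposition by invoking their work with these cosmetic extensions.
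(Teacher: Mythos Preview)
Your approach is essentially the same as the paper's: derive from any three concurrent chords a vanishing sum of twelve roots of unity, invoke the classification of such sums, and then extract a global bound. Your derivation of the twelve-term relation via elimination of $z_0,\bar z_0$ from the line equations $z + \zeta^{a+b}\bar z = \zeta^a + \zeta^b$ is a clean variant of the paper's similar-triangles computation, and leads to the same kind of identity.

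The gap is in your endgame. You write that the chords ``fall into $O(1)$ arithmetic-progression families in the exponent variables, each of which \ldots\ has length $O(1)$'', and then defer to Poonen--Rubinstein for the ``delicate Diophantine case analysis''. Two problems. First, Poonen--Rubinstein only treat points \emph{inside} the unit circle; the exterior case is not in their paper, so calling it a ``cosmetic extension'' is not a proof. Second, and more seriously, you have not actually supplied the passage from the triple-wise rigidity to the bound on $m$. The paper's argument here is concrete and worth comparing: from the twelve-term relation one gets, for each triple $(j,k,l)$, that some difference $\alpha_{r;j,k,l}-\alpha_{r';j,k,l}$ lies in a fixed finite set $S$; pigeonholing over the $O(1)$ choices of $(r,r')$ and the element of $S$ gives $\gg m^3$ triples satisfying one fixed non-trivial linear equation in $\beta_j,\gamma_j,\beta_k,\gamma_k,\beta_l,\gamma_l$; fixing $k,l$ then yields $\gg m$ indices $j$ with $a\beta_j + b\gamma_j = \theta'$ for some $(a,b)\in\{-1,0,1\}^2\setminus\{(0,0)\}$; and finally one checks by elementary geometry (using that $p$ is neither the origin nor at infinity) that this last equation has only $O(1)$ solutions among chords through $p$. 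Your ``arithmetic-progression families of length $O(1)$'' gestures at this, but does not carry it out, and it is precisely this step that needs to be written down to complete the argument.
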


Actually, in \cite{poonen-rubinstein} it was shown that $C$ could be taken to be $7$ when one restricts attention to points \emph{inside} the unit circle.  The case of points outside the unit circle was not directly treated in that paper, but can be handled by a variant of the methods of that paper. See Appendix \ref{chord-app} for details.  For the purposes of establishing Theorem \ref{main-structure-theorem}, the full strength of Proposition \ref{poonen-rubinstein-prop} is unnecessary. Indeed, the more elementary weaker version established in Proposition \ref{chords-prop} would also suffice for this purpose.

\begin{corollary}\label{prc}
Suppose that $p \in \R\P^2$ does not lie on the line at infinity, is not an $m^{\operatorname{th}}$ root of unity and is not the origin $[0,0,1]$. Then at least $2m - O(1)$ of the $2m$ lines joining $p$ to a point of $X_{2m}$ pass through no other point of $X_{2m}$. 
\end{corollary}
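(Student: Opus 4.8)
The plan is to bound the number of ``bad'' points of $X_{2m}$, i.e.\ the points $q \in X_{2m}$ for which the line $\overline{\{p,q\}}$ contains a point of $X_{2m}$ other than $q$, and to show this number is $O(1)$. Call a line $\ell$ through $p$ \emph{rich} if it contains at least two points of $X_{2m}$. Then $q$ is bad precisely when $\overline{\{p,q\}}$ is rich, so the set $B$ of bad points equals $\bigcup_\ell (\ell \cap X_{2m})$, the union running over the rich lines $\ell$ through $p$. Since any line meets the unit circle in at most two points --- hence contains at most two of the $m$-th roots of unity --- and contains exactly one point of the line at infinity, we have $|\ell \cap X_{2m}| \le 3$ for every line; therefore $|B| \le 3N$, where $N$ is the number of rich lines through $p$, and it suffices to prove $N = O(1)$.

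To bound $N$, I would first note that a rich line through $p$ must contain at least one $m$-th root of unity (it cannot consist of two points at infinity), and then split the rich lines into two families. A rich line through $p$ containing two roots of unity contains exactly two of them (no three points of a circle are collinear) and is thus a line joining two vertices of $\Pi_m$; since $p$ is neither the origin nor an $m$-th root of unity, Proposition \ref{poonen-rubinstein-prop} bounds the number of such lines by the absolute constant $C$. It remains to control the rich lines $\ell$ through $p$ containing \emph{exactly one} root of unity, say $\zeta_a := [\cos\frac{2\pi a}{m},\sin\frac{2\pi a}{m},1]$; being rich, such an $\ell$ has its unique point at infinity in $X_{2m}$, say $[-\sin\frac{\pi k}{m},\cos\frac{\pi k}{m},0]$ with $0 \le k < m$.

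Here I would invoke the collinearity fact recorded in the proof of Proposition \ref{boroczky-examples}: the line through $\zeta_a$ and $\zeta_b$ passes through the point at infinity labelled $(a+b) \bmod m$, with the limiting case $b=a$ giving the tangent line to the unit circle at $\zeta_a$ (labelled $2a \bmod m$). Applying this with $b := k-a$ (indices mod $m$), the chord $\overline{\{\zeta_a,\zeta_{k-a}\}}$ passes through $\zeta_a$ and through the same point at infinity as $\ell$, so it coincides with $\ell$. If $2a \not\equiv k \pmod m$ then $\zeta_{k-a}\neq\zeta_a$ and $\ell$ would contain two roots of unity, contradicting our assumption; hence $2a\equiv k \pmod m$ and $\ell$ is forced to be the tangent to the unit circle at $\zeta_a$. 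As $p$ is affine, at most two tangent lines to the unit circle pass through $p$, so the second family has at most two members. Altogether $N \le C+2$, whence $|B| \le 3(C+2) = O(1)$, which is the assertion.

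I expect no serious obstacle; the only delicate point is the degenerate-chord step, where the collinearity formula of Proposition \ref{boroczky-examples} must be interpreted, in the case $\zeta_{k-a}=\zeta_a$, as producing the tangent line at $\zeta_a$. The remaining ingredients --- the reduction of bad points to rich lines, the elementary facts that a line meets a circle in at most two points and contains at most one point at infinity, and the citation of Proposition \ref{poonen-rubinstein-prop} --- are routine.
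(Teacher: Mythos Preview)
Your proof is correct and follows essentially the same route as the paper: both arguments reduce to Proposition~\ref{poonen-rubinstein-prop} for lines through two roots of unity, together with the observation that a line through one root of unity and one of the distinguished points at infinity must (unless it hits a second root) be tangent to the unit circle, of which at most two pass through $p$. Your packaging via counting \emph{rich lines} and using $|B|\le 3N$ is slightly tidier than the paper's, which counts bad roots and bad points at infinity separately, but the substance is identical.
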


\begin{proof}
If $x \in X_{2m}$ then we say that $x$ is \emph{bad} if the line $\overline{\{p,x\}}$ passes through another point $y\in X_{2m}$, $y \neq x$. Suppose first that $x$ is an $m^{\operatorname{th}}$ root of unity. We claim that if $x$ is bad then $px$ passes through another $m^{\operatorname{th}}$ root of unity, different from $x$, or else $\overline{\{p,x\}}$ is tangent to the unit circle. If $y$ is already an $m^{\operatorname{th}}$ root of unity then we are done; otherwise $y$ is one of the $m$ points on the line at infinity. But then the line $\overline{\{x,y\}}$ passes through another $m^{\operatorname{th}}$ root of unity $x'$ unless it is tangent to the unit circle, and we have proved the claim. 

This enables us to count the number of bad $x$ which are $m^{\operatorname{th}}$ roots of unity. There are at most two coming from the possibility that $\overline{\{p,x\}}$ is tangent to the unit circle. Otherwise, $\overline{\{ p, x\}}$ contains another $m^{\operatorname{th}}$ root of unity $x'$, whence $p$ lies on the line $\overline{\{x,x'\}}$. This gives at most $O(1)$ further possibilities by Proposition \ref{poonen-rubinstein-prop}. 

Now suppose that $x$ is one of the $m$ points on the line at infinity. If $\overline{\{p,x\}}$ passes through an $m^{\operatorname{th}}$ root of unity $y$ then it passes through another such root of unity $y'$ unless $\overline{\{p,x\}}$ is tangent to the unit circle. There are at most $2$ points on the line at infinity corresponding to the tangent lines, and then at most $O(1)$ corresponding to the chords $\overline{\{y ,y'\}}$ on which $p$ lies, by another application of Proposition \ref{poonen-rubinstein-prop}.
\end{proof}

We return now to the analysis of case (ii).  Let $p$ be a point of $P$ not on either the unit circle or the line at infinity.  Then by Corollary \ref{prc}, only $O(K^{O(1)})$ of the $n-O(K^{O(1)})$ lines connecting $p$ with $X_{2m}$, also meet another element of $X_{2m}$.  As $P$ only differs from $X_{2m}$ by $O(K^{O(1)})$ points, we conclude that there are $n-O(K^{O(1)})$ ordinary lines of $P$ that connect $p$ with an element of $X_{2m}$.  As in case (i), this implies that there are at most $O(K)$ points of $P$ lying outside the union of the unit circle and the line at infinity.  Applying Lemma \ref{reducible-case}, we obtain the claim.

Finally, we consider the case (iii).  By Lemma \ref{irreducible-case}, it suffices to show that there are at most $O(K)$ points of $P$ that do not lie on the curve $E$, which is either an elliptic curve or the smooth points of an acnodal singular cubic curve.  By the same argument used to handle cases (i) and (ii), it then suffices to show that each point $p$ in $P \backslash E$ generates $\gg n$ ordinary lines in $P$.

For this, it suffices to establish the following lemma.

\begin{lemma}\label{elliptic-lemma}
Suppose that $E$ is an elliptic curve or the smooth points of an acnodal singular cubic curve and that $H \oplus x$ is a coset of a finite subgroup of $E$ of size $n > 10^4$. Then, if $p \notin E$ is a point, then there are at least $n/1000$ lines through $p$ that meet exactly one element of $H \oplus x$.
\end{lemma}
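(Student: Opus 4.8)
The plan is to argue by contradiction: suppose fewer than $n/1000$ lines through $p$ meet $H\oplus x$ in exactly one point, and derive a contradiction from $n>10^4$. First dispose of a trivial subcase: if $\gamma$ is an acnodal cubic and $p$ is its singular point, then every line through $p$ meets $\gamma$ at the (doubled) acnode and in at most one further point, hence meets $E=\gamma^*$ — and so $H\oplus x\subseteq E$ — in at most one point, so every line $\overline{\{p,q\}}$ with $q\in H\oplus x$ is of the desired type and we are done. So we may assume $p\notin\gamma$. We may also assume $3x\in H$; this is the case in which the lemma is applied (its hypothesis is inherited from Lemma \ref{irreducible-case}, whose conclusion includes $3x\in H$), and the case $3x\notin H$ is handled in the same spirit, there being then no $3$-rich lines through $p$ and one bounding instead the number of $2$-rich lines. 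In either of the two allowed possibilities for $E$ (an elliptic curve, or the smooth points of an acnodal cubic) the curve $E$ carries the group law of Section \ref{examples-sec}, with a point of inflection $O$ serving as identity.

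The key reduction turns the problem into a bound on the number of $3$-rich lines through $p$. Since $\gamma$ is an irreducible cubic and $p\notin\gamma$, a line through $p$ meets $\gamma$ in $3$ points with multiplicity, and — the corresponding cubic polynomial being real, so with complex roots in conjugate pairs — it meets the real locus $E$ in $1$ or $3$ points with multiplicity (the acnode, where relevant, absorbing two of these and contributing nothing to $H\oplus x$). If a line $\ell\ni p$ meets $H\oplus x$ in two distinct points $a,b$, then $\ell\cap E=\{a,b,c\}$ with multiplicity, where $a\oplus b\oplus c=O$; since $\ominus 2x=x\ominus 3x$ and $3x\in H$, the third point $c=\ominus a\ominus b$ again lies in $H\oplus x$. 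Hence a line through $p$ meeting $H\oplus x$ in $\ge 2$ points meets it in $3$ points with multiplicity, and the only way to obtain exactly $2$ \emph{distinct} points is for $\ell$ to be tangent to $\gamma$ — and there are at most $6$ such lines through $p$, the class of $\gamma$ being at most $6$. Writing $g,b_2,b_3$ for the number of lines through $p$ meeting $H\oplus x$ in exactly $1,2,3$ distinct points, the partition of $H\oplus x$ by the pencil through $p$ gives $g+2b_2+3b_3=n$ with $b_2\le 6$, so $g\ge n-12-3b_3$. It therefore suffices to prove $b_3\le(\tfrac13-c_0)n$ for some absolute constant $c_0>0$, as this yields $g\ge n/1000$ once $n>10^4$.

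The remaining, and genuinely difficult, step is this bound on $b_3$. A $3$-rich line of $H\oplus x$ through $p$ is precisely a fibre of the degree-$3$ projection $\pi_p\colon E\to\RP^1$ (projection away from $p$) that lies inside the single coset $H\oplus x$, so $b_3\ge(\tfrac13-c_0)n$ would force $H\oplus x$ to coincide, up to $O(c_0 n)$ points, with a union of fibres of $\pi_p$. To rule this out I would pass to the quotient isogeny $\phi\colon E\to E':=E/H$ of degree $n$ and the morphism $\Phi=(\phi,\pi_p)\colon E\to E'\times\RP^1$, with image the curve $Z$: a $\pi_p$-fibre contained in a coset $C$ of $H$ is exactly a point of $Z$ over which $\Phi$ is (at least) $3$-to-$1$, and applying $\phi$ to the collinearity relation shows $3C\subseteq H$, so all such triple points of $Z$ lie over the at most $3$ points of $E'[3](\R)$. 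One checks that $\Phi$ is birational onto $Z$: $\deg\Phi$ divides $\deg\pi_p=3$, and $\deg\Phi=3$ would force the isogeny $\phi$ to factor through $\pi_p$, hence through the complete curve $\RP^1$, which $\phi$ manifestly does not (this already fails for $n>10^4$). Combining the resulting control on the singularities of $Z$ with a Cayley–Bacharach argument (Proposition \ref{chas}), which from two $3$-rich lines through $p$ manufactures a \emph{new} collinear triple inside $H\oplus x$, and with the additive structure of the coset $H\oplus x$, one aims to show that the extremal scenario forces $\pi_p$ to respect a translation by a nontrivial element $h$ of the finite group $H$ — that is, $\pi_p$ factors through $E\to E/\langle h\rangle$ — which is impossible, since $\deg\pi_p=3$ forces $\mathrm{ord}(h)\mid 3$ and then one would need a degree-$1$ map $E/\langle h\rangle\to\RP^1$, absurd.

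I expect the main obstacle to be the quantitative strength of this last rigidity step. The purely projective input — adjunction on $Z$ — yields only $b_3\le n-1$, which over a single coset is no better than the trivial bound $b_3\le n/3$; the \emph{strict} linear improvement we need is exactly where one must exploit the ordered, torsion-free nature of $\R$ and the genuine subgroup structure of $H$, rather than just the algebraic geometry of $\gamma$ — precisely the phenomenon flagged by the near-counterexamples \eqref{p1}--\eqref{p4} of Section \ref{examples-sec}, and handled by inputs of additive-combinatorial or convexity type such as those in Lemma \ref{add-comb-lem-7} and Proposition \ref{betts-case}. Once $b_3\le(\tfrac13-c_0)n$ is established, the lemma follows from the reduction in the second paragraph.
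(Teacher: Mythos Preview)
Your initial reductions are correct and match the paper: disposing of the acnodal singular point, bounding the tangent lines through $p$ by $6$, and reducing to the inequality $b_3 \le (\tfrac13 - c_0)n$. You are also right that the implicit hypothesis $3x\in H$ is what makes a line meeting $H\oplus x$ in two points automatically meet it in three (with multiplicity).

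However, the proposal is not a proof: the step you yourself flag as ``the main obstacle'' is exactly the content of the lemma, and you do not establish it. Your isogeny/birational framework is interesting but, as you note, adjunction on $Z$ gives nothing beyond the trivial $b_3\le n/3$; the passage from ``$\Phi$ is birational, hence $Z$ has controlled singularities'' to ``$\pi_p$ commutes with a nontrivial translation of $H$'' is asserted rather than argued, and the proposed Cayley--Bacharach input is not made precise. There is also a secondary issue: in the acnodal case $\gamma$ is a rational curve, so there is no algebraic isogeny $E\to E/H$ in the sense you need, and the map $\Phi=(\phi,\pi_p)$ to $E'\times\RP^1$ is not an algebraic morphism of the kind to which adjunction applies. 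Your scheme would need a separate treatment of this case.

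The paper's argument is entirely different and much more elementary, using only the \emph{real topology} of $E$. The at most $6$ tangent lines from $p$ and their contact points cut $E\cong\R/\Z$ or $\R/\Z\times\Z/2\Z$ into at most $13$ open arcs $A_1,\ldots,A_{13}$. On each arc the projection $\pi_p$ is nondegenerate, so for each $A_i$ either every secant from $p$ through $A_i$ misses $E$ elsewhere, or there are arcs $A_j,A_k$ and homeomorphisms $\phi_{ij}\colon A_i\to A_j$, $\phi_{ik}\colon A_i\to A_k$ given by following the secant. Since $H\oplus x$ meets each arc in an arithmetic progression with common step $h$ (the generator of $H$ inside $\R/\Z$), and since each $\phi_{ij}$ is monotone, one finds many $y\in A_i\cap(H\oplus x)$ with $\phi_{ij}(y\oplus h)=\phi_{ij}(y)\oplus h'$ and $\phi_{ik}(y\oplus h)=\phi_{ik}(y)\oplus h''$ for $h',h''\in\{h,\ominus h\}$. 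The collinearity relation $y\oplus\phi_{ij}(y)\oplus\phi_{ik}(y)=O$, applied at $y$ and at $y\oplus h$, then forces $h\oplus h'\oplus h''=O$, so $h$ has order at most $3$. This contradicts $|A_i\cap(H\oplus x)|>3$, which holds by pigeonhole once $b_3$ is assumed large. Thus the ``$\pi_p$ respects a translation'' conclusion you were aiming for is obtained not through algebraic rigidity of $Z$, but through the banal fact that a monotone bijection between two arithmetic progressions of the same step is, up to at most one exception at the endpoints, a translation by $\pm h$.
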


\emph{Remark.} The constant $1/1000$ could be improved a little by our methods, but we have not bothered to perform such an optimisation here. It seems reasonable to conjecture, in analogy with the results in \cite{poonen-rubinstein}, that in fact there are only $O(1)$ lines through $p$ that can meet three elements of a coset $x \oplus H$ of a finite subgroup of an elliptic curve, but this would seem to lie far deeper.\vspace{11pt}

\begin{proof} We first exclude one degenerate case, in which $E$ is the smooth points of an acnodal singular cubic curve, and $p$ is the isolated (i.e. acnodal) singular point of that curve.  In this case, any line through $p$ meets exactly one point of $E$, and the claim is trivial.  Thus we may assume that $p$ does not lie on the cubic curve that contains $E$.

Suppose the result is false. Then at least $0.999n$ of the lines joining $p$ to $x \oplus H$ meet $x \oplus H$ in $2$ or $3$ points. In the former case, the line must be tangent to $E$. There are at most $6$ such tangents\footnote{The points of tangency must all lie on the intersection of $E$ with its dual curve with respect to $p$, which has degree $2$; see also the \emph{Pl\"ucker Formul\ae.} }. Thus at least $0.998n$ of the lines joining $p$ to points of $x \oplus H$ meet $x \oplus H$ in 3 points. 

As a topological group, the cubic curve $E$ is isomorphic to either $\R/\Z$ or $\R/\Z \times (\Z/2\Z)$.  Consider all the lines through $p$ that are tangent to $E$; there are at most $6$ such lines, each meeting $E$ in at most $2$ points. These (at most) 12 points partition $E$ into no more than $13$ connected open sets $A_1,\ldots,A_{13}$ (topologically, these are either arcs or closed loops), plus $12$ endpoint vertices.  From a continuity argument, we see that for each $i$ one of the following statements is true:
\begin{enumerate}
\item the lines connecting $p$ to points of $A_i$ do not meet $E$ again;
\item there exist $A_j, A_k$, distinct from each other and from $A_i$, such that any line connecting $p$ and a point in $A_i$ meets $E$ again, once at a point in $A_j$, and once at a point in $A_k$. 
\end{enumerate} 

Suppose that $i$ is of type (i). Then by our supposition that the lemma is false we may assume that $|A_i \cap (H \oplus x)| < 0.001n$, since all the lines from $p$ to $A_i \cap (H \oplus x)$  contain no other point of $E$. By the pigeonhole principle there is some $i$ of type (ii) with $|A_i \cap (H \oplus x)| > \frac{1}{13}(1 - 0.012) n > 0.05n > 3$. By property (ii), lines from $p$ through $A_i$ meet the curve $E$ again in $A_j$ and $A_k$. 

Recall that for all except at most $0.002 n$ elements $q$ of $A_i \cap (H \oplus x)$, the line $\overline{\{p,q\}}$ meets $A_j$ and $A_k$ at elements of $H \oplus x$. It is easy to conclude from this, and similar statements for $j,k$, that the sizes of $A_i \cap (H \oplus x)$, $A_j \cap (H \oplus x)$ and $A_k \cap (H \oplus x)$ differ by at most $0.004 n$.

Let $\phi_{ij}: A_i \to A_j$ be the map that sends a point $q$ in $A_i$ to the point $\overline{\{p,q\}} \cap A_j$, then $\phi_{ij}$ is a homeomorphism from the set $A_i$ to the set $A_j$; in particular, $\phi_{ij}$ is either orientation-preserving or orientation-reversing, once one places an orientation on both $A_i$ and $A_j$.  Furthermore, $\phi_{ij}$ maps all but at most $0.002 n$ of the elements of $A_i \cap (H \oplus x)$ to $A_j \cap (H \oplus x)$ and vice versa.  Now as $H$ is a subgroup of $E$, which as an abelian topological group is either $\R/\Z$ or $\R/\Z \times (\Z/2\Z)$, we see that the sets $A_i \cap (H \oplus x)$, $A_j \cap (H \oplus x)$, being intersections of arcs in $\R/\Z$ with the discrete coset $x \oplus H$, are arithmetic progressions in $x \oplus H$ with a common spacing $h$.  

Now for all but at most $0.002 n$ values of $y \in A_i \cap (H \oplus x)$, $\phi_{ij}$ maps $y$ to a point of $A_j \cap (H \oplus x)$. For all but at most $1 + 0.002 n$ values of $y \in A_i \cap (H \oplus x)$, $\phi_{ij}$ maps $y \oplus h$ to a point of $A_j \cap (H \oplus x)$. (The extra $1$ comes from the fact that there is one endpoint value of $y$ in the progression $A_i \cap (x \oplus H)$ for which $y \oplus h$ does not lie in this progression.) Of the values of $y$ satisfying both of these statements, for all but at most $0.004 n$ values we have 
\begin{equation}\label{phiij}
 \phi_{ij}(y \oplus h) = \phi_{ij}(y) \oplus h'
\end{equation}
for $h'$ equal to either $h$ or $\ominus h$ (depending on whether $\phi_{ij}$ is orientation-preserving or orientation-reversing). 
Thus \eqref{phiij} holds for all except at most $0.01 n$ values of $y \in A_i \cap (H \oplus x)$.

Similarly, defining $\phi_{ik}$ in exactly the same way as $\phi_{ij}$, we see that for all except at most $0.01 n$ elements $y$ in $A_i \cap (H \oplus x)$ we have
\begin{equation}\label{phiik}
\phi_{ik}(y \oplus h) = \phi_{ik}(y) \oplus h''
\end{equation}
for $h''$ equal to either $h$ or $\ominus h$.

Recalling that $|A_i \cap (x \oplus H)| > 0.05n$, we may thus find $y \in (x \oplus H) \cap A_i$ such that both \eqref{phiij} and \eqref{phiik} hold.  On the other hand, as $y, \phi_{ij}(y), \phi_{ik}(y)$ are collinear, we have
$$ y \oplus \phi_{ij}(y) \oplus \phi_{ik}(y) = O$$
and similarly
$$ y \oplus h \oplus \phi_{ij}(y \oplus h) \oplus  \phi_{ik}(y \oplus h) = O.$$
From these equations and \eqref{phiij}, \eqref{phiik} we conclude that
$$ h \oplus h' \oplus h'' = O.$$
Since $h', h''$ are equal to either $h$ or $\ominus h$, we conclude that $h$ has order at most $3$, and so $|A_i \cap (H \oplus x)| \leq 3$. However we have already observed that $|A_i \cap (H \oplus x)| > 3$, a contradiction.\end{proof}

The proof of Theorem \ref{main-structure-theorem} is now complete.

\section{The Dirac--Motzkin conjecture}

The Dirac-Motzkin conjecture is the statement that, for $n$ large, a set of $P$ points in $\R^2$ not all lying on a line spans at least $n/2$ ordinary lines. The main result of this paper is a proof of a more precise version of this for large $n$, Theorem \ref{dirac-motzkin-1}, together with a characterization of the extremal examples.  We prove the even more precise Theorem \ref{dirac-motzkin-2}, of which Theorem \ref{dirac-motzkin-1} is an easy consequence, in this section. We refer the reader to Section \ref{examples-sec} for a precise statement of these two results and a leisurely discussion of the relevant examples. 

Suppose that $P$ spans at most $n$ ordinary lines and that $P$ is not collinear. We may apply our main structure theorem, Theorem \ref{main-structure-theorem}, to conclude that $P$ differs in $O(1)$ points from one of three examples: points on a line, a set $X_{n'}$, and a coset of a subgroup of an irreducible cubic curve (Sylvester-type example). In fact the weaker and rather easier Theorem \ref{main-weak} suffices for this purpose.

It is obvious that the first type of set spans at least $n - O(1)$ ordinary lines. Sets close to a Sylvester  example are also relatively easy to handle.

\begin{lemma}\label{lem8.1}
Suppose that $P \subset \R\P^2$ differs in $K$ points from a coset $H \oplus x$ of a subgroup $H$ of some irreducible cubic curve, where $3x = x \oplus x \oplus x \in H$. Then $P$ spans at least $n - O(K)$ ordinary lines.
\end{lemma}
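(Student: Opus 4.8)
The plan is to compare the ordinary lines of $P$ with those of the coset $G := H \oplus x$ directly, showing that all but $O(K)$ of the ordinary lines of $G$ survive as ordinary lines of $P$. Write $S := G \setminus P$ and $T := P \setminus G$, so that $|S| + |T| \leq 2K$ and $|G| = n + |S| - |T| = n + O(K)$; I may assume $n$, hence $|G|$, is large, the conclusion being vacuous otherwise. Then $H$ is a nontrivial finite subgroup of $\gamma^*$, so by Theorem \ref{gamma-gp} the curve $\gamma$ is an elliptic curve or an acnodal cubic and $\gamma^*$ is a group isomorphic to $\R/\Z$ or $\R/\Z \times \Z/2\Z$. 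By Proposition \ref{bgs-prop} applied to $G$ (a subgroup if $x \in H$, and otherwise a coset of the type considered there, since $3x \in H$), $G$ spans at least $|G| - 3$ ordinary lines.

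The only real ingredient needed is a \emph{closure property}: if $q \neq q'$ lie in $G$, then the third point in which the line $\overline{\{q,q'\}}$ meets $\gamma$ --- namely $\ominus q \ominus q'$ in the group law on $\gamma^*$ --- again lies in $G$. Indeed $q,q' \in H \oplus x$ forces $\ominus q \ominus q' \in H \ominus 2x$, and $H \ominus 2x = H \oplus x = G$ precisely because $3x \in H$. I will use this in two ways. First, each point of $G$ lies on $O(1)$ ordinary lines of $G$: if $\overline{\{q,q'\}}$ is ordinary in $G$, the third intersection $\ominus q \ominus q' \in G$ must equal $q$ or $q'$, i.e. $q' = \ominus 2q$ or $2q' = \ominus q$, and these equations have boundedly many solutions in $\R/\Z$ or $\R/\Z \times \Z/2\Z$; hence $O(|S|) = O(K)$ ordinary lines of $G$ meet $S$. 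Second, each point $t \notin G$ lies on at most $6$ ordinary lines of $G$: if $\ell$ is such a line, then $\ell \not\subseteq \gamma$ (as $\gamma$ is irreducible of degree $3$), so by B\'ezout $\ell$ meets $\gamma$ in at most three points, two of which are distinct points $q, q' \in G$; were there three distinct intersection points, all would be smooth and the third would be $\ominus q \ominus q' \in G$, making $\ell$ a $3$-rich line of $G$, contrary to hypothesis. So $\ell$ is tangent to $\gamma$ (in the acnodal case $\ell$ could instead pass through the isolated singular point, but such a line meets $\gamma^* \supseteq G$ in at most one point and so is not ordinary in $G$); its point of tangency lies on the intersection of $\gamma$ with the first polar of $t$ --- a conic --- which by B\'ezout has at most $6$ points. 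Hence $O(|T|) = O(K)$ ordinary lines of $G$ meet $T$. (Points of $T$ lying on $\gamma$ but outside $G$ lie on no ordinary line of $G$ at all, by the same closure argument.)

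Finally, any ordinary line $\ell$ of $G$ with $\ell \cap G = \{q,q'\}$ that meets neither $S$ nor $T$ satisfies $q,q' \in G \cap P$ and $\ell \cap T = \emptyset$, hence $\ell \cap P = \{q,q'\}$ and $\ell$ is an ordinary line of $P$. Therefore the number $N_2(P)$ of ordinary lines spanned by $P$ satisfies
\[ N_2(P) \ \geq\ N_2(G) - \#\{ \text{ordinary lines of } G \text{ meeting } S \cup T \} \ \geq\ (|G|-3) - O(K) \ =\ n - O(K). \]

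The only step that requires care is the bound on ordinary lines of $G$ through an added external point $t$: a priori $t$ could be collinear with many pairs of points of $G$, and controlling the number of \emph{$3$-rich} lines of $G$ through $t$ does seem genuinely hard (cf. the remark after Lemma \ref{elliptic-lemma}). The hypothesis $3x \in H$ is exactly what saves us: it makes $G$ closed under collinearity, so any pair of $G$-points on a non-tangent line through $t$ already completes to a collinear triple inside $G$, and only the $O(1)$ tangent lines from $t$ can yield \emph{ordinary} lines of $G$. Note that, in contrast to the argument establishing Theorem \ref{main-structure-theorem}, Lemma \ref{elliptic-lemma} is not needed here, since we only require a lower bound and so do not have to produce ordinary lines of $P$ through the points of $T$.
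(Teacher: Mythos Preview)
Your proof is correct and follows essentially the same approach as the paper's: both identify the ordinary lines of $G = H \oplus x$ as tangent lines to $\gamma$ (you via the closure property $\ominus q \ominus q' \in G$, the paper by direct computation), then bound the number of such tangents lost under deletion (each point of $G$ lies on $O(1)$ of them) and under addition (at most $6$ tangents pass through any external point, via the first polar). Your organization around the closure observation is slightly more conceptual, and you cite Proposition~\ref{bgs-prop} for the initial count rather than rederiving it, but the substance is the same.
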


\begin{proof}
Write $h_0 := 3x$, thus $h_0 \in H$. For every $h \in H$, the line joining $h \oplus x$ and $(-2h \ominus h_0) \oplus x$ is tangent to $\gamma$ at $h \oplus x$, since $(h \oplus x) \oplus  (h \oplus x) \oplus (-2h \ominus h_0 \oplus x) = 0$. Therefore it is an ordinary line unless $3h \oplus h_0 = 0$, in which case the points $h \oplus x$ and $-2h \ominus h_0 \oplus x$ coincide. Thus the only points of $H \oplus x$ not belonging to an ordinary line spanned by $H \oplus x$ correspond to the points of $H$ with $3h \oplus h_0 = 0$. Since $H$ is isomorphic to a subgroup of either $\R/\Z$ or $\Z/2\Z \times \R/\Z$, there are no more than $3$ of these. It follows immediately that any set formed by removing at most $K$ points of $H \oplus x$ has at least $n - O(K)$ ordinary lines, and these are all tangent lines to $\gamma$. No point in the plane lies on more than $6$ tangent lines to $\gamma$, and so the addition of a point destroys no more than $6$ of our $n - O(K)$ ordinary lines. It follows that $P$ itself spans at least $n - O(K)$ ordinary lines, as we wanted to prove.\end{proof}

Combining this lemma with the remarks just preceding it, we have now established the existence of an absolute constant $C$ such that a set of $n$ points, not all on a line, and spanning at most $n - C$ ordinary lines, differs in $O(1)$ points from a set $X_{2m}$ consisting of the $m^{\operatorname{th}}$ roots of unity plus $m$ corresponding points on the line at infinity. Now the $m$ tangents to the unit circle at roots of unity pass through only one other point of $X_{2m}$, and so $X_{2m}$ has $m$ ordinary lines. Furthermore, since each point not on the unit circle can be incident to at most two such tangent lines, the addition/deletion of $O(1)$ points does not affect more than $O(1)$ of these lines. This already establishes a weak version of the Dirac-Motzkin conjecture: every non-collinear set of $n$ points spans at least $n/2 - O(1)$ ordinary lines.

To prove Theorem \ref{dirac-motzkin-2}, a much more precise result, we must analyse configurations close to $X_{2m}$ more carefully. What is needed is precisely the following result which, together with what we have already said in this section, completes the proof of Theorem \ref{dirac-motzkin-2}. Recall from Section \ref{examples-sec} the examples of B\"or\"oczky.

\begin{proposition}\label{boroczky-sharp}
There is an absolute constant $C$ such that the following is true. Suppose that $P$ differs from $X_{2m}$ in at most $K$ points, and that $P$ spans at most $2m - CK$ ordinary lines. Then $P$ is a B\"or\"oczky example or a near-B\"or\"oczky example.
\end{proposition}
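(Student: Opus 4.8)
## Proof plan for Proposition \ref{boroczky-sharp}

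The plan is to start from a configuration $P$ differing from $X_{2m}$ by at most $K$ points, say $P = (X_{2m} \setminus E) \cup F$ with $|E|, |F| \leq K$, and to compute the number of ordinary lines of $P$ very precisely in terms of $E$, $F$ and the combinatorial positions of the points involved. The key point is that $X_{2m}$ itself has exactly $m$ ordinary lines (the tangents to the $m$th roots of unity), so adding or deleting points can only decrease the count below $2m - CK$ in rather constrained ways, and we want to pin down exactly which ways. First I would set up a ``ledger'': each ordinary line of $P$ is either (a) one of the $m$ tangent lines of $X_{2m}$ which survives, (b) a new ordinary line created by the modification. Conversely, to have few ordinary lines we need most of the $m$ tangent lines to be destroyed, and we need the new ordinary lines to be few.

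The main tool for controlling how tangent lines get destroyed and how new ordinary lines get created is Corollary \ref{prc} together with Proposition \ref{poonen-rubinstein-prop}: any point $p$ not on the unit circle, not on the line at infinity, and not the origin is collinear with only $O(1)$ pairs of points of $X_{2m}$. So each added point $f \in F$ that is not the origin destroys only $O(1)$ tangent lines, and creates ordinary lines connecting $f$ to $X_{2m}$ in $\gg m$ ways unless $f$ is the origin or on the circle or on the line at infinity. This forces: either $|F| = O(1)$ points lie off the special locus (contributing $\gg m$ ordinary lines each, which would blow the budget unless there are none such beyond a bounded number), or the added points are essentially forced to be (i) the origin, (ii) extra roots-of-unity-like points on the circle, or (iii) extra points on the line at infinity. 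Deleted points $e \in E$ are easier: deleting a root of unity kills its tangent line (a loss of $1$ ordinary line from the ledger, but possibly creates new ``vertical-type'' ordinary lines as in cases (iii), (iv) of Proposition \ref{boroczky-examples}); deleting a point at infinity similarly removes the corresponding tangent from being ordinary but may create a bounded number of new chords. The heart of the argument is then a careful finite case analysis: once we know $P$ agrees with $X_{2m}$ except for $O(1)$ modifications of these three restricted types, and that $P$ has $\leq 2m - CK$ ordinary lines, we must check that the only configurations achieving this are exactly those enumerated in Proposition \ref{boroczky-examples} (cases (i)--(iv)) and the near-Boroczky example.

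Concretely, the steps I would carry out, in order, are: (1) write $P = (X_{2m}\setminus E)\cup F$ and reduce, using Corollary \ref{prc}, to the case that every point of $F$ lies on the unit circle, on the line at infinity, or is the origin (any exception costs $\gg m \gg CK$ ordinary lines, contradiction, provided $C$ is large and $m$ — hence $n$ — is large relative to $K$); (2) analyse the effect on the $m$ tangent lines: a tangent at an $m$th root of unity $\zeta$ remains ordinary in $P$ unless a point of $F$ lies on it or the second point $-\zeta^2$-type partner is deleted, and by Poonen--Rubinstein each $f\in F$ lies on $O(1)$ tangents, so the number of surviving tangents is $m - O(K)$ unless $F$ contains the origin, which lies on $m/2$ of the tangent lines when $2m \equiv 0$; this is the mechanism distinguishing case (ii) of Proposition \ref{boroczky-examples}; (3) count the newly created ordinary lines — chords between two circle points, lines through the origin and a point at infinity, ``vertical'' chords appearing when a point at infinity is deleted — using the explicit collinearity relation (the line through $[\cos\frac{2\pi j}{m},\sin\frac{2\pi j}{m},1]$ and $[\cos\frac{2\pi j'}{m},\sin\frac{2\pi j'}{m},1]$ passes through $[-\sin\frac{\pi(j+j')}{m},\cos\frac{\pi(j+j')}{m},0]$); (4) assemble the inequality: $N_2(P) \geq (\text{surviving tangents}) + (\text{new ordinary lines}) - (\text{tangents that became non-ordinary because a third point landed on them})$, and show that to get $N_2(P) \leq 2m - CK$ with $C$ large we are forced into one of the exact Boroczky configurations; (5) finally, a short direct check that each of the Boroczky and near-Boroczky examples genuinely meets the bound, which is already contained in Proposition \ref{boroczky-examples} and the near-Boroczky proposition.

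I expect the main obstacle to be step (4): the bookkeeping of exactly how many ordinary lines are gained versus lost is delicate because the three types of modification (origin, extra circle points, extra/deleted infinity points) interact — for instance deleting a point at infinity simultaneously removes one potential tangent line from the ``ordinary'' tally \emph{and} creates roughly $m/2$ new ordinary chords, so the net effect is a gain, which is why case (iii) of Proposition \ref{boroczky-examples} has $3m-3$ rather than fewer; getting the constant $C$ and the parity/residue-mod-$4$ distinctions right, and ruling out all ``hybrid'' modifications that one might naively think could do better, is where the real work lies. A secondary subtlety is handling the case $2m$ odd versus even and the role of the origin, since the origin is collinear with $m/2$ antipodal pairs of $m$th roots of unity only when those pairs exist; the parametrisation via $\R/\Z$ from Lemma \ref{reducible-case} makes these collinearities transparent and should let us carry out the case analysis cleanly.
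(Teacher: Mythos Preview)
Your approach is essentially the same as the paper's, but you have understated how strong the reduction in step (1) actually is, and as a result you are anticipating a harder case analysis in step (4) than is really needed.

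Two sharpenings. First, Corollary \ref{prc} applies to any $p$ that is not on the line at infinity, not the origin, and not an $m$th root of unity --- it does \emph{not} exempt arbitrary points on the unit circle. So after step (1) you should have $F \subset \{\text{line at infinity}\} \cup \{[0,0,1]\}$, not merely $F \subset \{\text{unit circle}\} \cup \{\text{line at infinity}\} \cup \{[0,0,1]\}$; there is no ``extra circle points'' case to carry into step (4). Second, the paper then observes that an extra point $p$ on the line at infinity (not already one of the $m$ points of $X_{2m}$ there) joins each $m$th root of unity by a line meeting $X_{2m}\cup\{p\}$ in exactly two points unless it is tangent to the circle, giving $\geq 3m - O(K)$ ordinary lines in $P$; so this case is also eliminated, and one is left with $P \subset X_{2m} \cup \{[0,0,1]\}$.

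At that point the ledger is much simpler than you expect: each single modification (adding the origin, or deleting one point of $X_{2m}$) creates at least $m/2 - O(1)$ new ordinary lines, and no point other than the one being added or deleted lies on more than one of them. Hence performing two or more modifications already yields $\geq 2m - O(K)$ ordinary lines, and one is reduced to inspecting configurations that differ from $X_{2m}$ by \emph{at most one} such operation. The ``hybrid'' cases you were worried about in step (4) therefore never arise, and the parity/residue bookkeeping collapses to a short direct check of the handful of one-step modifications.
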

We now prove this proposition.  Suppose that $P$ differs from $X_{2m}$ in at most $K$ points. Suppose first of all that $P$ contains a point $p$ outside $X_{2m}$, that $p$ does not lie on the line at infinity and that $p \neq [0,0,1]$. Then by Corollary \ref{prc}, the $m$ tangent lines at the unit circle, as well as at least $m - O(1)$ of the lines $\overline{\{p,x\}}$ connecting $p$ to a point $x \in X_{2m}$, pass through precisely $2$ points of $X_{2m} \cup \{p\}$. It is clear that the addition/deletion of $K$ points \emph{other than $p$} cannot add or delete points on more than $O(K)$ of these lines, and so $P$ spans $2m - O(K)$ ordinary lines. 

Now suppose that $P$ contains an additional point $p$ on the line at infinity. Then the $m$ tangent lines to the $m^{\operatorname{th}}$ roots of unity, as well as the at least $2m - 2$ lines $\overline{\{p,x\}}$, $x$ an $m^{\operatorname{th}}$ root of unity, which are not tangent to the unit circle contain precisely two points of $X_{2m} \cup \{p\}$. Once again the addition/deletion of $K$ points \emph{other than $p$} cannot add or delete points on more than $O(K)$ of these lines, and so again $P$ spans $3m - O(K)$ ordinary lines.

We have now reduced matters to the case $P \subset X_{2m} \cup [0,0,1]$. Starting from $X_{2m}$, the omission of a point or the addition of $[0,0,1]$ creates a certain number of new lines with precisely two points, and of course no point other than $[0,0,1]$ or the omitted point is on more than one of these new lines. By inspection in any case there are always at least $m/2 - O(1)$ of these lines, and so there are at least $2m - O(K)$ ordinary lines unless we do \emph{at most one} of the operations of adding $[0,0,1]$ or removing a point of $X_{2m}$. At this point a short inspection of the possibilities leads to the conclusion that the B\"or\"oczky examples and the near-B\"oroczky examples are the only ones which do not have at least $3m - O(1)$ ordinary lines. This, at last, concludes the proof of Theorem \ref{dirac-motzkin-2}.\endproof\vspace{11pt}

\emph{Remark.} We relied on Corollary \ref{prc}, which depended on the result of Poonen and Rubinstein \cite{poonen-rubinstein}. For the purposes of proving the Dirac--Motzkin conjecture for large $n$, the somewhat easier Proposition \ref{chords-prop} is sufficient.

\section{The orchard problem} \label{orchard}

In this section we establish Theorem \ref{mainthm-orchard}, the statement that a set of $n$ points in the plane contains no more than $\lfloor \frac{1}{6} n (n-3) \rfloor + 1$ $3$-rich lines when $n$ is sufficiently large.  The sharpness of this bound was established in Proposition \ref{bgs-prop}.

If $N_k$ is the number of lines containing precisely $k$ points of $P$ then, by double-counting pairs of points in $P$, we have
\begin{equation}\label{double-count-pre}  \sum_{k \geq 2} \binom{k}{2} N_k = \binom{n}{2}.\end{equation}
From this it follows that if $N_3 > \lfloor \frac{1}{6} n(n-3) \rfloor + 1$ then 
\begin{equation}\label{double-count} N_2 + \sum_{k \geq 4} \binom{k}{2} N_k \leq n,\end{equation} from which we conclude that $N_2$, the number of ordinary lines spanned by $P$, is at most $n$. Furthermore no line contains more than $O(\sqrt{n})$ points.

We may now apply Theorem \ref{main-structure-theorem}, our structure theorem for sets with few ordinary lines.  Since no line contains more than $O(\sqrt{n})$ points of $P$ we see that in fact only option (iii) of that theorem can occur, that is to say $P$ differs in $O(1)$ points from a coset $H \oplus x$, $3x \in H$, of a subgroup $H$ of some irreducible cubic curve $\gamma$, which is either an elliptic curve or (the smooth points of) an acnodal singular curve. The rest of the analysis is straightforward but a little tedious.

Suppose that $3x = h_0$. As in the proof of Lemma \ref{lem8.1}, the tangent line to $\gamma$ at $h \oplus x$ meets $H \oplus x$ in the point $(-2h \ominus h_0) \oplus x$, which is distinct from the first point unless $3h \oplus h_0 = 0$. There are at most $O(1)$ of these. In creating $P$ from $H \oplus x$ by the addition/deletion of $O(1)$ points, at most $O(1)$ of these lines are affected. 

Since $P$ spans at most $n$ ordinary lines, it follows that $P$ contains only $O(1)$ ordinary lines other than these tangent lines. Furthermore, since we now know that $P$ contains \emph{at least} $n + O(1)$ ordinary lines, that is to say $N_2 = n + O(1)$, we conclude from \eqref{double-count} that $N_4 = O(1)$. We are going to conclude that $P = H \oplus x$, a statement whose proof we divide into three parts.

\emph{Claim 1.} There is no point of $P$ off the curve $\gamma$. If $p$ is such a point, all except $O(1)$ of the lines joining $p$ to points of $P \cap \gamma$ must contain precisely two points of $P \cap \gamma$, or else there would be too many lines containing $p$ with $2$ or $4$ points of $P$.
Note that this cannot happen if $\gamma$ is the smooth points of an acnodal singular cubic curve and $p$ is the isolated singular point, since every line through $p$ meets at most one point of $\gamma$; thus $p$ lies outside of the cubic curve containing $\gamma$.  Consider the lines $\ell$ from $p$ to $P \cap \gamma$ which are not tangent to $\gamma$ and which contain precisely two points of $P \cap \gamma$ and precisely two points of $H \oplus x$, these points being the same. Since $P \cap \gamma$ differs from $H \oplus x$ in $O(1)$ points, all except $O(1)$ of the lines from $p$ have this property. But any line not tangent to $\gamma$ and containing the two points $h \oplus x$ and $h' \oplus x$ also contains $-(h \oplus h' \oplus h_0) \oplus x$, a third point of $H \oplus x$. This is a contradiction.
 
\emph{Claim 2.}  There is no point of $P$ outside the set $H \oplus x$. Suppose that $k \oplus x$ is such a point. Then if $h \in H$, the line joining $k \oplus x$ and $h \oplus x$ meets $\gamma$ again at $-(k \oplus h \oplus h_0) \oplus x$, which is not a point of $H \oplus x$. This point can thus only lie in $P$ for $O(1)$ values of $h$, and hence there are $n - O(1)$ ordinary lines of $P$ emanating from $k \oplus x$. In addition to the $n - O(1)$ tangent lines, this gives at least $2n - O(1)$ ordinary lines in $P$, a contradiction.

\emph{Claim 3.} $P$ contains all of $H \oplus x$. Suppose that $h_* \oplus x$ is a point of $H \oplus x$ not contained in $P$. For all except $O(1)$ values of $h$, the points $h \oplus x$ and $-(h \oplus h_* \oplus h_0) \oplus x$ lie in $P$, and the line joining $h_* \oplus x$ to them is not tangent to $\gamma$. All such lines then contain precisely two points of $P$, and once again we obtain $n - O(1)$ ordinary lines to add to the $n - O(1)$ tangent lines we already have. Once again a contradiction ensues. 

We have now shown that if $P$ is a set of $n$ points in the plane with $N_3$, the number of lines in $P$ spanning precisely 3 points, satisfying $N_3 > \lfloor \frac{1}{6} n(n-3) \rfloor + 1$, then $P$ is a coset $H \oplus x$ on $\gamma$, an elliptic curve or the smooth points of an acnodal cubic, with $3x \in H$.  But by Proposition \ref{bgs-prop} we have $N_3 \leq \lfloor \frac{1}{6} n(n-3) \rfloor + 1$ in any such case, and we are done.
\vspace{11pt}

\emph{Remarks.} Note that we have in fact classified (for large $n$) the optimal configurations in the orchard problem as coming from cosets in elliptic curves or acnodal cubics. We note that nothing like the full force of Theorem \ref{main-structure-theorem} is required for the orchard problem (as opposed to the Dirac--Motzkin conjecture). Once the much weaker Proposition \ref{intermediate} is established, we can immediately rule out possibilities (ii) and (iii) of that proposition and hence do away with all of the material in Section \ref{somewhat-collinear} and some of the material in Section \ref{detailed-structure} too.

\appendix

\section{Some tools from additive combinatorics}\label{add-comb-app}

In this section we collect some more-or-less standard tools from additive combinatorics used in Sections \ref{somewhat-collinear} and \ref{detailed-structure}.

If $A, B$ are two sets in some abelian group, and if $\Gamma \subset A \times B$ is a set of pairs, we write $A +_{\Gamma} B := \{a + b : (a,b) \in \Gamma\}$. The next result is known as the Balog-Szemer\'edi-Gowers theorem. The precise form we use is a variant of Gowers's version \cite[Proposition 12]{gowers-4aps} due to Sudakov, Szemer\'edi and Vu \cite[Theorem 4.1]{ssv}. 

\begin{theorem}[Balog-Szemer\'edi-Gowers]\label{bsg}
Suppose that $A, B$ are two sets in an abelian group, both of size at most $n$.  Suppose that $\Gamma \subset A \times B$ is a set \textup{(}which may be thought of as a bipartitie graph\textup{)} with $|\Gamma| \geq n^2/K$. Suppose that $|A +_{\Gamma} B| \leq K'n$. Then there are sets $A' \subset A, B' \subset B$ with $|A'| \geq \frac{n}{4K}$, $|B'| \geq \frac{n}{16K^2}$ such that $|A' + B'| \leq 2^{12} (K')^3 K^5 n$. \end{theorem}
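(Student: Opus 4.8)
The plan is to prove Theorem~\ref{bsg} by the standard graph-theoretic route to the Balog--Szemer\'edi--Gowers theorem due to Gowers \cite{gowers-4aps}, in the sharpened form of Sudakov, Szemer\'edi and Vu \cite{ssv}. The argument divides into three stages: a purely combinatorial lemma about the bipartite graph $\Gamma$ (passing to large subsets inside which every pair is joined by many paths of length three), a ``dictionary'' converting such paths into additive representations, and a short counting step. The first stage is where all the real work is; the last two are formal.

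\textbf{Stage 1 (the graph lemma).} First I would show that one may pass to subsets $A' \subseteq A$, $B' \subseteq B$ with $|A'| \ge n/(4K)$ and $|B'| \ge n/(16K^2)$ such that every pair $(a',b') \in A' \times B'$ is joined by $\gg n^2/K^5$ paths of length three in $\Gamma$, i.e.\ of the form $a' \sim b_1 \sim a_1 \sim b'$ with $b_1 \in B$, $a_1 \in A$ and all three pairs being $\Gamma$-edges. This rests on a second-moment computation: choosing $b \in B$ uniformly at random one has $\E_b |N_\Gamma(b)| = |\Gamma|/|B| \ge n/K$, so $\E_b |N_\Gamma(b)|^2 \ge n^2/K^2$ by Cauchy--Schwarz, whereas for a suitable threshold $\theta \approx n/K^{O(1)}$ the expected number of ``poor'' pairs $(a,a')$ inside $N_\Gamma(b)$, meaning those with fewer than $\theta$ common $\Gamma$-neighbours, is only $\tfrac{1}{|B|}\sum_{\text{poor}}|N_\Gamma(a)\cap N_\Gamma(a')| \le |A|^2\theta/|B|$, a negligible fraction of $n^2/K^2$. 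Choosing $b$ for which $|N_\Gamma(b)|^2$ is large while the number of poor pairs in $N_\Gamma(b)$ is near its average, discarding the few ``heavy'' vertices of $N_\Gamma(b)$ (and, as a preliminary step, the vertices of $A$ of $\Gamma$-degree below $n/(2K)$), produces $A'$ with the property that no vertex of $A'$ is poor with more than a small fraction of $A'$; defining $B'$ to consist of the $b\in B$ with at least $n/(16K^2)$ neighbours in $A'$ and double-counting the $\Gamma$-edges between $A'$ and $B$ gives the lower bound on $|B'|$; and the path count follows since for $a'\in A'$, $b'\in B'$ the number of length-three paths is $\sum_{a_1 \in N_\Gamma(b')\cap A'} |N_\Gamma(a')\cap N_\Gamma(a_1)| \gg \theta\cdot n/K^2$. \emph{This graph lemma, with the absolute constants tracked carefully enough to reach exactly $n/(4K)$ and $n/(16K^2)$, is the main obstacle and is essentially the content of Theorem~4.1 of \cite{ssv}; everything else is bookkeeping.}

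\textbf{Stages 2 and 3 (dictionary and counting).} Write $S := A +_\Gamma B$, so $|S| \le K'n$. For $a' \in A'$, $b' \in B'$ and a length-three path $a' \sim b_1 \sim a_1 \sim b'$ in $\Gamma$ one has the telescoping identity
$$ a' + b' = (a' + b_1) - (a_1 + b_1) + (a_1 + b'),$$
and each of the three summands lies in $S$ because each of $(a',b_1)$, $(a_1,b_1)$, $(a_1,b')$ is a $\Gamma$-edge. With $a',b'$ fixed, the path (equivalently the pair $(b_1,a_1)$) is recovered from the triple $(a'+b_1,\,a_1+b_1,\,a_1+b') \in S^3$ via $b_1 = (a'+b_1)-a'$ and $a_1 = (a_1+b')-b'$, so distinct paths give distinct triples; hence each $c \in A'+B'$ admits at least $P \gg n^2/K^5$ distinct representations $c = s_1 - s_2 + s_3$ with $s_1,s_2,s_3 \in S$. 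Since a triple $(s_1,s_2,s_3)\in S^3$ determines the value $s_1-s_2+s_3$, the representation sets attached to distinct $c\in A'+B'$ are disjoint, so $P\cdot|A'+B'| \le |S|^3 \le (K')^3 n^3$ and therefore $|A'+B'| \le (K')^3 n^3/P \ll (K')^3 K^5 n$. Finally, tracking the absolute constants produced in Stage~1 (the factors $\tfrac14$, $\tfrac1{16}$ and the implied constant in $P \gg n^2/K^5$) brings this to the stated bound $|A'+B'| \le 2^{12}(K')^3 K^5 n$.
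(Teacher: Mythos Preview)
Your proposal is correct and follows essentially the same approach as the paper: the paper's proof is simply a citation to \cite[Theorem~4.1]{ssv} (together with the observation that one may pad $A$ and $B$ with dummy elements to reduce the case $|A|,|B|\le n$ to the case $|A|=|B|=n$), and your sketch is precisely an outline of that Sudakov--Szemer\'edi--Vu argument. The only point you omit is the brief remark on handling ``at most $n$'' rather than ``exactly $n$,'' which is immediate as the paper notes.
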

\begin{proof} See \cite[Theorem 4.1]{ssv}. In the statement of that result $A$ and $B$ are both supposed to have size $n$, but it easy to see that the proof works under the assumption that they both have size \emph{at most} $n$, for instance by adding dummy elements to $A$ or $B$ (enlarging the group $G$ if necessary) while keeping $\Gamma$ unchanged. \end{proof}

On several occasions in Section \ref{somewhat-collinear} we will apply the preceding theorem together with Ruzsa's triangle inequality (see e.g. \cite[Lemma 2.6]{tao-vu}), which states that $|U| |V - W| \leq |U - V| |U - W|$ for any sets $U,V,W$ in an abelian group (in fact, the group does not even need to be abelian). Let us record, as a corollary, the result of doing this in the particular context we need.

\begin{corollary}\label{bsg-cor}
Suppose that $A, B$ are two sets in an abelian group, both of size at most $n$. Suppose that $\Gamma \subset A \times B$ is a set with $|\Gamma| \geq \delta n^2$ for which $|A +_{\Gamma} B| \leq n$. Then there are sets $A' \subset A$, $B' \subset B$ with $|A'|, |B'| \gg \delta^2 n$ such that $|A' - A'|, |B' - B'| \ll \delta^{-11} n$.
\end{corollary}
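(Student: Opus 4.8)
The plan is to deduce Corollary~\ref{bsg-cor} directly from the Balog--Szemer\'edi--Gowers theorem (Theorem~\ref{bsg}) and Ruzsa's triangle inequality, both recalled above. No new idea is needed beyond a short bookkeeping computation.

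First I would apply Theorem~\ref{bsg} with the parameter choices $K := 1/\delta$ and $K' := 1$. With these choices the hypothesis $|\Gamma| \geq \delta n^2$ becomes precisely $|\Gamma| \geq n^2/K$, and $|A +_\Gamma B| \leq n$ becomes precisely $|A +_\Gamma B| \leq K'n$. The theorem then produces $A' \subset A$ and $B' \subset B$ with
\[
|A'| \geq \frac{n}{4K} = \frac{\delta n}{4}, \qquad |B'| \geq \frac{n}{16K^2} = \frac{\delta^2 n}{16},
\]
so in particular $|A'|, |B'| \gg \delta^2 n$ as required, together with the sumset bound
\[
|A' + B'| \leq 2^{12}(K')^3 K^5 n = 2^{12}\delta^{-5} n.
\]

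It remains to pass from this bound on a \emph{sum}set to bounds on the \emph{difference} sets $A' - A'$ and $B' - B'$, and this is where Ruzsa's triangle inequality $|U|\,|V - W| \leq |U - V|\,|U - W|$ enters. Applying it with $U := -B'$ and $V = W := A'$, and observing that $-B' - A' = -(A' + B')$ has cardinality $|A' + B'|$, yields $|B'|\,|A' - A'| \leq |A' + B'|^2$; the symmetric choice $U := -A'$, $V = W := B'$ yields $|A'|\,|B' - B'| \leq |A' + B'|^2$. Substituting the two bounds from the previous step then gives $|A' - A'|, |B' - B'| \ll \delta^{-O(1)} n$, and chasing the numerical constants — choosing for each difference set the more favourable of $|A'|, |B'|$ for the denominator, and invoking Theorem~\ref{bsg} in its sharpest form — produces the exponent stated in the corollary. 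One could equally rephrase the final step via the Pl\"unnecke--Ruzsa inequality, writing $|A' + B'| \ll \delta^{-O(1)}|A'|$.

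Since both ingredients are quoted essentially verbatim, there is no genuine obstacle here. The only point that requires a moment's attention is the transition from the sumset $A' + B'$ delivered by Theorem~\ref{bsg} to the difference sets appearing in the conclusion: one simply reflects $B'$ (respectively $A'$), which turns a sumset into a difference set without changing its cardinality, before applying Ruzsa's inequality. Any further care needed is purely a matter of matching the precise exponent rather than a cruder $O(1)$, i.e.\ constant-chasing.
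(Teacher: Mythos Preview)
Your approach is exactly the paper's: the proof there is the single sentence ``This follows immediately from the preceding lemma and the Ruzsa triangle inequality,'' and you have correctly unpacked that sentence.

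One small caveat on the constant-chasing: with the stated form of Theorem~\ref{bsg} and the Ruzsa inequality as you apply it, the denominator in the bound for $|A'-A'|$ is forced to be $|B'|\gg\delta^{2}n$ (not $|A'|$), so the direct computation gives $|A'-A'|\ll\delta^{-12}n$ rather than $\delta^{-11}n$; only $|B'-B'|$ comes out with exponent $-11$. Your remark about ``choosing the more favourable of $|A'|,|B'|$ for the denominator'' does not quite work, since Ruzsa's inequality with $V=W=A'$ requires $U$ to satisfy a bound on $|U-A'|$, and the only available choice is $U=-B'$. This discrepancy is harmless for every application in the paper (the bound is always absorbed into $n^{1+1/10}$), and the paper itself does not verify the exponent, so this is a cosmetic point rather than a genuine gap.
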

\begin{proof}
This follows immediately from the preceding lemma and the Ruzsa triangle inequality.
\end{proof}

The following is a ``robust'' version of the elementary sumset estimate $|U + V| \geq |U| + |V| - 1$.

\begin{lemma}\label{add-comb-lem-7}
Let $U, V \subset \R$ be sets of size $r$ and $s$ respectively. Suppose that $\Gamma \subset U \times V$ has cardinality at least $(1 - \delta)rs$. Then $|U +_{\Gamma} V| \geq r + s - 2 -  2\sqrt{2\delta rs}$.
\end{lemma}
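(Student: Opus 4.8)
The plan is to reduce the statement to a question about how many elements of the full sumset $U+V$ can fail to lie in $U+_\Gamma V$, and then to control that number by a Cauchy--Davenport-type counting argument that genuinely uses the ordering of $\R$.

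\emph{Reduction.} Sort $U=\{u_1<\cdots<u_r\}$ and $V=\{v_1<\cdots<v_s\}$, and write $m:=|(U\times V)\setminus\Gamma|\le\delta rs$ for the number of ``bad'' pairs. Since $U+_\Gamma V\subseteq U+V$ and $|U+V|\ge r+s-1$, it suffices to bound the size of the ``killed set''
\[ T_0 := \{\, t\in U+V : \text{every pair } (u,v)\in U\times V \text{ with } u+v=t \text{ is bad}\,\}, \]
because $|U+_\Gamma V| = |U+V| - |T_0|$. For each $t\in T_0$ the set $R(t):=\{(u,v)\in U\times V: u+v=t\}$ is a nonempty set of bad pairs, and the $R(t)$ are pairwise disjoint as $t$ varies; hence $\sum_{t\in T_0}|R(t)|\le m$. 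In particular $|T_0|\le m$, which already gives the weaker bound $|U+_\Gamma V|\ge r+s-1-m$. This settles the case $m\le 1+2\sqrt{2m}$, so from now on we may assume $m$ is large.

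\emph{The main estimate.} To improve on $|T_0|\le m$ we exploit the fact that $U,V$ are ordered subsets of $\R$. Enumerate $U+V=\{t_1<t_2<\cdots<t_L\}$. The key input is a robust form of the bound $|U+V|\ge|U|+|V|-1$: a down-closed family of pairs that meets $k$ distinct sum-levels must contain many pairs — in the extremal case where $U,V$ are arithmetic progressions with a common difference, at least $\binom{k+1}{2}$ of them — and symmetrically for the $k$ largest sum-levels. Consequently, if $T_0$ contains $\ell$ of the $\lceil L/2\rceil$ smallest sum-values, then the disjoint packets $R(t)$ for these $t$ all lie in the down-closed region below the largest of them, forcing $\gg \ell^2$ bad pairs there; symmetrically for the largest values. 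Splitting $T_0$ into its low and high halves and balancing these $\ell^2$-type lower bounds against the budget $m$ yields $|T_0|\le 1+2\sqrt{2m}$ in the model case, and in general $|T_0|\le\bigl(|U+V|-(r+s-1)\bigr)+1+2\sqrt{2m}$. Feeding this into the reduction gives $|U+_\Gamma V|=|U+V|-|T_0|\ge r+s-2-2\sqrt{2m}\ge r+s-2-2\sqrt{2\delta rs}$.

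\emph{The main obstacle.} The delicate point is that the clean ``$\binom{k+1}{2}$ pairs'' lower bound is false when $U$ and $V$ are far from arithmetic progressions (e.g.\ Sidon-type sets), so it cannot simply be asserted. The resolution is a case split: when $U+V$ is substantially larger than $r+s-1$, the trivial bound $|U+_\Gamma V|\ge|U+V|-m$ already beats what is required, whereas when $|U+V|$ is close to minimal a Cauchy--Davenport/Vosper-type input forces $U$ and $V$ to be close to arithmetic progressions, so that the $\binom{k+1}{2}$-type estimate (or a slightly weaker version, which is all the stated constant needs) does hold. Making the numerology of this interpolation come out as $2\sqrt{2\delta rs}$, while correctly tracking the ``surplus'' $|U+V|-(r+s-1)$ through both regimes, is where the real work lies. (An alternative, more hands-on route avoiding the case split is to work with the two families $\{\text{smallest }\Gamma\text{-sum in each nonempty row}\}$ and $\{\text{smallest }\Gamma\text{-sum in each nonempty column}\}$, bounding both the number of ``collisions'' inside each family and the size of their intersection by disjoint packets of bad pairs; this incurs the same $\sqrt{m}$-type loss.)
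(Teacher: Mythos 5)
There is a genuine gap at the heart of your plan, and you have in effect flagged it yourself: the ``$\binom{k+1}{2}$-type'' lower bound on bad pairs is only asserted in the model case, and the proposed rescue — a dichotomy between ``$|U+V|$ much larger than $r+s-1$'' (use the trivial bound) and ``$|U+V|$ near-minimal'' (invoke a Vosper/Freiman-type inverse theorem to force near-AP structure) — does not cover all cases. Writing $m\le\delta rs$ for the number of bad pairs and $d:=|U+V|-(r+s-1)$ for the surplus, the trivial bound $|U+_\Gamma V|\ge |U+V|-m$ only meets the target when $d\gtrsim m-2\sqrt{2m}$, while the only inverse theorem available over $\R$ in this setting (Freiman's $3k-4$ theorem and its two-set variants) requires $d\lesssim\min(r,s)$; when $\min(r,s)<d<m-2\sqrt{2m}$ neither tool applies, and this range is nonempty precisely in the regime the lemma is used in Section 7, where $m\sim\delta rs$ can greatly exceed $\min(r,s)$. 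Even inside the structured case, the quantitative statement you need — a representation-count bound for sets that are APs with up to $d$ holes, with the surplus tracked well enough to recover the constant $2\sqrt{2\delta rs}$ — is not established. So as it stands the argument proves only the easy bound $|U+_\Gamma V|\ge r+s-1-m$.

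The paper's proof needs no inverse theorem and no case split. Sorting $U=\{u_1<\dots<u_r\}$, $V=\{v_1<\dots<v_s\}$, for each $k$ one takes the explicit ``staircase''
\[ u_1+v_k<u_2+v_k<\dots<u_{r-k}+v_k<u_{r-k}+v_{k+1}<\dots<u_{r-k}+v_s, \]
which exhibits $r+s-2k$ distinct sums; the key observation is that any given pair $(u_i,v_j)$ occurs in at most two of these staircases as $k$ varies (namely $k=j$ and $k=r-i$). Hence over $k=1,\dots,k_0$ the total number of bad entries is at most $2\delta rs$, and by pigeonhole some staircase with $k\le k_0$ has at most $2\delta rs/k_0$ bad entries; discarding them leaves $r+s-2k_0-2\delta rs/k_0$ distinct elements of $U+_\Gamma V$, and choosing $k_0\approx\sqrt{\delta rs}$ finishes. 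Your ``killed set'' reduction and the row/column-minima alternative both point in a reasonable direction, but neither is carried to a proof; I would recommend replacing the structural interpolation by this direct staircase-plus-pigeonhole argument, which is where the $\sqrt{\delta rs}$ loss genuinely comes from.
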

\begin{proof}
Suppose that $U = \{u_1,\dots,u_r\}$ with $u_1 < \dots < u_r$, and $V = \{v_1,\dots, v_s\}$ with $v_1 < \dots < v_s$. For any $1 \leq k \leq \min(r,s)$ we have
\[ u_1 + v_k < u_2 + v_k < \dots < u_{r-k} + v_k < u_{r-k} + v_{k+1} < \dots < u_{r - k} + v_s,\] giving $r + s - 2k$ distinct elements of $U + V$. As $k$ varies, no pair $(u_i, v_j)$ appears in this listing more than twice. Thus by the pigeonhole principle there is, for any choice of positive integer $k_0$,  some $k \leq k_0$ such that 
at most $2\delta rs/k_0$ elements of this listing come from pairs $(u_i, v_j)$ not lying in $\Gamma$. It follows that
\[ |U +_\Gamma V| \geq r + s - 2k_0 - \frac{2\delta rs}{k_0}.\] Choosing $k_0 := \lceil \sqrt{\delta rs}\rceil$ confirms the result.\end{proof}

We actually need a variant of this result for subsets of the \emph{multiplicative} group $\R^{*}$. If $U, V \subset \R^{*}$ and if $\Gamma \subseteq U \times V$ then we write $U \cdot_{\Gamma} V = \{uv : (u,v) \in \Gamma\}$.  

\begin{lemma}\label{add-comb-lem-7-mult}
Let $U, V \subset \R^{*}$ be sets of size $r$ and $s$ respectively. Suppose that $\Gamma \subset U \times V$ has cardinality at least $(1 - \delta)rs$. Then $|U +_{\Gamma} V| \geq r + s - 4 -  2\sqrt{2\delta rs}$.
\end{lemma}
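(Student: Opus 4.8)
The plan is to reduce the multiplicative statement to the additive Lemma \ref{add-comb-lem-7} by taking logarithms, after first splitting $U$ and $V$ according to sign. Write $U = U_+ \cup U_-$ and $V = V_+ \cup V_-$ where $U_+, V_+$ consist of the positive elements and $U_-, V_-$ of the negative elements. Correspondingly split $\Gamma$ into the four pieces $\Gamma_{++} := \Gamma \cap (U_+ \times V_+)$, $\Gamma_{+-} := \Gamma \cap (U_+ \times V_-)$, and so on. On each piece the product $uv$ has a fixed sign, so $U \cdot_{\Gamma} V$ contains the disjoint union of $U_+ \cdot_{\Gamma_{++}} V_+ \,\cup\, U_- \cdot_{\Gamma_{--}} V_-$ (the positive products) and $U_+ \cdot_{\Gamma_{+-}} V_- \,\cup\, U_- \cdot_{\Gamma_{-+}} V_+$ (the negative products). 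On a same-sign piece, applying the map $x \mapsto \log|x|$ turns the multiplicative set into a subset of $\R$ and turns $U \cdot_{\Gamma} V$ into $\log|U| +_{\Gamma} \log|V|$ in the additive sense, so Lemma \ref{add-comb-lem-7} applies verbatim to each of the four pieces.

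The bookkeeping then runs as follows. Let $r_{\pm} := |U_{\pm}|$, $s_{\pm} := |V_{\pm}|$, so $r = r_+ + r_-$ and $s = s_+ + s_-$; and let $\delta_{\epsilon\eta} := 1 - |\Gamma_{\epsilon\eta}|/(r_\epsilon s_\eta)$ be the local density defect on each block. Lemma \ref{add-comb-lem-7} gives $|U_\epsilon \cdot_{\Gamma_{\epsilon\eta}} V_\eta| \geq r_\epsilon + s_\eta - 2 - 2\sqrt{2\delta_{\epsilon\eta} r_\epsilon s_\eta}$ for each of the four $(\epsilon,\eta)$. Adding the bound for the $(+,+)$ block to the bound for the $(-,-)$ block, and separately the $(+,-)$ bound to the $(-,+)$ bound, and then taking the larger of these two sums (since both quantities are $\leq |U \cdot_{\Gamma} V|$), one gets on the positive side a quantity of the form $(r_+ + s_+) + (r_- + s_-) - 4 - 2\sqrt{2\delta_{++}r_+s_+} - 2\sqrt{2\delta_{--}r_-s_-} = r + s - 4 - (\dots)$, and similarly $r + s - 4 - (\dots)$ on the negative side. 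The main point is to estimate the error terms: by Cauchy–Schwarz, $\sqrt{\delta_{++}r_+s_+} + \sqrt{\delta_{--}r_-s_-} \leq \sqrt{2}\sqrt{\delta_{++}r_+s_+ + \delta_{--}r_-s_-}$, and $\delta_{++}r_+s_+ + \delta_{--}r_-s_- \leq \sum_{\epsilon,\eta}\delta_{\epsilon\eta}r_\epsilon s_\eta = rs - |\Gamma| \leq \delta rs$; the same holds with $(+,-),(-,+)$ in place of $(++),(--)$. Hence at least one of the two sides is $\geq r + s - 4 - 2\sqrt{2}\sqrt{2\delta rs}$.

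There is a factor-of-$\sqrt 2$ discrepancy with the claimed bound $r+s-4-2\sqrt{2\delta rs}$, so the cleaner route is to observe that $\sqrt{\delta_{++}r_+s_+}+\sqrt{\delta_{--}r_-s_-} \leq \sqrt{\delta_{++}r_+s_+ + \delta_{--}r_-s_- + 2\sqrt{\delta_{++}\delta_{--}r_+s_+r_-s_-}}$, and bound the cross term crudely; alternatively, and more simply, apply Lemma \ref{add-comb-lem-7} with $\Gamma$ in the form where the defect is measured against $\max(r_+s_+ + r_-s_-,\, r_+s_- + r_-s_+)\geq rs/2$, so that whichever of the two sign-pairings carries at least half the pairs has relative defect at most $2\delta$, giving directly $|U \cdot_\Gamma V| \geq r + s - 4 - 2\sqrt{2\cdot 2\delta \cdot (rs/2)} = r + s - 4 - 2\sqrt{2\delta rs}$ as wanted. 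The only genuine subtlety — and the step to be careful with — is exactly this split of the density defect across the sign blocks, making sure one compares $\delta$ against the right denominator so that the constants come out as stated; everything else is a direct application of Lemma \ref{add-comb-lem-7} under the order-preserving bijection $x \mapsto \log|x|$.
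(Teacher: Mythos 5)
Your reduction---split $U,V$ by sign and use $x \mapsto \log|x|$ on each of the four blocks so that Lemma \ref{add-comb-lem-7} applies---is exactly the paper's opening move (the paper phrases it as the isomorphism $\R^* \cong \Z/2\Z \times \R$). The gap is in how you recombine the blocks. You add the bound for the $(+,+)$ block to the bound for the $(-,-)$ block and assert that the sum is at most $|U \cdot_\Gamma V|$; this would require $U_+ \cdot_{\Gamma_{++}} V_+$ and $U_- \cdot_{\Gamma_{--}} V_-$ to be disjoint, but both consist of positive reals and can coincide. Concretely, take $U = V = \{\pm 2^i : 1 \le i \le m\}$ with $m \ge 2$ and let $\Gamma$ consist of all same-sign pairs, so $\delta = \frac12$ while $\delta_{++} = \delta_{--} = 0$: your ``positive side'' sum is $(r_+ + s_+ - 2) + (r_- + s_- - 2) = 4m-4$, yet $U \cdot_\Gamma V = \{2^{i+j}\}$ has only $2m-1$ elements, so the intermediate assertion is false. (Your final inequality escapes in this instance only because the later Cauchy--Schwarz step reinstates the global $\delta$, and your ``more simply'' patch rests on the same grouping.) The identical objection applies to adding the $(+,-)$ and $(-,+)$ bounds, whose product sets are both contained in the negative reals.

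The correct pairing is the transpose of yours, and it is what the paper does: positive and negative products are the genuinely disjoint classes, so
$$ |U \cdot_\Gamma V| \;\ge\; \max\bigl(b_{++}, b_{--}\bigr) + \max\bigl(b_{+-}, b_{-+}\bigr), \qquad b_{\epsilon\eta} := r_\epsilon + s_\eta - 2 - 2\sqrt{2\delta_{\epsilon\eta} r_\epsilon s_\eta}, $$
with $b_{\epsilon\eta}$ the block bound from Lemma \ref{add-comb-lem-7}. Since a maximum is at least the average, and $(r_+ + s_+) + (r_- + s_-) = (r_+ + s_-) + (r_- + s_+) = r+s$, the right-hand side is at least $r + s - 4 - \sum_{\epsilon,\eta} \sqrt{2 \delta_{\epsilon\eta} r_\epsilon s_\eta}$; then the four-term Cauchy--Schwarz inequality $\sqrt{x}+\sqrt{y}+\sqrt{z}+\sqrt{w} \le 2\sqrt{x+y+z+w}$ together with $\sum_{\epsilon,\eta} \delta_{\epsilon\eta} r_\epsilon s_\eta \le \delta r s$ gives exactly the error $2\sqrt{2\delta rs}$. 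In particular, the factor of $\sqrt 2$ you were trying to recover is an artifact of the wrong grouping (the averaging halves the error sum), and the proposal to measure the defect against $\max(r_+s_+ + r_-s_-,\, r_+s_- + r_-s_+)$ cannot work as stated: Lemma \ref{add-comb-lem-7} applies to one block at a time, so it only produces a main term of the form $r_\epsilon + s_\eta$, never $r+s$.
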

\begin{proof}
As an additive group, $\R^*$ is isomorphic to $\Z/2\Z \times \R$. By abuse of notation, we identify $U$ and $V$ with subsets of this additive group and use additive notation. Define $U_0 = (\{0\} \times \R) \cap U$, $U_1 = (\{1\} \times \R) \cap U$, $V_0 = (\{0\} \times \R) \cap V$ and $V_1 = (\{1\} \times \R) \cap V$. Write $r_0 = |U_0|$, $r_1 = |U_1|$, $s_0 = |V_0|$ and $s_1 = |V_1|$. Suppose that $\Gamma \cap (U_i \times V_j)$ has $\delta_{i,j} r_i s_j$ edges; then 
\begin{equation}\label{edges} \delta_{0,0} r_0 s_0 + \delta_{0,1} r_0 s_1 + \delta_{1,0} r_1 s_0 + \delta_{1,1} r_1 s_1 = \delta rs.
\end{equation}
Clearly 
\[ (U +_{\Gamma} V) \cap (\{0\} \times \R) \supset U_0 +_{\Gamma} V_0, \;\; U_1 +_{\Gamma} V_1  \] and
\[ (U +_{\Gamma} V) \cap (\{1 \} \times \R) \supset U_1 +_{\Gamma} V_0, \; \; U_0 +_{\Gamma} V_1.\]
Therefore by the preceding lemma
\begin{align*} |U +_{\Gamma} V| & \geq \max(r_0 + s_0 - 2\sqrt{2\delta_{0,0} r_0 s_0}, r_1 + s_1 - 2\sqrt{2\delta_{1,1} r_1 s_1}) \\ & + \max(r_0 + s_1 - 2\sqrt{2\delta_{0,1} r_0 s_1}, r_1 + s_0 - 2\sqrt{2 \delta_{1,0} r_1 s_0})  - 4\\ & \geq r_0 + s_0 + r_1 + s_1  - \sum_{i,j} \sqrt{2\delta_{i,j} r_i s_j}  - 4.\end{align*}
Using the inequality 
\[ \sqrt{x} + \sqrt{y} + \sqrt{z} + \sqrt{w} \leq 2\sqrt{x + y + z + w}\] (easily established using Cauchy-Schwarz) together with \eqref{edges}, we obtain
\[ |U+_{\Gamma} V| \geq r + s - 2\sqrt{2\delta rs} - 4,\] as claimed.

\end{proof}

The following result was used heavily in Section \ref{detailed-structure}. It is of a fairly standard type and will be of no surprise to experts in additive combinatorics, but we do not know of a convenient reference.

\begin{proposition}\label{almost-group}
Suppose that $A, B, C$ are three subsets of some abe- lian group $G$, all of cardinality within $K$ of $n$, where $K \leq \eps n$ for some absolute constant $\eps > 0$. Suppose that there are at most $Kn$ pairs $(a,b) \in A \times B$ for which $a +b \notin C$. Then there is a subgroup $H \leq G$ and cosets $x + H, y + H$such that $|A \triangle (x + H)|, |B \triangle (y + H)|, |C \triangle (x + y + H)| \leq 7K$.
\end{proposition}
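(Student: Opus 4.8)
The plan is to prove Proposition \ref{almost-group} by a standard Freiman-type argument, using the hypothesis to locate a large set with very small doubling, invoking (a robust form of) Kneser's theorem or the Balog--Szemer\'edi--Gowers and Freiman machinery already recalled in the appendix, and then upgrading ``approximate coset'' to ``genuine coset up to $O(K)$ symmetric difference'' via a popularity/stabiliser argument. First I would set up the bipartite graph $\Gamma \subset A \times B$ of pairs $(a,b)$ with $a+b \in C$; by hypothesis $|\Gamma| \geq |A||B| - Kn \geq (1-O(\eps))|A||B|$, so $\Gamma$ is extremely dense. In particular $A +_\Gamma B \subset C$, so $|A +_\Gamma B| \leq n+K$; since $\Gamma$ is almost complete one does not even need the full strength of Balog--Szemer\'edi--Gowers here --- a direct Pl\"unnecke/Ruzsa argument on the dense graph gives a large $A_0 \subset A$, $B_0 \subset B$ with $|A_0|, |B_0| \geq n - O(K)$ and $|A_0 + B_0| \leq n + O(K)$, hence $|A_0 + B_0| \leq |A_0| + O(K)$ and likewise $\leq |B_0| + O(K)$.

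Next I would extract a genuine coset. Since $|A_0 + B_0| \leq |A_0| + |B_0| - |A_0| + O(K)$ is only $O(K)$ larger than each of $|A_0|, |B_0|$, Kneser's theorem applied to $A_0 + B_0$ yields a finite subgroup $H = \mathrm{Stab}(A_0 + B_0)$ with $|A_0 + B_0| \geq |A_0 + H| + |B_0 + H| - |H|$; combined with the upper bound this forces $A_0 + H$ and $B_0 + H$ to each exceed $A_0$, $B_0$ respectively by only $O(K)$, i.e. $A_0$ (resp. $B_0$) is contained in a union of cosets of $H$ which it fills up to $O(K)$ points --- but a ``union of cosets'' with only $O(K)$ missing points, when $|A_0| \geq n - O(K)$ and (as one checks) $|H|$ is comparable to $n$, must be a \emph{single} coset $x + H$ (resp. $y + H$), again up to $O(K)$ points. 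The same analysis applied to $A_0 + B_0 \subset C$ (plus $|C| \leq n+K$) shows $C$ agrees with $x + y + H$ up to $O(K)$. One has to be a little careful that all these $O(K)$'s combine to give the clean constant $7K$ claimed; this is where one tracks constants rather than writing $O(K)$, using the elementary $|U+V| \geq |U|+|V|-1$ bound (Lemma \ref{add-comb-lem-7}) and the near-completeness of $\Gamma$ to keep losses additive in $K$ rather than multiplicative.

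An alternative, perhaps cleaner route avoiding Kneser's theorem: let $H := \{ h \in G : |(A_0 + h) \cap A_0| \geq |A_0| - O(K) \}$ be the ``approximate stabiliser''. A routine counting argument (each popular difference $a - a'$ with $a, a' \in A_0$ lying in many representations) shows $H$ is genuinely a subgroup once the thresholds are chosen correctly, that $|H| = n + O(K)$, and that $A_0 \subseteq x + H$ for a suitable $x$ with $|(x+H) \setminus A_0| = O(K)$; symmetrically for $B_0$. Then for the $C$ statement one uses that for all but $O(K)$ values of $x+H$'s coset representatives the translate $a + B_0$ is almost contained in $C$, forcing $C \supseteq (x+y+H)$ up to $O(K)$ and $C \subseteq (x+y+H)$ up to $O(K)$ since $|C| = n+O(K)$.

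The main obstacle is the constant bookkeeping: getting the specific bound $7K$ (rather than an unspecified $O(K)$) requires being disciplined about which losses are incurred where --- the passage from $A$ to $A_0$, from $A_0$ to its coset hull, from the subgroup's size back to the symmetric difference with $C$ --- and making sure $\eps$ is chosen small enough that $|H|$ is genuinely forced to be large (so that ``union of cosets minus $O(K)$ points'' collapses to a single coset). The underlying additive combinatorics is entirely standard; the only real work is this careful accounting, which is why the statement is relegated to the appendix.
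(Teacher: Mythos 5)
Your overall toolkit (dense bipartite graph, Kneser/Freiman-type structure, then constant tracking) is in the right spirit, but the proposal has a genuine gap at its very first step, and everything downstream depends on it. You claim that ``a direct Pl\"unnecke/Ruzsa argument on the dense graph'' produces $A_0\subset A$, $B_0\subset B$ with $|A_0|,|B_0|\ge n-O(K)$ and $|A_0+B_0|\le n+O(K)$. No standard argument delivers \emph{additive} $O(K)$ losses here. The hypothesis only gives an edge-density deficit of order $K/n$ in $\Gamma$, and the known refinement results (including the $99\%$ Balog--Szemer\'edi--Gowers statement, Lemma \ref{99bsg}, which is what the paper uses) yield exceptional sets of size $O\bigl((K/n)^{c}\,n\bigr)=O(K^{c}n^{1-c})$, which is far larger than $O(K)$ unless $K$ is comparable to $n$ (think of $K$ bounded: you are asking to pass from ``at most $Kn$ bad pairs'' to genuine sets of size $n-O(1)$ whose full sumset has size $n+O(1)$, which is essentially the content of the whole proposition, not a routine graph-theoretic reduction -- note that the $Kn$ bad pairs can form a perfect matching, so you cannot clean them out by deleting $O(K)$ rows and columns). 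Once this step is weakened to the honest $O(K^{c}n^{1-c})$ scale, your application of Kneser's theorem still produces the subgroup $H$, but only with coset errors at that same $o(n)$ scale, and your plan for recovering the sharp $7K$ by ``keeping losses additive in $K$'' throughout collapses. Your alternative route has the same scale confusion: the high-threshold approximate stabiliser $\{h:|(A_0+h)\cap A_0|\ge |A_0|-O(K)\}$ is not obviously closed under addition (the slack compounds), and showing any such stabiliser is large again requires small-doubling information that you do not yet possess at the $O(K)$ scale; the paper's Lemma \ref{lemmaa5} works instead with $\Sym_{2/3}$ and accepts $O(\eps n)$ errors.

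The missing idea is the paper's two-stage structure. First one proves the weak version, Proposition \ref{almost-group-weak}, with errors $\eps'n=O(\eps^{c})n$ only: apply Lemma \ref{99bsg} and the Ruzsa triangle inequality to get $|A'-A'|,|B'-B'|\le(1+O(\eps^c))n$, apply the Fournier-type Lemma \ref{lemmaa5} to each to get cosets of subgroups $H,H'$, and then show $H=H'$ (via additive quadruples and Lagrange) and that $C$ is close to $x+y+H$. Only \emph{then}, with a genuine subgroup $H$ in hand at the crude $o(n)$ accuracy, does one bootstrap to $7K$ by returning to the original hypothesis: every point of $A$ outside $x+H$ forms, with the $\ge(1-4\eps')n>n/2$ points of $B$ inside $y+H$, sums that avoid $H$ and hence (up to the few stray points of $C$) avoid $C$, so there can be at most $2K$ such points, and similarly for $B$; every element of $x+y+H$ has $\ge n/2$ representations $a+b$, so at most $2K$ of them are missing from $C$; and the remaining bounds ($5K$ for the missing points of $A,B$ and the stray points of $C$) follow from comparing cardinalities, giving $5K+2K=7K$. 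You gesture at counting of this flavour only for $C$ in your alternative route, but the decisive use of the ``at most $Kn$ bad pairs'' hypothesis \emph{after} the subgroup has been found -- which is what converts $o(n)$ errors into $O(K)$ errors -- is absent from your plan, and without it the stated bound $7K$ (indeed any $O(K)$ bound) is not reached.
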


As remarked in Section \ref{detailed-structure}, results of this general type are quite familiar to additive combinatorialists and are of the general form ``an almost-group is close to a group''. We supply a complete proof here for the convenience of the reader.  Variants of it are possible. For the most part the ideas are due to Kneser \cite{kn53, kn55}, Freiman \cite{frei} and Fournier \cite{fournier}.

We first note that it is enough to prove the following weaker proposition, which may then be ``cleaned up'' to give the stated result
.
\begin{proposition}\label{almost-group-weak} Let $\eps$ be a positive quantity, less than some absolute constant. 
Suppose that $A, B, C$ are three subsets of some abelian group $G$, all of size within $\eps n$ of $n$. Suppose that there at most $\eps n^2$ pairs $(a,b) \in A \times B$ for which $a +b \notin C$. Then there is a subgroup $H \leq G$ and cosets $x + H, y + H$ such that $|A \triangle (x + H)|, |B \triangle (y + H)|, |C \triangle (x + y + H)| \leq \eps' n$, where $\eps'$ can be taken to be $O(\eps^{c})$ for some absolute constant $c > 0$.
\end{proposition}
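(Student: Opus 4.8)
The plan is to first \emph{localize} the hypothesis onto a single set, and then run a Kneser-type argument to pull out the subgroup.

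\emph{Localization.} First I would use the bound on bad pairs to pass to large subsets and translates. Since at most $\eps n^2$ pairs $(a,b)\in A\times B$ have $a+b\notin C$, all but $O(\eps^{1/2}n)$ elements $a\in A$ are \emph{good}, meaning $a+b\in C$ for all but $O(\eps^{1/2}n)$ values of $b\in B$; similarly for $B$. Choosing a good $a_0\in A$ and a good $b_0\in B$ with $a_0+b_0\in C$, the sets $A':=\{a\in A: a+b_0\in C\}$ and $B':=\{b\in B: a_0+b\in C\}$ have size $n-O(\eps^{1/2}n)$ and satisfy $A'+b_0\subseteq C$, $a_0+B'\subseteq C$. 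As $|C|\le n+\eps n$, each of these nearly-full translates fills all but $O(\eps^{1/2}n)$ of $C$, and comparing them shows that after translating $B$ and $C$ suitably, $A,B,C$ all differ by $O(\eps^{1/2}n)$ from a common set $S$ with $|S|=n+O(\eps n)$; the hypothesis then reads: at most $O(\eps^{1/2}n^2)$ pairs $(s,s')\in S\times S$ have $s+s'\notin S$. Writing $F\subseteq S\times S$ for the good pairs, $|F|\ge |S|^2-O(\eps^{1/2}n^2)$ and the restricted sumset $S+_F S$ lies in a translate of $S$, so $|S+_F S|\le |S|+O(\eps^{1/2}n)$.

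\emph{Extracting the subgroup.} For all but $O(\eps^{1/2}n)$ elements $s\in S$ the slice $\{s'\in S:(s,s')\in F\}$ has size $\ge |S|-O(\eps^{1/2}n)$, so $|(s+S)\triangle S|=O(\eps^{1/2}n)$: a subset of $S$ of size $n-O(\eps^{1/2}n)$ consists of $O(\eps^{1/2})$-symmetries of $S$. To convert this into genuine group structure I would apply Kneser's theorem: with $H:=\operatorname{Stab}(S+S)$ one has $|S+S|\ge 2|S+H|-|H|\ge 2|S|-|H|$. Combined with a bound $|S+S|\le |S|+O(\eps^{c_0}n)$ on the \emph{genuine} sumset (obtained by upgrading the restricted-sumset bound via a popular-representation/covering argument, i.e.\ a Kneser-type theorem for partial sumsets in the spirit of Fournier), this gives $|H|\ge |S|-O(\eps^{c_0}n)\ge n-O(\eps^{c_0}n)$; then $S+S$, a union of $H$-cosets of total size $\le |H|+O(\eps^{c_0}n)$, is a single coset, forcing $S$ to lie in one coset of $H$ and, on cardinality grounds, to fill all but $O(\eps^{c_0}n)$ of it.

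\emph{Transfer back.} Undoing the translations of the localization step, $A,B,C$ are each within $O(\eps^{c}n)$ of a coset of $H$, say $x+H$, $y+H$, $z+H$. Finally the abundance of good pairs forces the coset relation: many $a+b$ with $a\in A$, $b\in B$ lie in $(x+H)+(y+H)=(x+y)+H$, so $C$ meets $(x+y)+H$ in $\gg n$ points, which together with $|C\triangle(z+H)|=O(\eps^c n)$ and $|H|=n-O(\eps^c n)$ pins down $z\equiv x+y\ \mathrm{mod}\ H$. Setting $\eps'=O(\eps^{c})$ completes the proof.

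\emph{Main obstacle.} The delicate point is the middle step: passing from ``\emph{almost all} sums $s+s'$ lie in a set of size $|S|+o(n)$'' to ``\emph{all} of them do'', i.e.\ establishing the robust, general-abelian-group analogue of $|U+V|\ge |U|+|V|-1$. Over $\R$ this is Lemma~\ref{add-comb-lem-7} and exploits the order, which is unavailable here; one must instead run Kneser's stabilizer argument carefully enough that the $o(n)$ error is not inflated to a constant multiple of $n$ (as it would be if one routed the restricted sumset through Balog--Szemer\'edi--Gowers, Theorem~\ref{bsg}, and Pl\"unnecke--Ruzsa). The rest is essentially bookkeeping.
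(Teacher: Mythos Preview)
Your outline is essentially correct and parallels the paper's argument, but the two diverge at exactly the point you flag as the main obstacle.

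The paper does not reduce to a single set $S$; it keeps $A$ and $B$ separate and applies the ``$99\%$'' Balog--Szemer\'edi--Gowers theorem (Lemma~\ref{99bsg}, quoted from \cite{tao-vu}) directly to the restricted sumset $A+_\Gamma B$. This yields nearly-full subsets $A'\subseteq A$, $B'\subseteq B$ with $|A'-B'|\leq(1+\eps')n$, and then Ruzsa's triangle inequality gives $|A'-A'|,|B'-B'|\leq(1+\eps'')n$. You explicitly rule out the standard Balog--Szemer\'edi--Gowers (Theorem~\ref{bsg}) on the grounds that it inflates the error---that is correct---but the $99\%$ variant is precisely the tool that does not, and it is the paper's answer to your ``main obstacle''. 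Your alternative (a robust Kneser for partial sumsets ``in the spirit of Fournier'') would work too, but you have only sketched it; in effect you would be re-proving Lemma~\ref{99bsg}.

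For the extraction of the subgroup the paper does not use Kneser's stabilizer inequality as you do. Instead, from $|A'-A'|\leq(1+\eps)n$ it runs Fournier's argument (Lemma~\ref{lemmaa5}): one shows directly that $H:=\Sym_{2/3}(A')$ is a subgroup, because $\Sym_{1/3}(A')=\Sym_{2/3}(A')$ once the popularity sets are large enough. This gives subgroups $H,H'$ with $A',B'$ close to cosets of each; a short additive-quadruple count plus Lagrange's theorem forces $H=H'$, and then the coset for $C$ falls out as in your final step. Your Kneser route (take $H=\operatorname{Stab}(S+S)$, use $|S+S|\geq 2|S|-|H|$ to pin $|H|$, then count cosets) is a legitimate alternative and arguably more conceptual, but it only becomes available after the genuine sumset bound, which again hinges on the $99\%$ BSG step.

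In short: your localization and transfer-back steps are fine, your identification of the crux is spot-on, and your Kneser endgame is a valid substitute for the paper's $\Sym_\alpha$ argument; the one piece you leave as a black box is exactly Lemma~\ref{99bsg}.
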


Let us deduce Proposition \ref{almost-group} from this. Let $A, B$ and $C$ be as in the hypotheses of that proposition. Provided that $\eps$ is small enough, Proposition \ref{almost-group-weak} applies and we conclude that there is a subgroup $H \leq G$ and cosets $x + H, y + H$ such that $|A \triangle (x + H)|, |B \triangle (y + H)|, |C \triangle (x+y + H)| \leq \eps' n$ with $\eps' = O(\eps^c)$. By translating $A$ and $B$ if necessary we may assume without loss of generality that $x = y = 0$. 

Suppose that $A = (H \setminus X) \cup X'$, $B = (H \setminus Y) \cup Y'$ and $C = (H \setminus Z) \cup Z'$, with $X, Y, Z \subset H$, $X', Y', Z'$ disjoint from $H$ and all of $X, X', Y, Y', Z, Z'$ having cardinality at most $\eps' n$. Now if $a \in X'$ then the elements $a + b$, $b \in H \setminus X$, are all distinct and none of them lie in $H$. If such an element $a + b$ lies in $C$, it must therefore lie in $Z'$. Thus if $a \in X'$ then there are at least $|H| - |X| - |Z'| \geq (1 - 4\eps') n > \frac{1}{2}n$ elements $b \in H \setminus X$ for which $a + b \notin C$. By assumption it follows that $\frac{1}{2} n|X'|  \leq Kn$, which implies that $|X'| \leq 2K$. Similarly $|Y'| \leq 2K$. 

Now note that, since $|X|, |Y| < \frac{1}{8}|H|$, every element of $H$ has at least $\frac{3}{4}|H| \geq \frac{1}{2}n$ representations as a sum $a + b$. Indeed if $h \in H$ then by we have $|(h - (H \setminus X)) \cap (H \setminus Y)| > \frac{3}{4}|H|$ by the pigeonhole principle. It follows that if we pass to a subset of these sums by removing all sums $a + b$ with $(a,b)$ lying in a set of size at most $Kn$, at least $|H| - 2K$ elements of $H$ are still represented.  By assumption, the set $C$ contains a set of this form, and it follows that $|Z| \leq 2K$. 

We have now demonstrated the inequalities
\[ |A|, |B| \leq |H| + 2K, \qquad |C| \geq |H| - 2K.\] Since the sizes of $A$, $B$ and $C$ differ by at most $K$, we must in fact have 
\[ |H| - 3K \leq |A|, |B|, |C| \leq |H| + 3K.\]
This allows us to conclude that $|X|, |Y|, |Z'| \leq 5K$. Proposition \ref{almost-group} follows immediately.\endproof

We turn now to the task of proving Proposition \ref{almost-group-weak}. We require the following result, which could be deduced from results of Kneser \cite{kn53, kn55} and Freiman \cite{frei}.

\begin{lemma} \label{lemmaa5} Let $\eps < \frac{1}{60}$. Suppose that $A$ is a subset of an abelian group $G$ with $|A| = n$, and suppose that $|A - A| \leq (1 + \eps)n$. Then there is a subgroup $H \leq G$ and a coset $x + H$ such that $|A \triangle (x + H)| \leq 6\eps n$.
\end{lemma}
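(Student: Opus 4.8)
\textbf{Proof plan for Lemma \ref{lemmaa5}.} The plan is to run the standard Kneser/Freiman-type argument that a set with very small doubling (here: very small difference set) must be close to a coset. The key is that the stabilizer $H$ of the difference set $A-A$ is forced to be a large subgroup which $A$ nearly fills out a coset of.

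First I would recall Kneser's theorem in the difference-set form: if $H = \{ g \in G : g + (A-A) = A - A\}$ is the stabilizer (symmetry group) of $A - A$, then $|A - A| \geq |A + H| + |{-A} + H| - |H| = 2|A + H| - |H|$. Since $A \subseteq A + H$ and $A + H$ is a union of $H$-cosets, write $|A+H| = rm$ where $m = |H|$ and $r$ is the number of cosets of $H$ meeting $A$; then $|A - A| \geq 2rm - m = (2r-1)m$. Combined with the hypothesis $|A-A| \leq (1+\eps)n$ and $n = |A| \leq rm$, we get $(2r-1)m \leq (1+\eps) rm$, i.e. $r(1-\eps) \leq 1$, which for $\eps < 1/60$ forces $r = 1$. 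Hence $A$ is contained in a single coset $x + H$, and $|A + H| = |H| = m$.

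Next I would bound $m - n = |H| - |A| = |(x+H) \setminus A|$. From $r = 1$ and the inequality $|A - A| \geq 2|A+H| - |H| = 2m - m = m$ we get $m \leq |A - A| \leq (1+\eps)n \leq (1+\eps)m$, which already shows $|H \setminus (A - x)|$ is small relative to $m$; more precisely $n \geq m/(1+\eps) \geq (1-\eps)m$, so $|(x+H)\setminus A| = m - n \leq \eps m \leq \eps n/(1-\eps) \leq 2\eps n$ for $\eps$ small. Therefore $|A \triangle (x+H)| = m - n \leq 2\eps n < 6\eps n$, which is the claimed bound (with room to spare; the looser constant $6$ is presumably there to absorb a less sharp version of Kneser's inequality or to avoid invoking the exact symmetry-group statement). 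If one prefers to avoid quoting Kneser's theorem with its stabilizer refinement, one can instead cite the Freiman-type $3k-4$ theorem over $\Z$ together with a Fre\u\i man-isomorphism / rectification step, but the stabilizer argument above is cleanest and self-contained modulo Kneser.

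The main obstacle is simply citing the right black box: Kneser's theorem is classical but the precise inequality involving the symmetry group of $A - A$ (as opposed to $A + A$) needs to be stated carefully, and one must check that the trivial subgroup case $m = 1$ is excluded exactly when $|A| \geq 2$ and $A$ is not a single point — but in that degenerate situation $|A - A| \geq 2|A| - 1 > (1+\eps)|A|$ already fails the hypothesis once $|A|$ is large, so $H$ is automatically nontrivial and everything is consistent. No genuinely hard estimate is involved; the content is entirely in correctly invoking Kneser and chasing the coset-counting inequality.
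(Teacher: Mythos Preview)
Your argument is correct and in fact yields the sharper bound $|A \triangle (x+H)| \leq \eps n$ (since once $r=1$ you have $A \subset x+H$ and $H = A-A$, whence $|H| \leq (1+\eps)n$ and $|H|-n \leq \eps n$). However, it takes a genuinely different route from the paper. The paper does not invoke Kneser's theorem as a black box; instead it runs a self-contained ``popular differences'' argument \`a la Fournier: one defines $\Sym_\alpha(A)$ to be the set of differences with at least $\alpha n$ representations, shows by double counting that $|\Sym_{5/6}(A)| \geq (1-5\eps)n$, and then proves directly that $H := \Sym_{2/3}(A)$ is a subgroup by exploiting the inclusion $\Sym_{1-\delta_1}(A) + \Sym_{1-\delta_2}(A) \subset \Sym_{1-\delta_1-\delta_2}(A)$ together with a short pigeonhole argument showing $\Sym_{1/3}(A) = \Sym_{2/3}(A)$. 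The coset structure then follows because $a_1 - a_2 \in H$ for all but $6\eps n^2$ pairs. Your approach is shorter and gives a better constant, at the cost of importing Kneser's stabilizer inequality wholesale; the paper's approach is longer and loses a constant factor, but is fully elementary and makes the subgroup $H$ explicit as a set of popular differences rather than as an abstract stabilizer.
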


\begin{proof}
This is basically the argument of Fournier \cite{fournier}. Write (cf. \cite{tao-vu}) $\Sym_{\alpha}(A)$ for the set of all $d$ which have at least $\alpha n$ representations as $a_1 - a_2$, $a_1, a_2 \in A$. Note that $\Sym_{1 - \delta_1} (A) + \Sym_{1 - \delta_2}(A) \subset \Sym_{1 - \delta_1 - \delta_2}(A)$, and note also that $|\Sym_{5/6}(A)| \geq (1 - 5\eps) n \geq \frac{11}{12}n$. This follows from double-counting pairs $(a_1, a_2) \in A^2$: we have
\begin{align*} n^2 = |A|^2 & = \sum_{d \in A - A} | \{ (a_1, a_2) : a_1, a_2 \in A, a_1 - a_2 = d\}| \\ &  \leq |\Sym_{5/6}(A)||A| + \textstyle\frac{5}{6}|(A - A) \setminus \Sym_{5/6}(A)| |A| \\ & \leq \textstyle\frac{1}{6}|\Sym_{5/6}(A)|n + \frac{5}{6}(1 + \eps) n^2.\end{align*}

We claim that $H = \Sym_{2/3}(A)$ is a group. Certainly $0 \in H$, and $H + H \subset \Sym_{1/3}(A)$, so all we need do is check that $\Sym_{2/3}(A) = \Sym_{1/3}(A)$. Suppose that $d \in \Sym_{1/3}(A)$. Then $d = a_1 - a_2$ in at least $\frac{1}{3}n$ ways. If $t \in \Sym_{5/6}(A)$ then $t = a'_1 - a'_2$ in at least $\frac{5}{6}n$ ways. For at least $\frac{1}{6}n$ of these we will have $a'_1 = a_2$ for some $a_2$ such that $d = a_1 - a_2$, and thus $d + t = (a_1 - a_2) + (a'_1 - a'_2) = a_1 - a'_2 \in A - A$. That is, $|(d + \Sym_{5/6}(A) ) \cap (A - A)| \geq \frac{1}{6}n$. In particular, $d + \Sym_{5/6}(A)$ intersects $\Sym_{5/6}(A)$ (which has size at least $\frac{11}{12}n $) and therefore $d \in \Sym_{5/6}(A) - \Sym_{5/6}(A) \subset \Sym_{2/3}(A)$, as required.

To see that $A$ is close to a coset of $H$, note that $a_1 - a_2 \in H$ for all but at most $|(A - A) \setminus \Sym_{2/3}(A)| |A| \leq 6\eps n^2$ of the pairs $(a_1, a_2) \in A$. In particular there is some $x = a_2$ such that $a_1 \in x + H$ for all but at most $6\eps n$ values of $a_1 \in A$.
\end{proof}

We also need the following ``99\%'' version of the Balog-Szemer\'edi-Gowers theorem.

\begin{lemma}\label{99bsg}
Suppose that $A$ and $B$ are sets in some abelian group, and that $\Gamma \subset A \times B$ is a set with $|\Gamma| \geq (1 - \eps)|A||B|$. Suppose that $|A +_{\Gamma} B| \leq (1 + \eps)|A|^{1/2}|B|^{1/2}$. Then there are sets $A' \subset A$ and $B' \subset B$ with $|A'|/|A|, |B'|/|B| \geq 1 - \eps'$ such that $|A' - B'| \leq (1 + \eps') |A|^{1/2}|B|^{1/2}$.  We can take $\eps' = O(\eps^{c})$ for some $c > 0$.
\end{lemma}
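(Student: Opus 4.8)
The plan is to run a ``$99\%$'' version of the Balog--Szemer\'edi--Gowers argument by hand, exploiting the fact that the hypotheses are already extremely rigid: the sumset $S := A +_{\Gamma} B$ has size at most the Cauchy--Schwarz bound $(1+\eps)\sqrt{|A||B|}$ allows, so essentially every fibre of the addition map restricted to $\Gamma$ must be as large as possible. Write $a := |A|$, $b := |B|$, let $m := \sqrt{ab}$ (a positive real), and carry $\delta := \eps^{1/2}$ as the error exponent; all error terms will be of size $O(\delta m)$, so we will obtain $\eps' = O(\eps^{1/2})$. We may assume $A,B$ nonempty and $\eps$ below a suitable absolute constant. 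A first observation is that the two sides are balanced: writing $N(x) := \{y \in B : (x,y)\in\Gamma\}$ we have $x + N(x) \subseteq S$, so $|N(x)| \le |S| \le (1+\eps)m$; summing over $x\in A$ forces $(1-\eps)ab \le |\Gamma| = \sum_x |N(x)| \le a(1+\eps)m$, whence $b \le \frac{1+\eps}{1-\eps}m$ and, symmetrically, $a,b = (1+O(\eps))m$.

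Next I would clean the graph on the $A$-side only: discard the at most $\delta a$ vertices $x$ with $|N(x)| < (1-\delta)b$, leaving $A_1 \subseteq A$ with $|A\setminus A_1| = O(\delta m)$ on which every vertex has near-full degree. Two facts then follow from elementary double counting. First, $A_1$ has near-minimal doubling: for $x,x'\in A_1$ there are at least $(1-2\delta)b$ common neighbours $y$, and each such $y$ exhibits $x-x'$ as a difference $(x+y)-(x'+y)$ of two elements of $S$, with distinct $y$ giving distinct $x+y$; hence every $d\in A_1-A_1$ has at least $(1-2\delta)b$ representations as a difference of elements of $S$, and comparing the total of these counts with $|S|^2 = \sum_d r_{S-S}(d)$ gives $|A_1-A_1| \le (1+O(\delta))m \le (1+O(\delta))|A_1|$. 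Second, both $A_1$ and $B$ are, up to $O(\delta m)$ points, translates of $S$ and hence of one another: for a fixed $x_0\in A_1$ one has $|B\setminus(S-x_0)| \le \delta b$ and, since $|S| \le |B| + O(\delta m)$, also $|B \triangle (S-x_0)| = O(\delta m)$; a pigeonhole over $b\in B$ applied to $\#\{(x,y)\in A_1\times B : x+y\in S\} \ge (1-\delta)|A_1|b$ produces a $b_0\in B$ with $|A_1 \triangle (S-b_0)| = O(\delta m)$; combining, $|A_1 \triangle (B+t)| = O(\delta m)$ where $t := x_0 - b_0$.

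With these in hand I would pass to $A^{\#} := A_1 \cap (B+t)$, which has $|A^{\#}| = (1-O(\delta))m$ and inherits $|A^{\#}-A^{\#}| \le |A_1-A_1| \le (1+O(\delta))|A^{\#}|$. Feeding $A^{\#}$ into Lemma~\ref{lemmaa5} (Fournier's near-minimal-doubling statement, applicable once the doubling parameter $O(\delta)$ is below $1/60$, i.e.\ once $\eps$ is below an absolute constant) yields a subgroup $H \le G$ and a coset $z+H$ with $|A^{\#}\triangle(z+H)| = O(\delta m)$ and $|H| = (1+O(\delta))m$. Since $A^{\#}\subseteq A_1$ and $A^{\#}-t \subseteq B$, both $A_1$ and $B$ contain $A^{\#}$, which is $O(\delta m)$-close to $z+H$, respectively $A^{\#}-t$ is $O(\delta m)$-close to $(z-t)+H$. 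Thus $A' := A_1 \cap (z+H)$ and $B' := B \cap ((z-t)+H)$ lose only $O(\delta m)$ points relative to $A$ and $B$, so $|A'|/|A|, |B'|/|B| \ge 1 - O(\delta)$, while $A' - B' \subseteq (z+H) - ((z-t)+H) = t+H$ gives $|A'-B'| \le |H| = (1+O(\delta))\sqrt{|A||B|}$. Taking $\eps' := C\eps^{1/2}$ for a suitable absolute constant $C$ then completes the proof, with $c = 1/2$.

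I expect the only delicate point to be bookkeeping: one must check that each error introduced --- the imbalance of $|A|$ and $|B|$, the cleaning loss, the slack in $|S|$, the two ``$A_1$ and $B$ are translates of $S$'' estimates, and the output of Lemma~\ref{lemmaa5} --- is genuinely $O(\eps^{1/2}m)$ and that these combine without the implied constants blowing up, and in particular that after the translate step $A^{\#}$ is still large enough for Lemma~\ref{lemmaa5} to apply (it is, since $|A_1\triangle(B+t)| = O(\delta m)$ and $|A_1| = (1-O(\delta))m$). There is no genuine conceptual obstacle beyond this; the whole virtue of the $99\%$ regime is that one never needs the quantitatively expensive Balog--Szemer\'edi--Gowers theorem itself (Theorem~\ref{bsg}), only its trivial precursors together with the already-established Lemma~\ref{lemmaa5}. (The exponent $c=1/2$ is certainly not optimal, but its value is irrelevant for the applications in Section~\ref{detailed-structure}.)
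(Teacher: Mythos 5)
Your proposal is correct, but it takes a genuinely different route from the paper: the paper's entire proof of Lemma \ref{99bsg} is a citation to \cite[Theorem 2.31]{tao-vu} (with a remark that \cite[Proposition 2.27]{tao-vu} would upgrade the conclusion to a sumset bound), whereas you give a self-contained argument. Your chain of estimates checks out: the balancing $|A|,|B|=(1+O(\eps))\sqrt{|A||B|}$, the one-sided cleaning to $A_1$ with loss $O(\eps^{1/2}|A|)$, the lower bound $r_{S-S}(d)\geq (1-2\eps^{1/2})|B|$ for $d\in A_1-A_1$ giving $|A_1-A_1|\leq(1+O(\eps^{1/2}))|A_1|$, the two "translate of $S$" estimates and the passage to $A^{\#}=A_1\cap(B+t)$, and finally the application of Lemma \ref{lemmaa5} and the containment $A'-B'\subseteq t+H$ are all sound, and there is no circularity since Lemma \ref{lemmaa5} is proved independently of Lemma \ref{99bsg}. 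What each approach buys: the paper's citation is shorter and keeps the appendix lean, while yours makes the lemma self-contained within the paper's toolkit and yields an explicit exponent $c=1/2$; it also front-loads the coset structure, so that part of the subsequent derivation of Proposition \ref{almost-group-weak} (which in the paper extracts the subgroup only after applying Lemma \ref{99bsg}, the Ruzsa triangle inequality, and Lemma \ref{lemmaa5}) is in effect already done inside your proof — at the mild cost that your conclusion is proved via a structural detour rather than directly, and the constant tracking you flag (all errors $O(\eps^{1/2}\sqrt{|A||B|})$, smallness of $\eps$ so that Lemma \ref{lemmaa5} applies) is exactly the bookkeeping one must do, which you have done correctly.
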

\begin{proof}
This follows from \cite[Theorem 2.31]{tao-vu}. If one wanted instead the conclusion $|A' + B'| \leq (1 + \eps') |A|^{1/2}|B|^{1/2}$ (for comparison with Theorem \ref{bsg}) then one could additionally apply \cite[Proposition 2.27]{tao-vu}. 
\end{proof}

With these lemmas in hand, we can conclude the proof of Proposition \ref{almost-group-weak}. In what follows $\eps_1,\eps_2,\dots$ are all quantities bounded by $O(\eps^{O(1)})$. Explicit dependencies could be given if desired, but this would require Lemma \ref{99bsg} to be made explicit. With the hypotheses as in Proposition \ref{almost-group-weak}, first apply Lemma \ref{99bsg} to conclude that there are sets $A' , B' $ with $|A \triangle A'|, |B \triangle B'| \leq \eps_1n$ and $|A ' - B'| \leq (1 + \eps_2)n$. Applying the Ruzsa triangle inequality we obtain $|A' - A'|, |B' - B'| \leq (1 + \eps_3)n$. By Lemma \ref{lemmaa5} (assuming $\eps$ is sufficiently small) there are subgroups $H, H'$ and cosets $x + H, y + H'$ such that $|A \triangle (x + H)|, |B \triangle (y + H')| \leq \eps_4 n$. In particular $(1 - \eps_5)n \leq |H|, |H'| \leq (1 + \eps_5)n$.

Now, by assumption, for all except $\eps n^2$ pairs $(a,b) \in A \times B$, $a + b$ lies in a set $C$ of size at most $(1 + \eps)n$. It follows easily that for all except $\eps_6 n^2$ pairs $(h,h') \in H \times H'$, $h + h'$ lies in a set of size $(1 + \eps_7)n$. We claim that this forces $H = H'$. To see this, note that the assumption easily implies that there are at least $(1 - \eps_{8})n^3$ additive quadruples $h_1 - h_2 = h'_1 - h'_2$, and so for all but $\eps_{9} n^2$ pairs $(h_1,h_2) \in H$ we have $h_1 - h_2 = h'_1 - h'_2 \in H'$. This implies that all but $\eps_{10} n$ elements of $H$ lie in $H'$, and hence $|H \triangle H'| \leq \eps_{11} n$ and so $|H \cap H'| \geq (1 - \eps_{12})n  > \frac{1}{2}n$, provided $\eps$ is sufficiently small. Invoking Lagrange's theorem (the order of a subgroup divides the order of the group), it follows that in fact $H = H \cap H' = H'$, as claimed.

Finally, note that since $A$ occupies at least $7|H|/8$ of $x + H$, and $B$ at least $7|H|/8$ of $y + H$, every element of $x + y + H$ is a sum $a + b$ in at least  $3|H|/4 > n/2$ ways. It follows that $C$ must contain all but at most $\eps_{13} n$ of the elements of $x + y + H$, and this concludes the proof.\endproof

The next result is due to Elekes, Nathanson and Ruzsa \cite{elekes-nathanson-ruzsa}. 

\begin{proposition}\label{enr-prop-piecewise} Let $A \subset \R$ be a set of cardinality $n$, and suppose that there are $x_1 < \dots < x_{10}$ such that $f : \R \rightarrow \R$ is defined except possibly at $x_1,\dots, x_{10}$ and is strictly concave or convex on each open interval $(x_i, x_{i+1})$. Then either $|A - A|$ or $|f(A) - f(A)|$ has cardinality at least $cn^{5/4}$ for some absolute constant $c>0$.\end{proposition}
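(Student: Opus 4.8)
The plan is to deduce the proposition from the single-interval case by pigeonholing, and to prove that case by the point--line incidence argument of Elekes, Nathanson and Ruzsa, using the Szemer\'edi--Trotter theorem in its pseudoline form. For the reduction, I would first discard the at most $10$ elements of $A$ equal to some $x_i$ (this changes $|A-A|$ and $|f(A)-f(A)|$ by $O(1)$), so that $A$ is partitioned by the $x_i$ into a bounded number of blocks, one of which — call it $A'$, with $n':=|A'| \gg n$ — lies in a single open interval $I$ on which $f$ is strictly convex or strictly concave. Replacing $f$ by $-f$ if necessary (which leaves $|f(A')-f(A')|$ unchanged) we may assume $f$ is strictly convex on $I$. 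Since $A'-A' \subseteq A-A$ and $f(A')-f(A') \subseteq f(A)-f(A)$, it then suffices to prove $\max(|A'-A'|,|f(A')-f(A')|) \gg (n')^{5/4}$; and when $n$ is below an absolute threshold the statement is trivial because $|A-A| \geq 2n-1$.

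For the single-interval statement, let $P := (A'-A') \times (f(A')-f(A'))$, so $|P| = |A'-A'|\cdot|f(A')-f(A')| =: N$, and for each pair $(b,c)\in A'\times A'$ let $C_{b,c}$ be the graph of $x \mapsto f(x+b)-f(c)$ over $x \in I-b$. The key point is that $C_{b,c}$ passes through the $n'$ distinct points $(a-b,\,f(a)-f(c))$ with $a\in A'$, all of which lie in $P$, so the family $\{C_{b,c}\}$ is incident to $P$ in at least (number of distinct curves)$\,\times\, n'$ points. I would then verify, using strict convexity: (a) $C_{b,c}=C_{b',c'}$ forces $b=b'$ (the $x$-projections are the translates $I-b$) and hence $f(c)=f(c')$, which has at most two solutions — so there are $\gg (n')^2$ distinct curves; (b) two distinct curves meet in at most one point, since for $b\neq b'$ the map $x\mapsto f(x+b)-f(x+b')$ is strictly monotone (increasing chord slopes of a convex function), while for $b=b'$ distinct curves are disjoint; (c) any two distinct points lie on at most two of the curves, by the same monotonicity in the $b$ variable together with the fact that level sets of a convex function have at most two points. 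Thus $\{C_{b,c}\}$ is, up to bounded multiplicity, a pseudoline arrangement of size $\gg (n')^2$, incident to $P$ in $\gg (n')^3$ points.

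Invoking the Szemer\'edi--Trotter incidence bound for pseudolines now gives $(n')^3 \ll \big((n')^2 N\big)^{2/3} + (n')^2 + N$. For $n'$ above an absolute constant the term $(n')^2$ cannot dominate the right-hand side, and whichever of the other two terms does dominate forces $N \gg (n')^{5/2}$. Hence $|A'-A'|\cdot|f(A')-f(A')| \gg (n')^{5/2}$, so one of the two factors is $\gg (n')^{5/4} \gg n^{5/4}$, completing the proof.

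The parts I expect to require the most care are claims (b) and (c): one must check that, uniformly over the bounded number of convex/concave pieces into which the $x_i$ divide $f$, the curve family genuinely satisfies the bounded-intersection and bounded-incidence hypotheses that the (generalised) Szemer\'edi--Trotter theorem requires. Everything else — the pigeonholing, the distinctness count (a), and the final arithmetic — is routine bookkeeping, and for small $n$ the bound is immediate.
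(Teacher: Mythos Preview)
Your proposal is correct and your reduction --- discard the $x_i$, pigeonhole onto the largest block $A' = A \cap (x_i,x_{i+1})$, and apply the single-interval sum-product bound --- is exactly what the paper does. The only difference is that the paper cites the inequality $|A'-A'|\,|f(A')-f(A')| \gg (n')^{5/2}$ directly as \cite[Corollary 3.1]{elekes-nathanson-ruzsa}, whereas you have reproduced the Elekes--Nathanson--Ruzsa incidence argument (points $P=(A'-A')\times(f(A')-f(A'))$, translated graphs $C_{b,c}$, pseudoline Szemer\'edi--Trotter) that underlies that corollary. Your verification of (a)--(c) is sound; the strict monotonicity of $x\mapsto f(x+b)-f(x+b')$ for $b\neq b'$ on a strictly convex piece is precisely the mechanism ENR use.
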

\begin{proof}
Suppose first of all that $f$ is strictly convex or concave on an interval containing $A$. Then by \cite[Corollary 3.1]{elekes-nathanson-ruzsa} we have $|A - A| |f(A) - f(A)| \gg n^{5/2}$, and so either $|A - A|$ or $|f(A) - f(A)|$ has cardinality at least $cn^{5/4}$. The proposition follows by applying this to the largest of the sets $A_i := A \cap (x_i, x_{i+1})$. 
\end{proof}

\section{Intersections of lines through roots of unity}\label{chord-app}

In this section we establish Proposition \ref{poonen-rubinstein-prop}.  Our arguments will be a crude variant of those used in \cite{poonen-rubinstein}.  

It will be convenient to identify the plane $\R^2$ with the complex numbers $\C$.  Let $\Pi_n$ be the $n^{\operatorname{th}}$ roots of unity, and suppose that $p$ is a point other than the origin or an element of $\Pi_n$ which is incident to $m$ lines $\ell_1,\ldots,\ell_m$, each of which pass through two points $e^{2\pi i\beta_j}, e^{2\pi i\gamma_j}$ of $\Pi_n$, where $0 \leq \beta_1,\ldots,\beta_m,\gamma_1,\ldots,\gamma_m < 1$ are distinct rationals with denominator $n$.  Our objective is to show that $m=O(1)$.

We claim the identity
$$ \frac{|p - e^{2\pi i\beta_j}|}{|p - e^{2\pi i \beta_k}|} = \frac{|e^{2\pi i \beta_j} - e^{2\pi i \gamma_k}|}{|e^{2\pi i \beta_k} - e^{2\pi i \gamma_j}|}$$
for any distinct $1 \leq j,k \leq m$.  Indeed, from elementary trigonometry we see that $( p, e^{2\pi i \beta_j}, e^{2\pi i \gamma_k} )$ and
$( p, e^{2\pi i \beta_k}, e^{2\pi i \gamma_j} )$ form a pair of similar triangles, regardless of the relative ordering between the points involved.   The right-hand side can be simplified as
$$ \frac{|\sin( \pi (\beta_j - \gamma_k) )|}{|\sin(\pi(\beta_k - \gamma_j))|}.$$
We conclude that
$$ \frac{|\sin( \pi (\beta_j - \gamma_k) )|}{|\sin(\pi(\beta_k - \gamma_j))|}
\frac{|\sin( \pi (\beta_k - \gamma_l) )|}{|\sin(\pi(\beta_l - \gamma_k))|} \frac{|\sin( \pi (\beta_l - \gamma_j) )|}{|\sin(\pi(\beta_j - \gamma_l))|} = 1$$
for any distinct $1 \leq i, j,k \leq m$, and thus
\begin{align*} \sin( \pi (\beta_j - \gamma_k) ) & \sin( \pi (\beta_k - \gamma_l) ) \sin( \pi (\beta_l - \gamma_j) ) \\ & = \pm
\sin( \pi (\beta_k - \gamma_j) ) \sin( \pi (\beta_l - \gamma_k) ) \sin( \pi (\beta_j - \gamma_l) )\end{align*}
for some choice of sign $\pm$.  Actually, we claim that the sign here is always given by the $+$ sign.  To see this, let us temporarily forget that the $e^{2\pi i\beta_j}, e^{2\pi i\gamma_k}$ were constrained to be roots of unity, and that $p$ was assumed not to take values at the origin or at infinity (since we have not yet actually used these hypotheses).  We first observe that the sign does not change if we shift any of the $\beta_j$ or $\gamma_k$ by an integer, so we may assume that these phases take values in $\R/\Z$ rather than $[0,1)$.  Then we observe that the sign is stable with respect to continuous perturbations of the $\beta_j,\gamma_k$ and $p$, so long as no two phases cross each other, and that $e^{2\pi i\beta_j}, e^{2\pi i \gamma_j}, p$ remain collinear for all $j$.  From this we may reduce to the case when $p$ is at the origin (so that $\beta_j =\gamma_j+1/2$ for all $j$) or at, say, $[1,0,0]$ (so that $\beta_j = 1/2 - \gamma_j$ for all $j$), and the sign is easily verified in these cases.

Expanding out $\sin x$ as $(e^{ix} - e^{-ix})/2i$, we conclude that
\begin{align*} (& e^{\pi i (\beta_j-\gamma_k)} -  e^{\pi i (\gamma_k - \beta_j)})
(e^{\pi i (\beta_k-\gamma_l)} - e^{\pi i (\gamma_l - \beta_k)}) (e^{\pi i (\beta_l-\gamma_j)} - e^{\pi i (\gamma_j - \beta_k)}) \\
& =
(e^{\pi i (\beta_k-\gamma_j)} - e^{\pi i (\gamma_j - \beta_k)})
(e^{\pi i (\beta_l-\gamma_k)} - e^{\pi i (\gamma_k - \beta_l)}) (e^{\pi i (\beta_j-\gamma_k)} - e^{\pi i (\gamma_k - \beta_j)}).\end{align*}
Multiplying out both sides, and cancelling the common terms 
\[ \pm e^{\pm \pi i(\beta_j+\beta_k+\beta_l-\gamma_j-\gamma_k-\gamma_l)}\] appearing on both sides, one arrives at an identity of the form
\begin{equation}\label{instance}
 \sum_{r=1}^{12} \epsilon_r e^{\pi i \alpha_{r; j,k,l}} = 0,
\end{equation}
where the $\epsilon_r = \pm 1$ are signs depending only on $r$, and $\alpha_{r;j,k,l}$ are twelve linear combinations of $\beta_j,\beta_k,\beta_l,\gamma_j,\gamma_k,\gamma_l$, each of the form
$$ \alpha_{r;j,k,l} = \pm \beta_j \pm \beta_k \pm \beta_l \pm \gamma_j \pm \gamma_k \pm \gamma_l$$
where the six signs $\pm$ do not need to be equal, but depend only on the index $r$.  These signs can of course be worked out explicitly, but we will not need to do so here, save to note that the linear forms $\alpha_{r;j,k,l}$ are all distinct in $r$, thus $\alpha_{r;j,k,l} - \alpha_{r';j,k,l}$ is a non-trivial combination of $\beta_j,\beta_k,\beta_l,\gamma_j,\gamma_k,\gamma_l$ whenever $r,r'$ are distinct.

Now we reinstate the hypothesis that the $e^{2\pi i \beta_j}, e^{2\pi i \gamma_k}$ are $n$th roots of unity, which ensures that the $\epsilon_r e^{\pi i \alpha_{r; j,k,l}}$ are also $n$th roots of unity.  The sets of twelve $n$th roots of unity that sum to zero were completely classified in \cite[Theorem 3]{poonen-rubinstein}.  The exact classification is somewhat messy, but we only require the following qualitative consequence of it. 

\begin{proposition} There exists a finite set $S$ of roots of unity with the property that whenever $e^{2\pi i \alpha_1},\ldots,e^{2\pi i \alpha_{12}}$ are roots of unity with $\sum_{r=1}^{12} e^{2\pi i \alpha_r} = 0$, then one has $e^{2\pi i (\alpha_r - \alpha_{r'})} \in S$ for some $1 \leq r < r' \leq 12$.
\end{proposition}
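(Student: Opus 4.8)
The plan is to derive this purely from the classification of \emph{minimal} vanishing sums of roots of unity. Recall that a vanishing relation $\sum_{r\in T}\zeta_r=0$ among roots of unity is called minimal if no proper nonempty subsum vanishes; Conway--Jones and, in the range we need, \cite[Theorem 3]{poonen-rubinstein} (equivalently, Mann's theorem) describe all minimal relations with at most twelve terms. So, given roots of unity $e^{2\pi i\alpha_1},\dots,e^{2\pi i\alpha_{12}}$ with $\sum_{r=1}^{12}e^{2\pi i\alpha_r}=0$, I would first pass to a minimal sub-relation: let $T\subseteq\{1,\dots,12\}$ be minimal with respect to inclusion among the nonempty subsets for which $\sum_{r\in T}e^{2\pi i\alpha_r}=0$ (such a $T$ exists, since the full index set qualifies). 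Since a single root of unity is never zero we have $|T|\geq2$, while trivially $|T|\leq12$, and by construction $\sum_{r\in T}e^{2\pi i\alpha_r}=0$ is a minimal relation.

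Next I would feed this minimal relation into the classification. Its qualitative consequence --- all that is needed here --- is the following: there is an absolute constant $M$ (one may take $M=2\cdot3\cdot5\cdot7\cdot11=2310$, the product of the primes at most $12$, which is the bound furnished by Mann's theorem) such that in any minimal vanishing relation among at most twelve roots of unity, every pairwise ratio of the summands is a root of unity of order dividing $M$. Applying this to the relation supported on $T$ shows that $e^{2\pi i(\alpha_r-\alpha_{r'})}$ is a root of unity of order dividing $M$ for all $r,r'\in T$. Now set $S:=\{\zeta\in\C:\zeta^{M}=1\}$; this is a finite set of roots of unity, independent of $n$. Choosing any two distinct indices $r,r'\in T$ --- possible since $|T|\geq2$ --- we obtain $e^{2\pi i(\alpha_r-\alpha_{r'})}\in S$, which is exactly the assertion of the proposition.

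The sole piece of genuine content is the structural statement invoked in the second step; everything around it is routine bookkeeping, and this is also where I would expect any difficulty to lie (namely in quoting the classification correctly). Two small points deserve attention. First, the classification applies only to \emph{minimal} relations, which is why one must first pass to the minimal sub-block $T$ rather than working with the original twelve-term relation directly. Second, one should record a single order bound $M$ valid uniformly over all relation-lengths $k\leq12$ rather than a $k$-dependent one; this is automatic, since $\prod_{p\leq k}p\mid\prod_{p\leq12}p$ for every $k\leq12$. If one prefers, the conclusion can instead be extracted from the explicit (but lengthy) list of minimal relations in \cite[Theorem 3]{poonen-rubinstein}; the crude Mann-type bound above is simply more convenient.
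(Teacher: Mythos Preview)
Your proof is correct and follows essentially the same approach as the paper: both arguments reduce to a minimal vanishing sub-relation and then invoke the structural classification of such relations to bound the pairwise ratios. You are simply more explicit about the passage to a minimal subset $T$, and you quote Mann's theorem (yielding $M=2310$) rather than the full Poonen--Rubinstein list (from which the paper extracts $M=30030$); either source suffices, and your version is if anything slightly cleaner.
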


Indeed, one can take $S$ to be the ratios of the roots of unity arising in the minimal relations of weight up to $12$ that were classified in \cite[Theorem 3]{poonen-rubinstein}, the key point being that there were only finitely many ($107$, to be precise) such relations up to rotation.  In fact one can take $S$ to consist of $30030$th roots of unity if desired.

Applying this proposition, we conclude that for any distinct $i,j,k,l$ with $1 \leq i,j,k,l \leq m$, one has
$$ \pm e^{\pi i (\alpha_{r;j,k,l} - \alpha_{r';j,k,l})} \in S$$
for some $r,r'$ with $1 \leq r < r' \leq 12$ and some choice of sign $\pm$.  Applying the pigeonhole principle, we conclude (for $m$ large enough) that there exist $r,r'$ with $1 \leq r < r' \leq 12$ and a phase $\theta$ such that
$$ \alpha_{r;j,k,l} - \alpha_{r';j,k,l} = \theta$$ 
for $\gg m^3$ triples of distinct $1 \leq j,k,l \leq m$.

Fix $r,r',\theta$ as above.  As mentioned earlier, $\alpha_{r;j,k,l} - \alpha_{r';j,k,l}$ is a non-trivial linear form in the $\beta_j,\beta_k,\beta_l,\gamma_j,\gamma_k,\gamma_l$.  By symmetry, we may then assume that at least one of the $\beta_j, \gamma_j$ coefficients in this form are non-zero.  As these coefficients lie in $\{-2,0,+2\}$, we may thus write
$$ \alpha_{r;j,k,l} - \alpha_{r';j,k,l} = 2a \beta_j + 2b \gamma_j + c_{k,l}$$
for some coefficients $a,b \in \{-1,0,+1\}$ not both zero, and some phases $c_{k,l}$ independent of $j$.   Note that these coefficients are equal to $-2$, $0$, or $+2$.  Pigeonholing in the $k,l$, we may then find distinct $1 \leq k,l\leq m$ and a phase $\theta'$ such that
$$ a \beta_j + b \gamma_j = \theta'$$
for $\gg m$ values of $j$.  But from elementary trigonometry, and the hypothesis that $p$ is not at the origin (or at infinity) we see that as $\beta,\gamma \in [0,1)$ range over the distinct phases for which
$e^{2\pi i \beta}, e^{2\pi i \gamma}$ are concurrent with $p$, the phase $a\beta+b\gamma$ can take on any specific value $\theta'$ at most $O(1)$ times, and so $m = O(1)$ as desired.  This concludes the proof of Proposition \ref{poonen-rubinstein-prop}.\endproof\vspace{11pt}

 \emph{Remark.} In principle, an explicit computational analysis of the minimal relations that were classified in \cite[Theorem 3]{poonen-rubinstein} should yield the optimal value of $C$ in Proposition \ref{poonen-rubinstein-prop}.  In \cite{poonen-rubinstein} it was shown that one can take $C=7$ if one restricts $p$ to be in the interior of the circle, and it is likely that the same bound holds in the exterior region also.  However, we will not perform this computation here.\vspace{11pt}

We now give a weaker version of Proposition \ref{poonen-rubinstein-prop} which is completely elementary. In particular, it avoids the Poonen-Rubinstein classification of tuples of twelve roots of unity summing to zero.  It can be used as a substitute for that proposition in the proof of Theorem \ref{mainthm} (and hence Theorem \ref{dirac-motzkin-1}), but not in the stronger Theorem \ref{dirac-motzkin-2}.

\begin{proposition}\label{chords-prop}
Let $\Pi_n \subset \C \equiv \R^2$ denote the regular $n$-gon consisting of the $n^{\operatorname{th}}$ roots of unity. Then no point other than the origin lies on more than $O(n^{5/6})$ lines joining pairs of vertices of $\Pi_n$.
\end{proposition}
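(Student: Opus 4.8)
The plan is to fix a point $p \neq [0,0,1]$ in the affine plane $\R^2 \cong \C$ and to bound the number of lines spanned by $\Pi_n$ which pass through $p$ but do not have $p$ as one of their two defining vertices. (When $p$ is not itself a vertex of $\Pi_n$, every spanned line through $p$ is of this form; a vertex of course lies on $n-1$ spanned lines and so, like the origin, should be thought of as lying outside the scope of the statement.) Write $p = re^{i\theta_0}$ in polar coordinates, with $r = |p| > 0$. The idea is to attach to each such line an integer point on an explicit plane curve, and then to invoke a classical bound on integer points on convex arcs.

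\textbf{Reduction to a curve.} Let $\ell$ be a line through $p$ meeting $\Pi_n$ in $e^{2\pi i b/n}$ and $e^{2\pi i c/n}$ with $b \ne c$, and let $F$ be the foot of the perpendicular from the origin $O$ to $\ell$. Since the perpendicular from the centre of a circle to a chord bisects the chord,
\[ F = \tfrac12\big(e^{2\pi i b/n} + e^{2\pi i c/n}\big) = e^{i\pi(b+c)/n}\,\cos\!\big(\tfrac{\pi(b-c)}{n}\big). \]
On the other hand $\angle OFp$ is a right angle, so $F$ lies on the circle of diameter $\overline{Op}$, namely $\{\,r\cos(t)\,e^{i(\theta_0+t)} : t \in \R\,\}$. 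Comparing the moduli and (modulo $\pi$) the arguments of the two expressions for $F$ and eliminating $t$ gives
\begin{equation}\label{chord-rel}
 \cos^2\!\big(\tfrac{\pi(b-c)}{n}\big) \;=\; r^2\,\cos^2\!\big(\tfrac{\pi(b+c)}{n} - \theta_0\big).
\end{equation}
Hence, putting $u := b-c$ and $v := b+c$, each line through $p$ of the kind we are counting yields an integer point $(u,v)$ on the plane curve
\[ \Gamma \;:=\; \{(x,y)\in\R^2 : \cos^2(\pi x/n) = r^2\cos^2(\pi y/n - \theta_0)\} \]
inside the box $[-n,n]\times[0,2n]$, and distinct lines yield distinct pairs $\{b,c\}$, hence distinct pairs $\{u,v\}$; so the number of such lines is at most the number of integer points of $\Gamma$ in that box. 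The hypothesis $p\ne O$ is used exactly here: it forces $r\ne 0$, so $\Gamma$ is a genuine curve and does not degenerate to the line $\{\cos(\pi x/n)=0\}$, which is the configuration of diameters responsible for $O$ lying on $\sim n/2$ spanned lines.

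\textbf{Counting integer points on $\Gamma$.} Since $r\ne 0$, on each maximal $y$-interval where $|r\cos(\pi y/n - \theta_0)| < 1$ the curve $\Gamma$ is, modulo $2n$ in $x$, the graph of $x = \pm\frac{n}{\pi}\arccos\!\big(r\cos(\pi y/n - \theta_0)\big)$, a real-analytic, non-affine function of $y$; so $\Gamma$ meets the box in a union of $O(1)$ arcs, each strictly convex or strictly concave (one also checks that the finitely many points where $r\cos(\pi y/n - \theta_0) = \pm1$, where two branches meet with a vertical tangent, together with the boundary of the box, contribute only $O(1)$ further integer points). On a strictly convex or concave arc lying in a box of side $O(n)$ the number of integer points is $O(n^{2/3})$: the edge vectors joining consecutive lattice points along the arc have strictly monotone slopes, hence are pairwise distinct; there are $O(T^2)$ integer vectors of norm at most $T$; and the edge vectors have total length at most the arc length, which is $O(n)$; a dyadic decomposition by length then yields $O(n^{2/3})$ (the classical Jarník bound). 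Summing over the $O(1)$ arcs bounds the number of lines through $p$ by $O(n^{2/3})$, in particular by $O(n^{5/6})$.

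The delicate part is the convexity bookkeeping in the last step — showing that $\Gamma$ splits into a bounded number of strictly convex or concave arcs uniformly in $r,\theta_0$, and handling the arc endpoints and vertical-tangent points — together with making the Jarník estimate precise. Everything before that, namely the derivation of \eqref{chord-rel} from Thales's theorem and the chord-midpoint identity, and the passage from "line through $p$" to "integer point on $\Gamma$", is elementary trigonometry and uses none of the machinery of \cite{poonen-rubinstein}.
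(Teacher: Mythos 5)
Your proposal is correct in substance but takes a genuinely different route from the paper. The paper parametrises the lines through $p$ by the angle $\theta_j$ at which $\overline{\{p,v_j\}}$ meets the line at infinity, notes that $n\theta_j$ is an integer whenever $v_j$ is an endpoint of a chord through $p$, and uses the geometric fact that for fixed $a$ the increment $\theta_{j+a}-\theta_j$ takes any given value at most twice; a Cauchy--Schwarz and pigeonhole argument then yields $O(n^{5/6})$. You instead encode each chord through $p=re^{i\theta_0}$ by the integer point $(b-c,b+c)$ on the curve $\cos^2(\pi x/n)=r^2\cos^2(\pi y/n-\theta_0)$ -- your foot-of-perpendicular/Thales derivation of that relation is correct, and distinct chords give distinct lattice points -- and then count lattice points on $O(1)$ strictly convex or concave arcs via Jarn\'{\i}k's bound. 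This buys a stronger estimate, $O(n^{2/3})$ in place of $O(n^{5/6})$, at the price of the convexity bookkeeping; the paper's argument is shorter and avoids any curve analysis, but is lossier. The bookkeeping you defer does go through uniformly in $p$: on each branch the second derivative of $x$ with respect to $y$ is, up to sign, a nonzero multiple of $(1-r^2)\cos(\pi y/n-\theta_0)/\sin^3(\pi x/n)$, so as long as $r\neq 1$ there are only $O(1)$ inflection and vertical-tangent points in the box, the number of arcs is bounded absolutely, and Jarn\'{\i}k's constant is absolute.

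One caveat you should make explicit: your assertion that the branch functions are non-affine fails when $r=1$, where $1-r^2=0$ and $\Gamma$ degenerates into straight lines, so the convex-arc decomposition does not exist. That case is geometrically vacuous, but it needs a sentence: a line through two vertices meets the unit circle only at those two vertices, so a point $p$ with $|p|=1$ lies on no chord-lines except those having $p$ itself as an endpoint, which your opening reduction (also needed to make sense of the statement when $p$ is a vertex, and consistent with how Proposition \ref{chords-prop} is used via Corollary \ref{prc}) has already excluded. With that line added, your argument is complete.
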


\begin{proof} The argument here will be similar to that used at the end of Section \ref{detailed-structure}, with the roots of unity $\Pi_n$ playing the role of the coset $H \oplus x$ in that analysis.

Let $p$ be a point other than the origin. Let the vertices of $\Pi_n$ be $v_1,\dots, v_n$ in order. Suppose that the line connecting $p$ and $v_j$ meets the line at infinity in the point $[-\sin \pi \theta_j, \cos \pi \theta_j,0]$.  As $j$ ranges between $1$ and $n$, the $\theta_j$ can be taken to be an increasing sequence in $[0, 2]$. 

A Euclidean geometry exercise, left as an exercise to the reader, confirms the following claim: for any fixed integer $a$ and for any $\phi$ there are at most two values of $j$ such that $\theta_{j + a} - \theta_j = \phi$.  

Suppose that $p$ lies on $\delta n$ lines joining pairs of vertices of $\Pi_n$.  If $v_j$ is one such vertex, then from elementary trigonometry we see that $n \theta_j$ is an integer.  Thus  there is a set $J$, $|J| = \delta n$, such that all $n \theta_j$ are integers in $\{1,\dots, 2n\}$ for $j \in J$.  Split $\{1,\dots, 2n\}$ into $m \sim \frac{1}{5}\delta n$ intervals of length $\sim 10/\delta$, and suppose that the number of points of $J$ in these intervals is $N_1, \dots, N_m$. Since $N_1 + \dots + N_m = \delta n$, the Cauchy-Schwarz inequality implies that $N_1^2 + \dots + N_m^2 \geq \delta^2 n^2/m \geq 5\delta n$. On the other hand this sum is at most the number of pairs in $J \times J$ differing by at most $10/\delta$. The contribution from the diagonal (pairs $(j,j)$) is just $\delta n$, and so there are at least $4\delta n$ pairs in $J \times J$ differing by at most $10/\delta$. By the pigeonhole principle there is some integer $a$, $0 < a \leq 10/\delta$,  such that $j, j + a \in J$ for $\gg \delta^2 n$ values of $j$. From this sequences of $j$s, we may then extract a subsequence $j_1 < \dots < j_d$, $d \gg \delta^3 n$, with $j_{i+1} > j_i + a$, such that once again $j_i, j_i + a \in J$ for each $i$. 

Write $x_i := n( \theta_{j_{i} + a} - \theta_{j_i})$. Then, since the $\theta_j$ are increasing as a function of $j$, all the $x_i$ are positive. Furthermore we have $x_1 + \dots + x_d \leq n$. However the $x_i$ are all integers, and no integer can occur more than twice as a value of $x_i$ by the claim we established at the start of the proof. From this it follows that $d \ll \sqrt{n}$.

Comparing these inequalities yields $\delta \ll n^{-1/6}$, and this completes the proof.
\end{proof}

\setcounter{tocdepth}{1}

\end{document}